%% file: main.tex
\input{preamble}

\begin{document}

\begin{abstract}
  In this paper we define a novel product on polyominoes called the \textit{tensor product of polyominoes}. Using a special case of this product which we name a \textit{dilation}, we construct a new class of polyominoes called the \textit{dilated closed paths}. This class is a ``thick'' generalization of Cisto and Navarra's class of closed paths. We prove the Zig-Zag Walk Conjecture for this new class thereby fully characterizing primality for its associated polyomino ideals.
\end{abstract}

\maketitle

\section{Introduction}

In a 2012 paper by Qureshi, a construction was presented to attach a binomial ideal to any given polyomino, which Qureshi called the \textit{polyomino ideal} (\textbf{\cite{Qureshi:2012}}). This new class of ideal was introduced as a generalization of the class of binomial ideals generated by 2-minors of an $m \times n$ matrix of indeterminates that is well-known in commutative algebra (\textbf{\cite{Hochster_Eagon:1971}}). Many papers in the past decade have studied algebraic invariants of polyominoes determined by either the polyomino ideal or by its corresponding coordinate ring. One algebraic invariant that has been focused on is the primality of the polyomino ideal. There is yet no complete description of the primality of the polyomino ideal, however, a complete description of primality has been given for certain classes of polyominoes and more general collections of cells such as simple polyominoes (\textbf{\cite{Qureshi_Shibuta_Shikama:2017}}), rectangle minus convex polyominoes (\textbf{\cite{Hibi_Qureshi:2015}}, \textbf{\cite{Shikama:2018}}), grid polyominoes (\textbf{\cite{Mascia_Rinaldo_Romeo:2020}}), closed paths (\textbf{\cite{Cisto_Navarra:2023}}), weakly closed paths (\textbf{\cite{Cisto_Navarra_Utano:2022}}), and most recently for polyominoes containing a good rectangle piece and for extended bipartite collections of cells (\textbf{\cite{Zheng_Guo_Wu:2026i}}, \textbf{\cite{Zheng_Guo_Wu:2026ii}}). In \textbf{\cite{Mascia_Rinaldo_Romeo:2020}}, the authors defined a specific sequence of inner intervals on a polyomino meeting certain geometric criteria called a \textit{zig-zag walk}. In that same paper, they showed that the non-existence of a zig-zag walk in any polyomino is a necessary condition for its associated polyomino ideal to be prime and conjectured it to be a sufficient condition. This conjecture has been named the ``Zig-Zag Walk Conjecture''. Our main result will show that the Zig-Zag Walk Conjecture holds for a novel class of polyominoes which we introduce and name the class of \textit{dilated closed paths}. This class is a ``thick'' generalization of the class of closed paths that Cisto and Navarra defined in \textbf{\cite{Cisto_Navarra:2023}} and for which they proved the Zig-Zag Walk Conjecture. In order to construct this generalization of closed paths, we introduce a novel product on polyominoes which we call the \textit{tensor product of polyominoes}.

In Section \ref{Section:2}, we give some preliminaries on polyominoes, the polyomino ideal, and we state the Zig-Zag Walk Conjecture as first given in \textbf{\cite{Mascia_Rinaldo_Romeo:2020}}. In Section \ref{Section:3} we introduce a novel product on polyominoes which we name the \textit{tensor product of polyominoes}. We single out a certain tensor product of polyominoes which we call a \textit{dilation}, which is a stretching of a polyomino by integer factors in one or both coordinate directions, and show that it preserves many important polyomino invariants. We end this section by defining our novel class of \textit{dilated closed paths}. Then, in Section \ref{Section:4} we build the necessary steps on the logical staircase to be able to state our main result, Theorem \ref{Thm: ZZW for dilated closed paths}. This result fully characterizes the primality of dilated closed paths by showing that the Zig-Zag Walk Conjecture holds for this class of polyominoes. 

As a note on the notation to follow, we fix $\mathbb{N} = \{0,1,2,...\}$ and $\mathbb{Z}^+ = \{1,2,3,...\}$.

\section{Preliminaries} \label{Section:2}

\subsection{Polyominoes}

Let $a,b\in\mathbb{N}^2$ and $\leq$ be the product order on $\mathbb{N}^2$, that is, if $a = (i,j)$ and $b = (k,\ell)$, then $a \leq b$ if and only if $i\leq k$ and $j\leq \ell$. If $a\leq b$, then $[a,b]=\{c\in\mathbb{N}^2 \mid a\leq c \leq b\}$ is called an \textit{interval} of $\mathbb{N}^2$. Furthermore, if $a<b$, then $[a,b]$ is called a \textit{proper interval}. Let $I=[a,b]$ be a proper interval of $\mathbb{N}^2$, so $a=(i,j)$ and $b=(k,\ell)$ for some $i<k$ and $j<\ell$. We call $a$ and $b$, respectively, the \textit{lower left corner} and \textit{upper right corner} of $I$. Together, $a$ and $b$ are the \textit{diagonal corners} of $I$, with $c=(i,\ell)$ and $d=(k,j)$ being the \textit{anti-diagonal corners} of $I$. If $C$ is an interval of $\mathbb{N}^2$ with lower left corner $a$ such that $C = [a, a+(1,1)]$, then $C$ is called a \textit{cell}. Given a cell $C$ with diagonal corners $a$ and $b$ and anti-diagonal corners $c$ and $d$, we say that $V(C) =\{a,b,c,d\}$ is the set of \textit{vertices} of $C$ and that $E(C)=\{\{a,c\},\{a,d\},\{b,c\},\{b,d\}\}$ is the set of \textit{edges} of $C$.

We now shift our attention to collections of cells in $\mathbb{N}^2$. Let $C$ and $D$ be distinct cells in $\mathbb{N}^2$, then a \textit{walk of length n} from $C$ to $D$ is a sequence of cells $\mathcal{W}: C=C_1, C_2,...,C_n=D$ such that $E(C_i)\cap E(C_{i+1}) \neq \emptyset$ for all $1 \leq i \leq n-1$. If it is the case that $C_i\neq C_j$ for any $i,j\in\{1,..,n\}$, then we call $\mathcal{W}$ a \textit{path} from $C$ to $D$. Let $\mathcal{P}$ be a finite collection of cells in $\mathbb{N}^2$ and let C and D be two cells of $\mathcal{P}$. We say that $C$ and $D$ are \textit{connected} in $\mathcal{P}$ if there exists a walk $\mathcal{W}: C=C_1,C_2,...,C_{n-1},C_n=D$ such that $C_i\in\mathcal{P}$ for all $2 \leq i \leq n-1$. We say that $\mathcal{P}$ is a \textit{polyomino} if every pair of distinct cells in $\mathcal{P}$ is connected. We denote by $|\mathcal{P}|$ the \textit{rank} of $\mathcal{P}$, that is, the number of cells in $\mathcal{P}$.

Note that the cells of polyominoes are always connected edge to edge. If we relax this condition to allow two cells to be only connected vertex to vertex we say that that such a collection of cells, say $\mathcal{Q}$, is \textit{weakly connected}, meaning that for any cells $C,D \in \mathcal{Q}$ there exists a sequence of cells ${\mathcal{C}: C=C_1, C_2,...,C_n=D}$ such that $V(C_i)\cap V(C_{i+1}) \neq \emptyset$ for all $1 \leq i \leq n-1$. Such a collection of cells $\mathcal{Q}$ is often called a \textit{polyking} as it can be viewed as a representation of kingwise movement on a chessboard. A polyomino, on the other hand, can be viewed as a representation of rookwise movement on a chessboard.

By its construction, the class of all polykings contains all polyominoes. We also note that any polyking can be thought as a finite collection of polyominoes. Let $\mathcal{Q}$ be a polyking and let $\mathcal{Q'}$ be a subset of cells of $\mathcal{Q}$. Then $\mathcal{Q'}$ is called a \textit{connected component} of $\mathcal{Q}$ if
$\mathcal{Q'}$ is a polyomino and for any $C\in \mathcal{Q} \setminus \mathcal{Q'}$, we have that $\mathcal{Q'} \cup C$ is not a polyomino, i.e. $\mathcal{Q'}$ is maximal in regard to set inclusion. More specifically, for any polyomino $\mathcal{P}$, if $\mathcal{P'}$ is a subset of cells of $\mathcal{P}$ and $\mathcal{P'}$ is a polyomino, then we call $\mathcal{P'}$ a \textit{subpolyomino} of $\mathcal{P}$, and we notate this containment as $\mathcal{P'} \subseteq \mathcal{P}$.

 For $\mathcal{P}$ a polyomino, we call $\mathcal{P}$ \textit{simple} if for any cells $C$, $D \notin \mathcal{P}$, there exists a walk ${\mathcal{W}: C=C_1, C_2,..., C_n=D}$ such that $C_i \notin \mathcal{P}$ for any $1 \leq i \leq n$. We call $\mathcal{P}$ \textit{non-simple} if $\mathcal{P}$ is not simple. Let $\mathcal{P}$ be a polyomino. We say that a finite collection of cells $\mathcal{H}$ is a \textit{hole} of $\mathcal{P}$ if for any $F \in \mathcal{H}$, we have that $F \notin \mathcal{P}$ and for any other cell $G \in \mathcal{H}$, $F$ and $G$ are connected and $\mathcal{H}$ is maximal with respect to set inclusion.

\begin{figure}[h]
    \centering
    \begin{tikzpicture} [scale = 0.7]
        \draw[step=1cm,white,very thin] (0,0) grid (12,6);
        \polyomino[
        empty cell =x,
        grid,
        p={a}{style={lightgray,draw=black}},
        row sep =;
        ]{
        x x a x x x x x x x a x;
        x a a a x x x a a a a x;
        a a a a a x x a x x a x;
        x a a a x x x a x a x a;
        x x a x x x x a a a a a;
        x x x x x x x x x x x x 
        }

        \begin{scope}[color=black]
            \node[anchor=center] () at (8.5,3.5){$\mathcal{H}_1$};
            \node[anchor=center] () at (10.5,2.5){$\mathcal{H}_2$};
            \node[anchor=center] () at (2.5,0.5){(a)};
            \node[anchor=center] () at (9.5,0.5){(b)};
        \end{scope}
        
    \end{tikzpicture}
    \caption{(a) An example of a simple polyomino (b) An example of a non-simple polyomino with two holes $\mathcal{H}_1$ and $\mathcal{H}_2$}
    \label{fig:non-simple polyo}
\end{figure}

Note by its definition, any hole of a polyomino can be thought of as its own simple polyomino, meaning that we cannot have a hole inside of another hole. We also require any hole $\mathcal{H}$ of a polyomino $\mathcal{P}$ to be finite because in general there are an infinite amount of cells of $\mathbb{N}^2$ that are not in $\mathcal{P}$. We define the \textit{exterior} of $\mathcal{P}$ to be the set {$\ext \mathcal{P} = \{C \subset \mathbb{N}^2 \mid C \text{ is a cell, } C \notin \mathcal{P} \cup \mathcal{H}_1 \cup ... \cup \mathcal{H}_n\}$} for holes $\mathcal{H}_1,...,\mathcal{H}_n$ of $\mathcal{P}$. Correspondingly, we define the \textit{interior} of a polyomino $\mathcal{P}$ to be the set $\intpolyo (\mathcal{P}) = \{C \subset \mathbb{N}^2 \mid C \text{ is a cell and } C \notin \ext (\mathcal{P})\}$. In addition to $\ext \mathcal{P}$ and $\intpolyo \mathcal{P}$, we now define some other useful sets associated with polyominoes. Let $\mathcal{P}$ be a polyomino of rank $n$, and let $C_1,..., C_n$ be the cells of $\mathcal{P}$. We define $V(\mathcal{P}) = \bigcup\limits_{i=1}^{n} V(C_i)$ to be the set of vertices of $\mathcal{P}$ and $E(\mathcal{P}) = \bigcup\limits_{i=1}^{n} E(C_i)$ to be the set of edges of $\mathcal{P}$. Note as any hole $\mathcal{H}$ of a polyomino $\mathcal{P}$ can be thought of as a simple polyomino, then we can extend the notation above to describe the sets $V(\mathcal{H})$ and $E(\mathcal{H})$.

Let $A, B \subset \mathbb{N}^2$ be two cells with lower left corners $a = (i,j)$ and $b = (k,l)$ respectively, such that $a \leq b$. We define the \textit{cell interval} $[A,B]$ to be the set of all cells $C \subset \mathbb{N}^2$ such that if $c$ is the lower left corner of $C$ then we have that $a \leq c \leq b$. The \textit{width} of $[A,B]$ is defined to be $k-i+1$ and the \textit{height} of $[A,B]$ is defined to be $l-j+1$, that is to say that they are measured in cells. We say that the cells A and B are in \textit{vertical position} if $i = k$ and in \textit{horizontal position} if $j = l$. By its definition, a cell interval is a polyomino. In this paper we will refer to the class of all cell intervals as the class of \textit{rectangle polyominoes} which we denote by $\mathcal{R}$. We use $\mathcal{R}_{m,n}$ to denote the unique rectangle polyomino with width $m$ and height $n$. Let $\mathcal{P}$ be a polyomino and let $A,B \in \mathcal{P}$ such that $[A,B]$ is a cell interval of $n$ cells in vertical or horizontal position. We call $[A,B]$ a \textit{block of $\mathcal{P}$ of length n} if $C \in \mathcal{P}$ for all cells $C \in [A,B]$. We say that a block $\mathcal{B}$ of $\mathcal{P}$ is \textit{maximal} if there does not exist another block $\mathcal{B}'$ of $\mathcal{P}$ such that $\mathcal{B} \subset \mathcal{B}'$. Let $\mathcal{P}$ a polyomino and let $a, b \in V(\mathcal{P})$ with $a = (i,j)$ and $b = (k,l)$. We say that $[a,b]$ is a \textit{vertical edge interval} of $\mathcal{P}$ if $i = k$ and if the edge {$\{a+(0,i),a+(0,i+1)\}\in E(\mathcal{P})$} for all $i\in\{0,...,l-1\}$. Similarly, we call $[a,b]$ a \textit{horizontal edge interval} of $\mathcal{P}$ if $j = l$ and {$\{a+(i,0),a+(i+1,0)\}\in E(\mathcal{P})$ for all $i\in\{0,...,k-1\}$}. We call a vertical edge interval $[a,b]$ \textit{maximal} if $\{a - (0,1),a\},\{b, b + (0,1)\} \notin E(\mathcal{P})$. Similarly, we call a horizontal edge interval $[a,b]$ \textit{maximal} if $\{a - (1,0),a\},\{b, b + (1,0)\} \notin E(\mathcal{P})$.

\begin{figure}[h]
    \centering
    \begin{tikzpicture} [scale = 0.7]
        \draw[step=1cm,white,very thin] (0,0) grid (5,5);
        \polyomino[
        empty cell =x,
        grid,
        p={a}{style={lightgray,draw=black}},
        row sep =;
        ]{
        x x a x x;
        x a a a x;
        a a x a a;
        x x x x x; 
        x x a x x
        }
        \polyomino[
        empty cell =x,
        grid,
        p={a}{style={gray,draw=black}},
        row sep =;
        ]{
        x x x x x;
        x x x x x;
        x x x x x;
        x a a a x; 
        x x x x x
        }
        
        \draw [red, ultra thick] (1,3)--(4,3);
        \draw [blue, ultra thick] (4,1)--(4,4);
        
        \fill(1,3) circle[radius=2pt] node[above left]{$a$};
        \fill(4,3) circle[radius=2pt] node[above right]{$b$};
        \fill(4,1) circle[radius=2pt] node[below right]{$u$};
        \fill(4,4) circle[radius=2pt] node[above right]{$v$};

        \begin{scope}[color=black]
            \node[anchor=center] () at (2.5,1.5){$\mathcal{B}$};
        \end{scope}
        
    \end{tikzpicture}
    \caption{A polyomino with a maximal block $\mathcal{B}$ of length 3, a horizontal edge interval $[a,b]$ in red, and a maximal vertical edge interval $[u,v]$ in blue}
    \label{fig:blocks and edge intervals}
\end{figure}

One large class of polyominoes that has been studied is the class of \textit{thin polyominoes}. We say that a polyomino $\mathcal{P}$ is a \textit{thin polyomino} if $\mathcal{R}_{2,2} \nsubseteq \mathcal{P}$, that is, it does not contain the square tetromino as subpolyomino. One subclass of thin polyominoes is the class of grid polyominoes introduced in \textbf{\cite{Mascia_Rinaldo_Romeo:2020}}. Another subclass of thin polyominoes is the class of \textit{closed paths} which were introduced in \textbf{\cite{Cisto_Navarra:2023}} and independently in \textbf{\cite{Mascia_Rinaldo_Romeo:2022}} as \textit{thin cycles}. As we will be investigating a generalization of this class of polyominoes in Section \ref{Section:3}, we fix our terminology now and call these polyominoes closed paths. We give their formal definition as was given in Definition 3.1 of \textbf{\cite{Cisto_Navarra:2023}}.

\begin{definition} \label{Defn: closed path}
    Let $\mathcal{P}$ be a polyomino. We call $\mathcal{P}$ a \textit{closed path} if it is a sequence of cells $A_1,...,A_n,A_{n+1}$, with $n > 5$, such that:
    \begin{itemize}
        \item[(i)] $A_1 = A_{n+1}$,
        \item[(ii)] for all $i\in \{1,...,n\}$, $A_i \cap A_{i+1} = e$ for some common edge $e\in E(A_i) \cap E(A_{i+1})$,
        \item[(iii)] $A_i \neq A_j$ for all $i,j\in\{1,..,n\}$ with $i \neq j$,
        \item[(iv)] for all $i \in \{1,...,n\}$ and for all $j \notin \{i-2, i-1, i, i+1, i+2\}$, $A_i \cap A_j = \emptyset$, given that we define $A_{-2} = A_{n-2}, A_{-1} = A_{n-1}, A_0 = A_n$, and $A_{n+2} = A_2$.
    \end{itemize}
\end{definition}

\begin{figure} [H]
    \centering
    \begin{tikzpicture} [scale = 0.7]
        \draw[step=1cm,white,very thin] (0,0) grid (7,6);
        \polyomino[
        empty cell =x,
        grid,
        p={a}{style={lightgray,draw=black}},
        p={b}{style={lightgray,draw=black}},
        row sep =;
        ]{
        x a a a a x x;
        a a x x a a a;
        a x x x x x a;
        a a x x x a a;
        x a a x x a x;
        x x a a b a x
        }
        
    \end{tikzpicture}
    \caption{An example of a closed path}
    \label{fig:closed path}
\end{figure}

\subsection{The polyomino ideal} \label{Subsection:2.2}

We now give Qureshi's construction from \textbf{\cite{Qureshi:2012}} that attaches a binomial ideal to a polyomino. Let $\mathcal{P}$ be a polyomino and fix $K$ to be a field. We define the ring $S$ to be the polynomial ring $K[x_a \mid a \in V(\mathcal{P})]$. Let $a,b \in V(\mathcal{P})$ with $a < b$. We call the proper interval $[a,b]$ an \textit{inner interval} if the polyomino attached to $[a,b]$ is a subpolyomino of $\mathcal{P}$. For any proper interval $[a,b] \subset \mathbb{N}^2$ with anti-diagonal corners $c,d \in \mathbb{N}^2$, we can associate with it the binomial $x_ax_b-x_cx_d$.

\begin{definition}
    Let $\mathcal{P}$ be a polyomino, let $[a,b]$ be an inner interval of $\mathcal{P}$ and let $c,d \in V(\mathcal{P})$ be the anti-diagonal corners of $[a,b]$. We say that the binomial $x_ax_b-x_cx_d \in S$ is an \textit{inner 2-minor} of $\mathcal{P}$ and we call the ideal $I_{\mathcal{P}} \subset S$ generated by all the inner 2-minors of $\mathcal{P}$ the \textit{polyomino ideal of $\mathcal{P}$}.
\end{definition}

We note that the polyomino ideal of $\mathcal{P}$ naturally induces a quotient ring $K[\mathcal{P}] = S/I_{\mathcal{P}}$ which we will call the \textit{coordinate ring of $\mathcal{P}$}.

\subsection{The toric ideal of a polyomino and prime polyominoes} \label{Subsection:2.3} We now define a certain toric ideal $J_{\mathcal{P}}$ that we can associate with any polyomino $\mathcal{P}$ in the specific way that it contains the polyomino ideal, that is $I_\mathcal{P} \subseteq J_{\mathcal{P}}$. As any toric ideal is prime by its definition (see Theorem 1.6 in \textbf{\cite{Bigatti_Robbiano:2001}}), we can assert the primality of any polyomino ideal $I_\mathcal{P}$ by showing that its containment in $J_{\mathcal{P}}$ is actually an equality, that is $I_\mathcal{P}=J_{\mathcal{P}}$. Though Qureshi first described such a toric ideal in \textbf{\cite{Qureshi:2012}} as a graph-theoretic ideal of an edge ring associated to a polyomino, we follow Mascia, Rinaldo, and Romeo's alternate construction of such a toric ideal as defined in \textbf{\cite{Mascia_Rinaldo_Romeo:2020}}.

Let $\mathcal{P}$ be a polyomino and fix some field $K$. Let $S =K[x_a \mid a \in V(\mathcal{P})]$ and let $I_{\mathcal{P}} \subset S$ be the polyomino ideal of $\mathcal{P}$. We let $a,b \in V(\mathcal{P})$, with $a = (i,j)$ and $b = (k,l)$, and we consider the total order $\prec$ on $V(\mathcal{P})$ given by $a \prec b$ if $i < k$ or if $i = k$ and $j < l$. For any hole $\mathcal{H}$ of $\mathcal{P}$ we define $e\in V(\mathcal{H})$ to be the \textit{lower left corner} of $\mathcal{H}$ if $e \prec f$ for all $f \in V(\mathcal{H})\setminus\{e\}$. Let $\mathcal{H}_1,...,\mathcal{H}_r$ be all the holes of $\mathcal{P}$ and denote by $e_k = (i_k,j_k)$ the lower left corner of the hole $\mathcal{H}_k$ for $k\in\ L = \{1,...,r\}$. We define the set $W_k = \{a\in V(\mathcal{P}) \mid a\leq e_k\}$ where $\leq$ is the product order on $\mathbb{N}^2$ as defined previously. Let $\{H_i\}_{i\in I}$ and $\{V_j\}_{j\in J}$ be the set of all maximal horizontal edge intervals and the set of all maximal vertical edge intervals of $\mathcal{P}$, respectively, for some index sets $I$ and $J$. Let $\{h_i\}_{i\in I}$,$\{v_j\}_{j\in J}$, and $\{w_k\}_{k\in L}$ be three sets of variables associated to the sets $\{H_i\}_{i\in I}$, $\{V_j\}_{j\in J}$, and $\{W_k\}_{k\in L}$, respectively.
We define the map:
\begin{eqnarray*}
    \alpha: V(\mathcal{P}) & \rightarrow & K[\{h_i,v_j,w_k\}\mid i\in I, j\in J,k\in L] \\
            a & \mapsto & \prod_{a\in H_i\cap V_j}h_iv_j \prod_{a\in W_k}w_k
\end{eqnarray*}
We define $T_{\mathcal{P}}$ to be the \textit{toric ring of $\mathcal{P}$}, where 
\[T_{\mathcal{P}} = K[\alpha(a) \mid a\in V(\mathcal{P})] \subset K[\{h_i,v_j,w_k\}\mid i\in I, j\in J,k\in L]
\]

Let $\varphi$ be the ring homomorphism defined by:
\begin{eqnarray*}
    \varphi: S & \rightarrow & T_{\mathcal{P}}\\
             x_a & \mapsto & \alpha(a)
\end{eqnarray*}
Note that $\varphi$ is surjective. We define $J_{\mathcal{P}} = \ker \varphi$ to be the \textit{toric ideal of $\mathcal{P}$}.

\begin{definition}
   Let $\mathcal{P}$ be a polyomino, and $I_{\mathcal{P}}$ be its polyomino ideal. We say that $\mathcal{P}$ is \textit{prime} if $I_{\mathcal{P}}$ is prime and we say that $\mathcal{P}$ is \textit{non-prime} if $I_{\mathcal{P}}$ is non-prime. 
\end{definition}

By the above definitions, we can now give the following remark.

\begin{remark}
   If $I_\mathcal{P} = J_\mathcal{P}$ then $I_\mathcal{P}$ is prime. 
\end{remark}

A pivotal result in the characterization of primality for polyominoes was given by Qureshi, Shibuta, and Shikama in \textbf{\cite{Qureshi_Shibuta_Shikama:2017}}, in which they showed that simple polyominoes are prime. As we will be referencing this result for the remainder of the thesis, we state it now.

\begin{theorem}[\textbf{\cite{Qureshi_Shibuta_Shikama:2017}}, Theorem 2.2] \label{Thm:simple polyos are prime}
    Simple polyominoes are prime.
\end{theorem}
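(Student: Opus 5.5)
The statement says that if $\mathcal{P}$ is simple, then $I_{\mathcal{P}}$ is prime. By the Remark above, it suffices to show $I_{\mathcal{P}}=J_{\mathcal{P}}$. Since $\mathcal{P}$ is simple, it has no holes, so $L=\emptyset$ and no variables $w_k$ occur. The map $\alpha$ then sends each vertex $a$ to $h_iv_j$, where $H_i$ and $V_j$ are the unique maximal horizontal and vertical edge intervals through $a$. The inclusion $I_{\mathcal{P}}\subseteq J_{\mathcal{P}}$ is direct. For an inner interval $[a,b]$ with anti-diagonal corners $c,d$, the cell interval lies in $\mathcal{P}$. Hence $a$ and $d$ lie on the same maximal horizontal edge interval, and so do $c$ and $b$. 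Likewise $a,c$ and $d,b$ each share a maximal vertical edge interval. Therefore $\alpha(a)\alpha(b)=\alpha(c)\alpha(d)$.

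For the reverse inclusion, recall that $J_{\mathcal{P}}$ is a toric ideal, hence generated by binomials $f=x_{a_1}\cdots x_{a_m}-x_{b_1}\cdots x_{b_m}$ with $\varphi(f)=0$. I would argue by induction on $m$, together with a secondary measure, that every such binomial lies in $I_{\mathcal{P}}$. The equality $\varphi(f)=0$ means the multisets of maximal horizontal intervals of the $a$'s and of the $b$'s agree, and likewise for vertical intervals. Equivalently, the $a$'s and $b$'s alternate along a closed ``rook cycle'' whose steps run along edge intervals of $\mathcal{P}$. The reduction step is as follows. Choose $a_1$ and a $b_s$ on the same horizontal interval, and a $b_t$ on the vertical interval of $a_1$. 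Suppose the fourth corner $e$ of the rectangle spanned by these points is a vertex, and the rectangle is an inner interval. Then a single inner 2-minor rewrites $x_{b_s}x_{b_t}$ into $x_{a_1}x_e$ modulo $I_{\mathcal{P}}$, so that $x_{a_1}$ cancels and the degree drops. In general I would not expect this to happen directly. Instead I would use a sequence of inner 2-minors that moves vertices along the intervals, i.e.\ along a path of cells, and shrinks the cycle.

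The key geometric input, and the step I expect to be the main obstacle, is showing that the region enclosed by the rook cycle lies entirely in $\mathcal{P}$. This is exactly where simplicity enters. The cycle is made of edges of $\mathcal{P}$, so a cell enclosed by it that is not in $\mathcal{P}$ would belong to a bounded component of the complement, that is, to a hole. Simple polyominoes have no holes. Once the enclosed region is known to consist of cells of $\mathcal{P}$, I would tile it by inner intervals. I would take an extremal corner of the cycle, for example the one that is smallest in the order $\prec$. The rectangle at that corner is then inner, and one 2-minor either removes a corner of the cycle or decreases the enclosed area.

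Inducting on the pair (degree, enclosed area) lexicographically gives $f\in I_{\mathcal{P}}$. A cleaner alternative, if the bookkeeping becomes messy, is to use a Gröbner basis argument. One would show that the inner 2-minors form a quadratic Gröbner basis with respect to a suitable term order. Then, since $I_{\mathcal{P}}$ and $J_{\mathcal{P}}$ agree in degree two, a dimension or Hilbert-function comparison forces equality. The geometric no-hole argument would still be the heart of either approach.
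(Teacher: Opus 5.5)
The paper does not prove this theorem; it quotes it from Qureshi--Shibuta--Shikama. Their argument views $T_{\mathcal{P}}$ (no $w_k$ occur for simple $\mathcal{P}$) as the edge ring of the bipartite graph $G_{\mathcal{P}}$, whose vertices are the maximal edge intervals and whose edges are the vertices of $\mathcal{P}$. They show that $G_{\mathcal{P}}$ is weakly chordal and invoke Ohsugi--Hibi to conclude that $J_{\mathcal{P}}$ is generated by quadrics. Your framework is compatible with this: binomials become alternating rook cycles, and cells enclosed by such a cycle lie in $\mathcal{P}$. However, your reduction step has a genuine gap.

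The claim that at the $\prec$-smallest corner $p$ ``the rectangle is inner'' is false. Take the L-tromino with cells $[(0,0),(1,1)]$, $[(1,0),(2,1)]$, $[(0,1),(1,2)]$. The binomial $f=x_{(0,0)}x_{(2,1)}x_{(1,2)}-x_{(2,0)}x_{(1,1)}x_{(0,2)}$ lies in $J_{\mathcal{P}}$. The cycle neighbours of $p=(0,0)$ are $(2,0)$ and $(0,2)$. The rectangle they span has fourth corner $(2,2)\notin V(\mathcal{P})$ and contains the missing cell $[(1,1),(2,2)]$.

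Falling back on a smaller rectangle at $p$, or on ``moving a vertex along an interval'', does not help. An inner 2-minor can only rewrite $f$ when two of its corners divide the same monomial of $f$; this is the mechanism behind Lemma \ref{Lemma: 2.2}. A rectangle at $p$ whose other corners are not cycle vertices involves only $x_p$. Multiplying by a missing variable $x_e$ would only show $x_e f\in I_{\mathcal{P}}$, which is useless without already knowing primality. So no step you describe actually decreases the enclosed area.

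The missing idea is a \emph{chord}. Write the cycle as $H_1,V_1,\dots,H_m,V_m$ with $a_i\in H_i\cap V_i$ and $b_i\in V_i\cap H_{i+1}$ (indices mod $m$). Then $f=\prod_i x_{a_i}-\prod_i x_{b_i}$ with $m\ge 3$, and you must find $c\in V(\mathcal{P})\cap H_j\cap V_k$ with $k\notin\{j-1,j\}$.

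If the rook path touches itself, the contact point is such a $c$. It is a lattice point on edges of $\mathcal{P}$, and two parallel sides cannot touch because the maximal intervals are distinct.

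Otherwise the path is a simple rectilinear polygon with $2m\ge 6$ corners, so it has a reflex corner. Extending the horizontal side there into the interior runs between enclosed cells, which lie in $\mathcal{P}$ by simplicity. Hence the extension stays in the same maximal interval $H_j$, and the first side it meets is vertical and lies on some $V_k$ other than the two neighbours of $H_j$.

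Now set $f_1=x_{a_j}\cdots x_{a_k}-x_{b_j}\cdots x_{b_{k-1}}x_c$ and $f_2=x_c\,x_{a_{k+1}}\cdots x_{a_{j-1}}-x_{b_k}\cdots x_{b_{j-1}}$, with indices read cyclically. Then
\[
f=x_{a_{k+1}}\cdots x_{a_{j-1}}\,f_1+x_{b_j}\cdots x_{b_{k-1}}\,f_2 ,
\]
where $f_1,f_2$ are binomials of strictly shorter cycles. Induction on length ends at $4$-cycles, which are rectangles bounded by edges of $\mathcal{P}$ and hence inner by simplicity. In the L-tromino the chord is $(1,0)$.

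You should also justify splitting an arbitrary binomial of $J_{\mathcal{P}}$ into cycle binomials, since in general it gives an even closed walk in $G_{\mathcal{P}}$, not a cycle.

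Finally, your Gr\"obner alternative is misdirected. A quadratic Gr\"obner basis of $I_{\mathcal{P}}$ says nothing about $J_{\mathcal{P}}$. Agreement in degree two cannot force equality, because by Lemma \ref{Lemma: ideal containment} the degree-two parts agree for every polyomino, prime or not. What is needed is quadratic generation of $J_{\mathcal{P}}$ itself, which is exactly what the chord argument (or Ohsugi--Hibi) supplies.
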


\subsection{Primality of non-simple polyominoes and the Zig-Zag Walk Conjecture} \label{Subsection:2.4}

As all simple polyominoes are prime, what is left to resolve is the characterization of primality for non-simple polyominoes. Though only non-simple polyominoes can have non-prime ideals, not all non-simple polyominoes are non-prime. In this section we introduce the concept of a \textit{zig-zag walk}, which was first introduced in \textbf{\cite{Mascia_Rinaldo_Romeo:2020}}, where in Corollary 3.6, the authors showed that the existence of a zig-zag walk in a polyomino is a sufficient condition for its polyomino ideal to be non-prime, and hence, the non-existence of a zig-zag walk in a polyomino is a necessary condition for its polyomino ideal to be prime.

\begin{definition} \label{Defn:ZZW}
    Let $\mathcal{P}$ be a polyomino and let $\mathcal{W}: I_1,...,I_n$ be a sequence of distinct inner intervals of $\mathcal{P}$ such that for each interval $I_i$, $i \in \{1,...,n\}$, either $v_i$ and $z_i$ are its diagonal corners and $u_i$ and $v_{i+1}$ are its anti-diagonal corners or vice versa. We call $\mathcal{W}$ a \textit{zig-zag walk} of $\mathcal{P}$ if:
        \begin{itemize}
            \item[(i)] $I_1 \cap I_{n} = \{v_1\} = \{v_{n+1}\}$ and $I_i \cap I_{i+1} = \{v_{i+1}\}$ for any $i \in \{1,...,n-1\}$,
            \item[(ii)] $v_i$ and $v_{i+1}$ are on the same edge interval of $\mathcal{P}$ for all $i \in \{1,...,n\}$,
            \item[(iii)] for all $i,j \in \{1,...,n\}$, with $i \neq j$, there exists no inner interval $J$ of $\mathcal{P}$ such that $z_i,z_j \in J$.
        \end{itemize}
\end{definition}

\begin{figure} [h]
    \centering
    \begin{tikzpicture}
        \draw[step=1cm,white,very thin] (0,0) grid (6,7);
        \polyomino[
        empty cell =x,
        grid,
        p={a}{style={lightgray,draw=black}},
        p={b}{style={gray,draw=black}},
        row sep =;
        ]{
        x x a a x x;
        x a b b b x;
        b b x x a b;
        b b x x a b;
        x a b b b x;
        x x b b b x;
        x x x a x x
        }

        \fill(2,3) circle[radius=2pt] node[above right]{$v_1$};
        \fill(2,5) circle[radius=2pt] node[below right]{$v_2$};
        \fill(5,5) circle[radius=2pt] node[below left]{$v_3$};
        \fill(5,3) circle[radius=2pt] node[above left]{$v_4$};

        \fill(0,5) circle[radius=2pt] node[above left]{$z_1$};
        \fill(5,6) circle[radius=2pt] node[above right]{$z_2$};
        \fill(6,3) circle[radius=2pt] node[below right]{$z_3$};
        \fill(2,1) circle[radius=2pt] node[below left]{$z_4$};

        \fill(0,3) circle[radius=2pt] node[below left]{$u_1$};
        \fill(2,6) circle[radius=2pt] node[above left]{$u_2$};
        \fill(6,5) circle[radius=2pt] node[above right]{$u_3$};
        \fill(5,1) circle[radius=2pt] node[below right]{$u_4$};

         \begin{scope}[color=black]
            \node[anchor=center] () at (0.5,3.5){$I_1$};
            \node[anchor=center] () at (3.5,5.5){$I_2$};
            \node[anchor=center] () at (5.5,3.5){$I_3$};
            \node[anchor=center] () at (3.5,2.5){$I_4$};
            \node[anchor=center] () at (0.5,6.5){$\mathcal{P}$};
        \end{scope}
        
    \end{tikzpicture}
    \caption{An example of a zig-zag walk $\mathcal{W} = I_1,I_2,I_3,I_4$ of $\mathcal{P}$}
    \label{fig:zig-zag walk}
\end{figure}

The reason that the authors chose the name ``zig-zag'' is because if $v_i$ is a diagonal corner of $I_i$ then, by the above definition, $v_{i+1}$ must be an anti-diagonal of the next inner interval $I_{i+1}$ (\textbf{\cite{Mascia_Rinaldo_Romeo:2020}}, Remark 3.3). Thus, this alternating pattern of the placement of each $v_i$ and $z_i$ in adjacent inner intervals makes a ``zig-zag'' pattern. We note also that for any zig-zag walk $\mathcal{W}: I_1,...,I_n$, we must have that $n$ is even.

\begin{conjecture}[\textbf{Zig-Zag Walk Conjecture}] \label{Conj:ZZW}
    Let $\mathcal{P}$ be a polyomino. Then $I_{\mathcal{P}}$ is prime if and only if $\mathcal{P}$ contains no zig-zag walks.
\end{conjecture}

The forward direction of this conjecture was proven as Corollary 3.6 in \textbf{\cite{Mascia_Rinaldo_Romeo:2020}}, but its converse, which would give a sufficient condition for the primality of a polyomino, remains an open problem. The conjecture has, however, been proved for grid polyominoes (\textbf{\cite{Mascia_Rinaldo_Romeo:2020}}), closed path polyominoes (\textbf{\cite{Cisto_Navarra:2023}}), and weakly closed path polyominoes (\textbf{\cite{Cisto_Navarra_Utano:2022}}). As well, in \textbf{\cite{Mascia_Rinaldo_Romeo:2020}}, the authors verified the conjecture computationally using \emph{Macaulay2} for all polyominoes with small enough rank. We give their statement below.

\begin{theorem}[\textbf{\cite{Mascia_Rinaldo_Romeo:2020}},Theorem 3.9] \label{Thm: zigzag conj for under 15}
    Let $\mathcal{P}$ be a polyomino with $|\mathcal{P}| \leq 14$. Then $I_{\mathcal{P}}$ is prime if and only if $\mathcal{P}$ contains no zig-zag walks.
\end{theorem}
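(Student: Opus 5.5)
The forward implication needs no new work: by Corollary 3.6 of \textbf{\cite{Mascia_Rinaldo_Romeo:2020}}, if $\mathcal{P}$ contains a zig-zag walk then $I_{\mathcal{P}}$ is not prime. So I only need to show that every polyomino with $|\mathcal{P}| \leq 14$ and no zig-zag walk is prime. By Theorem \ref{Thm:simple polyos are prime}, every simple polyomino is prime. Hence it suffices to treat the non-simple polyominoes of rank at most $14$. There are finitely many of these up to translation, so I would settle the claim by a finite, computer-assisted verification.

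\textbf{Step 1: reduce the list.} A non-simple polyomino needs at least $8$ cells to enclose a hole; the smallest case is the ring of $8$ cells around a single missing cell. So only ranks $8 \leq |\mathcal{P}| \leq 14$ matter. The polyomino ideal, the toric ideal $J_{\mathcal{P}}$ and the notion of zig-zag walk are all preserved, up to relabelling variables, by translations and by the eight symmetries of the square lattice. I would therefore enumerate free polyominoes only, for example with Redelmeier's algorithm, and keep those that have at least one hole.

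\textbf{Step 2: test each candidate.} For each candidate I compute $I_{\mathcal{P}}$ in \emph{Macaulay2} and decide primality by testing $I_{\mathcal{P}} = J_{\mathcal{P}}$. Since $J_{\mathcal{P}}$ is the lattice ideal associated to $I_{\mathcal{P}}$, I would test this as the saturation equality
\[
  I_{\mathcal{P}} : \Bigl(\prod_{a \in V(\mathcal{P})} x_a\Bigr)^{\infty} = I_{\mathcal{P}},
\]
or alternatively compute the kernel of $\varphi$ directly. Equality gives primality by the Remark above. If equality fails, I run a combinatorial search for a sequence of inner intervals satisfying conditions (i)--(iii) of Definition \ref{Defn:ZZW}. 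This search is finite because the inner intervals of $\mathcal{P}$ are finite in number. The theorem follows once every non-prime candidate is shown to contain a zig-zag walk.

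\textbf{Main obstacle.} I expect the difficulty to be computational rather than conceptual. The number of free polyominoes grows to several hundred thousand at rank $14$, and saturation over more than $30$ variables is expensive. To keep this manageable I would first discard cheaply every candidate in which a zig-zag walk is found, since those are already known to be non-prime. Among the rest I would discard those that fall into classes already known to be prime, such as closed paths \textbf{\cite{Cisto_Navarra:2023}} and grid polyominoes. Saturation would then be run only on the remaining cases, saturating one variable at a time to reduce cost.
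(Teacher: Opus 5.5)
Your plan follows the same route as the cited source. Corollary 3.6 of \cite{Mascia_Rinaldo_Romeo:2020} gives one direction, and simple polyominoes are prime by \cite{Qureshi_Shibuta_Shikama:2017}. The non-simple ones are then checked exhaustively in \emph{Macaulay2}, which is the computer verification the paper cites via \cite{Mascia_Rinaldo_Romeo_sourcecode}; the paper gives no proof of its own. However, Step 1 rests on a false claim that leaves a case out: the smallest non-simple polyomino has $7$ cells, not $8$. Take the $3\times 3$ block of cells in $[(0,0),(3,3)]$ and delete the centre cell $[(1,1),(2,2)]$ and the corner cell $[(2,0),(3,1)]$. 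The seven remaining cells are edge-connected. The centre cell is a hole, because holes are defined through edge-walks in the complement and all four edge-neighbours of the centre still belong to $\mathcal{P}$. The deleted corner meets the centre only in the vertex $(2,1)$. Your range $8\le|\mathcal{P}|\le 14$ therefore never examines this heptomino.

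The omitted case is not vacuous. Two inner intervals that meet in exactly one common corner must lie in opposite quadrants at that corner. In this heptomino that happens only at the four vertices of the hole. Together with condition (ii) and distinctness of the $I_i$, this forces any zig-zag walk to be the cycle of the four cells $[(1,0),(2,1)]$, $[(2,1),(3,2)]$, $[(1,2),(2,3)]$, $[(0,1),(1,2)]$.
\begin{itemize}
\item In one orientation the $z_i$ are $(2,0),(3,2),(1,3),(0,1)$, and $(2,0),(0,1)$ both lie in the inner interval $[(0,0),(2,1)]$.
\item In the other orientation they are $(0,2),(2,3),(3,1),(1,0)$, and $(0,2),(1,0)$ both lie in $[(0,0),(1,3)]$.
\end{itemize}
So condition (iii) fails in both orientations, the heptomino has no zig-zag walk, and the theorem asserts that it is prime, which is exactly what your check never tests. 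The repair is easy: enumerate all free polyominoes of rank at most $14$ and filter for holes, rather than hard-coding a lower bound. As written, though, the finite verification is incomplete.

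Two smaller points. First, the identification $J_{\mathcal P}=I_{\mathcal P}:(\prod_a x_a)^\infty$ is what you need for ``equality implies prime.'' It is true but nontrivial: it says $J_{\mathcal P}$ is the lattice ideal of the lattice spanned by the inner $2$-minors, and it should be cited rather than asserted. Second, closed paths and grid polyominoes are not classes known to be prime. Only the zig-zag characterization is known for them, and it can be used only after your first filter has removed every candidate with a zig-zag walk.
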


The full description of the algorithm used to prove the above theorem can be found along with its source code in \textbf{\cite{Mascia_Rinaldo_Romeo_sourcecode}}. Since we will be constructing a generalization of closed paths in the next chapter, we state now for future reference the result of the Zig-Zag Walk Conjecture holding for closed paths, which was given as Theorem 6.2 in \textbf{\cite{Cisto_Navarra:2023}}.

\begin{theorem} [\textbf{\cite{Cisto_Navarra:2023}}, Theorem 6.2] \label{Thm: ZZWC for closed paths}
    Let $\mathcal{P}$ be a closed path, then $I_\mathcal{P}$ is prime if and only if $\mathcal{P}$ contains no zig-zag walks.
\end{theorem}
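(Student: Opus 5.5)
The forward implication needs no new work: by Corollary~3.6 of \textbf{\cite{Mascia_Rinaldo_Romeo:2020}}, any polyomino containing a zig-zag walk is non-prime. So if $I_\mathcal{P}$ is prime, the closed path $\mathcal{P}$ has no zig-zag walks. The real content is the converse, and for it I would follow a combinatorial-then-algebraic strategy.

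\emph{Step 1: turn the absence of zig-zag walks into a local geometric configuration.} A closed path has exactly one hole, and every inner interval is a block, i.e.\ a horizontal or vertical run of cells. Using Definition~\ref{Defn: closed path}, decompose $\mathcal{P}$ into its maximal blocks, which meet consecutively at corner cells. I would then show that if $\mathcal{P}$ has no zig-zag walk, it must contain a specific local shape. The candidates I have in mind are an ``L-configuration'' (two consecutive turns in the same rotational sense, forming a thick corner) or a ``ladder'' of at least three steps (a staircase run of parallel blocks). The proof would be by contrapositive. If neither shape occurs, every turn of the path alternates in a way that lets one pick, at each maximal block, an inner interval whose free corner $z_i$ sits on the outer side. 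Walking around the hole then produces intervals $I_1,\dots,I_n$ satisfying (i)--(iii) of Definition~\ref{Defn:ZZW}. Condition (iii) follows from thinness: any inner interval lies in a single block, so two $z_i$ on different blocks cannot share one.

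\emph{Step 2: prove primality given such a configuration.} I would localize at a carefully chosen variable $x_a$, where $a$ is a vertex at the L-configuration or ladder. The plan has two parts. First, show $I_\mathcal{P}$ is saturated with respect to $x_a$, i.e.\ $I_\mathcal{P} : x_a^\infty = I_\mathcal{P}$. This follows if $x_a$ is not a zero divisor, which I would prove by exhibiting a monomial order (a suitable lexicographic order adapted to the path) under which the inner 2-minors form a Gröbner basis and $x_a$ divides no leading term. Second, show that $(I_\mathcal{P})_{x_a}$ is isomorphic to a localization of the polyomino ideal of a polyomino obtained by ``cutting'' the cycle at that configuration. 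Via the binomial relations at $a$, the variables at a cell of the configuration can be expressed through the others, so removing that cell yields a simple (path-like) polyomino. That ideal is prime by Theorem~\ref{Thm:simple polyos are prime}. Since a localization of a prime ideal is prime and $I_\mathcal{P}$ is saturated, $I_\mathcal{P}$ is prime.

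\emph{Main obstacle.} I expect the Gröbner basis computation in Step 2 to be hardest, specifically verifying that all S-pairs of inner 2-minors reduce to zero around the full cycle. The troublesome S-pairs are those from intervals in adjacent or next-adjacent blocks near the turns. I would first choose the order so that leading terms are the anti-diagonal products $x_cx_d$ on most blocks, switching convention only at the distinguished configuration. I would then treat the S-pairs case by case according to the relative position of the two intervals, using condition~(iv) of Definition~\ref{Defn: closed path} to limit interactions to blocks at most two apart.
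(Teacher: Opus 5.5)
Your skeleton is the one the paper relies on: the forward direction from Corollary~3.6 of \cite{Mascia_Rinaldo_Romeo:2020}, and the converse via Proposition~\ref{Proposition 6.1} followed by primality for each configuration. However, two of your steps fail as written.

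First, an L-configuration is not ``two consecutive turns in the same rotational sense''. It is a single turn whose two arms are orthogonal blocks $A_1,A_2,A_3$ and $A_3,A_4,A_5$, each of at least three cells. Under your reading the dichotomy is empty. The centre line of a closed path is a simple closed rectilinear curve with total turning $\pm 360^\circ$, so the turns cannot alternate in sign all the way round. Hence two consecutive same-sense turns occur in \emph{every} closed path. Step~2 would then prove every closed path prime, which contradicts the forward direction you quote, since closed paths with zig-zag walks exist. Your argument for condition (iii) is also too quick: maximal blocks overlap in corner cells, so vertices on ``different blocks'' can lie in a common cell and hence in a common inner interval.

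Second, the cut in your localization step does not exist as described. Consider first a corner cell $A$.
\begin{itemize}
\item A corner cell is the only kind of cell owning a vertex $c$ (the outer corner of the turn) that lies in no other cell of $\mathcal{P}$.
\item Inverting the variable $x_a$ of the inner corner $a$ does eliminate $x_c$ and gives $K[\mathcal{P}]_{x_a}\cong K[\mathcal{P}\setminus\{A\}]_{x_a}$.
\item But both free edges of $A$ face the outer side of the turn, while the region on the inner side meets $A$ only at $a$. So $\mathcal{P}\setminus\{A\}$ still has the hole, and Theorem~\ref{Thm:simple polyos are prime} does not apply.
\end{itemize}
Now consider a non-corner cell $B$ of a block.
\begin{itemize}
\item $B$ does touch both the hole and the exterior, so removing it makes the polyomino simple.
\item However, all four vertices of $B$ survive in $\mathcal{P}\setminus\{B\}$, so no variable is eliminated.
\item The minor of $B$ is not in the prime ideal $I_{\mathcal{P}\setminus\{B\}}$: the top and bottom edges of $B$ disappear, so its two monomials have different toric images.
\item Hence $K[\mathcal{P}]_{x_a}$ is a proper quotient of the domain $K[\mathcal{P}\setminus\{B\}]_{x_a}$, of strictly smaller dimension, and is not isomorphic to it.
\end{itemize}
So a correct reduction needs a considerably more elaborate substitution.

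Even with that repaired, the non-zero-divisor statement, which carries all the weight, is left without a mechanism. Nothing in the proposal says how an L-configuration or ladder makes the S-pairs reduce to zero, or keeps $x_a$ out of $\mathrm{in}_<(I_{\mathcal{P}})$. Nor does it show that your block-by-block choice of leading terms is realized by a single monomial order around the cycle.

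The argument the paper cites (reproduced in dilated form in Theorems~\ref{Thm: 3.2.1} and~\ref{Thm: 3.2.2}) needs neither Gr\"obner bases nor localization. It uses a toric map with an extra variable $w$ on $V(A)$ for the corner cell $A$ of the L-configuration, or on the set $L$ for a ladder. It then shows that quadratic binomials of $J_{\mathcal{P}}$ are inner minors, and that every irredundant binomial is quadratic, via Lemma~\ref{Lemma: 2.2} and a case analysis. There the ``cut'' is achieved by observing that the cells avoiding the $w$-vertices form a simple polyomino, so Theorem~\ref{Thm:simple polyos are prime} disposes of binomials not involving $w$.
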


\section{Dilated closed paths} \label{Section:3}

Our goal in this section is to construct a novel class of polyominoes which we name the \textit{dilated closed paths}. This new class will generalize the class of closed paths (see Definition \ref{Defn: closed path}). We recognize here that the class of closed paths has already been generalized in \textbf{\cite{Cisto_Navarra_Utano:2022}} in which the authors investigated weakly closed paths, which were constructed through a certain localized change to the geometry of a closed path. In our generalization, we will instead describe a certain global change to the geometry of a closed path that we will call a \textit{dilation}. In order to do this, we first define a novel product on polyominoes.

We use a modification of Battaglino's construction in \textbf{\cite{Battaglino:2014}} to attach a binary matrix to any polyomino. Let $\mathcal{P}$ be a polyomino and let $s=(k,\ell) \in \mathbb{N}^2$ such that {$s = \sup V(\mathcal{P})$}, that is, $s$ is the supremum of $V(\mathcal{P})$. We define the matrix $M_{\mathcal{P}}\in \mathcal{M}_{\ell \times k}(\mathbb{Z}_2)$ by:

    \[[M_{\mathcal{P}}]_{i,j} = 
        \begin{cases} 
        1 \text{ if $[(j-1,\ell-i),(j,\ell-i+1)]$ is a cell of $\mathcal{P}$} \\
        0 \text{ otherwise}
        \end{cases} \]

We say that $M_{\mathcal{P}}$ is the \textit{binary matrix associated with $\mathcal{P}$}.

\begin{figure} [h] 
    \centering   
    \begin{tikzpicture}
        \draw[step=1cm,black,very thin, dashed] (0,0) grid (4.9,5.49);
        \draw[->, ultra thick](0,-0.5)--(0,5.5);
        \draw[->, ultra thick](-0.5,0)--(5,0);
    
        \polyomino[
        empty cell =x,
        grid,
        p={a}{style={lightgray,draw=black}},
        row sep =;
        ]{
        x x a x x x x x;
        x a a a x x x x;
        x a x a x x x x;
        x x a a x x x x;
        x x a x x x x x
        }
        \begin{scope}[color=black]
            \node[anchor=center] () at (7.5,2.5){ 
            \huge {$\begin{bmatrix}
            0 & 0 & 1 & 0\\
            0 & 1 & 1 & 1\\
            0 & 1 & 0 & 1\\
            0 & 0 & 1 & 1\\
            0 & 0 & 1 & 0\\
              \end{bmatrix}$}};
        \end{scope}

        \fill(1,0) circle[radius=2pt] node[above left]{\small $r$};
        \fill(4,5) circle[radius=2pt] node[above right]{\small $s$};
        \fill(1,4) circle[radius=0pt] node[above left]{$\mathcal{P}$};

        \fill(0,0) circle[radius=0pt] node[below left]{\small 0};
        \fill(1,0) circle[radius=0pt] node[below]{\small 1};
        \fill(2,0) circle[radius=0pt] node[below]{\small 2};
        \fill(3,0) circle[radius=0pt] node[below]{\small 3};
        \fill(4,0) circle[radius=0pt] node[below]{\small 4};

        \fill(0,1) circle[radius=0pt] node[left]{\small 1};
        \fill(0,2) circle[radius=0pt] node[left]{\small 2};
        \fill(0,3) circle[radius=0pt] node[left]{\small 3};
        \fill(0,4) circle[radius=0pt] node[left]{\small 4};
        \fill(0,5) circle[radius=0pt] node[left]{\small 5};
        
    \end{tikzpicture}
    \caption{A polyomino $\mathcal{P}$ in $\mathbb{N}^2$ alongside its associated binary matrix with $r = \inf V(\mathcal{P})$ and $s = \sup V(\mathcal{P})$}
    \label{Fig:3.1}
\end{figure}

In Figure \ref{Fig:3.1}, $s = (4,5)$ is the supremum of $V(\mathcal{P})$. For any polyomino $\mathcal{P}$, we can correspondingly define the infimum of $V(\mathcal{P})$; in Figure \ref{Fig:3.1}, we have $\inf V(\mathcal{P}) = (1,0)$. Note that for a given polyomino $\mathcal{P}$, its binary matrix $M_{\mathcal{P}}$ cannot have a row (resp. column) of zeroes in its last row (resp. column). However, we allow $M_{\mathcal{P}}$ to contain rows (resp. columns) of zeroes as long as they are lower (resp. lefter) then $\inf V(\mathcal{P})$. That is, if $\inf V(\mathcal{P}) = (e,f)$ and $\sup V(\mathcal{P}) = (k, \ell)$, then for $i > \ell - f$, the $i$th row of $M_\mathcal{P}$ will be a row of zeroes and for $j \leq e $, the $j$th column of $M_\mathcal{P}$ will be a column of zeroes. We say that a polyomino $\mathcal{P}$ is in \textit{minimal position} if $\inf V(\mathcal{P}) = (0,0)$. Note that every polyomino $\mathcal{P}$ in $\mathbb{N}^2$ can be put in minimal position by translating $\mathcal{P}$ such that it lies in $\mathbb{N}^2$ in the unique way to make it contact both coordinate axes, that is, the intersection of $E(\mathcal{P})$ with each coordinate axis is nonempty. This position of $\mathcal{P}$ forces $\inf V(\mathcal{P}) = (0,0)$. Note that if a polyomino $\mathcal{P}$ is in minimal position, then the dimensions of $M_\mathcal{P}$ will be the same as the dimensions of the minimal bounding rectangle of $\mathcal{P}$ and as such it will have no rows or columns of zeroes. Given a polyomino $\mathcal{P}$, if we let $\mathcal{P}'$ denote the polyomino that represents $\mathcal{P}$ in minimal position, then $M_{\mathcal{P}'}$ is the submatrix of $M_\mathcal{P}$ obtained by deleting all rows and columns of zeroes in $M_\mathcal{P}$. Thus, we emphasize here that although we have so far been considering polyominoes in $\mathbb{N}^2$ equivalent up to translation, in this section we make a distinction between two polyominoes if one is a nontrivial translation of the other in $\mathbb{N}^2$ as they will have distinct associated binary matrices. This distinction will give us more flexibility of usage in the product we are to define below. If for two polyominoes $\mathcal{P}$ and $\mathcal{Q}$ it is the case that $M_\mathcal{P} = M_\mathcal{Q}$, then we write $\mathcal{P} = \mathcal{Q}$.

We briefly note here that one could easily extend the above definition to have binary matrices associated with polykings or, even further, with any finite collection of cells in $\mathbb{N}^2$. Therefore, we also state here that if we have two collections of cells $\mathcal{C}$ and $\mathcal{D}$ for which $M_\mathcal{C} = M_\mathcal{D}$, then we say that $\mathcal{C} = \mathcal{D}$.

We now recall the Kronecker product of two matrices. The following definition appears in \textbf{\cite{Henderson_Pukelsheim_Searle:1983}}, and we acknowledge here, as in that paper, that this product was likely first described by Georg Zehfuss. Let $A$ be an $m\times n$ matrix and $B$ be a $p\times q$ matrix. We say the \textit{Kronecker product} of $A$ and $B$, which we denote $A\otimes B$, is the following $mp \times nq$ block matrix:
\[
   A\otimes B = 
       \begin{bmatrix}
       a_{11}B & \dots & a_{1n}B\\
       \vdots  & \ddots & \vdots \\
       a_{n1}B & ... & a_{mn}B\\
       \end{bmatrix}
\]

An explicit formula for the entries of $A \otimes B$ is given by:

\[
    [A \otimes B]_{i,j} = [A]_{\left\lceil \frac{i}{p} \right\rceil, \left\lceil \frac{j}{q} \right\rceil}[B]_{((i-1) \% p)+1,((j-1) \% q)+1}
\]
Here $\%$ represents the remainder function. Modifying the above formula gives another useful formula that does not depend on the remainder function:

\[
 [A \otimes B]_{p(r-1)+u,q(s-1)+v} = [A]_{r,s}[B]_{u,v}
\]

\begin{definition} \label{Defn:tensor product of polyos}
   Let $\mathcal{P}$ and $\mathcal{Q}$ be two polyominoes with $\mathcal{Q}$ in minimal position. Let $M_{\mathcal{P}}$ and $M_{\mathcal{Q}}$ be the binary matrices associated with the $\mathcal{P}$ and $\mathcal{Q}$, respectively. We say that the unique collection of cells with associated binary matrix $M_{\mathcal{P}} \otimes M_{\mathcal{Q}}$ is the \textit{tensor product of polyominoes} $\mathcal{P}$ and $\mathcal{Q}$ and we denote this by $\mathcal{P} \otimes \mathcal{Q}$.
\end{definition}

In Figure \ref{Fig:First tensor product} we give an example of a tensor product of two polyominoes of rank 4, with both polyominoes translated to be in minimal position.

\begin{figure} [H]
    \centering
    \begin{tikzpicture} [scale = 0.8]
        \draw[step=1cm,white,very thin] (0,0) grid (8,10);
        \polyomino[
        empty cell =x,
        grid,
        p={a}{style={lightgray,draw=black}},
        row sep =;
        ]{
        x x x x x a x x;
        x a a x x a a x;
        a a x x x a x x;
        x x x x x x x x;
        x x x x a x a x;
        x x x x a a a a;
        x x x x a x a x;
        x x a x a x x x;
        x x a a a a x x;
        x x a x a x x x
        }

        \begin{scope}[color=black]
            \node[anchor=center] () at (4,8.5){\huge{$\otimes$}};
            \node[anchor=center] () at (8,8.5){\huge{$=$}};
        \end{scope}

        \fill(1,9) circle[radius=0pt] node[above left]{$\mathcal{P}$};
        \fill(5,10) circle[radius=0pt] node[above left]{$\mathcal{Q}$};
        \fill(2,3) circle[radius=0pt] node[above left]{$\mathcal{P} \otimes \mathcal{Q}$};

        \fill(0,7) circle[radius=2pt] node[below left]{$(0,0)$};
        \fill(5,7) circle[radius=2pt] node[below left]{$(0,0)$};
        \fill(2,0) circle[radius=2pt] node[below left]{$(0,0)$};
        
    \end{tikzpicture}
    \caption{The tensor product $\mathcal{P} \otimes \mathcal{Q}$ of two polyominoes $\mathcal{P}$ and $\mathcal{Q}$}
    \label{Fig:First tensor product}
\end{figure}

Note that in Figure \ref{Fig:First tensor product}, we have that $M_{\mathcal{P}} = 
\begin{bmatrix} 
0 & 1 & 1 \\
1 & 1 & 0 \\
\end{bmatrix}$ and $M_{\mathcal{Q}} =
\begin{bmatrix}
    1 & 0\\
    1 & 1\\
    1 & 0\\
\end{bmatrix}$ and so
\[M_{\mathcal{P} \otimes \mathcal{Q}} = M_{\mathcal{P}} \otimes M_{\mathcal{Q}} = 
\begin{bmatrix}
    0 & 0 & 1 & 0 & 1 & 0\\
    0 & 0 & 1 & 1 & 1 & 1\\
    0 & 0 & 1 & 0 & 1 & 0\\
    1 & 0 & 1 & 0 & 0 & 0\\
    1 & 1 & 1 & 1 & 0 & 0\\
    1 & 0 & 1 & 0 & 0 & 0\\
\end{bmatrix}
\]

In general, the tensor product of polyominoes is non-commutative. Additionally, the tensor product of two polyominoes does not always result in another polyomino. Though in Figure \ref{Fig:First tensor product} the tensor product $\mathcal{P}\otimes \mathcal{Q}$ results in a polyomino, with those same polyominoes, the tensor product $\mathcal{Q}\otimes \mathcal{P}$ results in a polyking as shown in Figure \ref{Fig:tensor gives polyking}.

\begin{figure} [h] 
    \centering
    \begin{tikzpicture} [scale = 0.8]
        \draw[step=1cm,white,very thin] (0,0) grid (6,6);
        \polyomino[
        empty cell =x,
        grid,
        p={a}{style={lightgray,draw=black}},
        row sep =;
        ]{
        x a a x x x;
        a a x x x x;
        x a a x a a;
        a a x a a x;
        x a a x x x;
        a a x x x x
        }

        \fill(0,0) circle[radius=2pt] node[below left]{$(0,0)$};
        
    \end{tikzpicture}
    \caption{The tensor product $\mathcal{Q}\otimes \mathcal{P}$ of the polyominoes $\mathcal{P}$ and $\mathcal{Q}$ from Figure \ref{Fig:First tensor product}}
    \label{Fig:tensor gives polyking}
\end{figure}

\begin{lemma} \label{Lemma: 3.1.1}
Let $\mathcal{P}$ and $\mathcal{Q}$ be polyominoes, then $|\mathcal{P} \otimes \mathcal{Q}| = |\mathcal{P}||\mathcal{Q}|$.
\end{lemma}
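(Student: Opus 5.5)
The plan is to count cells by counting the $1$-entries of the associated binary matrices. By the definition of $M_{\mathcal{P}}$, the entry $[M_{\mathcal{P}}]_{i,j}$ equals $1$ exactly when the cell with lower left corner $(j-1,\ell-i)$ belongs to $\mathcal{P}$. Distinct index pairs $(i,j)$ correspond to distinct cells. Also, every cell of $\mathcal{P}$ has its upper right corner $\leq s=\sup V(\mathcal{P})$ and lies in $\mathbb{N}^2$, so every cell of $\mathcal{P}$ appears as some entry. Hence $|\mathcal{P}|$ equals the number of $1$-entries of $M_{\mathcal{P}}$, i.e. the sum of its entries viewed as integers. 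The same reasoning applies to any finite collection of cells, as noted after the definition of binary matrices. In particular it applies to $\mathcal{P}\otimes\mathcal{Q}$, which by Definition \ref{Defn:tensor product of polyos} is the collection of cells whose binary matrix is $M_{\mathcal{P}}\otimes M_{\mathcal{Q}}$. So $|\mathcal{P}\otimes\mathcal{Q}|$ is the number of $1$-entries of $M_{\mathcal{P}}\otimes M_{\mathcal{Q}}$.

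Next I count those entries using the indexing formula
\[
[A \otimes B]_{p(r-1)+u,\,q(s-1)+v} = [A]_{r,s}[B]_{u,v},
\]
with $A=M_{\mathcal{P}}$ of size $\ell\times k$ and $B=M_{\mathcal{Q}}$ of size $p\times q$. The map $((r,s),(u,v))\mapsto (p(r-1)+u,\,q(s-1)+v)$ is a bijection from pairs of index positions onto index positions of $A\otimes B$, by division with remainder since $1\le u\le p$ and $1\le v\le q$. Entries are in $\{0,1\}$, and a product of two such entries is $1$ iff both factors are $1$. Therefore the $1$-entries of $A\otimes B$ are in bijection with pairs consisting of a $1$-entry of $A$ and a $1$-entry of $B$. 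This gives $|\mathcal{P}\otimes\mathcal{Q}|=|\mathcal{P}||\mathcal{Q}|$.

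The only point needing care is that the matrices are over $\mathbb{Z}_2$, so ``sum of entries'' must be computed in $\mathbb{Z}$ rather than in $\mathbb{Z}_2$. I will phrase everything as counting $1$-entries, which sidesteps this. I should also check that the collection with matrix $A\otimes B$ is well defined: the ``unique collection'' in the definition is just the set of cells read off from the $1$-entries. Beyond this bookkeeping there is no real obstacle.
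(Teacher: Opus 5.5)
Your proof is correct and follows essentially the same route as the paper: you identify $|\mathcal{P}|$, $|\mathcal{Q}|$ and $|\mathcal{P}\otimes\mathcal{Q}|$ with the number of $1$-entries of the corresponding binary matrices, and then count the $1$-entries of $M_{\mathcal{P}}\otimes M_{\mathcal{Q}}$. The paper does this count blockwise (each $1$-entry of $M_{\mathcal{P}}$ gives a copy of $M_{\mathcal{Q}}$ containing $|\mathcal{Q}|$ ones, and each $0$-entry gives a zero block), whereas you count entry by entry through the index bijection; this is the same argument at a finer level of detail.
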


\begin{proof}
Let $M_{\mathcal{P}}$ and $M_{\mathcal{Q}}$ be the binary matrices associated with $\mathcal{P}$ and $\mathcal{Q}$, respectively.
By its construction, the number of nonzero entries in $M_{\mathcal{P}}$ (resp. $M_{\mathcal{Q}}$) is precisely $|\mathcal{P}|$ (resp. $|\mathcal{Q}|$).
Let $M_{\mathcal{P} \otimes \mathcal{Q}}$ be the binary matrix associated with ${\mathcal{P} \otimes \mathcal{Q}}$. Therefore, by Definition \ref{Defn:tensor product of polyos},  $M_{\mathcal{P} \otimes \mathcal{Q}} = M_{\mathcal{P}}\otimes M_{\mathcal{Q}}$. We see that every block $m_{ij}M_{\mathcal{Q}}$ of $M_{\mathcal{P} \otimes \mathcal{Q}}$ is either of the form $0 \cdot M_{\mathcal{Q}}$ or $1 \cdot M_{\mathcal{Q}}$. Note that none of the blocks of the form $0 \cdot M_{\mathcal{Q}}$ have any nonzero entries. Therefore, the number of nonzero entries in $M_{\mathcal{P} \otimes \mathcal{Q}}$, will be equal to the number of blocks of the form $1 \cdot M_{\mathcal{Q}}$ multiplied by $|\mathcal{Q}|$, however there are precisely $|\mathcal{P}|$ such blocks. So the the number of nonzero entries in $M_{\mathcal{P} \otimes \mathcal{Q}}$ is $|\mathcal{P}||\mathcal{Q}|$, and thus $|\mathcal{P} \otimes \mathcal{Q}| = |\mathcal{P}||\mathcal{Q}|$.
\end{proof}

So intuitively the tensor product of two arbitrary polyominoes $\mathcal{P} \otimes \mathcal{Q}$ can be constructed by replacing every cell in $\mathcal{P}$ with a copy of $\mathcal{Q}$. For this reason we refer to $\mathcal{P}$ as the \textit{underlying polyomino} of the tensor product $\mathcal{P} \otimes \mathcal{Q}$. We now prove that tensor products preserve subsets of cells.

\begin{lemma} \label{Lemma:Tensor products preserve subpolyominoes}
   Let $\mathcal{P}$ and $\mathcal{Q}$ be polyominoes and let $\mathcal{P}' \subseteq \mathcal{P}$ be a subpolyomino of $\mathcal{P}$. Then the collection of cells $\mathcal{P}' \otimes \mathcal{Q}$ is contained in $\mathcal{P}\otimes \mathcal{Q}$.   
\end{lemma}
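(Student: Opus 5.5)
The statement is a direct entrywise comparison of binary matrices, and I will argue it at that level. The one subtlety is sizes. $\mathcal{P}'$ sits inside $\mathcal{P}$ in $\mathbb{N}^2$ without being translated, but $\sup V(\mathcal{P}')$ may be strictly smaller than $\sup V(\mathcal{P})$. In that case $M_{\mathcal{P}'}$ is smaller than $M_{\mathcal{P}}$, and the row index convention (rows counted from the top, $\ell - i$) shifts between the two matrices.

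So the first step is to fix a common frame. Let $s=(k,\ell)=\sup V(\mathcal{P})$ and $s'=(k',\ell')=\sup V(\mathcal{P}')$, so $k'\le k$ and $\ell'\le \ell$. From the definition of $M_{\mathcal{P}'}$, the cell with lower left corner $(j-1,\ell'-i)$ corresponds to entry $(i,j)$ of $M_{\mathcal{P}'}$. The same cell corresponds to entry $(i+(\ell-\ell'),j)$ of $M_{\mathcal{P}}$. Thus $M_{\mathcal{P}'}$ embeds as the submatrix of an $\ell\times k$ array occupying rows $\ell-\ell'+1,\dots,\ell$ and columns $1,\dots,k'$, with zeros elsewhere. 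Call this padded array $\widetilde{M}_{\mathcal{P}'}$. Since every cell of $\mathcal{P}'$ is a cell of $\mathcal{P}$, we get $[\widetilde{M}_{\mathcal{P}'}]_{r,s}\le [M_{\mathcal{P}}]_{r,s}$ entrywise.

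The second step is to check that padding commutes with the Kronecker product in the right way. I use the formula $[A\otimes B]_{p(r-1)+u,\,q(s-1)+v}=[A]_{r,s}[B]_{u,v}$, where $M_{\mathcal{Q}}$ is $p\times q$. Padding $M_{\mathcal{P}'}$ with zero rows on top and zero columns on the right becomes, after tensoring, padding with zero blocks of $p$ rows on top and $q$ columns on the right. So $\widetilde{M}_{\mathcal{P}'}\otimes M_{\mathcal{Q}}$ is exactly $M_{\mathcal{P}'}\otimes M_{\mathcal{Q}}$ placed in the $\ell p\times kq$ frame consistent with the cell coordinates of $\mathcal{P}\otimes\mathcal{Q}$. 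Concretely, the top row of both corresponds to height $\ell q$ in $\mathbb{N}^2$ up to the offset $(\ell-\ell')p$. I will verify this bookkeeping carefully: a cell of $\mathcal{P}'\otimes\mathcal{Q}$, read off from $M_{\mathcal{P}'}\otimes M_{\mathcal{Q}}$ with its own supremum, must land at the same lattice position as the corresponding entry of $\widetilde{M}_{\mathcal{P}'}\otimes M_{\mathcal{Q}}$ read in the frame of $M_{\mathcal{P}}\otimes M_{\mathcal{Q}}$. This step is the main obstacle, since the top-anchored row indexing is where an off-by-offset error could occur. I expect the resolution to be that the supremum of the product scales as $(kq,\ell p)$-type dimensions, which are exactly the matrix sizes.

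The final step is immediate. Entrywise, $[\widetilde{M}_{\mathcal{P}'}\otimes M_{\mathcal{Q}}]_{p(r-1)+u,\,q(s-1)+v}=[\widetilde{M}_{\mathcal{P}'}]_{r,s}[M_{\mathcal{Q}}]_{u,v}\le [M_{\mathcal{P}}]_{r,s}[M_{\mathcal{Q}}]_{u,v}=[M_{\mathcal{P}}\otimes M_{\mathcal{Q}}]_{p(r-1)+u,\,q(s-1)+v}$. Hence every $1$-entry of the matrix of $\mathcal{P}'\otimes\mathcal{Q}$ is a $1$-entry of $M_{\mathcal{P}\otimes\mathcal{Q}}$ at the same cell. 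So every cell of $\mathcal{P}'\otimes\mathcal{Q}$ is a cell of $\mathcal{P}\otimes\mathcal{Q}$. Note that the subpolyomino hypothesis on $\mathcal{P}'$ is used only through the inclusion of cells, so the argument in fact works for any subset of cells of $\mathcal{P}$.
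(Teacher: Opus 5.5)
Your proposal is correct and is essentially the paper's argument: your padded array $\widetilde{M}_{\mathcal{P}'}$ is the paper's $A_{\mathcal{P}'}$, and the row offset $p(\ell-\ell')$ you isolate is exactly what the paper's index chase checks when it passes from $[M_{\mathcal{P}'\otimes\mathcal{Q}}]_{n\ell'-a_2,\,a_1+1}$ to $[M_{\mathcal{P}\otimes\mathcal{Q}}]_{n\ell-a_2,\,a_1+1}$. Your entrywise inequality is also a cleaner form of the step the paper writes as an equality inside a proof by contradiction. One slip to fix when you write it up: the top of $\mathcal{P}\otimes\mathcal{Q}$ is at height $\ell p$, not $\ell q$, since $p$ is the number of rows of $M_{\mathcal{Q}}$.
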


\begin{proof}
    We let $M_\mathcal{Q} \in \mathcal{M}_{n \times m}(\mathbb{Z}_2)$ for some $m,n \in \mathbb{Z}^+$. Note that as $\mathcal{P}' \subseteq \mathcal{P}$, then $\sup V(\mathcal{P}') \leq \sup V(\mathcal{P})$. We let $\sup V(\mathcal{P}') = (k', \ell')$ and $\sup V(\mathcal{P}) = (k,\ell)$ for some $k,k', \ell, \ell ' \in \mathbb{Z}^+$. So $M_\mathcal{P} \in \mathcal{M}_{\ell,k}(\mathbb{Z}_2)$ and $M_{\mathcal{P}'} \in \mathcal{M}_{\ell ',k'}(\mathbb{Z}_2)$. We can decompose $M_\mathcal{P}$ as $M_\mathcal{P} = A_{\mathcal{P}'} + \bar A_{\mathcal{P}'}$ where
    \[
        [A_{\mathcal{P}'}]_{i,j} = 
        \begin{cases} 
        1 \text{ if $[(j-1,\ell-i),(j,\ell-i+1)]$ is a cell of $\mathcal{P}'$} \\
        0 \text{ otherwise}
        \end{cases}
    \] 
    and
    \[
        [\bar A_{\mathcal{P}'}]_{i,j} = 
        \begin{cases} 
        1 \text{ if $[(j-1,\ell-i),(j,\ell-i+1)]$ is a cell of $\mathcal{P} \setminus \mathcal{P}'$} \\
        0 \text{ otherwise}
        \end{cases}
    \]
    We can now define $M_{\mathcal{P}'}$ to be some submatrix of $A_{\mathcal{P}'}$. This means that if $[M_{\mathcal{P}'}]_{i,j} = 1$, then $[A_{\mathcal{P}'}]_{i,j} = 1$ and so $[M_{\mathcal{P}}]_{i,j} = 1$. Furthermore, the contraposition of this implication gives that if $[M_{\mathcal{P}}]_{i,j} = 0$, then $[M_{\mathcal{P}'}]_{i,j} = 0$. We want to show that the collection of cells $\mathcal{P}' \otimes \mathcal{Q}$ is contained in $\mathcal{P} \otimes \mathcal{Q}$. Let $C \in \mathcal{P}' \otimes \mathcal{Q}$ be an arbitrary cell. We will proceed to show that $C \in \mathcal{P} \otimes \mathcal{Q}$. Let $C = [(a_1,a_2), (a_1,a_2)+(1,1)]$ for some $a_1,a_2 \in \mathbb{N}$. Let $M_{\mathcal{P}' \otimes \mathcal{Q}}$ be the binary matrix associated with $\mathcal{P}' \otimes \mathcal{Q}$. Note that $M_{\mathcal{P}' \otimes \mathcal{Q}} \in \mathcal{M}_{n\ell ' \times mk'}(\mathbb{Z}_2)$, so we have that $a_2 = n\ell' - p$ for some $p\in \{0,...,n\ell '\}$. As $C \in \mathcal{P}' \otimes \mathcal{Q}$, then this means by the definition of the associated binary matrix that $[M_{\mathcal{P}' \otimes \mathcal{Q}}]_{n\ell' - a_2, a_1 + 1} = 1$. We proceed to show that $[M_{\mathcal{P} \otimes \mathcal{Q}}]_{n\ell - a_2, a_1 + 1} = 1$. Towards a contradiction, assume that $[M_{\mathcal{P} \otimes \mathcal{Q}}]_{n\ell - a_2, a_1 + 1} = 0$. Then:
    \begin{align*}
        0 = [M_{\mathcal{P} \otimes \mathcal{Q}}]_{n\ell - a_2, a_1 + 1} 
        &= [M_{\mathcal{P}} \otimes M_\mathcal{Q}]_{n\ell - a_2, a_1 + 1} \\
        &= [M_\mathcal{P}]_{\left\lceil \frac{n\ell - a_2}{n} \right\rceil, \left\lceil \frac {a_1 + 1}{m} \right\rceil} [M_\mathcal{Q}]_{((n\ell-a_2-1)\%n)+1,(a_1\%m)+1}\\
        &= [M_\mathcal{P}]_{\left\lceil \frac{n\ell - n\ell' + p}{n} \right\rceil, \left\lceil \frac {a_1 + 1}{m} \right\rceil} [M_\mathcal{Q}]_{((n\ell-n\ell' + p -1)\%n)+1,(a_1\%m)+1}\\
        &= [M_\mathcal{P}]_{\left\lceil \ell - \ell' + \frac{p}{n} \right\rceil, \left\lceil \frac {a_1 + 1}{m} \right\rceil} [M_\mathcal{Q}]_{((p -1)\%n)+1,(a_1\%m)+1}\\
        &= [M_\mathcal{P}]_{\ell - \ell' + \left\lceil \frac{p}{n} \right\rceil, \left\lceil \frac {a_1 + 1}{m} \right\rceil} [M_\mathcal{Q}]_{((p -1)\%n)+1,(a_1\%m)+1}\\
        &= [M_{\mathcal{P}'}]_{\left\lceil \frac{p}{n} \right\rceil, \left\lceil \frac {a_1 + 1}{m} \right\rceil} [M_\mathcal{Q}]_{((p -1)\%n)+1,(a_1\%m)+1}\\
        &= [M_{\mathcal{P}'} \otimes M_\mathcal{Q}]_{p, a_1 + 1} \\
        &= [M_{\mathcal{P}'} \otimes M_\mathcal{Q}]_{n\ell'-a_2, a_1 + 1} \\
        &= [M_{\mathcal{P}' \otimes \mathcal{Q}}]_{n\ell' - a_2, a_1 + 1}
    \end{align*}
But this is a contradiction as $[M_{\mathcal{P}' \otimes \mathcal{Q}}]_{n\ell' - a_2, a_1 + 1} = 1$. Therefore it must be that $[M_{\mathcal{P} \otimes \mathcal{Q}}]_{n\ell - a_2, a_1 + 1} = 1$, and so thus by definition of the associated binary matrix we have that $C \in \mathcal{P} \otimes \mathcal{Q}$.
\end{proof}

By the above proposition, we have that for any polyominoes $\mathcal{P}$ and $\mathcal{Q}$ and for any cell $C\in\mathcal{P}$, since $C \subseteq \mathcal{P}$, then the collection of cells $C\otimes\mathcal{Q}$ is contained in  $\mathcal{P}\otimes\mathcal{Q}$. In fact, we can decompose $\mathcal{P}\otimes\mathcal{Q}$ into precisely $|\mathcal{P}|$ collections of cells of the form $C\otimes\mathcal{Q}$ for every $C \in \mathcal{P}$, and we state this now as a lemma.

\begin{lemma} \label{Lemma:3.1.3}
     Let $\mathcal{P}$ and $\mathcal{Q}$ be polyominoes. Then $\mathcal{P}\otimes\mathcal{Q} = \bigcup\limits_{C\in \mathcal{P}}C\otimes\mathcal{Q}$.
\end{lemma}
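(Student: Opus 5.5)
We prove the equality by two containments, working throughout at the level of binary matrices. Since collections of cells are identified with their binary matrices, we show that a cell lies in $\mathcal{P}\otimes\mathcal{Q}$ if and only if it lies in $C\otimes\mathcal{Q}$ for some $C\in\mathcal{P}$.

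The containment $\bigcup_{C\in\mathcal{P}} C\otimes\mathcal{Q}\subseteq \mathcal{P}\otimes\mathcal{Q}$ follows directly from Lemma \ref{Lemma:Tensor products preserve subpolyominoes}. Each single cell $C\in\mathcal{P}$ is a subpolyomino of $\mathcal{P}$, since a one-cell collection is trivially connected. Hence $C\otimes\mathcal{Q}$ is contained in $\mathcal{P}\otimes\mathcal{Q}$, and taking the union over $C\in\mathcal{P}$ gives this direction.

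For the reverse containment, let $M_\mathcal{Q}\in\mathcal{M}_{n\times m}(\mathbb{Z}_2)$ and $\sup V(\mathcal{P})=(k,\ell)$. Let $D$ be a cell of $\mathcal{P}\otimes\mathcal{Q}$, so that $[M_\mathcal{P}\otimes M_\mathcal{Q}]_{i,j}=1$ for the corresponding position $(i,j)$.

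1. Write $i=n(r-1)+u$ and $j=m(s-1)+v$ with $1\le u\le n$ and $1\le v\le m$.
2. By the remainder-free Kronecker formula, $[M_\mathcal{P}]_{r,s}[M_\mathcal{Q}]_{u,v}=1$. Hence $[M_\mathcal{P}]_{r,s}=1$ and $[M_\mathcal{Q}]_{u,v}=1$.
3. The first equality says $C=[(s-1,\ell-r),(s,\ell-r+1)]$ is a cell of $\mathcal{P}$.
4. The single cell $C$ has $\sup V(C)=(s,\ell-r+1)$. Its binary matrix $M_C$ is the $(\ell-r+1)\times s$ matrix with a single $1$ in position $(1,s)$.
5. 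Therefore $M_C\otimes M_\mathcal{Q}$ has entry $[M_C]_{1,s}[M_\mathcal{Q}]_{u,v}=1$ at position $(u,\,m(s-1)+v)$.

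It remains to check that this position of $M_C\otimes M_\mathcal{Q}$ encodes the same cell $D$. This index bookkeeping is the main obstacle, and it can be handled exactly as in the proof of Lemma \ref{Lemma:Tensor products preserve subpolyominoes}.

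1. In $\mathcal{P}\otimes\mathcal{Q}$, whose matrix has $n\ell$ rows, entry $(i,j)$ corresponds to the cell with lower left corner $(j-1,\;n\ell-i)$.
2. In $C\otimes\mathcal{Q}$, whose matrix has $n(\ell-r+1)$ rows, entry $(u,j)$ corresponds to the cell with lower left corner $(j-1,\;n(\ell-r+1)-u)$.
3. Since $n\ell-i=n\ell-n(r-1)-u=n(\ell-r+1)-u$, the two lower left corners agree.

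Thus $D\in C\otimes\mathcal{Q}$, which completes the reverse containment and proves the equality.
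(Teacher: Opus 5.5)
Your proof is correct. The first containment is exactly the paper's, via Lemma~\ref{Lemma:Tensor products preserve subpolyominoes}, but your reverse containment takes a different route.

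The paper argues by counting. It combines Lemma~\ref{Lemma: 3.1.1} with $|C\otimes\mathcal{Q}|=|\mathcal{Q}|$ to assert $\bigl|\bigcup_{C\in\mathcal{P}}C\otimes\mathcal{Q}\bigr|=|\mathcal{P}||\mathcal{Q}|=|\mathcal{P}\otimes\mathcal{Q}|$, and then concludes from the inclusion together with equal ranks. That rank equality silently requires the sets $C\otimes\mathcal{Q}$ to be pairwise disjoint. The paper only proves this afterwards, in the first assertion of Lemma~\ref{Lemma:Dilations preserve disjoint sets}. That assertion is proved by direct index estimates without using the present lemma, so there is no circularity, but the dependence is unstated.

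Your element-wise argument needs neither the rank formula nor disjointness. Reading off the Kronecker block index $(r,s)$ of the entry encoding $D$ identifies the specific cell $C\in\mathcal{P}$ whose copy contains $D$. The one delicate point is that the same cell is encoded at row $i$ of an $n\ell$-row matrix and at row $u$ of an $n(\ell-r+1)$-row matrix. You check this correctly via $n\ell-i=n(\ell-r+1)-u$.

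The paper's route is shorter once the counting lemma is available, but it leans on a fact proved later. Yours is self-contained. It also yields the block-index description of $C\otimes\mathcal{Q}$ that the disjointness lemma later re-derives with ceiling estimates.
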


\begin{proof}
    Let $A \in \bigcup\limits_{C\in \mathcal{P}}C\otimes\mathcal{Q}$ be arbitrary, then $A \in D \otimes \mathcal{Q}$ for some $D \in \mathcal{P}$. Since $D \in \mathcal{P}$, then $D \subseteq \mathcal{P}$, and so by Lemma \ref{Lemma:Tensor products preserve subpolyominoes} we have that $D \otimes \mathcal{Q} \subseteq \mathcal{P} \otimes \mathcal{Q}$ and so therefore $A \in \mathcal{P} \otimes \mathcal{Q}$. As $A$ was arbitrary, then we $\bigcup\limits_{C\in \mathcal{P}}C\otimes\mathcal{Q} \subseteq \mathcal{P} \otimes \mathcal{Q}$. By Lemma \ref{Lemma: 3.1.1}, we have that $|\mathcal{P}\otimes\mathcal{Q}| = |\mathcal{P}||\mathcal{Q}|$. As well, we have that $|C\otimes\mathcal{Q}| = |C||\mathcal{Q}| = 1 \cdot |\mathcal{Q}| = |\mathcal{Q}|$ and so $\lvert \bigcup\limits_{C\in \mathcal{P}}C\otimes\mathcal{Q}\text{ } \rvert = |\mathcal{P}||\mathcal{Q}| = |\mathcal{P} \otimes \mathcal{Q}|$. Thus as $\bigcup\limits_{C\in \mathcal{P}}C\otimes\mathcal{Q}$ and $\mathcal{P} \otimes \mathcal{Q}$ have the same rank and $\bigcup\limits_{C\in \mathcal{P}}C\otimes\mathcal{Q} \subseteq \mathcal{P} \otimes \mathcal{Q}$, then it must be that $\mathcal{P}\otimes\mathcal{Q} = \bigcup\limits_{C\in \mathcal{P}}C\otimes\mathcal{Q}$.
\end{proof}

\begin{corollary} \label{Cor: Union of tensor = tensor of union}
    Let $\mathcal{P}_1, \mathcal{P}_2$, and $\mathcal{Q}$ be polyominoes. Then $(\mathcal{P}_1 \otimes \mathcal{Q})\cup(\mathcal{P}_2 \otimes \mathcal{Q}) = (\mathcal{P}_1 \cup \mathcal{P}_2)\otimes \mathcal{Q}$.
\end{corollary}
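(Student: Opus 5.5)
The plan is to reduce the corollary to Lemma \ref{Lemma:3.1.3}, which decomposes a tensor product cell by cell. We apply that lemma to each of $\mathcal{P}_1\otimes\mathcal{Q}$, $\mathcal{P}_2\otimes\mathcal{Q}$ and $(\mathcal{P}_1\cup\mathcal{P}_2)\otimes\mathcal{Q}$. This gives
\[
(\mathcal{P}_1 \otimes \mathcal{Q})\cup(\mathcal{P}_2 \otimes \mathcal{Q}) = \Bigl(\bigcup_{C\in\mathcal{P}_1} C\otimes\mathcal{Q}\Bigr)\cup\Bigl(\bigcup_{C\in\mathcal{P}_2} C\otimes\mathcal{Q}\Bigr) = \bigcup_{C\in\mathcal{P}_1\cup\mathcal{P}_2} C\otimes\mathcal{Q},
\]
and the right-hand side equals $(\mathcal{P}_1\cup\mathcal{P}_2)\otimes\mathcal{Q}$ by Lemma \ref{Lemma:3.1.3} again. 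The middle equality is just the associativity of unions over an index set that is the union of two index sets.

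The one point that needs care is that Lemma \ref{Lemma:3.1.3}, and the lemmas it relies on, are stated for polyominoes. However, $\mathcal{P}_1\cup\mathcal{P}_2$ need not be a polyomino; if the two are disjoint and not edge-adjacent, it is only a finite collection of cells. I would handle this by observing that the tensor product is defined purely through binary matrices, and the paper has already noted that binary matrices extend to arbitrary finite collections of cells. So the definition $M_{\mathcal{C}\otimes\mathcal{Q}} = M_{\mathcal{C}}\otimes M_{\mathcal{Q}}$ makes sense for any finite collection $\mathcal{C}$.

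I would then check that the proofs of Lemmas \ref{Lemma: 3.1.1}, \ref{Lemma:Tensor products preserve subpolyominoes} and \ref{Lemma:3.1.3} never use connectivity of $\mathcal{P}$. They only count nonzero entries and compare matrix entries via the Kronecker index formula, so they carry over verbatim to finite collections of cells. Here $\mathcal{P}'$ is taken to be any subcollection, in particular a single cell $C$.

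An alternative, more direct route avoids this extension. One compares entries of $M_{\mathcal{P}_1\cup\mathcal{P}_2}\otimes M_{\mathcal{Q}}$ with the entrywise Boolean OR of the two Kronecker products. Each is padded to the common size determined by $\sup V(\mathcal{P}_1\cup\mathcal{P}_2)$, as in the proof of Lemma \ref{Lemma:Tensor products preserve subpolyominoes}. The identity $[A\otimes B]_{n(r-1)+u,\,m(s-1)+v} = [A]_{r,s}[B]_{u,v}$, together with $M_{\mathcal{P}_1\cup\mathcal{P}_2} = M_{\mathcal{P}_1}\vee M_{\mathcal{P}_2}$ after padding, then gives the result, since $(a\vee b)c = ac \vee bc$ over $\{0,1\}$.

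The main obstacle I expect is this bookkeeping about different matrix sizes and row offsets, since rows are indexed from the top using $\ell$. I would resolve it, as the paper does, by embedding everything in the frame of the larger supremum.
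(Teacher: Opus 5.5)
Your proposal is correct and follows essentially the paper's route. The paper proves the two containments separately, using Lemma \ref{Lemma:Tensor products preserve subpolyominoes} for $\subseteq$ and Lemma \ref{Lemma:3.1.3} for $\supseteq$, which is the same cell-by-cell decomposition you package as a single chain of equalities. Your remark that $\mathcal{P}_1\cup\mathcal{P}_2$ need not be a polyomino, and that the supporting lemmas extend verbatim to finite collections of cells, addresses a point the paper passes over silently, even though it later applies this corollary to $I_i\cup I_{i+1}$, which is only weakly connected.
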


\begin{proof}
   We proceed by showing containment both ways. Let $C \in (\mathcal{P}_1 \otimes \mathcal{Q})\cup(\mathcal{P}_2 \otimes \mathcal{Q})$ be arbitrary, then $C \in \mathcal{P}_1 \otimes \mathcal{Q}$ or $C \in \mathcal{P}_2 \otimes \mathcal{Q}$. Since $P_1, P_2 \subseteq \mathcal{P}_1 \cup \mathcal{P}_2$, then by Lemma \ref{Lemma:Tensor products preserve subpolyominoes}, we have that $\mathcal{P}_1 \otimes \mathcal{Q},\mathcal{P}_2 \otimes \mathcal{Q} \subseteq (\mathcal{P}_1 \cup \mathcal{P}_2)\otimes \mathcal{Q}$. So if $C \in \mathcal{P}_1 \otimes \mathcal{Q}$, then $C \in (\mathcal{P}_1 \cup \mathcal{P}_2)\otimes \mathcal{Q}$, and if $C \in \mathcal{P}_2 \otimes \mathcal{Q}$, then also $C \in (\mathcal{P}_1 \cup \mathcal{P}_2)\otimes \mathcal{Q}$. So therefore $(\mathcal{P}_1 \otimes \mathcal{Q})\cup(\mathcal{P}_2 \otimes \mathcal{Q}) \subseteq (\mathcal{P}_1 \cup \mathcal{P}_2)\otimes \mathcal{Q}$. Now let $D \in (\mathcal{P}_1 \cup \mathcal{P}_2)\otimes \mathcal{Q}$ be arbitrary. Then by Lemma \ref{Lemma:3.1.3}, $D \in A \otimes \mathcal{Q}$ for some $A \in \mathcal{P}_1 \cup \mathcal{P}_2$. If $A \in \mathcal{P}_1$, then $A \otimes \mathcal{Q} \subseteq \mathcal{P}_1 \otimes \mathcal{Q}$ by Lemma \ref{Lemma:Tensor products preserve subpolyominoes}. So therefore $D \in \mathcal{P}_1 \otimes \mathcal{Q}$. If rather $A \in \mathcal{P}_2$, then $A \otimes \mathcal{Q} \subseteq \mathcal{P}_2 \otimes \mathcal{Q}$ by Lemma \ref{Lemma:Tensor products preserve subpolyominoes}. So therefore $D \in \mathcal{P}_2 \otimes \mathcal{Q}$. Thus $D \in (\mathcal{P}_1 \otimes \mathcal{Q})\cup(\mathcal{P}_2 \otimes \mathcal{Q})$, and so $(\mathcal{P}_1 \cup \mathcal{P}_2)\otimes \mathcal{Q} \subseteq (\mathcal{P}_1 \otimes \mathcal{Q})\cup(\mathcal{P}_2 \otimes \mathcal{Q})$.
\end{proof}

By Corollary \ref{Cor: Union of tensor = tensor of union} above, we have that if two polyominoes overlap in the plane, then their tensor products must overlap, that is, the tensor product of polyominoes preserves non-empty intersection. What we show now with the following lemma is that the tensor product of polyominoes also preserves empty intersection, meaning if two polyominoes do not share any cells, then their tensor products do not share any cells. 

\begin{lemma} \label{Lemma:Dilations preserve disjoint sets}
    Let $\mathcal{P}$ and $\mathcal{Q}$ be polyominoes and let $A,B \in \mathcal{P}$ be arbitrary with $A \neq B$. Then $|(A\otimes\mathcal{Q})\cap(B\otimes\mathcal{Q})| = 0$. Furthermore, if $|\mathcal{P}_1 \cap \mathcal{P}_2| = 0$ for some polyominoes $\mathcal{P}_1$ and $\mathcal{P}_2$, then $|(\mathcal{P}_1 \otimes \mathcal{Q}) \cap (\mathcal{P}_2 \otimes \mathcal{Q})| = 0$.
\end{lemma}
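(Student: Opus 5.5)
The plan is to prove the first claim by locating $A \otimes \mathcal{Q}$ explicitly inside a block of $M_\mathcal{P} \otimes M_\mathcal{Q}$. Then I deduce the second claim from the first via Lemma \ref{Lemma:3.1.3}.

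Let $M_\mathcal{Q} \in \mathcal{M}_{n \times m}(\mathbb{Z}_2)$, and let $A$ have lower left corner $(a_1,a_2)$. The single cell $A$ is a subpolyomino, and its binary matrix $M_A$ has $\ell_A = a_2+1$ rows and $k_A = a_1+1$ columns. Its only nonzero entry is $[M_A]_{1,a_1+1}=1$. Hence $M_A \otimes M_\mathcal{Q}$ vanishes outside the block in rows $1,\dots,n$ and columns $m a_1+1,\dots,m(a_1+1)$, and equals $M_\mathcal{Q}$ on that block.

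Next I translate back to cells using the definition of the binary matrix, with $\ell = n(a_2+1)$. Entry $(i,j)$ of this block corresponds to the cell with lower left corner $(j-1,\, n(a_2+1)-i)$. So every cell of $A \otimes \mathcal{Q}$ has lower left corner $(x,y)$ with
\[
ma_1 \le x \le m a_1 + m-1 \quad\text{and}\quad na_2 \le y \le na_2+n-1 .
\]
That is, $A \otimes \mathcal{Q}$ lies in the cell interval (the $m\times n$ window) determined by $A$. The same holds for $B$ with lower left corner $(b_1,b_2)$.

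Since $A \neq B$, we have $(a_1,a_2)\neq(b_1,b_2)$. The maps $x\mapsto \lfloor x/m\rfloor$ and $y \mapsto \lfloor y/n \rfloor$ send each cell of $A\otimes\mathcal{Q}$ to $(a_1,a_2)$ and each cell of $B\otimes\mathcal{Q}$ to $(b_1,b_2)$. Therefore no cell lies in both, and $|(A\otimes\mathcal{Q})\cap(B\otimes\mathcal{Q})|=0$.

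The main obstacle is bookkeeping. One must check that the cell coordinates of $C\otimes \mathcal{Q}$ do not depend on the ambient $\sup V$ used to build the matrix; the row index is measured from the top, so the total height enters. I would handle this by the computation above: the height $n(a_2+1)$ of $M_A\otimes M_\mathcal{Q}$ cancels exactly, giving the absolute coordinates. This agrees with the computation in Lemma \ref{Lemma:Tensor products preserve subpolyominoes}, which shows that the position of $\mathcal{P}'\otimes\mathcal{Q}$ is intrinsic.

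For the second claim, suppose $|\mathcal{P}_1\cap\mathcal{P}_2|=0$. By Lemma \ref{Lemma:3.1.3},
\[
\mathcal{P}_r\otimes\mathcal{Q} = \bigcup_{C\in\mathcal{P}_r} C\otimes\mathcal{Q} \quad\text{for } r=1,2 .
\]
Any common cell would then lie in $A\otimes\mathcal{Q}$ and in $B\otimes\mathcal{Q}$ for some $A\in\mathcal{P}_1$ and $B\in\mathcal{P}_2$. Here $A\neq B$ because the polyominoes share no cells. Lemma \ref{Lemma:3.1.3} applies to each $\mathcal{P}_r$ separately, so the first part does not need $A$ and $B$ to lie in a common polyomino, since the window description is intrinsic. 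This contradicts the first part.
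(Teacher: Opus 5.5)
Your proof is correct and follows essentially the paper's route: you confine every cell of $A\otimes\mathcal{Q}$ to the $m\times n$ window of lower-left corners $[ma_1, ma_1+m-1]\times[na_2, na_2+n-1]$, so distinct cells give disjoint windows, and then you deduce the second claim from Lemma~\ref{Lemma:3.1.3}. The only differences are that the paper works inside $M_{\mathcal{P}\otimes\mathcal{Q}}$ with ceilings while you work with $M_A$ and floors, and your explicit computation of $M_A\otimes M_\mathcal{Q}$ together with your remark that the window description is intrinsic justify two points the paper leaves tacit: the window inequalities it asserts ``by definition,'' and the use of the first part for $A\in\mathcal{P}_1$, $B\in\mathcal{P}_2$ that need not lie in a common polyomino.
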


\begin{proof}
    By its definition, $M_{\mathcal{P} \otimes \mathcal{Q}}$ is a block matrix consisting of $|\mathcal{P}|$ blocks, meaning every cell of $\mathcal{P}$ corresponds to a block of $M_{\mathcal{P} \otimes \mathcal{Q}}$. We let $\sup V(\mathcal{P}) = (k,\ell)$ and $\sup V(\mathcal{Q}) = (m,n)$ for some $k, \ell, m, n \in \mathbb{N}$, so then $M_\mathcal{P} \in \mathcal{M}_{\ell \times k}(\mathbb{Z}_2)$ and $M_\mathcal{Q} \in \mathcal{M}_{n \times m}(\mathbb{Z}_2)$. As such, we have that $M_{\mathcal{P} \otimes \mathcal{Q}} \in \mathcal{M}_{n \ell \times mk}(\mathbb{Z}_2)$. Towards a contradiction, suppose there exists some cell $C \in \mathcal{P} \otimes \mathcal{Q}$ such that $C \in (A\otimes\mathcal{Q})\cap(B\otimes\mathcal{Q})$. Since $C \in \mathcal{P} \otimes \mathcal{Q}$, then $C$ corresponds to some non-zero entry in $M_{\mathcal{P} \otimes \mathcal{Q}}$, say $[M_{\mathcal{P} \otimes \mathcal{Q}}]_{r,s}$. Therefore, $C = [(s-1,n\ell - r), (s, n\ell-r+1)]$. Let $A = [(a_1,a_2),(a_1,a_2)+(1,1)]$ and $B = [(b_1,b_2),(b_1,b_2)+(1,1)]$ for some $a_1,a_2,b_1,b_2 \in \mathbb{N}$. As $C \in (A\otimes\mathcal{Q})$, then by definition of an associated binary matrix and the Kronecker product we have that:
    \[
    ma_1 \leq s-1 < m(a_1 + 1) \text{ and } na_2 \leq n\ell - r < n(a_2+1)
    \] and so,
    \[
    ma_1 + 1 \leq s \leq ma_1 + m \text{ and } n \ell - na_2 - n +1 \leq r \leq n\ell - na_2 \text{.}
    \]
    Similarly, as $C \in (B\otimes\mathcal{Q})$, then also
    \[
    mb_1 + 1 \leq s \leq mb_1 + m \text{ and } n \ell - nb_2 - n +1 \leq r \leq n\ell - nb_2 \text{.}
    \]
    This gives that:
    \begin{align*}
        \left\lceil \frac{ma_1 +1}{m} \right\rceil \leq &\left\lceil \frac{s}{m} \right\rceil \leq \left\lceil \frac{ma_1 + m}{m} \right\rceil \\
        \implies a_1 + \left\lceil \frac{1}{m} \right\rceil \leq &\left\lceil \frac{s}{m} \right\rceil \leq a_1 + 1 \\
        \implies \textcolor{white}{......} a_1 + 1 \leq &\left\lceil \frac{s}{m} \right\rceil \leq a_1 + 1\\
        \implies \textcolor{white}{...................} &\left\lceil \frac{s}{m} \right\rceil = a_1+1 \\ 
    \end{align*} and
    \begin{align*}
        \left\lceil \frac{mb_1 +1}{m} \right\rceil \leq &\left\lceil \frac{s}{m} \right\rceil \leq \left\lceil \frac{mb_1 + m}{m} \right\rceil \\
        \implies b_1 + \left\lceil \frac{1}{m} \right\rceil \leq &\left\lceil \frac{s}{m} \right\rceil \leq b_1 + 1 \\
        \implies \textcolor{white}{......}b_1 + 1 \leq &\left\lceil \frac{s}{m} \right\rceil \leq b_1 + 1\\
        \implies \textcolor{white}{...................} &\left\lceil \frac{s}{m} \right\rceil = b_1+1 \text{.} \\ 
    \end{align*}
So $a_1 + 1 = b_1 + 1$, and therefore $a_1 = b_1$. Also, we have that:
\begin{align*}
        \left\lceil \frac{n\ell - na_2 -n +1}{n} \right\rceil \leq &\left\lceil \frac{r}{n} \right\rceil \leq \left\lceil \frac{n\ell - na_2}{n} \right\rceil \\
        \implies \ell - a_2 - 1 + \left\lceil \frac{1}{n} \right\rceil \leq &\left\lceil \frac{r}{n} \right\rceil \leq \ell - a_2 \\
        \implies \textcolor{white}{.....} \ell - a_2 - 1 + 1 \leq &\left\lceil \frac{r}{n} \right\rceil \leq \ell - a_2\\
        \implies \textcolor{white}{.................} \ell - a_2 \leq &\left\lceil \frac{r}{n} \right\rceil \leq \ell - a_2 \\
        \implies \textcolor{white}{..............................} &\left\lceil \frac{r}{n} \right\rceil = \ell - a_2 \\     
    \end{align*} and
     \begin{align*}
        \left\lceil \frac{n\ell - nb_2 -n +1}{n} \right\rceil \leq &\left\lceil \frac{r}{n} \right\rceil \leq \left\lceil \frac{n\ell - nb_2}{n} \right\rceil \\
        \implies \ell - b_2 - 1 + \left\lceil \frac{1}{n} \right\rceil \leq &\left\lceil \frac{r}{n} \right\rceil \leq \ell - b_2 \\
        \implies \textcolor{white}{.....} \ell - b_2 - 1 + 1 \leq &\left\lceil \frac{r}{n} \right\rceil \leq \ell - b_2\\
        \implies \textcolor{white}{.................} \ell - b_2 \leq &\left\lceil \frac{r}{n} \right\rceil \leq \ell - b_2 \\
        \implies \textcolor{white}{..............................} &\left\lceil \frac{r}{n} \right\rceil = \ell - b_2 \text{.} \\     
    \end{align*} 
    So $\ell - a_2 = \ell - b_2$, and therefore $a_2 = b_2$. So then $A = B$, but this is a contradiction. Therefore it must be that $|(A\otimes\mathcal{Q})\cap(B\otimes\mathcal{Q})| = 0$. Now assume that $|\mathcal{P}_1 \cap \mathcal{P}_2| = 0$ for some polyominoes $\mathcal{P}_1$ and $\mathcal{P}_2$. Then, for all $C \in \mathcal{P}_1$ and for all $D \in \mathcal{P}_2$, $C \neq D$. So then from the above, $|(C \otimes \mathcal{Q}) \cap (D \otimes \mathcal{Q})| = 0$ for all $C \in \mathcal{P}_1$ and for all $D \in \mathcal{P}_2$. Therefore,
\[
    \left|\left(\bigcup_{C\in \mathcal{P}_1}(C \otimes \mathcal{Q})\right) \bigcap \left(\bigcup_{D\in \mathcal{P}_2}(D \otimes \mathcal{Q})\right)\right| = 0
\]
and so by Lemma \ref{Lemma:3.1.3}, $|(\mathcal{P}_1 \otimes \mathcal{Q}) \cap (\mathcal{P}_2 \otimes \mathcal{Q})| = 0$.
\end{proof}

\begin{definition}
    We call the unique polyomino consisting of a single cell the \textit{monomino} and we denote it by $\mathcal{R}_{1,1}$.
\end{definition}

\begin{lemma}
    Let $\mathcal{P}$ be an arbitrary polyomino, then $\mathcal{P} \otimes \mathcal{R}_{1,1} = \mathcal{P}$.
\end{lemma}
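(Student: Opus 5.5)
The plan is to work entirely at the level of binary matrices. By Definition \ref{Defn:tensor product of polyos} and the convention that two collections of cells are equal exactly when their associated binary matrices are equal, it suffices to show $M_{\mathcal{P}} \otimes M_{\mathcal{R}_{1,1}} = M_{\mathcal{P}}$. Before using the definition we check its hypothesis: $\mathcal{R}_{1,1}$ must be in minimal position. So we take the monomino to be the cell $[(0,0),(1,1)]$, for which $\inf V(\mathcal{R}_{1,1}) = (0,0)$ and $\sup V(\mathcal{R}_{1,1}) = (1,1)$.

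First we compute the associated matrix of the monomino. Since $\sup V(\mathcal{R}_{1,1}) = (1,1)$, the matrix $M_{\mathcal{R}_{1,1}}$ is $1\times 1$. Its single entry is $1$, because $[(0,0),(1,1)]$ is a cell of $\mathcal{R}_{1,1}$. Hence $M_{\mathcal{R}_{1,1}} = [1]$.

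Next we evaluate the Kronecker product using the formula $[A\otimes B]_{p(r-1)+u,\,q(s-1)+v} = [A]_{r,s}[B]_{u,v}$ with $p=q=1$. The only admissible choice is $u=v=1$, so $M_{\mathcal{P}}\otimes[1]$ has the same $\ell\times k$ dimensions as $M_{\mathcal{P}}$. Its entries are $[M_{\mathcal{P}}\otimes [1]]_{r,s} = [M_{\mathcal{P}}]_{r,s}\cdot 1 = [M_{\mathcal{P}}]_{r,s}$. Therefore $M_{\mathcal{P}\otimes\mathcal{R}_{1,1}} = M_{\mathcal{P}}$, and so $\mathcal{P}\otimes\mathcal{R}_{1,1} = \mathcal{P}$, including its position in $\mathbb{N}^2$.

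There is no real obstacle here. The only point needing care is the positional convention: "the monomino" must be read as the minimal-position cell at the origin, so that the definition applies and $M_{\mathcal{R}_{1,1}}$ is exactly $[1]$ rather than a larger matrix padded with zero rows and columns. As a consistency check, the result agrees with Lemma \ref{Lemma: 3.1.1}, since $|\mathcal{P}\otimes\mathcal{R}_{1,1}| = |\mathcal{P}|\cdot 1$.
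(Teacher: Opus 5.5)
Your proof is correct and follows the paper's argument essentially verbatim: take the monomino in minimal position so that $M_{\mathcal{R}_{1,1}} = [1]$, observe entrywise that $M_{\mathcal{P}} \otimes [1] = M_{\mathcal{P}}$, and conclude by the convention that equal binary matrices give equal collections of cells. Your explicit check of the minimal-position hypothesis and the index computation with $p=q=1$ only spell out the paper's one-line calculation in more detail.
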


\begin{proof}
    If $\mathcal{R}_{1,1}$ is in minimal position, then $M_{\mathcal{R}_{1,1}} = [1]$. Therefore, 
    \[
    M_{\mathcal{P} \otimes {\mathcal{R}_{1,1}}} = M_{\mathcal{P}} \otimes M_{\mathcal{R}_{1,1}} = M_{\mathcal{P}} \otimes [1] = [[M_\mathcal{P}]_{ij}[1]] = [[M_\mathcal{P}]_{ij}] = M_\mathcal{P}
    \]
    Thus $\mathcal{P} \otimes \mathcal{R}_{1,1} = \mathcal{P}$.
\end{proof}

An interesting question to ask now is how the tensor product $\mathcal{P} \otimes \mathcal{R}_{m,n}$ generalizes for arbitrary rectangle polyominoes of width $m$ and height $n$. We note that if $\mathcal{R}_{m,n}$ is in minimal position, then $M_{\mathcal{R}_{m,n}} \in \mathcal{M}_{n \times m}(\mathbb{Z}_2)$ with $[M_{\mathcal{R}_{m,n}}]_{i,j} = 1$ for all $i \in \{1,..,n\}$ and $j\in \{1,...,m\}$. Figure \ref{Fig:3.4} depicts a polyomino $\mathcal{P}$ in the coordinate plane $\mathbb{N}^2$ and Figure \ref{Fig:3.5} depicts $\mathcal{P} \otimes \mathcal{R}_{3,2}$.

\begin{figure} [H]
    \centering
    \begin{tikzpicture} [scale = 0.8]
        \draw[step=1cm,black,very thin, dashed] (0,0) grid (3.9,2.9);
        \draw[->, ultra thick](0,-0.5)--(0,3);
        \draw[->, ultra thick](-0.5,0)--(4,0);
        \polyomino[
        empty cell =x,
        grid,
        p={a}{style={lightgray,draw=black}},
        row sep =;
        ]{
        a x a;
        a a a
        }

        \fill(0,0) circle[radius=2pt] node[above left]{\footnotesize$(0,0)$};
        \fill(1,1) circle[radius=2pt] node[above right]{\footnotesize$(1,1)$};
                
        \begin{scope}[color=black]
            \node[anchor=center] () at (0.5,0.5){$A$};
        \end{scope}
    \end{tikzpicture}
    \caption{A polyomino $\mathcal{P}$ in the coordinate plane $\mathbb{N}^2$}
    \label{Fig:3.4}
\end{figure}

\begin{figure} [H] 
    \centering
    \begin{tikzpicture} [scale = 0.8]
        \draw[step=1cm,black,very thin, dashed] (0,0) grid (9.9,4.9);
        \draw[->, ultra thick](0,-0.5)--(0,5);
        \draw[->, ultra thick](-0.5,0)--(10,0);
        \polyomino[
        empty cell =x,
        grid,
        p={a}{style={lightgray,draw=black}},
        row sep =;
        ]{
        a a a x x x a a a;
        a a a x x x a a a;
        a a a a a a a a a;
        a a a a a a a a a
        }

        \fill(0,0) circle[radius=2pt] node[above left]{\footnotesize $(0,0)$};
        \fill(3,2) circle[radius=2pt] node[above right]{\footnotesize $(3,2)$};
        
    \end{tikzpicture}
    \caption{The polyomino $\mathcal{P} \otimes \mathcal{R}_{3,2}$}
    \label{Fig:3.5}
\end{figure}

By the above figure, it seems that the tensor product of a polyomino with a rectangle polyomino results in a stretching or dilation of the original polyomino, for intuitively we are replacing every cell in $\mathcal{P}$ with some cell interval of width $m$ and height $n$. We will soon show in Proposition \ref{Proposition: Dilations of polyos are polyos} that the tensor product $\mathcal{P} \otimes \mathcal{R}_{m,n}$ is always a polyomino. First, we introduce a map on $\mathbb{N}^2$ and, thus, implicitly a map on the vertices of a polyomino, that will allow us to label vertices in $\mathcal{P} \otimes \mathcal{R}_{m,n}$ in the natural way in which they correspond to vertices in $\mathcal{P}$.

\begin{definition}
    Let $m,n\in \mathbb{Z}^+$. We say that the map
    \begin{eqnarray*}
       \Delta_{m,n}: \mathbb{N}^2 & \rightarrow & \mathbb{N}^2\\
       (x,y) & \mapsto & (mx, ny)
    \end{eqnarray*} is the $(m,n)$\textit{-dilation map} on $\mathbb{N}^2$.
\end{definition}

If we have some collection of cells $\mathcal{Q} = \mathcal{P} \otimes \mathcal{R}_{m,n}$ for some polyomino $\mathcal{P}$, then we refer to $\mathcal{Q}$ as a \textit{dilation} of $\mathcal{P}$. As an example, looking back at Figure \ref{Fig:3.4} and Figure \ref{Fig:3.5}, we see that applying the above definition in labeling $\mathcal{P} \otimes \mathcal{R}_{m,n}$ gives that $\Delta_{3,2}((1,1)) = (3,2)$. Certainly, there is a natural correspondence between the vertex $(1,1)$ in $\mathcal{P}$ and the vertex $(3,2)$ in $\mathcal{P} \otimes \mathcal{R}_{m,n}$ and this is the reason we claim this map is the natural choice for this labeling. For ease of communication, we define the following notation.

\begin{definition}
    Let $[a,b]$ be an interval in $\mathbb{N}^2$ and $m,n\in \mathbb{Z}^+$. We use $\Delta_{m,n}([a,b])$ to denote the interval $[\Delta_{m,n}(a),\Delta_{m,n}(b)]$ in $\mathbb{N}^2$.
\end{definition}

\begin{remark}
    Note that if $\mathcal{P}$ is a polyomino and $H = [a,b]$ is a horizontal edge interval of $\mathcal{P}$ consisting of $k$ edges, then $\Delta_{m,n}(H)$ is a horizontal edge interval of $\mathcal{P} \otimes \mathcal{R}_{m,n}$ consisting of $mk$ edges. Similarly, if $V = [a,b]$ is a vertical edge interval of $\mathcal{P}$ consisting of $\ell$ edges, then $\Delta_{m,n}(V)$ is a vertical edge interval of $\mathcal{P} \otimes \mathcal{R}_{m,n}$ consisting of $n \ell$ edges. Thus, dilation preserves horizontal and vertical edge intervals. As well, if there is a vertex $v \in \mathcal{P} \otimes \mathcal{R}_{m,n}$ such that $v$ lies on $\Delta_{m,n}(H)$ and $v$ lies on $\Delta_{m,n}(V)$ for some horizontal edge interval $H$ and some vertical edge interval $V$ in $\mathcal{P}$, then $v = \Delta_{m,n}(w)$ for some $w \in \mathcal{P}$.
\end{remark} 

We now show that the dilation of any polyomino is a polyomino.

\begin{proposition} \label{Proposition: Dilations of polyos are polyos}
    Let $\mathcal{P}$ be a polyomino and let $\mathcal{R}_{m,n}$ be a rectangle polyomino for some $m,n\in \mathbb{Z}^+$. Then $\mathcal{Q} = \mathcal{P}\otimes\mathcal{R}_{m,n}$ is a polyomino.
\end{proposition}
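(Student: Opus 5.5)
We use the decomposition of Lemma \ref{Lemma:3.1.3}: $\mathcal{Q} = \bigcup_{C\in\mathcal{P}} C\otimes\mathcal{R}_{m,n}$. So it is enough to show two things. First, each block $C\otimes\mathcal{R}_{m,n}$ is a polyomino. Second, whenever $C,D\in\mathcal{P}$ share an edge, some cell of $C\otimes\mathcal{R}_{m,n}$ shares an edge with some cell of $D\otimes\mathcal{R}_{m,n}$. Granting these, take two cells $X,Y$ of $\mathcal{Q}$. They lie in blocks $C\otimes\mathcal{R}_{m,n}$ and $D\otimes\mathcal{R}_{m,n}$ for some $C,D\in\mathcal{P}$. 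Because $\mathcal{P}$ is a polyomino, there is a walk $C=C_1,\dots,C_t=D$ in $\mathcal{P}$ with $E(C_i)\cap E(C_{i+1})\neq\emptyset$. We build a walk in $\mathcal{Q}$ from $X$ to $Y$ by concatenation: walk inside $C_1\otimes\mathcal{R}_{m,n}$ to a cell adjacent to $C_2\otimes\mathcal{R}_{m,n}$, cross that shared edge, walk inside $C_2\otimes\mathcal{R}_{m,n}$, and so on. Every cell used lies in $\mathcal{Q}$, so $X$ and $Y$ are connected in $\mathcal{Q}$.

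\emph{Identifying the block.} Let $C=[(a_1,a_2),(a_1+1,a_2+1)]$. We show that $C\otimes\mathcal{R}_{m,n}$ is exactly the cell interval of width $m$ and height $n$ whose lower left cell has lower left corner $\Delta_{m,n}((a_1,a_2))=(ma_1,na_2)$. Its cells are those $[(x,y),(x+1,y+1)]$ with $ma_1\le x<m(a_1+1)$ and $na_2\le y<n(a_2+1)$. This is the computation already carried out in the proof of Lemma \ref{Lemma:Dilations preserve disjoint sets}. There, membership of a cell in $C\otimes\mathcal{Q}$ was shown to force exactly these inequalities on the matrix indices. Conversely, for $\mathcal{Q}=\mathcal{R}_{m,n}$ every entry of $M_{\mathcal{R}_{m,n}}$ equals $1$. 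So the formula $[A\otimes B]_{p(r-1)+u,\,q(s-1)+v}=[A]_{r,s}[B]_{u,v}$ shows that every index pair in the block corresponding to $C$ gives a $1$. Also $|C\otimes\mathcal{R}_{m,n}|=mn$ by Lemma \ref{Lemma: 3.1.1}, which confirms the count. A cell interval is a polyomino, as noted in Section \ref{Section:2}, so each block is connected.

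\emph{Adjacency of blocks.} Suppose $C$ and $D$ share an edge. Then, up to swapping $C$ and $D$, either $D$ has lower left corner $(a_1+1,a_2)$ or it has lower left corner $(a_1,a_2+1)$. In the horizontal case, take the cell of $C\otimes\mathcal{R}_{m,n}$ with lower left corner $(m(a_1+1)-1,\,na_2)$ and the cell of $D\otimes\mathcal{R}_{m,n}$ with lower left corner $(m(a_1+1),\,na_2)$. These two cells share the edge $\{(m(a_1+1),na_2),(m(a_1+1),na_2+1)\}$. The vertical case is symmetric, with the roles of $m$ and $n$ exchanged.

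The only real obstacle is the index bookkeeping in the identification step. The matrix is indexed top-down (row $i$ corresponds to height $\ell-i$), so we must check carefully that the block of $M_{\mathcal{P}}\otimes M_{\mathcal{R}_{m,n}}$ in block-row $\ell-a_2$ and block-column $a_1+1$ really corresponds to heights $na_2,\dots,na_2+n-1$. This follows by the same ceiling manipulations as in the proof of Lemma \ref{Lemma:Dilations preserve disjoint sets}, using that $\mathcal{R}_{m,n}$ is in minimal position.
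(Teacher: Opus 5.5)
Your proposal is correct and follows essentially the same route as the paper. Both decompose $\mathcal{Q}$ into the dilated cells $C\otimes\mathcal{R}_{m,n}$ via Lemma~\ref{Lemma:3.1.3}, use that each dilated cell is a cell interval (hence connected), and concatenate walks through edges shared by the dilated images of consecutive cells along a walk in $\mathcal{P}$. The differences are only in detail: you explicitly identify $C\otimes\mathcal{R}_{m,n}$ as the $m\times n$ cell interval with lower left corner $\Delta_{m,n}$ of the lower left corner of $C$, and you exhibit one specific shared edge. The paper merely asserts both facts, using the full dilated edge interval $\Delta_{m,n}(e_i)$ for the shared edge.
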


\begin{proof}
    We want to show that $\mathcal{Q}$ is a polyomino. We proceed to show that any two cells in $\mathcal{Q}$ are connected. Let $C,D \in \mathcal{Q}$ be arbitrary cells. Then $C \in A \otimes \mathcal{R}_{m,n}$ and $D \in B \otimes \mathcal{R}_{m,n}$ for some $A,B \in \mathcal{P}$ by Lemma \ref{Lemma:3.1.3}. As $\mathcal{P}$ is a polyomino then we know that $A$ and $B$ are connected in $\mathcal{P}$, that is, there exists some walk $\mathcal{W}: A=A_1, A_2,...,A_{r-1},A_r=B$ with $A_i \in \mathcal{P}$ for all $i \in \{2,...,r-1\}$. As consecutive cells in $\mathcal{W}$ are connected edge to edge, then there exist edges $e_1,e_2,...,e_{r-1}\in E(\mathcal{P})$ such that $e_i = A_i \cap A_{i+1}$ for all $i\in\{1,...,r-1\}$. Note that for this set of edges, if $e_i = [u_i,v_i]$ is a horizontal edge of $\mathcal{P}$ for some $u_i,v_i \in V(\mathcal{P})$, then $\Delta_{m,n}(e_i) = [\Delta_{m,n}(u_i), \Delta_{m,n}(v_i)]$ is a horizontal edge interval of $\mathcal{Q}$ that consists of $m$ edges. We define $H_{e_i} = \bigcup \limits_{j=1}^{m}{\varepsilon_{i,j}}$, with $\varepsilon_{i,j}\in E(\mathcal{Q})$, to be this horizontal edge interval $\Delta_{m,n}(e_i)$ in $\mathcal{Q}$. If rather, $e_i = [u_i,v_i]$ is a vertical edge of $\mathcal{P}$, then $\Delta_{m,n}(e_i)$ is a vertical edge interval of $\mathcal{Q}$ that consists of $n$ edges. We define $V_{e_i} = \bigcup \limits_{j=1}^{n}{\varepsilon_{i,j}}$ to be this vertical edge interval with all the $\varepsilon_{i,j}\in E(\mathcal{Q})$. As we had that each $e_i = A_i \cap A_{i+1}$, then we have that $\varepsilon_{i,j} = K_{i,j} \cap K_{i,j}'$ for some $K_{i,j} \in A_i \otimes \mathcal{R}_{m,n}$ and $K_{i,j}' \in A_{i+1} \otimes \mathcal{R}_{m,n}$. Note in particular, that since $K_{1,1} \in A_1 \otimes \mathcal{R}_{m,n} = A \otimes \mathcal{R}_{m,n}$, and $A \otimes \mathcal{R}_{m,n}$ is a cell interval and therefore a polyomino in $\mathbb{N}^2$, then as $C \in A \otimes \mathcal{R}_{m,n}$, we have the $C$ and $K_{1,1}$ are connected in $A \otimes \mathcal{R}_{m,n}$, so there exists a walk $\mathcal{W}_1 :C,C_{1,1},C_{1,2},...,C_{1,{t_1}},K_{1,1}$ for some cells $C_{1,1},...,C_{1,{t_1}}\in A_1 \otimes \mathcal{R}_{m,n}$ and some $t_1 \in \mathbb{N}$. By the same argument, there exists a walk $\mathcal{W}_2 :K_{1,1}',C_{2,1},C_{2,2},...,C_{2,{t_2}},K_{2,1}$ in $A_2 \otimes \mathcal{R}_{m,n}$. We therefore define the following collection of walks $\{\mathcal{W}_\ell\}_{\ell \in \{2,...,r-1\}}$, with $\mathcal{W}_{\ell}: K_{{(\ell -1)},1}',C_{\ell ,1},C_{\ell,2},...,C_{\ell,{t\ell}},K_{\ell,1}$ for some cells $C_{\ell,1},...,C_{\ell,{t_\ell}} \in A_\ell \otimes \mathcal{R}_{m,n}$ and some $t_\ell \in \mathbb{N}$. We also note that as both $K_{(r-1),1}'$ and $D$ are cells in $B \otimes \mathcal{R}_{m,n}$, then we have that there exists some walk $\mathcal{W}_{r}: K_{{(r -1)},1}',C_{r,1},C_{r,2},...,C_{r,{t_r}},D$ for some cells $C_{r,1},...,C_{r,{t_r}} \in B \otimes \mathcal{R}_{m,n}$ and some $t_r \in \mathbb{N}$. Thus we have that the concatenated walk $\mathcal{W}_{\Delta}: \mathcal{W}_1, \mathcal{W}_2,..., \mathcal{W}_{r-1}, \mathcal{W}_r$ is a walk from $C$ to $D$ in $\mathcal{Q}$, and so $C$ and $D$ are connected (see Figure \ref{Fig:Dilations preserve connectedness}). Since $C$ and $D$ were arbitrary cells in $\mathcal{Q}$, then we have that $\mathcal{Q}$ is a polyomino.
\end{proof}

 \begin{figure} [H]
    \centering
    \begin{tikzpicture}
        \draw[step=1cm,white,very thin] (0,0) grid (12,12);
        \draw [white, thin, fill = lightgray] (4,4) rectangle (5.6,8);
        \draw [white, thin, fill = lightgray] (6.4,4) rectangle (8,8);
        \draw [black, thin, fill = lightgray] (0,0) rectangle (4,8);
        \draw [black, thin, fill = lightgray] (8,4) rectangle (12,12);
        \polyomino[
        empty cell =x,
        grid,
        p={a}{style={lightgray,draw=black}},
        p={b}{style={gray,draw=black}},
        p={c}{style={cyan,draw=black}},
        p={d}{style={Thistle,draw=black}},
        row sep =;
        ]{
        x x x x x x x x x x x x;
        x x x x x x x x x b d x;
        x x x x x x x x x x x x;
        x x x x x x x x b b x x;
        x x x x x x x x b x x x;
        x x x x x x x x x x x x;
        x x x x x x x x b x x x;
        b b x b b x x b b x x x;
        b x x x x x x x x x x x;
        b x b x x x x x x x x x;
        x x c x x x x x x x x x;
        x x x x x x x x x x x x
        }

    \draw[black, thin] (0,4) -- (5.6,4);
    \draw[black, thin] (4,8) -- (5.6,8);
    \draw[black, thin] (6.4,4) -- (8,4);
    \draw[black, thin] (6.4,8) -- (12,8);

    \draw[red, ultra thick] (0,4) -- (4,4);
    \draw[blue, ultra thick] (4,4) -- (4,8);
    \draw[blue, ultra thick] (8,4) -- (8,8);
    \draw[red, ultra thick] (8,8) -- (12,8);

     \begin{scope}[color=black]
            \node[below] () at (0,0){$A \otimes \mathcal{R}_{m,n}$};
            \node[above] () at (0,8) {$A_2 \otimes \mathcal{R}_{m,n}$};
            \node[below] () at (12,4){$A_{r-1} \otimes \mathcal{R}_{m,n}$};
            \node[above] () at (12,12){$B \otimes \mathcal{R}_{m,n}$};

            \node[anchor=center] () at (6,6) {\huge $\cdot \cdot \cdot$};
            
            \node[anchor=center] () at (2.5,1.5) {$C$};
            \node[anchor=center] () at (2.5,2.5) {$C_{1,1}$};
            \node[anchor=center] () at (0.5,2.5) {$C_{1,{t_1}}$};
            \node[anchor=center] () at (0.5,3.5) {$K_{1,1}$};
            \node[anchor=center] () at (0.5,4.5) {$K_{1,1}'$};
            \node[anchor=center] () at (1.5,4.5) {$C_{2,1}$};
            \node[anchor=center] () at (3.5,4.5) {$K_{2,1}$};
            \node[anchor=center] () at (4.5,4.5) {$K_{2,1}'$};
            
            \node[anchor=center] () at (7.5,4.5) {\tiny $K_{(r-2),1}$};
            \node[anchor=center] () at (8.5,4.5) {\tiny $K_{(r-2),1}'$};
            \node[anchor=center] () at (8.5,5.5) {\tiny $C_{(r-1),1}$};
            \node[anchor=center] () at (8.5,7.5) {\tiny $K_{(r-1),1}$};
            \node[anchor=center] () at (8.5,8.5) {\tiny $K_{(r-1),1}'$};
            \node[anchor=center] () at (9.5,8.5) {$C_{r,1}$};
            \node[anchor=center] () at (9.5,10.5) {$C_{r,{t_r}}$};
            \node[anchor=center] () at (10.5,10.5) {$D$};

            \node[above] () at (1.5,2.5) {$\mathcal{W}_1$};
            \node[above] () at (2.5,4.5) {$\mathcal{W}_2$};
            \node[right] () at (8.5,6.5) {$\mathcal{W}_{r-1}$};
            \node[right] () at (9.5,9.5) {$\mathcal{W}_{r}$};
    \end{scope}

\begin{scope}[color=black]
            \node[left] () at (0,4) {\textcolor{red}{$H_{e_1}$}};
            \node[above] () at (4,8) {\textcolor{blue}{$V_{e_2}$}};
            \node[below] () at (8,4) {\textcolor{blue}{$V_{e_{r-2}}$}};
            \node[right] () at (12,8) {\textcolor{red}{$H_{e_{r-1}}$}};
\end{scope}
        
    \draw[dots]  (2.3,4.5)--(2.8,4.5);
    \draw[dots]  (5.1,4.5)--(5.6,4.5);
    \draw[dots]  (6.47,4.5)--(6.9,4.5);
    \draw[dots]  (1.3,2.5)--(1.8,2.5);
    \draw[dots]  (8.5,6.3)--(8.5,6.8);
    \draw[dots]  (9.5,9.3)--(9.5,9.8);
    
    \end{tikzpicture}
    \caption{$\mathcal{W}_\Delta: \mathcal{W}_1,\mathcal{W}_2,..., \mathcal{W}_{r-1}, \mathcal{W}_r$ is a walk that connects $C$ and $D$ in $\mathcal{Q}$.}
    \label{Fig:Dilations preserve connectedness}
\end{figure}

\begin{corollary} \label{Cor: dilations preserve connectedness}
    Let $\mathcal{P}$ be a polyomino and let  $A, B \in \mathcal{P}$ be cells that are connected in $\mathcal{P}$. If $A' \in A \otimes \mathcal{R}_{m,n}$ and $B' \in B \otimes \mathcal{R}_{m,n}$, then $A'$ and $B'$ are connected in $\mathcal{P} \otimes \mathcal{R}_{m,n}$.
\end{corollary}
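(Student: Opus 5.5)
The corollary follows from the walk construction in the proof of Proposition \ref{Proposition: Dilations of polyos are polyos}; we only need to check that the argument there never used that the endpoint cells were arbitrary cells of $\mathcal{Q}$. Let $A'\in A\otimes\mathcal{R}_{m,n}$ and $B'\in B\otimes\mathcal{R}_{m,n}$, where $A,B\in\mathcal{P}$ are connected in $\mathcal{P}$. By Lemma \ref{Lemma:Tensor products preserve subpolyominoes} (and Lemma \ref{Lemma:3.1.3}), both $A'$ and $B'$ are cells of $\mathcal{Q}=\mathcal{P}\otimes\mathcal{R}_{m,n}$.

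First, fix a walk $A=A_1,\dots,A_r=B$ in $\mathcal{P}$ witnessing connectedness, and let $e_i=A_i\cap A_{i+1}$ be the shared edges. As in the proof of the proposition, each $\Delta_{m,n}(e_i)$ is an edge interval of $\mathcal{Q}$ made of $m$ (horizontal) or $n$ (vertical) unit edges. Pick one such unit edge $\varepsilon_{i,1}$; it is shared by a cell $K_{i,1}\in A_i\otimes\mathcal{R}_{m,n}$ and a cell $K'_{i,1}\in A_{i+1}\otimes\mathcal{R}_{m,n}$.

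Second, each block $A_i\otimes\mathcal{R}_{m,n}$ is a cell interval, hence a polyomino contained in $\mathcal{Q}$. Inside it we can walk from $K'_{i-1,1}$ to $K_{i,1}$. At the ends, we walk from $A'$ to $K_{1,1}$ inside $A\otimes\mathcal{R}_{m,n}$, and from $K'_{r-1,1}$ to $B'$ inside $B\otimes\mathcal{R}_{m,n}$. Concatenating these walks gives a walk $\mathcal{W}_\Delta$ from $A'$ to $B'$ all of whose cells lie in $\mathcal{Q}$.

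The main point to verify is that $K_{i,1}$ and $K'_{i,1}$ really lie in the blocks of $A_i$ and $A_{i+1}$ respectively, and are adjacent across $\varepsilon_{i,1}$. This is a coordinate check. The cells of $A_i\otimes\mathcal{R}_{m,n}$ are exactly the cells with lower left corner in $\Delta_{m,n}(a_i)+\{0,\dots,m-1\}\times\{0,\dots,n-1\}$, where $a_i$ is the lower left corner of $A_i$. This description follows from the Kronecker index formula, as in the proof of Lemma \ref{Lemma:Dilations preserve disjoint sets}. Two adjacent cells $A_i,A_{i+1}$ have corners differing by a unit vector, so their blocks abut along $\Delta_{m,n}(e_i)$. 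Once this is checked, the result is immediate.
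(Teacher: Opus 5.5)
Your proposal is correct and is essentially the paper's argument. The paper views the walk $\mathcal{W}$ from $A$ to $B$ as a polyomino and applies Proposition \ref{Proposition: Dilations of polyos are polyos} to conclude that $\mathcal{W}\otimes\mathcal{R}_{m,n}$ is a polyomino containing $A'$ and $B'$ (implicitly, it also lies inside $\mathcal{P}\otimes\mathcal{R}_{m,n}$). You instead rerun that proposition's walk-lifting construction with the endpoints fixed, and you add the explicit coordinate check that the dilated blocks of adjacent cells abut along $\Delta_{m,n}(e_i)$, which the paper's proof of the proposition leaves implicit.
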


\begin{proof}
    If $A$ and $B$ are connected, then there is a walk between them, say $\mathcal{W}$. As $\mathcal{W}$ is a connected collection of cells, then it is a polyomino. By Proposition \ref{Proposition: Dilations of polyos are polyos}, we have that $\mathcal{W} \otimes \mathcal{R}_{m,n}$ is a polyomino. So as $A' \in A \otimes \mathcal{R}_{m,n} \subseteq \mathcal{W} \otimes \mathcal{R}_{m,n}$ and $B' \in B \otimes \mathcal{R}_{m,n} \subseteq \mathcal{W} \otimes \mathcal{R}_{m,n}$, then $A'$ and $B'$ are connected.
\end{proof}

\begin{corollary} 
     \label{Cor: dilations preserve unconnectedness}
    Let $\mathcal{P}$ be a polyomino and let $F$ and $G$ be two cells not in $\mathcal{P}$ such that there is a walk of cells not in $\mathcal{P}$ between them. Let $m,n \in \mathbb{Z}^+$. If $F' \in F \otimes \mathcal{R}_{m,n}$ and $G' \in G \otimes \mathcal{R}_{m,n}$, then there exists a walk of cells not in $\mathcal{P} \otimes \mathcal{R}_{m,n}$ between them.
\end{corollary}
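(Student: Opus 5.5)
The plan mirrors the proof of Proposition \ref{Proposition: Dilations of polyos are polyos}, applied to the complement of $\mathcal{P}$ rather than to $\mathcal{P}$ itself. Let $\mathcal{W}: F=F_1,F_2,\dots,F_r=G$ be a walk with $F_i\notin\mathcal{P}$ for all $i$. The first step is to verify that for any cell $F_i\notin\mathcal{P}$, every cell of $F_i\otimes\mathcal{R}_{m,n}$ lies outside $\mathcal{P}\otimes\mathcal{R}_{m,n}$. By Lemma \ref{Lemma:3.1.3}, $\mathcal{P}\otimes\mathcal{R}_{m,n}=\bigcup_{C\in\mathcal{P}}C\otimes\mathcal{R}_{m,n}$. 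Since $F_i\neq C$ for every $C\in\mathcal{P}$, the first part of Lemma \ref{Lemma:Dilations preserve disjoint sets} gives $|(F_i\otimes\mathcal{R}_{m,n})\cap(C\otimes\mathcal{R}_{m,n})|=0$ for each such $C$. Hence $F_i\otimes\mathcal{R}_{m,n}$ is disjoint from $\mathcal{P}\otimes\mathcal{R}_{m,n}$.

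To apply that lemma, I will form the two-cell collection $\mathcal{P}\cup\{F_i\}$, or simply the block $\{C,F_i\}$, and place it inside a common ambient collection. This is needed because the tensor product is defined through binary matrices with a fixed supremum. Lemma \ref{Lemma:Tensor products preserve subpolyominoes} guarantees that the dilation of a single cell is independent of the ambient collection; by the block formula, $C\otimes\mathcal{R}_{m,n}$ is the cell interval $\Delta_{m,n}(C)$. I expect the cleanest route is to state explicitly that for any cell $C=[a,a+(1,1)]$ we have $C\otimes\mathcal{R}_{m,n}=\Delta_{m,n}(C)$, a block of $mn$ cells, and that these blocks are pairwise disjoint for distinct $C$. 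This is exactly what the computation in Lemma \ref{Lemma:Dilations preserve disjoint sets} shows.

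Next, I will build the walk. Consecutive cells $F_i,F_{i+1}$ share an edge $e_i$. Exactly as in the proof of Proposition \ref{Proposition: Dilations of polyos are polyos}, $\Delta_{m,n}(e_i)$ is an edge interval of $m$ or $n$ unit edges, each shared by a cell $K_i\in F_i\otimes\mathcal{R}_{m,n}$ and a cell $K_i'\in F_{i+1}\otimes\mathcal{R}_{m,n}$. Each block $F_i\otimes\mathcal{R}_{m,n}$ is a rectangle, so any two of its cells are joined by a walk inside that block, and by the first step this walk avoids $\mathcal{P}\otimes\mathcal{R}_{m,n}$. I will concatenate the following walks:
\begin{itemize}
\item $F'$ to $K_1$ inside $F_1\otimes\mathcal{R}_{m,n}$;
\item $K_{i-1}'$ to $K_i$ inside $F_i\otimes\mathcal{R}_{m,n}$, for each $i$;
\item $K_{r-1}'$ to $G'$ inside $F_r\otimes\mathcal{R}_{m,n}$.
\end{itemize}
The result is a walk from $F'$ to $G'$ all of whose cells lie outside $\mathcal{P}\otimes\mathcal{R}_{m,n}$.

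The main obstacle is bookkeeping rather than combinatorics. The tensor product was defined only for polyominoes, with matrices depending on the supremum, whereas $F_i$ lies outside $\mathcal{P}$ and possibly beyond its bounding box. To handle this, I would work inside the polyomino $\mathcal{P}\cup\mathcal{W}$, which is connected because $\mathcal{W}$ is a walk adjacent to cells of $\mathcal{P}$, or otherwise in a large rectangle containing both. Its dilation contains all the relevant blocks consistently by Lemma \ref{Lemma:Tensor products preserve subpolyominoes} and Corollary \ref{Cor: Union of tensor = tensor of union}, and disjointness then follows from the second part of Lemma \ref{Lemma:Dilations preserve disjoint sets}.
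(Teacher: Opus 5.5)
Your argument is correct and is essentially the paper's proof with the proof of Proposition~\ref{Proposition: Dilations of polyos are polyos} written out by hand. The paper regards the walk $\mathcal{W}$ as a polyomino disjoint from $\mathcal{P}$, cites that proposition to get that $\mathcal{W}\otimes\mathcal{R}_{m,n}$ is a polyomino, and uses the second part of Lemma~\ref{Lemma:Dilations preserve disjoint sets}. You instead rebuild the concatenated walk directly and apply disjointness cell by cell.

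One small correction: $\mathcal{P}\cup\mathcal{W}$ need not be connected, since the walk may lie entirely in the exterior far from $\mathcal{P}$. Drop that option and rely on your explicit identity $C\otimes\mathcal{R}_{m,n}=\Delta_{m,n}(C)$, or on a large ambient rectangle, either of which settles the bookkeeping by itself.
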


\begin{proof} 
    Let $\mathcal{W}$ denote the walk of cells not in $\mathcal{P}$ between $F$ and $G$. Then we can think of $\mathcal{W}$ as a polyomino with $|\mathcal{W} \cap \mathcal{P}| = 0$. By Proposition \ref{Proposition: Dilations of polyos are polyos}, we have that $\mathcal{W} \otimes \mathcal{R}_{m,n}$ is a polyomino, and so by Lemma \ref{Lemma:Dilations preserve disjoint sets}, $|(\mathcal{W} \otimes \mathcal{R}_{m,n}) \cap (\mathcal{P} \otimes \mathcal{R}_{m,n})| = 0$. So as $F' \in F \otimes \mathcal{R}_{m,n} \subseteq \mathcal{W} \otimes \mathcal{R}_{m,n}$ and $G' \in G \otimes \mathcal{R}_{m,n} \subseteq \mathcal{W} \otimes \mathcal{R}_{m,n}$, then $F'$ and $G'$ are connected by a walk of cells in $\mathcal{W} \otimes \mathcal{R}_{m,n}$, with every cell in the walk not in $\mathcal{P} \otimes \mathcal{R}_{m,n}$.
\end{proof}

\begin{corollary} \label{Cor:Simple implies dilation simple}
    Let $\mathcal{P}$ be a polyomino and let $\mathcal{Q} = \mathcal{P} \otimes \mathcal{R}_{m,n}$ for some $m,n \in \mathbb{Z}^+$. If $\mathcal{P}$ is simple, then $\mathcal{Q} = \mathcal{P} \otimes \mathcal{R}_{m,n}$ is simple.
\end{corollary}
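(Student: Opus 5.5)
We will show directly from the definition of simplicity that any two cells $F', G' \notin \mathcal{Q}$ are joined by a walk of cells outside $\mathcal{Q}$. The central tool is Corollary \ref{Cor: dilations preserve unconnectedness}, and the work is to reduce an arbitrary cell of $\mathbb{N}^2$ to a dilated block.

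\textbf{Step 1: every cell lies in a unique dilated block.} Let $F' = [(x,y),(x+1,y+1)]$ be a cell of $\mathbb{N}^2$ with $F' \notin \mathcal{Q}$. Set $F = [(\lfloor x/m \rfloor, \lfloor y/n \rfloor), (\lfloor x/m \rfloor +1, \lfloor y/n \rfloor +1)]$. Then $F' \in F \otimes \mathcal{R}_{m,n}$, i.e.\ $F'$ lies in the $m\times n$ block $\Delta_{m,n}(F)$. This uses the explicit index formula for the Kronecker product, exactly as in the proof of Lemma \ref{Lemma:Dilations preserve disjoint sets}; there the position of the block is read off from $\lceil s/m\rceil$ and $\lceil r/n\rceil$.

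\textbf{Step 2: $F'\notin\mathcal{Q}$ forces $F\notin\mathcal{P}$.} Suppose instead $F \in \mathcal{P}$. By Lemma \ref{Lemma:Tensor products preserve subpolyominoes} (or Lemma \ref{Lemma:3.1.3}), $F \otimes \mathcal{R}_{m,n} \subseteq \mathcal{Q}$, so $F' \in \mathcal{Q}$, a contradiction. Hence $F \notin \mathcal{P}$. The same argument applied to $G'$ gives a cell $G \notin \mathcal{P}$ with $G' \in G\otimes\mathcal{R}_{m,n}$.

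\textbf{Step 3: conclude.} If $F = G$, then $F'$ and $G'$ both lie in the rectangle $F\otimes \mathcal{R}_{m,n}$. Every cell of this rectangle is outside $\mathcal{Q}$ by Lemma \ref{Lemma:Dilations preserve disjoint sets}, and the rectangle is connected, so we get a walk outside $\mathcal{Q}$. If $F \neq G$, simplicity of $\mathcal{P}$ gives a walk of cells not in $\mathcal{P}$ from $F$ to $G$. Corollary \ref{Cor: dilations preserve unconnectedness} then provides a walk of cells not in $\mathcal{Q}$ from $F'$ to $G'$. Hence $\mathcal{Q}$ is simple; it is a polyomino by Proposition \ref{Proposition: Dilations of polyos are polyos}.

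\textbf{Main obstacle.} The delicate point is Step 1. The binary matrix formalism only indexes cells inside the bounding box $[0,mk]\times[0,n\ell]$, so cells of $\mathbb{N}^2$ far outside need care. For such cells we would argue that $F$ also lies outside the bounding box of $\mathcal{P}$, hence $F \notin \mathcal{P}$, and that the block $F\otimes\mathcal{R}_{m,n}$ is still a well-defined rectangle disjoint from $\mathcal{Q}$. We would handle this by noting that cells of $\mathcal{Q}$ correspond exactly to nonzero entries of $M_\mathcal{P}\otimes M_{\mathcal{R}_{m,n}}$; any cell beyond the matrix range is therefore automatically absent from $\mathcal{Q}$, and likewise its parent cell $F$ is absent from $\mathcal{P}$.
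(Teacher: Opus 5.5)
Your proof is correct and follows essentially the same route as the paper: lift $F',G'\notin\mathcal{Q}$ to parent cells $F,G\notin\mathcal{P}$, use simplicity of $\mathcal{P}$ to connect $F$ and $G$ outside $\mathcal{P}$, and transport that walk to $\mathcal{Q}$ via Corollary~\ref{Cor: dilations preserve unconnectedness}. Three of your steps tighten points the paper only handles through a loose appeal to Lemma~\ref{Lemma:3.1.3}: the explicit floor-division parent cell, the separate case $F=G$, and the treatment of cells outside the bounding box.
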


\begin{proof}
    Assume $\mathcal{P}$ is simple. Let $F',G' \notin \mathcal{Q}$ be arbitrary. By Lemma \ref{Lemma:3.1.3}, we have that $F' \in F \otimes \mathcal{R}_{m,n}$ and $G' \in G \otimes \mathcal{R}_{m,n}$, for some $F,G \notin \mathcal{P}$, for if it were the case that $F,G \in \mathcal{P}$, then $F \otimes \mathcal{R}_{m,n} \subseteq \mathcal{Q}$ and $F \otimes \mathcal{R}_{m,n} \subseteq \mathcal{Q}$ by Lemma \ref{Lemma:Tensor products preserve subpolyominoes}, so therefore $F',G' \in \mathcal{Q}$, but this is a contradiction. As $F,G \notin \mathcal{P}$ and $\mathcal{P}$ is simple, then there exists a walk of cells not in $\mathcal{P}$ between $F$ and $G$. By Corollary \ref{Cor: dilations preserve unconnectedness}, we have that there must exist a walk of cells not in $\mathcal{Q}$ between $F'$ and $G'$. As $F',G' \notin \mathcal{Q}$ were arbitrary, then it must be that $\mathcal{Q}$ is simple.
\end{proof}

Now that we have proven that any dilation $\mathcal{P} \otimes \mathcal{R}_{m,n}$ of a polyomino $\mathcal{P}$ is itself a polyomino, we proceed to state and prove some useful lemmas that give insight into its structure and that will help us prove our main results in Section \ref{Section:4}.

\begin{lemma} \label{Lemma:Dilated vertices are in dilation}
    Let $\mathcal{P}$ be a polyomino and let $\mathcal{R}_{m,n}$ be a rectangle polyomino for some $m,n\in \mathbb{Z}^+$. Then $\Delta_{m,n}(V(\mathcal{P})) \subseteq V(\mathcal{P}\otimes\mathcal{R}_{m,n})$.
\end{lemma}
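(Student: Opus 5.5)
Fix an arbitrary vertex $v \in V(\mathcal{P})$; the plan is to exhibit a cell of $\mathcal{P}\otimes\mathcal{R}_{m,n}$ that has $\Delta_{m,n}(v)$ as one of its vertices. Since $V(\mathcal{P}) = \bigcup_{C\in\mathcal{P}} V(C)$, there is a cell $A\in\mathcal{P}$ with $v \in V(A)$. Write $A = [(a_1,a_2),(a_1+1,a_2+1)]$. Then $v = (a_1+\epsilon_1, a_2+\epsilon_2)$ for some $\epsilon_1,\epsilon_2 \in \{0,1\}$, so $\Delta_{m,n}(v) = (ma_1+m\epsilon_1, na_2+n\epsilon_2)$.

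Next I would identify the block $A\otimes\mathcal{R}_{m,n}$ explicitly. By Lemma \ref{Lemma:3.1.3} and Lemma \ref{Lemma:Tensor products preserve subpolyominoes}, it is contained in $\mathcal{P}\otimes\mathcal{R}_{m,n}$. Its cells are exactly the cells with lower left corner $(ma_1+i, na_2+j)$ for $0\le i\le m-1$ and $0\le j\le n-1$. This is the same index computation already carried out in the proof of Lemma \ref{Lemma:Dilations preserve disjoint sets}: a cell $C=[(s-1,n\ell-r),(s,n\ell-r+1)]$ lies in $A\otimes\mathcal{R}_{m,n}$ iff $ma_1 \le s-1 < m(a_1+1)$ and $na_2 \le n\ell-r < n(a_2+1)$. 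Since every entry of $M_{\mathcal{R}_{m,n}}$ equals $1$, every such position carries a $1$ in the Kronecker product. So $A\otimes\mathcal{R}_{m,n}$ is the full cell interval with lower left corner $(ma_1,na_2)$ and upper right corner $(ma_1+m, na_2+n)$.

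Finally I would choose the corner cell $C$ of this block according to $(\epsilon_1,\epsilon_2)$. Its lower left corner is $(ma_1+(m-1)\epsilon_1,\, na_2+(n-1)\epsilon_2)$. Then $\Delta_{m,n}(v)$ is the vertex $(ma_1+(m-1)\epsilon_1+\epsilon_1,\, na_2+(n-1)\epsilon_2+\epsilon_2)$ of $C$, so $\Delta_{m,n}(v)\in V(C)\subseteq V(\mathcal{P}\otimes\mathcal{R}_{m,n})$. Since $v$ was arbitrary, the inclusion follows.

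The only real obstacle is the bookkeeping between matrix indices and coordinates, i.e.\ that the block of $M_{\mathcal{P}}\otimes M_{\mathcal{R}_{m,n}}$ for entry $(\ell-a_2, a_1+1)$ of $M_\mathcal{P}$ corresponds to the rectangle $[(ma_1,na_2),(ma_1+m,na_2+n)]$. I would handle this by invoking the formula $[A\otimes B]_{p(r-1)+u,\,q(s-1)+v}=[A]_{r,s}[B]_{u,v}$ with $p=n$, $q=m$, converting row indices back to $y$-coordinates via the sup of the dilated polyomino, which is $(mk,n\ell)$.
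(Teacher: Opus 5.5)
Your proof is correct and essentially follows the paper's route. The paper likewise uses a cell of $\mathcal{P}$ having $v=(a,b)$ as a corner and checks, via $[M_\mathcal{P}\otimes M_{\mathcal{R}_{m,n}}]_{n(r-1)+w,\,m(s-1)+z}=[M_\mathcal{P}]_{r,s}[M_{\mathcal{R}_{m,n}}]_{w,z}$, that the correspondingly placed cell at $\Delta_{m,n}(v)=(ma,nb)$ lies in the dilation; it handles all four cells around $v$ at once through a sum of matrix entries, whereas you fix one cell $A$ and take the matching corner cell of the full block $A\otimes\mathcal{R}_{m,n}$, which is exactly one of those four cells.
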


\begin{proof}
    Let $v \in V(\mathcal{P})$ be arbitrary with $v = (a,b)$. We let $\sup V(\mathcal{P}) = (k, \ell)$ for some $k,\ell \in \mathbb{N}$, therefore $M_\mathcal{P} \in \mathcal{M}_{\ell \times k}(\mathbb{Z}_2)$. If $v \in V(\mathcal{P})$, then at least one of the four cells in $\mathbb{N}^2$ that has $v$ as a corner is in $\mathcal{P}$, that is $[v,v+(1,1)],[v-(1,0),v+(0,1)],[v-(1,1),v]$, or $[v-(0,1),v+(1,0)]$ is a cell of $\mathcal{P}$. Equivalently, this means that:
    \[
    [M_\mathcal{P}]_{\ell - b, a+1}+[M_\mathcal{P}]_{\ell - b, a}+[M_\mathcal{P}]_{\ell - b+1, a}+[M_\mathcal{P}]_{\ell - b+1, a+1} \geq 1,
    \]
    If it is not the case that $\ell - b, \ell - b + 1 \in \{1,...,\ell\}$ and $a,a+1 \in \{1,...,k\}$, then we will still have that at least one of the summands is nonzero, so we can simply ignore the summands $[M_\mathcal{P}]_{i,j}$ whose indices $i,j$ are outside the dimensions of $M_\mathcal{P}$. We wish to show that $\Delta_{m,n}(V(\mathcal{P})) \subseteq V(\mathcal{P}\otimes\mathcal{R}_{m,n})$. As $v \in V(\mathcal{P})$ was arbitrary, this is equivalent to showing that $\Delta_{m,n}(v) \in V(\mathcal{P}\otimes\mathcal{R}_{m,n})$. We have $\Delta_{m,n}(v) = (ma,nb)$. Since $M_\mathcal{P} \in \mathcal{M}_{\ell \times k}(\mathbb{Z}_2)$ and $M_{\mathcal{R}_{m,n}} \in \mathcal{M}_{n \times m}(\mathbb{Z}_2)$, then $M_{\mathcal{P} \otimes \mathcal{R}_{m,n}} \in \mathcal{M}_{n\ell \times mk}(\mathbb{Z}_2)$. To show that $\Delta_{m,n}(v) = (ma,nb)\in V(\mathcal{P}\otimes\mathcal{R}_{m,n})$, we proceed to show at least one of the four cells with $\Delta_{m,n}(v)$ as a corner is in $\mathcal{P}\otimes\mathcal{R}_{m,n}$, or equivalently, that
    \begin{multline*}
    [M_{\mathcal{P}\otimes \mathcal{R}_{m,n}}]_{n\ell - nb, ma+1}+[M_{\mathcal{P}\otimes \mathcal{R}_{m,n}}]_{n\ell - nb, ma}
    +[M_{\mathcal{P}\otimes \mathcal{R}_{m,n}}]_{n\ell - nb+1, ma}\\+[M_{\mathcal{P}\otimes \mathcal{R}_{m,n}}]_{n\ell - nb+1, ma+1} \geq 1,
    \end{multline*}
    where again, we ignore the summands in the inequation $[M_{\mathcal{P}\otimes \mathcal{R}_{m,n}}]_{i,j}$ whose indices $i,j$ are outside the dimensions of $M_{\mathcal{P}\otimes \mathcal{R}_{m,n}}$. By the definition of the Kronecker product we have that
    \[
    [M_{\mathcal{P} \otimes \mathcal{R}_{m,n}}]_{n(r-1)+w,m(s-1)+z} = [M_\mathcal{P} \otimes M_{\mathcal{R}_{m,n}}]_{n(r-1)+w,m(s-1)+z} = [M_\mathcal{P}]_{rs}[ M_{\mathcal{R}_{m,n}}]_{wz}
    \]
    Therefore,
    \begin{align*}
    [M_{\mathcal{P}\otimes \mathcal{R}_{m,n}}]_{n\ell - nb, ma+1} &= [M_\mathcal{P} \otimes M_{\mathcal{R}_{m,n}}]_{n\ell - nb, ma+1} \\
    &= [M_\mathcal{P} \otimes M_{\mathcal{R}_{m,n}}]_{n(\ell - b-1)+n, ma+1} \\
    &= [M_\mathcal{P}]_{\ell -b, a+1}\cdot[M_{\mathcal{R}_{m,n}}]_{n,1}\\
    &= [M_\mathcal{P}]_{\ell -b, a+1} \cdot 1 \\
    &= [M_\mathcal{P}]_{\ell -b, a+1}
    \end{align*}
    \begin{align*}
    [M_{\mathcal{P}\otimes \mathcal{R}_{m,n}}]_{n\ell - nb, ma} &= [M_\mathcal{P} \otimes M_{\mathcal{R}_{m,n}}]_{n\ell - nb, ma} \\
    &= [M_\mathcal{P} \otimes M_{\mathcal{R}_{m,n}}]_{n(\ell - b-1)+n, m(a-1)+m} \\
    &= [M_\mathcal{P}]_{\ell -b, a}\cdot[M_{\mathcal{R}_{m,n}}]_{n,m}\\
    &= [M_\mathcal{P}]_{\ell -b, a} \cdot 1 \\
    &= [M_\mathcal{P}]_{\ell -b, a}
    \end{align*}
    \begin{align*}
    [M_{\mathcal{P}\otimes \mathcal{R}_{m,n}}]_{n\ell - nb+1, ma} &= [M_\mathcal{P} \otimes M_{\mathcal{R}_{m,n}}]_{n\ell - nb+1, ma} \\
    &= [M_\mathcal{P} \otimes M_{\mathcal{R}_{m,n}}]_{n(\ell - b)+1, m(a-1)+m} \\
    &= [M_\mathcal{P}]_{\ell -b+1, a}\cdot[M_{\mathcal{R}_{m,n}}]_{1,m}\\
    &= [M_\mathcal{P}]_{\ell -b+1, a} \cdot 1 \\
    &= [M_\mathcal{P}]_{\ell -b+1, a}
    \end{align*} and
    \begin{align*}
    [M_{\mathcal{P}\otimes \mathcal{R}_{m,n}}]_{n\ell - nb+1, ma+1} &= [M_\mathcal{P} \otimes M_{\mathcal{R}_{m,n}}]_{n\ell - nb+1, ma+1} \\
    &= [M_\mathcal{P} \otimes M_{\mathcal{R}_{m,n}}]_{n(\ell - b)+1, ma+1} \\
    &= [M_\mathcal{P}]_{\ell -b+1, a+1}\cdot[M_{\mathcal{R}_{m,n}}]_{1,1}\\
    &= [M_\mathcal{P}]_{\ell -b+1, a+1} \cdot 1 \\
    &= [M_\mathcal{P}]_{\ell -b+1, a+1}
    \end{align*}
Thus,
 \begin{multline*}
    [M_{\mathcal{P}\otimes \mathcal{R}_{m,n}}]_{n\ell - nb, ma+1}+[M_{\mathcal{P}\otimes \mathcal{R}_{m,n}}]_{n\ell - nb, ma}
    +[M_{\mathcal{P}\otimes \mathcal{R}_{m,n}}]_{n\ell - nb+1, ma}\\+[M_{\mathcal{P}\otimes \mathcal{R}_{m,n}}]_{n\ell - nb+1, ma+1} \\= [M_\mathcal{P}]_{\ell - b, a+1}+[M_\mathcal{P}]_{\ell - b, a}+[M_\mathcal{P}]_{\ell - b+1, a}+[M_\mathcal{P}]_{\ell - b+1, a+1} \geq 1
    \end{multline*}
    and therefore $\Delta_{m,n}(v) \in V(\mathcal{P}\otimes\mathcal{R}_{m,n})$.
\end{proof}

\begin{lemma} \label{Lemma: Dilations preserve inner intervals}
    Let $\mathcal{P}$ be a polyomino with $u,v \in V(\mathcal{P})$ and let $\mathcal{I} = [u,v]$ be an inner interval of $\mathcal{P}$. Then $\mathcal{I} \otimes \mathcal{R}_{m,n} = [\Delta_{m,n}(u),\Delta_{m,n}(v)]$ is an inner interval of $\mathcal{P} \otimes \mathcal{R}_{m,n}$ for any $m,n \in \mathbb{Z}^+$.
\end{lemma}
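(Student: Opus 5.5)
We plan to argue in two steps: first identify the collection of cells $\mathcal{I} \otimes \mathcal{R}_{m,n}$ with the cell interval attached to $[\Delta_{m,n}(u),\Delta_{m,n}(v)]$, and then show that this cell interval sits inside $\mathcal{P} \otimes \mathcal{R}_{m,n}$ with both corners among its vertices. Here we regard $\mathcal{I}$ as the rectangle polyomino attached to $[u,v]$, placed in its actual position in $\mathbb{N}^2$ (not in minimal position), so that it is a subpolyomino of $\mathcal{P}$ because $[u,v]$ is inner.

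For the identification, write $u=(a_1,a_2)$ and $v=(b_1,b_2)$ with $a_1<b_1$ and $a_2<b_2$. The cells of $\mathcal{I}$ are exactly the cells $C_{x,y}=[(x,y),(x+1,y+1)]$ with $a_1\le x\le b_1-1$ and $a_2\le y\le b_2-1$. By Lemma \ref{Lemma:3.1.3}, $\mathcal{I}\otimes\mathcal{R}_{m,n}=\bigcup_{C\in\mathcal{I}} C\otimes\mathcal{R}_{m,n}$. Using the Kronecker formula $[A \otimes B]_{p(r-1)+u,q(s-1)+v} = [A]_{r,s}[B]_{u,v}$, with $[M_{\mathcal{R}_{m,n}}]_{u,v}=1$ for every entry, we get that $C_{x,y}\otimes\mathcal{R}_{m,n}$ is exactly the cell interval of width $m$ and height $n$ with lower left cell at $(mx,ny)$. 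This is the computation already carried out for single cells in the proof of Lemma \ref{Lemma:Dilations preserve disjoint sets}, there in the form of the inequalities $ma_1\le s-1<m(a_1+1)$. Taking the union over all $x$ and $y$ tiles the cell interval whose lower left cell is at $(ma_1,na_2)$ and whose upper right cell is at $(mb_1-1,nb_2-1)$. This tiled region is precisely the polyomino attached to $[\Delta_{m,n}(u),\Delta_{m,n}(v)]=[(ma_1,na_2),(mb_1,nb_2)]$. Since $m,n\ge1$, we also have $\Delta_{m,n}(u)<\Delta_{m,n}(v)$, so this interval is proper.

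For the second step, Lemma \ref{Lemma:Tensor products preserve subpolyominoes} applied to $\mathcal{I}\subseteq\mathcal{P}$ gives $\mathcal{I}\otimes\mathcal{R}_{m,n}\subseteq\mathcal{P}\otimes\mathcal{R}_{m,n}$. Proposition \ref{Proposition: Dilations of polyos are polyos} shows that $\mathcal{I}\otimes\mathcal{R}_{m,n}$ is a polyomino; alternatively, this is immediate because it is a cell interval. Hence it is a subpolyomino of $\mathcal{P}\otimes\mathcal{R}_{m,n}$. Finally, Lemma \ref{Lemma:Dilated vertices are in dilation} gives $\Delta_{m,n}(u),\Delta_{m,n}(v)\in V(\mathcal{P}\otimes\mathcal{R}_{m,n})$. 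By the definition of an inner interval, $[\Delta_{m,n}(u),\Delta_{m,n}(v)]$ is therefore inner.

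The main obstacle is the index bookkeeping in the first step. The matrix $M_{\mathcal{I}}$ depends on $\sup V(\mathcal{I})=v$, while $M_{\mathcal{P}}$ depends on $\sup V(\mathcal{P})$, and rows are counted from the top. We will handle this by working only with absolute coordinates of lower left corners, converting from matrix indices via $[M]_{i,j}\leftrightarrow[(j-1,\ell-i),(j,\ell-i+1)]$ exactly as in Lemma \ref{Lemma:Tensor products preserve subpolyominoes}. In these coordinates the vertical offset $n\ell-nb$ cancels consistently, and one checks that the cell of $\mathcal{I}\otimes\mathcal{R}_{m,n}$ in row $i$ and column $j$ has lower left corner $(j-1,\,nb_2-i)$, which lies in the claimed range precisely when $[M_{\mathcal{I}}]_{\lceil i/n\rceil,\lceil j/m\rceil}=1$.
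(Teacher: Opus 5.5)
Your proof is correct, but it is organized differently from the paper's. The paper argues directly on $M_{\mathcal{P}}\otimes M_{\mathcal{R}_{m,n}}$. Assuming some cell of $[\Delta_{m,n}(u),\Delta_{m,n}(v)]$ is missing, it writes the offending indices as $r=n(\ell-\beta)+p$ and $s=m\alpha+q$. The Kronecker formula and the all-ones matrix $M_{\mathcal{R}_{m,n}}$ then reduce $[M_{\mathcal{P}\otimes\mathcal{R}_{m,n}}]_{r,s}$ to the entry $[M_{\mathcal{P}}]_{\ell-\beta+\lceil p/n\rceil,\,\alpha+\lceil q/m\rceil}$, which lies in the block of ones coming from $[u,v]$, a contradiction. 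Vertex membership comes from Lemma \ref{Lemma:Dilated vertices are in dilation}.

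You instead compute $\mathcal{I}\otimes\mathcal{R}_{m,n}$ on its own, which involves only $M_{\mathcal{I}}$ with $\sup V(\mathcal{I})=v$. Your index check is right: the cell with lower left corner $(j-1,nb_2-i)$ is present exactly when $ma_1\le j-1\le mb_1-1$ and $na_2\le nb_2-i\le nb_2-1$. You then push this rectangle into $\mathcal{P}\otimes\mathcal{R}_{m,n}$ with Lemma \ref{Lemma:Tensor products preserve subpolyominoes}.

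The paper's route is self-contained and handles the offset relative to $\sup V(\mathcal{P})$ in a single computation. Yours is more modular and hands that offset to an already-proved lemma. It also actually establishes the equality $\mathcal{I}\otimes\mathcal{R}_{m,n}=[\Delta_{m,n}(u),\Delta_{m,n}(v)]$ asserted in the statement; the paper's proof treats that equality as notation and only shows the right-hand interval is inner.

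Your appeal to Lemma \ref{Lemma:Dilated vertices are in dilation} is dispensable. $\Delta_{m,n}(u)$ and $\Delta_{m,n}(v)$ are corners of cells of the identified rectangle, which you have already placed inside $\mathcal{P}\otimes\mathcal{R}_{m,n}$.
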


\begin{proof}
    We let $\sup V(\mathcal{P}) = (k, \ell)$ for some $k, \ell \in \mathbb{N}$, so therefore $M_\mathcal{P} \in \mathcal{M}_{\ell \times k}(\mathbb{Z}_2)$. Let $u = (a,b)$ and $v = (c,d)$. Since $\mathcal{I} = [u,v]$ is an inner interval of $\mathcal{P}$, then $[M_\mathcal{P}]_{i,j} = 1$ for all $i \in \{1,...,\ell\}$, $j \in \{1,...,k\}$ with $\ell - d+1 \leq i \leq l-b$ and $a+1 \leq j \leq c$. We have by Lemma \ref{Lemma:Dilated vertices are in dilation} that $\Delta_{m,n}(u), \Delta_{m,n}(v) \in V(\mathcal{P} \otimes \mathcal{R}_{m,n})$. We note that $\Delta_{m,n}(u) = (ma, nb)$ and $\Delta_{m,n}(v) = (mc, nd)$ and also that $M_{\mathcal{P} \otimes \mathcal{R}_{m,n}} \in \mathcal{M}_{n\ell \times mk}(\mathbb{Z}_2)$. Towards a contradiction, assume that $\mathcal{I} \otimes \mathcal{R}_{m,n} = [\Delta_{m,n}(u),\Delta_{m,n}(v)]$ is not an inner interval of $\mathcal{P} \otimes \mathcal{R}_{m,n}$. Then there is some cell of $\mathcal{I} \otimes \mathcal{R}_{m,n}$ that is not in $\mathcal{P} \otimes \mathcal{R}_{m,n}$, or equivalently $[M_{\mathcal{P} \otimes \mathcal{R}_{m,n}}]_{rs} = 0$ for some $r \in \{1,...,n\ell\}$, $s \in \{1,...,mk\}$ with $n\ell - nd + 1 \leq r \leq n\ell - nb$ and $ma+1 \leq s \leq mc$. Thus, $r = n(\ell - \beta) + p$ for some $b \leq \beta \leq d$ and some $0 \leq p \leq n-1$, where
    \begin{align*}
        &p = 0 \text{ if } \beta = b \text{ ,} \\
        0 \leq &p \leq n-1 \text{ if } b < \beta < d \text{ , and} \\
        1 \leq &p \leq n-1 \text{ if } \beta = d \text{ .}
    \end{align*}
    Similarly, $s = m\alpha + q$ for some $a \leq \alpha \leq c$ and some $0 \leq q \leq m-1$, where 
    \begin{align*}
        1 \leq &q \leq m-1 \text{ if } \alpha = a \text{ ,} \\
        0 \leq &q \leq m-1 \text{ if } a < \alpha < c \text{ , and} \\
        &q = 0 \text{ if } \alpha = c \text{ .}
    \end{align*}
    We recall that by its definition, $[M_{\mathcal{R}_{m,n}}]_{i,j} = 1$ for all $i \in \{1,...,n\}$ and $j \in \{1,...,m\}$. So then
    \begin{align*}
    0 = [M_{\mathcal{P} \otimes \mathcal{R}_{m,n}}]_{rs} 
    &= [M_\mathcal{P} \otimes M_{\mathcal{R}_{m,n}}]_{rs} \\
    &= [M_\mathcal{P}]_{\left\lceil \frac{r}{n} \right\rceil, \left\lceil \frac{s}{m} \right\rceil} \cdot [M_{\mathcal{R}_{m,n}}]_{((r-1)\%n)+1,((s-1)\%m)+1} \\
    &= [M_\mathcal{P}]_{\left\lceil \frac{n(\ell - \beta) + p}{n} \right\rceil, \left\lceil \frac{m\alpha + q}{m} \right\rceil} \cdot 1 \\
    &= [M_\mathcal{P}]_{\ell - \beta + \left\lceil \frac{p}{n} \right\rceil, \alpha + \left\lceil \frac{q}{m} \right\rceil}
    \end{align*}
    However, $\ell - d + 1 \leq \ell - \beta + \lceil \frac{p}{n} \rceil \leq \ell - b$ and $a + 1 \leq \alpha + \lceil \frac{q}{m} \rceil \leq c$, so 
    \[ 
    [M_{\mathcal{P} \otimes \mathcal{R}_{m,n}}]_{rs} = [M_\mathcal{P}]_{\ell - \beta + \lceil \frac{p}{n} \rceil, \alpha + \lceil \frac{q}{m} \rceil} = 1
    \]
    which is a contradiction. Thus it must be that $\mathcal{I} \otimes \mathcal{R}_{m,n} = [\Delta_{m,n}(u),\Delta_{m,n}(v)]$ is an inner interval of $\mathcal{P} \otimes \mathcal{R}_{m,n}$.
\end{proof}

\begin{lemma} \label{Lemma: Dilations preserve non-inners}
    Let $\mathcal{P}$ be a polyomino with $u,v \in V(\mathcal{P})$ and let $\mathcal{I} = [u,v]$ be an interval of $\mathbb{N}^2$. If $\mathcal{I} \otimes \mathcal{R}_{m,n} = [\Delta_{m,n}(u),\Delta_{m,n}(v)]$ is an inner interval of $\mathcal{P} \otimes \mathcal{R}_{m,n}$ for some $m,n \in \mathbb{Z}^+$, then $\mathcal{I}$ is an inner interval of $\mathcal{P}$.
\end{lemma}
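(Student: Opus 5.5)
We prove the converse of Lemma \ref{Lemma: Dilations preserve inner intervals} by contraposition, using the same binary-matrix bookkeeping. Write $u=(a,b)$, $v=(c,d)$ and $\sup V(\mathcal{P})=(k,\ell)$, so that $M_\mathcal{P}\in\mathcal{M}_{\ell\times k}(\mathbb{Z}_2)$ and $M_{\mathcal{P}\otimes\mathcal{R}_{m,n}}\in\mathcal{M}_{n\ell\times mk}(\mathbb{Z}_2)$.

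First, $\mathcal{I}$ is a proper interval. Since $[\Delta_{m,n}(u),\Delta_{m,n}(v)]$ is inner, $ma<mc$ and $nb<nd$, and dividing by $m,n>0$ gives $a<c$ and $b<d$. Also $u,v\in V(\mathcal{P})$ by hypothesis. So it only remains to show that every cell of $\mathcal{I}$ lies in $\mathcal{P}$.

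Suppose not. Then some cell $A=[(\alpha,\beta),(\alpha+1,\beta+1)]$ with $a\le\alpha\le c-1$ and $b\le\beta\le d-1$ is not in $\mathcal{P}$; that is, $[M_\mathcal{P}]_{\ell-\beta,\alpha+1}=0$. Take the lower-left cell of the block $A\otimes\mathcal{R}_{m,n}$, namely the cell with lower left corner $(m\alpha,n\beta)$. Its entry in $M_{\mathcal{P}\otimes\mathcal{R}_{m,n}}$ is at row $n\ell-n\beta=n(\ell-\beta-1)+n$ and column $m\alpha+1=m\alpha+1$. By the Kronecker formula $[A\otimes B]_{p(r-1)+u,q(s-1)+v}=[A]_{r,s}[B]_{u,v}$, this entry equals
\[
[M_\mathcal{P}]_{\ell-\beta,\alpha+1}\,[M_{\mathcal{R}_{m,n}}]_{n,1}=0\cdot 1=0,
\]
so this cell is not in $\mathcal{P}\otimes\mathcal{R}_{m,n}$. (Equivalently, one can argue via Lemma \ref{Lemma:3.1.3} and Lemma \ref{Lemma:Dilations preserve disjoint sets}: the cells of $A\otimes\mathcal{R}_{m,n}$ are disjoint from every $C\otimes\mathcal{R}_{m,n}$ with $C\in\mathcal{P}$.) If $A$ lies outside the range of $M_\mathcal{P}$, the corresponding dilated cell is likewise outside the range of the larger matrix, so it is not a cell of the dilation either.

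On the other hand, $m\alpha\ge ma$, $n\beta\ge nb$, $m\alpha+m\le mc$ and $n\beta+n\le nd$. Hence this cell lies in $[\Delta_{m,n}(u),\Delta_{m,n}(v)]$, which is an inner interval of $\mathcal{P}\otimes\mathcal{R}_{m,n}$, so the cell must belong to $\mathcal{P}\otimes\mathcal{R}_{m,n}$. This is a contradiction, so every cell of $\mathcal{I}$ lies in $\mathcal{P}$ and $\mathcal{I}$ is an inner interval of $\mathcal{P}$.

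The only delicate step is the index bookkeeping for the row and column of the chosen subcell, together with the edge case where a cell of $\mathcal{I}$ falls outside the range of $M_\mathcal{P}$. Choosing the corner subcell makes the Kronecker indices exact, which avoids the ceiling and remainder case analysis needed in Lemma \ref{Lemma: Dilations preserve inner intervals}.
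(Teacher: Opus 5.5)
Your proof is correct and is essentially the paper's argument: take a cell of $\mathcal{I}$ missing from $\mathcal{P}$, and note that the lower-left cell of its dilation lies in $[\Delta_{m,n}(u),\Delta_{m,n}(v)]$ but not in $\mathcal{P}\otimes\mathcal{R}_{m,n}$. The paper justifies the absence via Lemma \ref{Lemma:Dilations preserve disjoint sets} (your parenthetical alternative) rather than your direct Kronecker-index computation, and your explicit check that $\mathcal{I}$ is proper fills a step the paper leaves implicit; the out-of-range case you mention is actually vacuous, since $u,v\in V(\mathcal{P})$ force the missing cell to lie within the range of $M_{\mathcal{P}}$.
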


\begin{proof}
    We proceed with a proof by contraposition. Assume $\mathcal{I} = [u,v]$ is not an inner interval of $\mathcal{P}$. We let $u = (a,b)$ and $v = (c,d)$. So there exists some $f \in \mathbb{N}^2$ with $u \leq f < v$, such that the cell $F = [f, f+(1,1)] \notin \mathcal{P}$. Then $|F \cap \mathcal{P}| = 0$, and so by Lemma \ref{Lemma:Dilations preserve disjoint sets}, we have that $|(F \otimes \mathcal{R}_{m,n}) \cap (\mathcal{P} \otimes \mathcal{R}_{m,n})| = 0$. Let $f = (f_1,f_2)$. We note that $\Delta_{m,n}(u) = (ma,nb)$ and $\Delta_{m,n}(v)=(mc,nd)$ and also that:
    \begin{align*}
        F \otimes \mathcal{R}_{m,n} &= \Delta_{m,n}([f,f+(1,1)]) \\
        &= [\Delta_{m,n}(f), \Delta_{m,n}(f+(1,1))] \\
        &= [\Delta_{m,n}(f_1,f_2), \Delta_{m,n}(f_1+1,f_2+1)]\\
        &= [(mf_1,nf_2),(m(f_1+1),n(f_2+1))]
    \end{align*}
    Since $u \leq f < v$, then $a \leq f_1 < c$ and $b \leq f_2 < d$. So then $ma \leq mf_1 < mc$ and also $nb \leq nf_2 < nd$, and therefore $\Delta_{m,n}(u) \leq \Delta_{m,n}(f) < \Delta_{m,n}(v)$. Since the cell $[\Delta_{m,n}(f),\Delta_{m,n}(f)+(1,1)]\in F \otimes \mathcal{R}_{m,n}$, then $[\Delta_{m,n}(f),\Delta_{m,n}(f)+(1,1)]\notin \mathcal{P} \otimes \mathcal{R}_{m,n}$. Thus $\mathcal{I} \otimes \mathcal{R}_{m,n} = [\Delta_{m,n}(u),\Delta_{m,n}(v)]$ is not an inner interval of $\mathcal{P} \otimes \mathcal{R}_{m,n}$.
\end{proof}

The above lemmas give us that dilations preserve vertices (Lemma \ref{Lemma:Dilated vertices are in dilation}), inner intervals (Lemma \ref{Lemma: Dilations preserve inner intervals}), and intervals that are not inner intervals. We now give a corollary to Lemma \ref{Lemma:Tensor products preserve subpolyominoes} which states that dilations also preserve subpolyominoes.

\begin{corollary} \label{Cor: Dilations preserve subpolyominoes}
   Let $\mathcal{P}$ be a polyomino and let $\mathcal{Q} \subseteq \mathcal{P}$ be a subpolyomino of $\mathcal{P}$. Then for any $m,n\in \mathbb{Z}^+$, $\mathcal{Q}\otimes\mathcal{R}_{m,n} \subseteq \mathcal{P}\otimes \mathcal{R}_{m,n}$.   
\end{corollary}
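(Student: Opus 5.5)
This is the special case $\mathcal{Q}' = \mathcal{R}_{m,n}$ of Lemma \ref{Lemma:Tensor products preserve subpolyominoes}, so the plan is simply to check that the lemma's hypotheses hold. In that lemma the roles are: $\mathcal{P}$ is the ambient polyomino, the subpolyomino is $\mathcal{P}'$, and the second factor is an arbitrary polyomino. Here we take the ambient polyomino to be $\mathcal{P}$, the subpolyomino to be $\mathcal{Q} \subseteq \mathcal{P}$, and the second factor to be $\mathcal{R}_{m,n}$. The lemma then gives directly that the collection of cells $\mathcal{Q} \otimes \mathcal{R}_{m,n}$ is contained in $\mathcal{P} \otimes \mathcal{R}_{m,n}$.

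The checks needed are as follows.
\begin{itemize}
\item $\mathcal{R}_{m,n}$ is a polyomino. It is a cell interval, and every cell interval is a polyomino.
\item $\mathcal{R}_{m,n}$ may be placed in minimal position, as Definition \ref{Defn:tensor product of polyos} requires of the right-hand factor. This is the convention already fixed in the paper, under which $M_{\mathcal{R}_{m,n}}$ is the $n \times m$ all-ones matrix.
\item Both products are polyominoes, not merely collections of cells. $\mathcal{Q}\otimes\mathcal{R}_{m,n}$ and $\mathcal{P}\otimes\mathcal{R}_{m,n}$ are polyominoes by Proposition \ref{Proposition: Dilations of polyos are polyos}. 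So the containment is a containment of polyominoes, and $\mathcal{Q}\otimes\mathcal{R}_{m,n}$ is a subpolyomino of $\mathcal{P}\otimes\mathcal{R}_{m,n}$.
\end{itemize}

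As a cross-check that avoids the index computation, one can use Lemma \ref{Lemma:3.1.3}:
\[
\mathcal{Q}\otimes\mathcal{R}_{m,n}=\bigcup_{C\in\mathcal{Q}} C\otimes\mathcal{R}_{m,n}\subseteq \bigcup_{C\in\mathcal{P}} C\otimes\mathcal{R}_{m,n}=\mathcal{P}\otimes\mathcal{R}_{m,n},
\]
where the middle inclusion holds because $\mathcal{Q}\subseteq\mathcal{P}$.

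The only potential subtlety is the translation convention. Binary matrices depend on absolute position in $\mathbb{N}^2$, so $\mathcal{Q}$ must be kept in its position inside $\mathcal{P}$, not translated to minimal position. Lemma \ref{Lemma:Tensor products preserve subpolyominoes} already handles this through the row offset $\ell-\ell'$, so no new work is required.
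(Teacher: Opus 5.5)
Your proposal is correct and is the paper's argument: the paper's proof is the single line that the result follows directly from Lemma \ref{Lemma:Tensor products preserve subpolyominoes}, applied with $\mathcal{R}_{m,n}$ (in minimal position) as the second factor. Your added remark that both sides are polyominoes by Proposition \ref{Proposition: Dilations of polyos are polyos} is a harmless strengthening; note, however, that your cross-check via Lemma \ref{Lemma:3.1.3} is not independent of the index computation, because the proof of Lemma \ref{Lemma:3.1.3} itself invokes Lemma \ref{Lemma:Tensor products preserve subpolyominoes}.
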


\begin{proof}
    Result follows directly from Lemma \ref{Lemma:Tensor products preserve subpolyominoes}.
\end{proof}

In a similar fashion to our earlier remark, this means for any polyomino $\mathcal{P}$ and for any cell $C\in\mathcal{P}$, we have that $C\otimes\mathcal{R}_{m,n}$ is a cell interval of rank $mn$ contained in $\mathcal{P}\otimes\mathcal{R}_{m,n}$. We will refer to $C\otimes\mathcal{R}_{m,n}$ as an $(m,n)$-\textit{dilated cell} of $\mathcal{P}\otimes\mathcal{R}_{m,n}$. As well any block $\mathcal{B}\subseteq\mathcal{P}$ corresponds to the cell interval $\mathcal{B}\otimes\mathcal{R}_{m,n}$ in $\mathcal{P}\otimes\mathcal{R}_{m,n}$ and we will refer to this cell interval as an $(m,n)$-\textit{dilated block} of $\mathcal{P}\otimes\mathcal{R}_{m,n}$. In the specific context of dilations, we can now also state the following corollary of Lemma \ref{Lemma:3.1.3}.

\begin{corollary} \label{Corollary: Dilation is union of dilated cells}
     Let $\mathcal{P}$ be a polyomino and let $\mathcal{R}_{m,n}$ be a rectangle polyomino for some $m,n\in \mathbb{Z}^+$. Then $\mathcal{P}\otimes\mathcal{R}_{m,n} = \bigcup\limits_{C\in \mathcal{P}}C\otimes\mathcal{R}_{m,n}$.
\end{corollary}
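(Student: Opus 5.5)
This statement is the specialization of Lemma \ref{Lemma:3.1.3} to $\mathcal{Q} = \mathcal{R}_{m,n}$, so the plan is to obtain it by a direct instantiation rather than a new argument. Lemma \ref{Lemma:3.1.3} is stated for arbitrary polyominoes $\mathcal{P}$ and $\mathcal{Q}$. The tensor product in Definition \ref{Defn:tensor product of polyos} only requires the right-hand factor to be in minimal position, and we always take $\mathcal{R}_{m,n}$ in minimal position, with $M_{\mathcal{R}_{m,n}}$ the all-ones $n\times m$ matrix. Hence the hypotheses of Lemma \ref{Lemma:3.1.3} are met, and setting $\mathcal{Q} = \mathcal{R}_{m,n}$ gives exactly $\mathcal{P}\otimes\mathcal{R}_{m,n} = \bigcup_{C\in\mathcal{P}} C\otimes\mathcal{R}_{m,n}$.

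For completeness I would recall why the ingredients of Lemma \ref{Lemma:3.1.3} still apply in this case. The inclusion $\supseteq$ comes from Corollary \ref{Cor: Dilations preserve subpolyominoes}, applied to each single cell $C\in\mathcal{P}$ viewed as a subpolyomino. The reverse inclusion comes from a counting argument: Lemma \ref{Lemma: 3.1.1} gives $|\mathcal{P}\otimes\mathcal{R}_{m,n}| = mn|\mathcal{P}|$, each dilated cell $C\otimes\mathcal{R}_{m,n}$ has exactly $mn$ cells, and Lemma \ref{Lemma:Dilations preserve disjoint sets} shows the dilated cells are pairwise disjoint. The union therefore has $mn|\mathcal{P}|$ cells, and since it is contained in $\mathcal{P}\otimes\mathcal{R}_{m,n}$ and has the same rank, the two are equal.

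The only point requiring any care is a convention. A single cell $C$ sitting away from the origin is treated as a polyomino with its own binary matrix, which may contain zero rows and columns. Consequently $C\otimes\mathcal{R}_{m,n}$ is positioned at $[\Delta_{m,n}(c),\Delta_{m,n}(c+(1,1))]$, where $c$ is the lower left corner of $C$, consistently with its placement inside $\mathcal{P}\otimes\mathcal{R}_{m,n}$. This convention is already built into Lemma \ref{Lemma:Tensor products preserve subpolyominoes}, so nothing beyond citing it is needed.
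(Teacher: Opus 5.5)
Your proposal is correct and matches the paper, which proves this corollary in one line by citing Lemma~\ref{Lemma:3.1.3} with $\mathcal{Q} = \mathcal{R}_{m,n}$. Your recap of the ingredients is also sound, and it states explicitly something the counting step in the paper's proof of Lemma~\ref{Lemma:3.1.3} uses only implicitly: the dilated cells are pairwise disjoint. That fact is the first part of Lemma~\ref{Lemma:Dilations preserve disjoint sets}, and its proof does not depend on Lemma~\ref{Lemma:3.1.3}.
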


\begin{proof}
    Result follows directly from Lemma \ref{Lemma:3.1.3}.
\end{proof}

We are now ready to define the novel class of polyominoes that we will study for the remainder of this chapter.

\begin{definition} \label{Defn:dilated closed path}
    Let $\mathcal{P}$ be a closed path polyomino as per Definition \ref{Defn: closed path} and let $\mathcal{R}_{m,n}$ be a rectangle polyomino for some $m,n \in \mathbb{Z}^+$. If $\mathcal{Q}$ is a polyomino such that $\mathcal{Q} = P \otimes \mathcal{R}_{m,n}$, then we say that $\mathcal{Q}$ is a \textit{dilated closed path}.
\end{definition}

\begin{figure} [H]
    \centering
    \begin{tikzpicture} [scale = 0.8]
        \draw[step=1cm,white,very thin] (0,0) grid (17,6);
        \polyomino[
        empty cell =x,
        grid,
        p={a}{style={lightgray,draw=black}},
        p={b}{style={lightgray,draw=black}},
        row sep =;
        ]{
        x a a a x x x x x a a a a a a x x;
        x a x a a x x x x a a x x a a a a;
        a a x x a x x a a a a x x x x a a;
        a x x a a x x a a x x x x a a a a;
        a b a a x x x a b a a a a a a x x;
        x x x x x x x x x x x x x x x x x
        }
        \begin{scope}[color=black]
            \node[anchor=center] () at (2.5,0.5){(a)};
            \node[anchor=center] () at (12,0.5){(b)};
            \node[anchor=center] () at (0.5,5.5){$\mathcal{P}$};
            \node[anchor=center] () at (7.5,5.5){$\mathcal{P} \otimes \mathcal{R}_{2,1}$};
            
        \end{scope}
        
    \end{tikzpicture}
    \caption{(a) A closed path $\mathcal{P}$ (b) The dilated closed path $\mathcal{P} \otimes \mathcal{R}_{2,1}$}
    \label{fig:dilated closed path}
\end{figure}

\begin{proposition} \label{Proposition: Dilated holes are holes}
    Let $\mathcal{P}$ be a polyomino and let $m,n \in \mathbb{Z}^+$. If $\mathcal{H}$ is a hole of $\mathcal{P}$, then $\mathcal{H} \otimes \mathcal{R}_{m,n}$ is a hole of $\mathcal{P} \otimes \mathcal{R}_{m,n}$.
\end{proposition}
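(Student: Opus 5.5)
To prove that $\mathcal{H}\otimes\mathcal{R}_{m,n}$ is a hole of $\mathcal{Q}=\mathcal{P}\otimes\mathcal{R}_{m,n}$, I will check the three defining properties of a hole: every cell lies outside $\mathcal{Q}$, the cells are pairwise connected through cells not in $\mathcal{Q}$, and the collection is maximal. Finiteness is immediate from Lemma \ref{Lemma: 3.1.1}.

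\emph{Disjointness and internal connectivity.} Since $\mathcal{H}$ is a hole, $|\mathcal{H}\cap\mathcal{P}|=0$. By the remark that a hole is a simple polyomino, $\mathcal{H}$ is a polyomino, so Lemma \ref{Lemma:Dilations preserve disjoint sets} gives $|(\mathcal{H}\otimes\mathcal{R}_{m,n})\cap\mathcal{Q}|=0$. Hence every cell of $\mathcal{H}\otimes\mathcal{R}_{m,n}$ lies outside $\mathcal{Q}$. For two cells $F',G'\in\mathcal{H}\otimes\mathcal{R}_{m,n}$, Lemma \ref{Lemma:3.1.3} lets me write $F'\in F\otimes\mathcal{R}_{m,n}$ and $G'\in G\otimes\mathcal{R}_{m,n}$ with $F,G\in\mathcal{H}$. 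Since $F$ and $G$ are joined by a walk of cells not in $\mathcal{P}$, Corollary \ref{Cor: dilations preserve unconnectedness} gives a walk from $F'$ to $G'$ through cells not in $\mathcal{Q}$.

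\emph{Maximality.} This is the main step. Suppose a cell $D'\notin\mathcal{Q}$ with $D'\notin\mathcal{H}\otimes\mathcal{R}_{m,n}$ is joined to some cell of $\mathcal{H}\otimes\mathcal{R}_{m,n}$ by a walk of cells not in $\mathcal{Q}$. Every cell of $\mathbb{N}^2$ lies in exactly one dilated cell $D\otimes\mathcal{R}_{m,n}$, namely the one with $D=[f,f+(1,1)]$ and $f=(\lfloor x/m\rfloor,\lfloor y/n\rfloor)$ for lower left corner $(x,y)$. I therefore define a projection $\pi$ sending each cell of $\mathbb{N}^2$ to its underlying cell. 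The image of a cell outside $\mathcal{Q}$ is a cell outside $\mathcal{P}$: if $\pi(C')\in\mathcal{P}$, then $C'\in\pi(C')\otimes\mathcal{R}_{m,n}\subseteq\mathcal{Q}$ by Lemma \ref{Lemma:Tensor products preserve subpolyominoes}. Two edge-adjacent cells project either to the same cell or to edge-adjacent cells, because the floors of their coordinates change by at most one in exactly one coordinate.

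Projecting the walk therefore gives, after deleting consecutive repetitions, a walk of cells outside $\mathcal{P}$ from a cell of $\mathcal{H}$ to $\pi(D')$. By maximality of $\mathcal{H}$, this forces $\pi(D')\in\mathcal{H}$, and hence $D'\in\pi(D')\otimes\mathcal{R}_{m,n}\subseteq\mathcal{H}\otimes\mathcal{R}_{m,n}$, a contradiction. The same projection argument shows that no cell of $\mathcal{H}\otimes\mathcal{R}_{m,n}$ is connected to the exterior of $\mathcal{Q}$, since its image would then be connected to $\ext\mathcal{P}$, which $\mathcal{H}$ is not. I expect the only real care to be needed in verifying the floor-function adjacency claim; it is elementary but should be written out explicitly.
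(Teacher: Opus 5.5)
Your proof is correct. For disjointness and pairwise connectivity it is essentially the paper's argument: the paper applies Lemma~\ref{Lemma:Dilations preserve disjoint sets} cell by cell, and gets connectivity by applying Proposition~\ref{Proposition: Dilations of polyos are polyos} to $\mathcal{H}$. You use the set form of that lemma and Corollary~\ref{Cor: dilations preserve unconnectedness} instead, and the two choices are interchangeable.

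The genuine difference is maximality. The paper argues by counting: $|\mathcal{H}|=k$ is ``maximal'', so $|\mathcal{H}\otimes\mathcal{R}_{m,n}|=kmn$, hence $\mathcal{H}\otimes\mathcal{R}_{m,n}$ is maximal. That count is only Lemma~\ref{Lemma: 3.1.1}. By itself it does not exclude a cell $D'\notin\mathcal{Q}$ outside $\mathcal{H}\otimes\mathcal{R}_{m,n}$ that is reachable from it through cells not in $\mathcal{Q}$. What is actually needed is a converse of Corollary~\ref{Cor: dilations preserve unconnectedness}: walks of cells not in $\mathcal{Q}$ must descend to walks of cells not in $\mathcal{P}$. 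Your floor projection $\pi$ supplies exactly this, so your route is more rigorous than the paper's at this step.

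The cost is two elementary facts you should write out:
\begin{itemize}
\item The cell with lower left corner $(x,y)$ lies in $D\otimes\mathcal{R}_{m,n}$ if and only if $D$ has lower left corner $(\lfloor x/m\rfloor,\lfloor y/n\rfloor)$. This is the content of the inequalities in the proof of Lemma~\ref{Lemma:Dilations preserve disjoint sets}. Combined with Lemma~\ref{Lemma:3.1.3}, it also gives $\pi(C')\in\mathcal{H}$ for every $C'\in\mathcal{H}\otimes\mathcal{R}_{m,n}$, which your walk-projection step uses implicitly.
\item The floor-adjacency claim.
\end{itemize}
Your closing remark about the exterior is redundant once finiteness and maximality are established.
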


\begin{proof}
    Let $\mathcal{H}$ be a hole of $\mathcal{P}$. Since we can view $\mathcal{H}$ as a simple polyomino in $\mathbb{N}^2$, then the tensor product $\mathcal{H} \otimes \mathcal{R}_{m,n}$ is well-defined. We proceed to show that $\mathcal{H} \otimes \mathcal{R}_{m,n}$ is a hole of $\mathcal{P} \otimes \mathcal{R}_{m,n}$. As $\mathcal{H}$ is a hole of $\mathcal{P}$, then it only contains finitely many cells, thus $|\mathcal{H}| = k$ for some $k \in \mathbb{Z}^+$. Therefore $|\mathcal{H} \otimes \mathcal{R}_{m,n}| = |\mathcal{H}||\mathcal{R}_{m,n}| = kmn$ and so $\mathcal{H} \otimes \mathcal{R}_{m,n}$ is a finite collection of cells. Let $F \in \mathcal{H} \otimes \mathcal{R}_{m,n}$ be arbitrary, we now show that $F \notin \mathcal{P} \otimes \mathcal{R}_{m,n}$. Since $F \in \mathcal{H} \otimes \mathcal{R}_{m,n}$, then by Corollary \ref{Corollary: Dilation is union of dilated cells}, $F \in E \otimes \mathcal{R}_{m,n}$ for some $E \in \mathcal{H}$. Since $E \in \mathcal{H}$, then $E \notin \mathcal{P}$. So for any $A \in \mathcal{P}$, we have that $E \neq A$, and therefore by Lemma \ref{Lemma:Dilations preserve disjoint sets}, it must be that $|(E \otimes \mathcal{R}_{m,n}) \cap (A \otimes \mathcal{R}_{m,n})| = 0$ for all $A \in \mathcal{P}$. Then
    \[
    \left|(E \otimes \mathcal{R}_{m,n}) \bigcap \left(\bigcup \limits_{A\in \mathcal{P}}A \otimes \mathcal{R}_{m,n}\right)\right| = 0
    \] and so by Corollary \ref{Corollary: Dilation is union of dilated cells}, we have that
    \[
    |(E \otimes \mathcal{R}_{m,n}) \cap (\mathcal{P} \otimes \mathcal{R}_{m,n})| = 0.
    \] So $F \notin \mathcal{P} \otimes \mathcal{R}_{m,n}$. As $F \in \mathcal{H} \otimes \mathcal{R}_{m,n}$ was arbitrary, this means all cells contained in $\mathcal{H} \otimes \mathcal{R}_{m,n}$ are not contained in $\mathcal{P} \otimes \mathcal{R}_{m,n}$. Now let $G_1, G_2 \in \mathcal{H} \otimes \mathcal{R}_{m,n}$ be arbitrary. As $\mathcal{H}$ is itself a polyomino, then by Proposition \ref{Proposition: Dilations of polyos are polyos}, $\mathcal{H} \otimes \mathcal{R}_{m,n}$ is also a polyomino and therefore $G_1$ and $G_2$ are connected in $\mathcal{H} \otimes \mathcal{R}_{m,n}$. As $\mathcal{H}$ is a hole  of $\mathcal{P}$ then it is maximal with respect to set inclusion, so its rank $|\mathcal{H}| = k$ is maximal. Therefore the maximal rank of $\mathcal{H} \otimes \mathcal{R}_{m,n}$ is $kmn$. If it were the case that $|\mathcal{H} \otimes \mathcal{R}_{m,n}| < kmn$ then $|\mathcal{H}||{R}_{m,n}| < kmn$, which would give $kmn < kmn$, but this is a contradiction. Thus it must be that $|\mathcal{H} \otimes \mathcal{R}_{m,n}| =kmn$ and so $\mathcal{H} \otimes \mathcal{R}_{m,n}$ is maximal with respect to set inclusion. Thus we have that $\mathcal{H} \otimes \mathcal{R}_{m,n}$ is a finite, connected collection of cells not contained in $\mathcal{P} \otimes \mathcal{R}_{m,n}$ that is maximal with respect to set inclusion and so by definition, $\mathcal{H} \otimes \mathcal{R}_{m,n}$ is a hole of $\mathcal{P} \otimes \mathcal{R}_{m,n}$.
\end{proof}

\begin{corollary} 
    Let $\mathcal{P}$ be a polyomino and let $m,n \in \mathbb{Z}^+$. If $\mathcal{P}$ is non-simple then $\mathcal{P} \otimes \mathcal{R}_{m,n}$ is non-simple.
\end{corollary}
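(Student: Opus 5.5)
The plan is to reduce the statement directly to Proposition \ref{Proposition: Dilated holes are holes}. A polyomino is non-simple exactly when it is not simple. Under the definitions of Section \ref{Section:2}, this means there are two cells $C,D\notin\mathcal{P}$ that cannot be joined by a walk of cells outside $\mathcal{P}$. Equivalently, some finite maximal connected collection of cells outside $\mathcal{P}$ exists, which is a hole of $\mathcal{P}$.

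So the first step is to extract a hole $\mathcal{H}$ from the non-simplicity of $\mathcal{P}$. Since $\mathcal{P}$ is finite, all but finitely many cells outside $\mathcal{P}$ lie in one unbounded connected component of the complement. If $\mathcal{P}$ is non-simple, there is a second component, and that component is bounded. A bounded component is a finite, connected, maximal collection of cells not in $\mathcal{P}$, so it is a hole $\mathcal{H}$ of $\mathcal{P}$. Applying Proposition \ref{Proposition: Dilated holes are holes} then shows that $\mathcal{H}\otimes\mathcal{R}_{m,n}$ is a hole of $\mathcal{P}\otimes\mathcal{R}_{m,n}$. By Lemma \ref{Lemma: 3.1.1}, this hole is nonempty, since it has $|\mathcal{H}|mn\geq 1$ cells.

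The second step is to turn this hole back into a failure of simplicity. Pick a cell $F'\in\mathcal{H}\otimes\mathcal{R}_{m,n}$. Also pick a cell $G'$ far away from the bounding rectangle of $\mathcal{P}\otimes\mathcal{R}_{m,n}$, for instance a cell whose lower left corner lies beyond $\sup V(\mathcal{P}\otimes\mathcal{R}_{m,n})$. Then $G'\notin\mathcal{P}\otimes\mathcal{R}_{m,n}$ and $G'\notin\mathcal{H}\otimes\mathcal{R}_{m,n}$, because the hole lies inside the bounding box. Suppose some walk of cells outside $\mathcal{P}\otimes\mathcal{R}_{m,n}$ joined $F'$ to $G'$. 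Then $G'$ would be connected to the hole through cells outside the polyomino, contradicting the maximality of $\mathcal{H}\otimes\mathcal{R}_{m,n}$ with respect to set inclusion. Hence no such walk exists, and $\mathcal{P}\otimes\mathcal{R}_{m,n}$ is non-simple.

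The main obstacle is the first step: the paper never proves that non-simple polyominoes have a hole, and it defines holes somewhat loosely. I would first try to cite this as the standard fact. If more detail is needed, I would argue with the bounding rectangle of $\mathcal{P}$: every cell outside that rectangle connects to every other such cell through cells outside the rectangle. So two cells outside $\mathcal{P}$ that cannot be joined cannot both reach the region outside the rectangle, and one of them lies in a finite component, which is a hole.
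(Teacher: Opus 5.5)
Your proposal is correct and follows the same route as the paper: you extract a hole $\mathcal{H}$ of $\mathcal{P}$ from non-simplicity, apply the proposition that $\mathcal{H}\otimes\mathcal{R}_{m,n}$ is a hole of $\mathcal{P}\otimes\mathcal{R}_{m,n}$, and conclude non-simplicity. The paper uses the equivalence ``non-simple $\Leftrightarrow$ has a hole'' without comment; your bounded-component argument and your maximality argument with a far-away cell $G'$ correctly supply both directions of it.
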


\begin{proof}
    If $\mathcal{P}$ is non-simple then it contains at least one hole, say $\mathcal{H}$. By Proposition \ref{Proposition: Dilated holes are holes}, $\mathcal{H} \otimes \mathcal{R}_{m,n}$ is a hole of $\mathcal{P} \otimes \mathcal{R}_{m,n}$ and so $\mathcal{P} \otimes \mathcal{R}_{m,n}$ is non-simple.
\end{proof}

In Proposition 3.4 of \textbf{\cite{Cisto_Navarra:2023}}, Cisto and Navarra characterize closed paths by proving that closed paths are non-simple polyominoes that contain a unique hole. By the above proposition we have that dilated closed paths are non-simple polyominoes. We proceed to show that dilated closed paths also contain a unique hole. In order to show this we will use the following two lemmas that appear in \textbf{\cite{Cisto_Navarra:2023}} which characterize the geometry of closed paths.

\begin{lemma} [\textbf{\cite{Cisto_Navarra:2023}}, Remark 3.2] \label{Lemma:Closed path geometry}
    Let $\mathcal{P}$ be a closed path and let $C,D,E,F \notin \mathcal{P}$. Then up to rotations and reflections, there are only two possible configurations of a sequence of three cells $A_{i-1}$, $A_i$, and $A_{i+1}$ in $\mathcal{P}$ as shown in Figure \ref{Fig: three cells in closed path} and each of these configurations necessarily appear in any closed path. Furthermore, for any cell $G \notin \mathcal{P}$ and for any cell $A \in \mathcal{P}$ such that $A$ is not positioned as $A_i$ is in Figure \ref{Fig: three cells in closed path} (I), there exists a path of cells $G = G_1,...,G_k$ not in $\mathcal{P}$ such that $G_k \cap A$ is a common edge of $G_k$ and $A$.

    \begin{figure}[H] 
    \centering
    \begin{tikzpicture}
        \draw[step=1cm,white,very thin] (0,0) grid (8,4);
        \polyomino[
        empty cell =x,
        grid,
        p={a}{style={gray,draw=black}},
        p={f}{style={lightgray,draw=black}},
        row sep =;
        ]{
        x a f x x x a x;
        f a a x x f a f;
        f f x x x x a x;
        x x x x x x x x
        }

         \begin{scope}[color=black]
            \node[anchor=center] () at (1.5,0.5){(I)};
            \node[anchor=center] () at (6.5,0.5){(II)};
            
            \node[anchor=center] () at (0.5,1.5){$C$};
            \node[anchor=center] () at (2.5,3.5){$D$};
            \node[anchor=center] () at (0.5,2.5){$E$};
            \node[anchor=center] () at (1.5,1.5){$F$};
            \node[anchor=center] () at (2.5,2.5){$A_{i-1}$};
            \node[anchor=center] () at (1.5,2.5){$A_{i}$};
            \node[anchor=center] () at (1.5,3.5){$A_{i+1}$};

            \node[anchor=center] () at (6.5,1.5){$A_{i-1}$};
            \node[anchor=center] () at (6.5,2.5){$A_i$};
            \node[anchor=center] () at (6.5,3.5){$A_{i+1}$};
            \node[anchor=center] () at (5.5,2.5){$C$};
            \node[anchor=center] () at (7.5,2.5){$D$};
        \end{scope}
    
    \end{tikzpicture}
    \caption{The only possible configurations of a sequence of three cells in a closed path.}
    \label{Fig: three cells in closed path}
    \end{figure}
\end{lemma}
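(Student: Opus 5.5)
The plan is to argue directly from Definition \ref{Defn: closed path}, treating the three parts of the statement separately: the classification of triples $A_{i-1},A_i,A_{i+1}$, the claim that both configurations occur, and the path-of-free-cells claim. Throughout I use that the cells $A_j$ with $j\notin\{i-2,\dots,i+2\}$ are disjoint from $A_i$ by condition (iv). For the classification, $A_{i-1}$ and $A_{i+1}$ are distinct by (iii), and each shares an edge with $A_i$ by (ii). So they lie either on opposite sides of $A_i$, giving (II), or on two adjacent sides, giving (I) up to rotation and reflection.

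In each case I must show that the labelled cells are not in $\mathcal{P}$.
\begin{itemize}
\item In (II), suppose $C\in\mathcal{P}$, say $C=A_j$. Since $C\cap A_i\neq\emptyset$, we get $j\in\{i-2,i+2\}$. But $A_{i\pm 2}$ must share an edge with $A_{i\pm1}$, while $C$ meets $A_{i\pm1}$ only at a vertex. This is a contradiction, and $D$ is handled symmetrically.
\item In (I), the cell $D$ completing the $2\times2$ square with $A_{i-1},A_i,A_{i+1}$ touches $A_i$. Hence $D=A_{i\pm2}$, and it shares edges with both $A_{i-1}$ and $A_{i+1}$. Then $A_{i+2}$ (say) meets $A_{i-1}$. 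That forces the cyclic index gap to be at most $2$, so $n\le 4$, contradicting $n>5$.
\item The outer cells $C,E,F$ in (I) are excluded in the same way. Any $A_j$ occupying one of them meets $A_i$, so $j\in\{i\pm1,i\pm2\}$. One then checks which edge adjacencies are forced, and each case either repeats a cell or produces a cycle of length at most $5$.
\end{itemize}

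For the claim that both configurations appear, take a cell of $\mathcal{P}$ that is lowest among the leftmost cells. Its two neighbours in the sequence must lie above it and to its right, so it is positioned as in (I). For (II) I would look at a maximal block of $\mathcal{P}$ lying in an extremal row or column and argue that it has length at least $3$ when $n>5$; its middle cell is then of type (II). I am not confident this step holds in full generality: staircase-like closed paths in which every cell turns seem possible. If so, I would simply defer to \cite{Cisto_Navarra:2023}, Remark 3.2, for this part, since it is stated there.

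For the last claim I read it, as intended, with $G$ in the region (hole or exterior) adjacent to the relevant side of $A$. The plan is to use the unique hole (Proposition 3.4 of \cite{Cisto_Navarra:2023}) and the local pictures above. If $A$ is of type (II), both $C$ and $D$ are free cells sharing an edge with $A$, one on each side of the path. If $A$ is the outer corner of a turn, the free cells $E$ and $F$ share edges with $A$. In either case, the free neighbour on the side facing $G$'s region belongs to the same connected component of the complement as $G$, because the path separates the plane into exactly two such components. A walk inside that component from $G$ to this neighbour is the required path. The excluded case is exactly the one where $A=A_i$ in (I) faces the inside corner $D$. There the only free cell on that side touching $A$ is $D$, and it meets $A$ only at a vertex, so no free cell on that side shares an edge with $A$.

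The main obstacle is the separation statement used in the last paragraph, namely that free cells on a given side of $A$ all lie in the same complementary component as $G$. I would justify it by walking along the free cells bordering the path, moving from $A_i$ to $A_{i+1}$ using the two local configurations. This shows that the free cells bordering one side of the path form a single connected chain.
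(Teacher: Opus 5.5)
Your classification of the triples and your proof that the labelled cells are free are fine, as is the leftmost-lowest argument for (I). Two small corrections: for $D$ the index count gives $n\le 5$ rather than $n\le 4$, which still contradicts $n>5$. For $C,E,F$ the reason is the one-liner you used in (II): such a cell would have to be $A_{i\pm 2}$, yet it shares no edge with $A_{i\pm 1}$.

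\emph{First gap: existence of configuration (II).} You leave it open, suspect that a closed path in which every cell turns might exist, and propose to defer to Remark 3.2 of Cisto--Navarra. That remark is the statement being proved. No such path exists. If every cell turned, the moves $A_{j+1}-A_j$ would alternate between horizontal and vertical. A horizontal move followed, after one vertical move, by the opposite horizontal move would make $A_{j+3}$ share an edge with $A_j$, contradicting (iv) because $n>5$. So all horizontal moves point the same way, as do all vertical ones, and the path is monotone and cannot close.

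Your extremal-row idea can also be completed. Edge-adjacent cells of a closed path are consecutive in the sequence: by (iv) the only alternative is a pair $A_j,A_{j\pm 2}$, which together with $A_{j\pm1}$ would give three pairwise edge-adjacent cells. So a maximal block in the top row cannot have length $1$, since its cell would need two sequence-neighbours below it. It cannot have length $2$ either: then $A_{j-1}$ and $A_{j+2}$ lie directly below $A_j$ and $A_{j+1}$ and share an edge, against (iv). Hence any middle cell of that block is of type (II). This is also exactly what the block-of-length-three lemma stated right after this one provides.

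\emph{Second gap: the last claim.} The hypothesis ``not positioned as $A_i$ in (I)'' restricts $A$ to type (II) cells, and for these the claim is asserted for every $G\notin\mathcal{P}$. Your rereading moves the restriction onto $G$, and so presupposes the separation that is the actual content.
\begin{itemize}
\item You import that separation from the uniqueness of the hole. But the unique-hole result is itself derived from this lemma: the paper, following Cisto--Navarra, proves that a dilated closed path has exactly one hole using exactly this lemma. As a proof of the lemma, this is circular.
\item Even granting one hole, that would not show $C$ and $D$ lie in different components, which your ``side facing $G$'s region'' step needs.
\end{itemize}

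The chain argument you name as the main obstacle is the right tool, and it makes both of these unnecessary.
\begin{itemize}
\item Using the local pictures, whose labelled cells you have shown to be free, consecutive free cells along a given side of the path are joined by free cells. They are edge-adjacent along straight stretches, coincide at $D$ at an inner turn, and are linked through the diagonal cell $C$ at an outer turn. So each of the two borders is connected.
\item Every component of the complement contains a free cell sharing an edge with $\mathcal{P}$. For a finite component take a cell of maximal abscissa; for the unbounded one take the cell above a topmost cell of $\mathcal{P}$. That cell lies on one of the two borders.
\item A type (II) cell has one free neighbour on each border.
\end{itemize}
This gives the claim for all $G$, and the unique hole follows as a corollary rather than being assumed.

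Note finally that this works only in $\mathbb{Z}^2$, or for $\mathcal{P}$ away from the axes. In the paper's $\mathbb{N}^2$ setting the claim fails for the ring of eight cells around $[(1,1),(2,2)]$. Its bottom middle cell $[(1,0),(2,1)]$ is of type (II), but its only free edge-neighbour is the hole cell, so no exterior $G$ can reach it.
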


\begin{lemma} [\textbf{\cite{Cisto_Navarra:2023}}, Lemma 3.3] \label{Lemma:closed path has a 3 block}
    If $\mathcal{P}$ is a closed path then $\mathcal{P}$ contains a block of length at least 3.
\end{lemma}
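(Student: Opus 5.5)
The plan is to argue directly from Definition~\ref{Defn: closed path}, by contradiction, looking at the sequence of unit steps between consecutive cells. Write $d_i\in\{\pm(1,0),\pm(0,1)\}$ for the translation taking $A_i$ to $A_{i+1}$, with indices read cyclically modulo $n$. By condition (ii) each $d_i$ is well defined. By condition (iii), together with $n>5$, we never have $d_{i+1}=-d_i$, since that would force $A_{i+2}=A_i$.

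\emph{Step 1.} Suppose $\mathcal{P}$ has no block of length at least $3$. If $d_{i-1}=d_i$ for some $i$, then $A_{i-1},A_i,A_{i+1}$ are three consecutive cells in a row or column, all in $\mathcal{P}$, which is a block of length $3$. Hence $d_i\neq d_{i-1}$. Combined with $d_i\neq -d_{i-1}$, every step is perpendicular to the previous one. So the steps alternate between horizontal and vertical, and every triple $A_{i-1},A_i,A_{i+1}$ is in configuration (I) of Lemma~\ref{Lemma:Closed path geometry}. (The lemma's claim that configuration (II) must occur would already give the result, but I would argue self-containedly.)

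\emph{Step 2.} I claim that all horizontal steps have the same sign, and likewise all vertical steps. Consider $d_i,d_{i+1},d_{i+2}$ with $d_i,d_{i+2}$ horizontal and $d_{i+1}$ vertical, and suppose $d_{i+2}=-d_i$. Then
\[
A_{i+3}=A_i+d_i+d_{i+1}-d_i=A_i+d_{i+1},
\]
so $A_i$ and $A_{i+3}$ share an edge; the four cells form a U-shape. This contradicts condition (iv), provided $i+3$ is not congruent modulo $n$ to any of $i-2,\dots,i+2$. That holds because $n\ge 6$, and this is exactly where the hypothesis $n>5$ is used. The same argument with the roles of horizontal and vertical swapped handles the vertical steps. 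Hence every horizontal step equals a fixed $\epsilon_1(1,0)$ and every vertical step equals a fixed $\epsilon_2(0,1)$.

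\emph{Step 3.} The total displacement is $\sum_{i=1}^n d_i$. Since the steps alternate, $n$ is even and there are $n/2$ steps of each kind, so the displacement is $(\epsilon_1 n/2,\ \epsilon_2 n/2)\neq(0,0)$. This contradicts $A_{n+1}=A_1$ from condition (i), so $\mathcal{P}$ must contain a block of length at least $3$.

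I expect Step 2 to be the main obstacle. The delicate point is the cyclic index bookkeeping in condition (iv): one has to check carefully that $A_i$ and $A_{i+3}$ are genuinely required to be disjoint. For small $n$, in particular $n=6$ where $i+3\equiv i-3$, this must be verified against the convention $A_{-2}=A_{n-2}$ and so on fixed in the definition.
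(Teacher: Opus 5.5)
Your argument is correct and complete. The paper gives no proof of this statement; it quotes it from Cisto and Navarra. Within the paper's own framework it is an immediate consequence of Lemma~\ref{Lemma:Closed path geometry} as quoted there. That lemma asserts that configuration (II), three consecutive cells in a row, appears in every closed path, and such a triple is a block of length $3$.

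Your route is a genuinely different, self-contained derivation from Definition~\ref{Defn: closed path}. You encode the path by unit steps $d_i$ and then:
\begin{itemize}
\item use (iii) to exclude reversals;
\item use the no-block hypothesis to force alternation of horizontal and vertical steps;
\item use (iv) to exclude the U-turn $d_{i+2}=-d_i$, so that $d_{i+2}=d_i$ cyclically;
\item use closure $A_{n+1}=A_1$ to reach the contradiction $\sum_i d_i\neq 0$.
\end{itemize}
This buys independence from the external geometric remark. That remark's clause that configuration (II) must occur is essentially the content of the present lemma, so leaning on it would be close to circular. Your route also makes explicit where $n>5$ enters.

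The index bookkeeping you flagged works out. With the cyclic convention, $i+3\equiv i+k \pmod n$ for some $|k|\le 2$ would require $n\mid 3-k\in\{1,\dots,5\}$, which is impossible for $n\ge 6$. For $n=6$, the set $\{i-2,\dots,i+2\}$ covers five of the six residues, and the missing one is exactly $i+3$. So (iv) does force $A_i\cap A_{i+3}=\emptyset$.

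The cyclic reading is clearly the intended one, since the conventions $A_{-1}=A_{n-1}$, $A_0=A_n$ exist precisely to make (iv) wrap around. It is also what your wrap-around triples $A_n,A_1,A_2$ in Step 1 and the shifted quadruples in Step 2 need. Your mod-$n$ indexing already handles these.
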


We state now a useful lemma that shows that subtracting a polyomino from another
and then dilating the result is the same as dilating both polyominoes first and then
performing the subtraction.

\begin{lemma} \label{Lemma:dilation subtraction = subtraction dilation}
    Let $\mathcal{P}$ be a polyomino, let $\mathcal{Q}$ be a subpolyomino of $\mathcal{P}$ and let $m,n \in \mathbb{Z}^+$. Then the collection of cells $(\mathcal{P} \otimes \mathcal{R}_{m,n}) \setminus (Q \otimes \mathcal{R}_{m,n}) = (\mathcal{P} \setminus \mathcal{Q}) \otimes \mathcal{R}_{m,n}$.
\end{lemma}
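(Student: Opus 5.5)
The plan is to prove the equality of collections of cells by working cell by cell, using the decomposition of a dilation into dilated cells (Corollary \ref{Corollary: Dilation is union of dilated cells}) together with the disjointness of distinct dilated cells (Lemma \ref{Lemma:Dilations preserve disjoint sets}). Since $\mathcal{P}\setminus\mathcal{Q}$ need not be a polyomino, I will interpret $(\mathcal{P}\setminus\mathcal{Q})\otimes\mathcal{R}_{m,n}$ as the collection of cells whose binary matrix is $M_{\mathcal{P}\setminus\mathcal{Q}}\otimes M_{\mathcal{R}_{m,n}}$, as permitted by the paper's remark extending binary matrices to arbitrary finite collections of cells. The computations in Lemma \ref{Lemma:3.1.3} and Lemma \ref{Lemma:Dilations preserve disjoint sets} only use the block structure of the Kronecker product, not connectedness. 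So I will note that these statements carry over verbatim to finite collections of cells, giving
\[
(\mathcal{P}\setminus\mathcal{Q})\otimes\mathcal{R}_{m,n}=\bigcup_{C\in\mathcal{P}\setminus\mathcal{Q}} C\otimes\mathcal{R}_{m,n}.
\]

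For the inclusion $\supseteq$, take a cell $F$ in the right-hand side. Then $F\in C\otimes\mathcal{R}_{m,n}$ for some $C\in\mathcal{P}\setminus\mathcal{Q}$. Since $C\in\mathcal{P}$, Lemma \ref{Lemma:Tensor products preserve subpolyominoes} gives $F\in\mathcal{P}\otimes\mathcal{R}_{m,n}$. Since $C\notin\mathcal{Q}$, we have $C\neq D$ for every $D\in\mathcal{Q}$, so Lemma \ref{Lemma:Dilations preserve disjoint sets} gives $(C\otimes\mathcal{R}_{m,n})\cap(D\otimes\mathcal{R}_{m,n})=\emptyset$. Taking the union over $D\in\mathcal{Q}$ and applying Corollary \ref{Corollary: Dilation is union of dilated cells} to $\mathcal{Q}$ shows $F\notin\mathcal{Q}\otimes\mathcal{R}_{m,n}$.

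For the inclusion $\subseteq$, take $F\in\mathcal{P}\otimes\mathcal{R}_{m,n}$ with $F\notin\mathcal{Q}\otimes\mathcal{R}_{m,n}$. By Corollary \ref{Corollary: Dilation is union of dilated cells}, $F\in C\otimes\mathcal{R}_{m,n}$ for some $C\in\mathcal{P}$. If $C\in\mathcal{Q}$, then Corollary \ref{Cor: Dilations preserve subpolyominoes} would put $F$ in $\mathcal{Q}\otimes\mathcal{R}_{m,n}$, a contradiction. Hence $C\in\mathcal{P}\setminus\mathcal{Q}$, and $F$ lies in the right-hand side.

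An alternative route is a rank count. By Lemma \ref{Lemma: 3.1.1} and disjointness, the right-hand side has $(|\mathcal{P}|-|\mathcal{Q}|)mn$ cells, which matches the size of the left-hand side, so one inclusion would suffice.

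The main obstacle is bookkeeping about positions. The binary matrix of $\mathcal{Q}$ (or of $\mathcal{P}\setminus\mathcal{Q}$) has different dimensions from that of $\mathcal{P}$, so one must check that dilated cells are placed consistently in $\mathbb{N}^2$. I would handle this by observing that for a single cell $C=[(a,b),(a+1,b+1)]$, the collection $C\otimes\mathcal{R}_{m,n}$ is exactly the cell interval $[\Delta_{m,n}((a,b)),\Delta_{m,n}((a+1,b+1))]$. This placement is independent of the ambient collection, as the index computation in Lemma \ref{Lemma:Tensor products preserve subpolyominoes} shows. Once that is in place, the two inclusions above are purely set-theoretic.
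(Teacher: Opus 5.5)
Your proposal is correct and follows essentially the same route as the paper: both inclusions are proved cell by cell, using the decomposition into dilated cells (Lemma~\ref{Lemma:3.1.3}) together with Lemma~\ref{Lemma:Tensor products preserve subpolyominoes} for one direction, and the disjointness of distinct dilated cells (Lemma~\ref{Lemma:Dilations preserve disjoint sets}) for the other. Your extra remark is also sound. Since $\mathcal{P}\setminus\mathcal{Q}$ need not be a polyomino, these lemmas are being applied to a general finite collection of cells, and their proofs rely only on the Kronecker block structure; the paper uses this implicitly without comment.
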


\begin{proof}
    Let $C' \in (\mathcal{P} \otimes \mathcal{R}_{m,n}) \setminus (Q \otimes \mathcal{R}_{m,n})$ be arbitrary, then $C' \in \mathcal{P} \otimes \mathcal{R}_{m,n}$ and $C' \notin Q \otimes \mathcal{R}_{m,n}$. Since $C' \in \mathcal{P} \otimes \mathcal{R}_{m,n}$, then $C' \in C \otimes \mathcal{R}_{m,n}$ for some $C \in \mathcal{P}$ by Lemma \ref{Lemma:3.1.3}. Suppose $C \in \mathcal{Q}$, then by Lemma \ref{Lemma:Tensor products preserve subpolyominoes}, $C' \in C \otimes \mathcal{R}_{m,n} \subseteq Q \otimes \mathcal{R}_{m,n}$, but this is a contradiction, therefore $C \notin \mathcal{Q}$. Then $C \in \mathcal{P} \setminus \mathcal{Q}$, so $C' \in C \otimes \mathcal{R}_{m,n} \subseteq (\mathcal{P} \setminus \mathcal{Q}) \otimes \mathcal{R}_{m,n}$. As C' was arbitrary, we have that $(\mathcal{P} \otimes \mathcal{R}_{m,n}) \setminus (Q \otimes \mathcal{R}_{m,n}) \subseteq (\mathcal{P} \setminus \mathcal{Q}) \otimes \mathcal{R}_{m,n}$. Now, let $D' \in (\mathcal{P} \setminus \mathcal{Q}) \otimes \mathcal{R}_{m,n}$ be arbitrary. Then by Lemma \ref{Lemma:3.1.3}, $D' \in D \otimes \mathcal{R}_{m,n}$ for some $D \in \mathcal{P} \setminus \mathcal{Q}$, meaning that $D \in \mathcal{P}$ and $D \notin \mathcal{Q}$. As $D \in \mathcal{P}$, then by Lemma \ref{Lemma:Tensor products preserve subpolyominoes}, we have that $D' \in D \otimes \mathcal{R}_{m,n} \subseteq \mathcal{P} \otimes \mathcal{R}_{m,n}$. As $D \notin \mathcal{Q}$, then $|(D \otimes \mathcal{R}_{m,n}) \cap (\mathcal{Q} \otimes \mathcal{R}_{m,n})| = 0$ by Lemma \ref{Lemma:Dilations preserve disjoint sets}. Thus, $D' \notin \mathcal{Q} \otimes \mathcal{R}_{m,n}$. So $D' \in (\mathcal{P} \otimes \mathcal{R}_{m,n}) \setminus (Q \otimes \mathcal{R}_{m,n})$. Since $D'$ was arbitrary, then we have that ($\mathcal{P} \setminus \mathcal{Q}) \otimes \mathcal{R}_{m,n} \subseteq (\mathcal{P} \otimes \mathcal{R}_{m,n}) \setminus (Q \otimes \mathcal{R}_{m,n})$. Thus, $(\mathcal{P} \otimes \mathcal{R}_{m,n}) \setminus (Q \otimes \mathcal{R}_{m,n}) = (\mathcal{P} \setminus \mathcal{Q}) \otimes \mathcal{R}_{m,n}$.
\end{proof}

\begin{proposition} \label{Prop:dilated closed path has unique hole}
    If $\mathcal{Q} = \mathcal{P} \otimes \mathcal{R}_{m,n}$ is a dilated closed path, then $\mathcal{Q}$ is non-simple and $\mathcal{Q}$ contains exactly one hole. 
\end{proposition}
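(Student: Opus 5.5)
Non-simplicity is already in hand: by Proposition 3.4 of \textbf{\cite{Cisto_Navarra:2023}} the closed path $\mathcal{P}$ has a unique hole $\mathcal{H}$. The Corollary following Proposition \ref{Proposition: Dilated holes are holes} then shows that $\mathcal{Q}$ is non-simple, and Proposition \ref{Proposition: Dilated holes are holes} itself shows that $\mathcal{H}\otimes\mathcal{R}_{m,n}$ is a hole of $\mathcal{Q}$. What remains is uniqueness: every hole of $\mathcal{Q}$ must equal $\mathcal{H}\otimes\mathcal{R}_{m,n}$. I will prove the equivalent statement that every cell $F'\notin\mathcal{Q}$ either lies in $\mathcal{H}\otimes\mathcal{R}_{m,n}$ or lies in $\ext\mathcal{Q}$, i.e.\ is joined by a walk of non-$\mathcal{Q}$ cells to cells arbitrarily far away.

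Fix a cell $F'\notin\mathcal{Q}$. By Lemma \ref{Lemma:3.1.3} applied to a large enough rectangle containing everything (equivalently, by decomposing $\mathbb{N}^2$ into $(m,n)$-dilated cells), there is a unique cell $F$ of $\mathbb{N}^2$ with $F'\in F\otimes\mathcal{R}_{m,n}$. Lemma \ref{Lemma:Tensor products preserve subpolyominoes} gives $F\notin\mathcal{P}$, since otherwise $F'\in\mathcal{Q}$.

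There are two cases.
\begin{itemize}
\item[(a)] If $F\in\mathcal{H}$, then $F'\in\mathcal{H}\otimes\mathcal{R}_{m,n}$ and we are done.
\item[(b)] If $F\notin\mathcal{H}$, then since $\mathcal{H}$ is the only hole, $F\in\ext\mathcal{P}$. So $F$ is joined by a walk of cells not in $\mathcal{P}$ to a cell $G$ far outside the bounding box of $\mathcal{P}$. Corollary \ref{Cor: dilations preserve unconnectedness} transports this walk to one of cells not in $\mathcal{Q}$ from $F'$ to any $G'\in G\otimes\mathcal{R}_{m,n}$, and $G'$ lies outside the bounding box of $\mathcal{Q}$. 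Hence $F'\in\ext\mathcal{Q}$.
\end{itemize}
Consequently any hole $\mathcal{H}'$ of $\mathcal{Q}$ is contained in $\mathcal{H}\otimes\mathcal{R}_{m,n}$. Since $\mathcal{H}'$ is maximal and $\mathcal{H}\otimes\mathcal{R}_{m,n}$ is a connected set of non-$\mathcal{Q}$ cells, this forces $\mathcal{H}'=\mathcal{H}\otimes\mathcal{R}_{m,n}$.

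The main obstacle is the bookkeeping around the exterior. The paper's notion of hole is only implicitly ``bounded''. So the precise thing to check is that cells of $\ext\mathcal{P}$, dilated, reach cells of $\mathbb{N}^2$ that cannot belong to any finite maximal connected non-$\mathcal{Q}$ component; the unbounded component is infinite, which rules it out as a hole. A second subtlety is that Lemma \ref{Lemma:3.1.3} is stated for polyominoes, while the walks used here are arbitrary walks of cells. I would handle this by regarding each such walk as a polyomino, as in the proof of Corollary \ref{Cor: dilations preserve unconnectedness}, and by noting that the dilated cells $F\otimes\mathcal{R}_{m,n}$ tile $\mathbb{N}^2$ by direct computation with $\Delta_{m,n}$. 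Lemmas \ref{Lemma:Closed path geometry} and \ref{Lemma:closed path has a 3 block} are not needed here; only the uniqueness of the hole of $\mathcal{P}$ is used.
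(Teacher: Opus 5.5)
Your proof is correct, but it is organized differently from the paper's.

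\textbf{The paper's route.} It argues by contradiction. Assuming two distinct holes, it picks cells $E'$, $F'$, $G'$ in the two holes and in $\ext\mathcal{Q}$. It pulls them back through Lemma~\ref{Lemma:dilation subtraction = subtraction dilation} (inside $\mathcal{R}_{k,\ell}\otimes\mathcal{R}_{m,n}$) to cells $E,F,G\notin\mathcal{P}$. It then reruns Cisto and Navarra's proof of their Proposition 3.4 at the level of $\mathcal{P}$:
the middle cell $A_2$ of a block of length $3$ (Lemma~\ref{Lemma:closed path has a 3 block}) has only two edges not shared with $\mathcal{P}$; each of $E,F,G$ reaches one of these edges through non-$\mathcal{P}$ cells (Lemma~\ref{Lemma:Closed path geometry}); so by pigeonhole two of them are joined in the complement of $\mathcal{P}$. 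Finally, Corollary~\ref{Cor: dilations preserve unconnectedness} lifts that walk to $\mathcal{Q}$.

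\textbf{Your route.} You take the uniqueness of the hole of $\mathcal{P}$ as a black box. The paper also invokes this in its first line, so its pigeonhole step effectively reproves something it already assumed. You then transport the partition of the complement of $\mathcal{P}$ into $\mathcal{H}$ and $\ext\mathcal{P}$ cell by cell, and conclude by maximality.

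\textbf{What each buys.} Your route is shorter and uses no closed-path geometry. Your cases (a) and (b) show, for an arbitrary polyomino, that every hole of $\mathcal{P}\otimes\mathcal{R}_{m,n}$ lies in $\bigcup_i\mathcal{H}_i\otimes\mathcal{R}_{m,n}$, i.e.\ dilation creates no new holes. Adding the observation that edge-adjacent cells in different dilated cells come from edge-adjacent cells upgrades this to a bijection on holes. The cost is the exterior bookkeeping you flagged: you need that a cell of $\ext\mathcal{P}$ lies in the infinite component of the complement, and that this survives dilation. The paper avoids this by only needing three pairwise disconnected non-$\mathcal{Q}$ cells. Your handling of the bookkeeping is adequate.
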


\begin{proof}
    By Proposition 3.4 in \textbf{\cite{Cisto_Navarra:2023}}, we have that since $\mathcal{P}$ is a closed path, then $\mathcal{P}$ contains exactly one hole, say $\mathcal{H}$. By Proposition \ref{Proposition: Dilated holes are holes}, we have that $\mathcal{H} \otimes \mathcal{R}_{m,n}$ is a hole of $\mathcal{Q}$, and so $\mathcal{Q}$ is non-simple. We claim that $\mathcal{H} \otimes \mathcal{R}_{m,n}$ is the unique hole of $\mathcal{Q}$. We proceed by a similar argument to that in the proof of Proposition 3.4 in \textbf{\cite{Cisto_Navarra:2023}}. Assume that $\mathcal{H}_1$ and $\mathcal{H}_2$ are two distinct holes of $\mathcal{Q}$. Then there exist three cells $E' \in \mathcal{H}_1$, $F' \in \mathcal{H}_2$, and $G' \in \ext \mathcal{Q}$ such that there exists no walk of cells not in $\mathcal{Q}$ that connects any pair of the three cells. Let $k, \ell \in \mathbb{Z}^+$ and let $\mathcal{R}_{km,\ell n}$ be the rectangle polyomino of minimal rank such that $\mathcal{Q} \subseteq \mathcal{R}_{km,\ell n}$ and $E', F', G' \in \mathcal{R}$. We note that $\mathcal{R}_{km,\ell n} = \mathcal{R}_{k,\ell} \otimes \mathcal{R}_{m,n}$. Since $E', F', G' \notin \mathcal{Q}$, then we have that $E', F', G' \in (\mathcal{R}_{km,\ell n} \setminus \mathcal{Q})$. So $E', F', G' \in (\mathcal{R}_{k,\ell} \otimes \mathcal{R}_{m,n}) \setminus (\mathcal{P} \otimes \mathcal{R}_{m,n})$. Therefore, by Lemma \ref{Lemma:dilation subtraction = subtraction dilation}, we have that $E', F', G' \in (\mathcal{R}_{k,\ell} \setminus \mathcal{P}) \otimes \mathcal{R}_{m,n}$. So by Corollary \ref{Corollary: Dilation is union of dilated cells}, we have that $E' \in E \otimes \mathcal{R}_{m,n}$, $F' \in F \otimes \mathcal{R}_{m,n}$, and $G' \in G \otimes \mathcal{R}_{m,n}$ for some $E, F, G\in (\mathcal{R}_{k,\ell} \setminus \mathcal{P})$. So as such, $E,F,G \notin \mathcal{P}$. By Lemma \ref{Lemma:closed path has a 3 block}, we know that $\mathcal{P}$ contains a block of length 3, say $\mathcal{B}$. We relabel the cells of $\mathcal{P}$ such that $\mathcal{B} = [A_1, A_3]$. Then $A_2$ is in the same position as $A_i$ in Figure \ref{Fig: three cells in closed path} (II). As $E, F, G\notin \mathcal{P}$, then by Lemma \ref{Lemma:Closed path geometry}, there exist three walks $\mathcal{W}_E: E = E_1,...,E_r$, $\mathcal{W}_F: F = F_1,...,F_s$, and $\mathcal{W}_G: G = G_1,...,G_t$ of cells not in $\mathcal{P}$ such that $E_r \cap A_2 = \{e\}$, $F_s \cap A_2 = \{f\}$, and $G_t \cap A_2 = \{g\}$ for some edges $e,f,g \in E(A_2)$. We know that $A_1 \cap A_2 = \{e_1\}$ and $A_2 \cap A_3 = \{e_2\}$ for some distinct edges $e_1,e_2 \in E(A_2)$. Since $E_r \notin \mathcal{P}$, then $E_r \neq A_1 \neq A_3$, and as such $e \neq e_1 \neq e_2$. For similar reasons, we have that $f \neq e_1 \neq e_2$ and $g \neq e_1 \neq e_2$. Since, by the definition of a cell, $|E(A_2)| = 4$, then it must be that either $e = f$, $f = g$, or $g = e$. Suppose $e = f$. Then $E_r = F_s$, and so there exists a walk $\mathcal{W}_{E \rightarrow F}: E = E_1,...,E_{r-1}, E_r = F_s, F_{s-1},...,F_1 = F$ of cells not in $\mathcal{P}$ that connects $E$ and $F$. Therefore, by Corollary \ref{Cor: dilations preserve unconnectedness}, $E'$ and $F'$ are connected by a walk of cells not in $\mathcal{P}$, but this is a contradiction. If instead we have that $f = g$ or $g = e$, we would obtain an analogous contradiction. Thus, it must be that $\mathcal{H} \otimes \mathcal{R}_{m,n}$ is the unique hole of $\mathcal{Q}$.
\end{proof}

\section{Primality of dilated closed paths} \label{Section:4}
As we have shown that dilated closed paths are non-simple polyominoes, there is reason to ask questions about their primality. Primarily, the big question is: \textit{does the Zig-Zag Walk Conjecture hold for the class of dilated closed paths?} Our main result at the end of this section will answer this question in the affirmative (see Theorem \ref{Thm: ZZW for dilated closed paths}). As all dilated closed paths contain exactly one hole by Proposition \ref{Prop:dilated closed path has unique hole}, then this result will bring us closer to classifying all polyominoes with exactly one hole whose polyomino ideal is prime. This was first stated as a point of research interest by Qureshi, Shibuta, and Shikama in \textbf{\cite{Qureshi_Shibuta_Shikama:2017}}, but it has yet to be resolved. As our initial step towards our main result, we show that zig-zag walks are preserved by dilations with the below proposition. We first give a short remark.

\begin{remark} \label{Remark:About rectangles with two common verts}
    If two rectangle polyominoes share at least two common vertices, then they must share at least one common edge.
\end{remark}

\begin{proposition} \label{Prop: Construct a zig-zag}
    Let $\mathcal{P}$ be a polyomino and let $\mathcal{Q} = \mathcal{P} \otimes \mathcal{R}_{m,n}$ for some $m,n \in \mathbb{Z}^+$. If $\mathcal{Q}$ contains no zig-zag walks, then $\mathcal{P}$ contains no zig-zag walks.
\end{proposition}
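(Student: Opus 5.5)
We argue by contraposition: assuming $\mathcal{P}$ has a zig-zag walk $\mathcal{W}: I_1,\dots,I_n$ with points $v_i,z_i,u_i$ as in Definition \ref{Defn:ZZW}, we show that the dilated sequence $\mathcal{W}': I_1\otimes\mathcal{R}_{m,n},\dots,I_n\otimes\mathcal{R}_{m,n}$, i.e. $I_i' = \Delta_{m,n}(I_i)$, is a zig-zag walk of $\mathcal{Q}$. The accompanying points are $v_i' = \Delta_{m,n}(v_i)$, $z_i' = \Delta_{m,n}(z_i)$ and $u_i' = \Delta_{m,n}(u_i)$. Since $\Delta_{m,n}$ is injective, order preserving in each coordinate, and sends corners of $[a,b]$ to corners of $[\Delta_{m,n}(a),\Delta_{m,n}(b)]$ (diagonal to diagonal, anti-diagonal to anti-diagonal), the labelling pattern of Definition \ref{Defn:ZZW} is inherited automatically. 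By Lemma \ref{Lemma:Dilated vertices are in dilation} all these points lie in $V(\mathcal{Q})$, and by Lemma \ref{Lemma: Dilations preserve inner intervals} each $I_i'$ is an inner interval of $\mathcal{Q}$. The $I_i'$ are distinct because $\Delta_{m,n}$ is injective on endpoints.

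We then verify (i)–(iii) in turn. For (i), $\Delta_{m,n}$ maps lattice points into lattice points but a dilated interval contains many new points, so we must show $I_i'\cap I_{i+1}' = \{v_{i+1}'\}$ directly. Note that $\Delta_{m,n}([a,b])$ as a subset of $\mathbb{R}^2$ is the image of the real rectangle $[a,b]$ under the linear bijection $(x,y)\mapsto(mx,ny)$. Hence the real intersection of $I_i'$ and $I_{i+1}'$ is the image of the real intersection of $I_i$ and $I_{i+1}$. That real intersection is a sub-rectangle with corners in $\mathbb{N}^2$, and its lattice points are only $v_{i+1}$, so it is the single point $v_{i+1}$. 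Therefore the dilated intersection is $\{v_{i+1}'\}$, and the same argument applies to $I_1'\cap I_n'$. For (ii), if $v_i,v_{i+1}$ lie on a common edge interval $H$ of $\mathcal{P}$, then by the remark on dilated edge intervals, $\Delta_{m,n}(H)$ is an edge interval of $\mathcal{Q}$ containing $v_i',v_{i+1}'$.

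Condition (iii) is the main obstacle. Suppose some inner interval $J=[p,q]$ of $\mathcal{Q}$ contains $z_i'$ and $z_j'$ with $i\neq j$. We want to produce an inner interval of $\mathcal{P}$ containing $z_i,z_j$, contradicting (iii) for $\mathcal{W}$. The plan is to round $J$ inward to the dilated lattice: set $\tilde p = (m\lceil p_1/m\rceil, n\lceil p_2/n\rceil)$ and $\tilde q = (m\lfloor q_1/m\rfloor, n\lfloor q_2/n\rfloor)$. Because $z_i',z_j'\in J$ are dilated points, both still lie in $[\tilde p,\tilde q]\subseteq J$. This smaller interval is inner in $\mathcal{Q}$, since it is a sub-interval of an inner interval, provided it is proper. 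If it degenerates to a segment, we handle that case separately by noting that a segment on an edge interval of $\mathcal{Q}$ pulls back to a segment on an edge interval of $\mathcal{P}$, and then fattening it within a cell of $\mathcal{P}$ adjacent to that edge. Otherwise $[\tilde p,\tilde q] = \Delta_{m,n}([\tilde p/(m,n),\tilde q/(m,n)])$, and Lemma \ref{Lemma: Dilations preserve non-inners} shows that the preimage is an inner interval of $\mathcal{P}$. This preimage contains $z_i,z_j$, giving the contradiction. Remark \ref{Remark:About rectangles with two common verts} may help control the degenerate situations. Hence $\mathcal{W}'$ is a zig-zag walk in $\mathcal{Q}$, which proves the contrapositive.
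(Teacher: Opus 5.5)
Your proposal is correct. Its skeleton matches the paper's: you argue by contraposition, dilate each $I_i$, get innerness from Lemma \ref{Lemma: Dilations preserve inner intervals}, and check (ii) through dilated edge intervals exactly as the paper does. The differences are in (i) and (iii).

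For (i), the paper argues that a second common vertex would, by Remark \ref{Remark:About rectangles with two common verts}, give a common edge. Then $(I_i\cup I_{i+1})\otimes\mathcal{R}_{m,n}$ would be a polyomino, and the paper pulls this back to $I_i\cup I_{i+1}$, a step it leaves unjustified. Your observation is shorter and avoids that pull-back: $(x,y)\mapsto(mx,ny)$ is a linear bijection of $\mathbb{R}^2$, so it commutes with intersections of rectangles.

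For (iii), the paper passes to a minimal inner interval $J'$ and asserts that $\Delta_{m,n}(z_i),\Delta_{m,n}(z_j)$ are corners of it. It rules out their lying on a common edge interval by an appeal to the zig-zag structure. It then treats the diagonal and anti-diagonal cases separately before applying Lemma \ref{Lemma: Dilations preserve non-inners}. Your inward rounding to the lattice $m\mathbb{Z}\times n\mathbb{Z}$ makes the corner analysis unnecessary. More importantly, it handles the collinear case directly instead of excluding it. Definition \ref{Defn:ZZW} does not by itself forbid $z_i,z_j$ from lying on a common edge interval, so this is where your route is more robust.

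Do make the degenerate case explicit.
\begin{itemize}
\item A single point cannot occur: $\tilde p=\tilde q$ forces $z_i=z_j$, and then $I_i$ is an inner interval containing both.
\item For a horizontal segment from $(ma_1,nb)$ to $(ma_2,nb)$, it is not enough that its pull-back lies on an edge interval of $\mathcal{P}$, since the cells along an edge interval can alternate sides. Use instead that $J$ is proper, so one of the strips $[ma_1,ma_2]\times[nb,nb+1]$ or $[ma_1,ma_2]\times[nb-1,nb]$ lies in $J$.
\item Each unit cell of that strip lies in some dilated cell $C\otimes\mathcal{R}_{m,n}$, and Lemma \ref{Lemma:Dilations preserve disjoint sets} forces $C\in\mathcal{P}$. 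Hence $[(a_1,b),(a_2,b+1)]$ or $[(a_1,b-1),(a_2,b)]$ is an inner interval of $\mathcal{P}$ containing $z_i,z_j$.
\item The same disjointness argument shows that, in your proper case, the preimage corners lie in $V(\mathcal{P})$, as Lemma \ref{Lemma: Dilations preserve non-inners} formally requires.
\end{itemize}
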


\begin{proof}
    We proceed with a proof by contraposition. Assume that $\mathcal{P}$ contains a zig-zag walk, say $\mathcal{W}: I_1,...,I_\ell$. We claim that $W_\Delta:I_1 \otimes \mathcal{R}_{m,n},...,I_\ell \otimes \mathcal{R}_{m,n}$ is a zig-zag walk in $\mathcal{Q}$. Since $I_1,...,I_\ell$ are required to be inner intervals, then by Lemma \ref{Lemma: Dilations preserve inner intervals}, we have that $I_1 \otimes \mathcal{R}_{m,n},...,I_\ell \otimes \mathcal{R}_{m,n}$ are inner intervals. Without loss of generality, we apply any necessary rotations to $\mathcal{P}$ such that $v_1$ is an anti-diagonal corner of $I_1$. Then by definition of a zig-zag walk (see Definition \ref{Defn:ZZW}), we have that:
    \[
     I_k = \begin{cases}
                [u_k, v_{k+1}] \text{ or } [v_{k+1}, u_k] \text{, if }k = 1 \bmod 2 \\
                [v_k, z_{k}] \text { or } [z_k, v_{k}] \text{, if }k = 0 \bmod 2 \\      
            \end{cases}
    \] 
  and so by Lemma \ref{Lemma: Dilations preserve inner intervals}, we must have that:
  \[
     I_k \otimes \mathcal{R}_{m,n} = 
            \begin{cases}
                [\Delta_{m,n}(u_k), \Delta_{m,n}(v_{k+1})] \text{ or } [\Delta_{m,n}(v_{k+1}), \Delta_{m,n}(u_k)] \text{, if }k = 1 \bmod 2 \\
                [\Delta_{m,n}(v_k), \Delta_{m,n}(z_{k})] \text { or } [\Delta_{m,n}(z_k), \Delta_{m,n}(v_{k})] \text{, if }k = 0 \bmod 2 \\      
            \end{cases}
    \]
We proceed to show that conditions (i), (ii), and (iii) of Definition \ref{Defn:ZZW} hold for $\mathcal{W}_\Delta$. By condition (i) for $\mathcal{W}$ as a zig-zag walk, we have that $I_1 \cap I_\ell = \{v_1\}$ and $I_i \cap I_{i+1} = \{v_{i+1}\}$ for $i \in \{1,...,\ell-1\}$. Let $i \in \{1,...,\ell-1\}$ and let $v_{i+1} \in I_i \cap I_{i+1}$, then $v_{i+1}\in I_i$ and $v_{i+1}\in I_{i+1}$. So by Lemma \ref{Lemma:Dilated vertices are in dilation}, we have that $\Delta_{m,n}(v_{i+1}) \in I_i \otimes \mathcal{R}_{m,n}$ and $\Delta_{m,n}(v_{i+1}) \in I_{i+1} \otimes \mathcal{R}_{m,n}$, respectively. So then $\Delta_{m,n}(v_{i+1}) \in (I_i \otimes \mathcal{R}_{m,n}) \cap (I_{i+1} \otimes \mathcal{R}_{m,n})$. We note that by Lemma \ref{Lemma: Dilations preserve inner intervals}, we have that $I_i \otimes \mathcal{R}_{m,n}$ and $I_{i+1} \otimes \mathcal{R}_{m,n}$ are both inner intervals of $\mathcal{Q}$ and so by definition of an inner interval they are both rectangle subpolyominoes of $\mathcal{Q}$. Suppose there exists another vertex $w\in (I_i \otimes \mathcal{R}_{m,n}) \cap (I_{i+1} \otimes \mathcal{R}_{m,n})$ with $w \neq \Delta_{m,n}(v_{i+1})$. By Remark \ref{Remark:About rectangles with two common verts}, we have that as $I_i \otimes \mathcal{R}_{m,n}$ and $I_{i+1} \otimes \mathcal{R}_{m,n}$ share two common vertices, then they must share a common edge, say $e$. Therefore, for any cell $C \in I_i \otimes \mathcal{R}_{m,n}$ and $D \in I_{i+1} \otimes \mathcal{R}_{m,n}$, there exists a walk that crosses the edge $e$ which connects $C$ and $D$. So by definition, we have that $(I_i \otimes \mathcal{R}_{m,n}) \cup (I_{i+1} \otimes \mathcal{R}_{m,n})$ is a polyomino. By Corollary \ref{Cor: Union of tensor = tensor of union}, we know that $(I_i \otimes \mathcal{R}_{m,n}) \cup (I_{i+1} \otimes \mathcal{R}_{m,n}) = (I_i \cup I_ {i+1})\otimes \mathcal{R}_{m,n}$, so then $(I_i \cup I_ {i+1})\otimes \mathcal{R}_{m,n}$ is a polyomino. Therefore, $I_i \cup I_{i+1}$ is a polyomino by definition of the tensor product, but this is a contradiction because $I_i \cap I_{i+1} = \{v_{i+1}\}$, so their union cannot be a polyomino as they do not have a common edge and as such $I_i \cup I_{i+1}$ is only weakly connected. Thus, it must be that $(I_i \otimes \mathcal{R}_{m,n}) \cap (I_{i+1} \otimes \mathcal{R}_{m,n}) = \{\Delta_{m,n}(v_{i+1})\}$. By a similar argument, we have that since $I_1 \cap I_\ell = \{v_1\}$, then $(I_1 \otimes \mathcal{R}_{m,n}) \cap (I_{\ell} \otimes \mathcal{R}_{m,n}) = \{\Delta_{m,n}(v_1)\}$. So condition (i) holds for $\mathcal{W}_\Delta$.
    
 By condition (ii) for $\mathcal{W}$ as a zig-zag walk, we have that $v_1$ and $v_\ell$ lie on the same edge interval and that $v_i$ and $v_{i+1}$ lie on the same edge interval in $\mathcal{P}$ for $i\in \{1,...,\ell-1\}$. Suppose that $v_i$ and $v_{i+1}$ lie on the same horizontal edge interval in $\mathcal{P}$. Then $v_i = (a,h)$ and $v_{i+1} = (b,h)$ for some $a,b,h \in \mathbb{N}$. Therefore $\Delta_{m,n}(v_i) = \Delta_{m,n}((a, h)) = (ma, nc)$ and $\Delta_{m,n}(v_{i+1}) = \Delta_{m,n}((b, h)) = (mb, nc)$, and so $\Delta_{m,n}(v_i)$ and $\Delta_{m,n}(v_{i+1})$ lie on the same horizontal edge interval in $\mathcal{Q}$. By a symmetric argument, if $v_i$ and $v_{i+1}$ lie on the same vertical edge interval in $\mathcal{P}$, then $\Delta_{m,n}(v_i)$ and $\Delta_{m,n}(v_{i+1})$ must lie on the same vertical edge interval in $\mathcal{Q}$. So condition (ii) holds for $\mathcal{W}_\Delta$.
 
 By condition (iii) for $\mathcal{W}$ as a zig-zag walk, we have that for any $i,j \in \{1,...,\ell\}$, with $i \neq j$, there exists no inner interval $I$ of $\mathcal{P}$ such that $z_i,z_j \in I$. Suppose that $\Delta_{m,n}(z_i),\Delta_{m,n}(z_j) \in J$ for some inner interval $J$ in $\mathcal{Q}$ with $i \neq j$. Let $J' \subseteq J$ be the inner interval in $\mathcal{Q}$ of minimal rank that contains $\Delta_{m,n}(z_i)$ and $\Delta_{m,n}(z_j)$. Then $\Delta_{m,n}(z_i)$ and $\Delta_{m,n}(z_j)$ are both corners of $J'$. Suppose $\Delta_{m,n}(z_i)$ and $\Delta_{m,n}(z_j)$ lie on the same edge interval in $\mathcal{Q}$, then $z_i$ and $z_j$ would lie on the same edge interval in $\mathcal{P}$, but this contradicts that $\mathcal{W}$ is a zig-zag walk in $\mathcal{P}$, since for each pair of consecutive inner intervals $I_i$ and $I_{i+1}$ in $\mathcal{W}$ with $i\in \{1,...,\ell\}$, we have that one of $z_i$ and $z_{i+1}$ is a diagonal corner of its inner interval while the other is not, and also we have that the pairs of vertices $z_i$ and $v_{i+1}$ and $v_i$ and $v_{i+1}$ lie on the same edge intervals, but $z_i$ is always either diagonal or anti-diagonal to $v_i$. Therefore, $\Delta_{m,n}(z_i)$ and $\Delta_{m,n}(z_j)$ must either both be diagonal corners or both anti-diagonal corners of $J'$. Suppose $\Delta_{m,n}(z_i)$ and $\Delta_{m,n}(z_j)$ are the diagonal corners of $J'$. Then $J' = [\Delta_{m,n}(z_i),\Delta_{m,n}(z_j)]$ or $J' = [\Delta_{m,n}(z_j),\Delta_{m,n}(z_i)]$, and so by Lemma \ref{Lemma: Dilations preserve non-inners}, we have that $[z_i,z_j]$ or $[z_j,z_i]$, respectively, is an inner interval of $\mathcal{P}$, but this contradicts $\mathcal{W}$ being a zig-zag walk in $\mathcal{P}$. Now suppose $\Delta_{m,n}(z_i)$ and $\Delta_{m,n}(z_j)$ are the anti-diagonal corners of $J'$. Let $a$ and $b$ be the diagonal corners of $J'$ such that $a$ lies on the same vertical edge interval as $\Delta_{m,n}(z_i)$ and such that $b$ lies on the same vertical edge interval as $\Delta_{m,n}(z_j)$. Then it must be that $a$ and $b$ lie on the same horizontal edge intervals as $\Delta_{m,n}(z_j)$ and $\Delta_{m,n}(z_i)$, respectively. This forces $a = \Delta_{m,n}(w_a)$ and $b = \Delta_{m,n}(w_b)$ for some $w_a,w_b \in V(\mathcal{P})$. So as $J' = [a,b]$ or $J' = [b,a]$, then by Lemma \ref{Lemma: Dilations preserve non-inners} we have that either $[w_a,w_b]$ or $[w_b,w_a]$ is an inner interval of $\mathcal{P}$ with anti-diagonal corners $z_i$ and $z_j$, but this contradicts $\mathcal{W}$ being a zig-zag walk in $\mathcal{P}$. Thus, there is no inner interval $J$ in $\mathcal{Q}$ that contains $z_i$ and $z_j$ with $i \neq j$. So condition (iii) holds for $\mathcal{W}_\Delta$. Therefore, $W_\Delta:I_1 \otimes \mathcal{R}_{m,n},...,I_\ell \otimes \mathcal{R}_{m,n}$ is a zig-zag walk in $\mathcal{Q}$.
\end{proof}

We seek to generalize the results by Cisto and Navarra in \textbf{\cite{Cisto_Navarra:2023}} on the primality of closed paths to the class of dilated closed paths.  In following their argument, we first state a convenient definition and lemma introduced by Shikama in \textbf{\cite{Shikama:2018}}. Recall that by $S$ we mean the polynomial ring which contains the polyomino ideal $I_{\mathcal{P}}$ for any polyomino $\mathcal{P}$, that is $S = K[x_v \mid v \in V(\mathcal{P})]$. For a binomial $f$ in a binomial ideal $J \subset S$, we can write $f = f^+-f^-$ to distinguish the two terms $f^+$ and $f^-$ of $f$. We denote by $V_f^+$ the subset of vertices $v \in V(\mathcal{P})$ such that $x_v$ divides $f^+$ and by $V_f^-$ the subset of vertices $v \in V(\mathcal{P})$ such that $x_v$ divides $f^-$. As an example, if $f =x_{(0,0)}x_{(1,1)}-x_{(0,1)}x_{(1,0)} \in J \subset S$, then $V_f^+ = \{(0,0),(1,1)\}$ and $V_f^- = \{(0,1),(1,0)\}$. 

\begin{definition}
    Let $f \in J$ for some binomial ideal $J \subset S$. We say that $f$ is \textit{redundant} if it can be expressed as a linear combination of binomials in $J$ of lower degree. Otherwise, we say that $f$ is \textit{irredundant}.
\end{definition}

\begin{lemma} (\textbf{\cite{Cisto_Navarra:2023}}, Lemma 2.2). \label{Lemma: 2.2}
    Let $\mathcal{P}$ be a polyomino, let $T$ be an integral domain, and let $\varphi : S \to T$ be a ring homomorphism. Let $J = \ker \varphi$ and let $f = f^+ - f^-$ be a binomial in $J$ of degree at least $3$. Suppose that $I_{\mathcal{P}} \subseteq J$ and $\varphi(x_a) \neq 0$ for all $a \in V(\mathcal{P})$. If there exist three vertices $p,q \in V_f^+$ and $r \in V_f^-$ such that $p,q$ are diagonal (resp. anti-diagonal) corners of an inner interval $\mathcal{I}$ of $\mathcal{P}$ and $r$ is one of the anti-diagonal (resp. diagonal) corners of $\mathcal{I}$, then $f$ is redundant in $J$. Similarly, if there exist three vertices $p \in V_f^+$ and $r,s \in V_f^-$ such that $p$ is one of the diagonal (resp. anti-diagonal) corners of an inner interval $\mathcal{I}$ of $\mathcal{P}$ and $r,s$ are the anti-diagonal (resp. diagonal) corners of $\mathcal{I}$, then $f$ is redundant in $J$.
\end{lemma}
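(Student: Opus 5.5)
The idea is to peel off one inner 2-minor of $\mathcal{P}$ from $f$ and then cancel a common variable from what remains, using that $T$ is an integral domain. We treat the first situation: $p,q\in V_f^+$ are the diagonal corners of the inner interval $\mathcal{I}$, and $r\in V_f^-$ is an anti-diagonal corner. Let $s$ be the other anti-diagonal corner of $\mathcal{I}$. The remaining situations follow by swapping the roles of diagonal and anti-diagonal corners, or of $f^+$ and $f^-$ (i.e. replacing $f$ by $-f$).

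Since $p,q$ divide $f^+$, write $f^+=x_px_q\,g$ for a monomial $g$ with $\deg g=\deg f-2\geq 1$. Since $r$ divides $f^-$, write $f^-=x_r\,h$ for a monomial $h$ with $\deg h=\deg f-1$. The binomial $x_px_q-x_rx_s$ is an inner 2-minor of $\mathcal{P}$, so it lies in $I_{\mathcal{P}}\subseteq J$. The key identity is
\[
f = g\,(x_px_q-x_rx_s) + x_r\,(x_s g - h),
\]
which can be checked directly by expanding.

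Put $f' = x_sg-h$. This is a binomial (possibly zero) of degree $\deg f-1<\deg f$. Since $f\in J$ and $g(x_px_q-x_rx_s)\in J$, we get $x_rf'\in J$. Hence $\varphi(x_r)\varphi(f')=0$ in $T$. Because $T$ is an integral domain and $\varphi(x_r)\neq 0$ by hypothesis, $\varphi(f')=0$, that is, $f'\in J$.

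The identity therefore expresses $f$ as an $S$-linear combination of two binomials of $J$: the 2-minor, which has degree $2<\deg f$ because $\deg f\geq 3$, and $f'$, which has degree $\deg f-1$. So $f$ is redundant in $J$.

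For the second statement, $p\in V_f^+$ is a diagonal corner and $r,s\in V_f^-$ are the anti-diagonal corners; let $q$ be the other diagonal corner. Then $f^-=x_rx_s g'$ and $f^+=x_p h'$ for monomials $g',h'$, and
\[
f = -g'(x_rx_s-x_px_q) + x_p\,(h'-x_qg').
\]
Cancelling $\varphi(x_p)\neq 0$ exactly as before shows that $h'-x_qg'\in J$, so $f$ is again redundant. The anti-diagonal variants are identical after exchanging the roles of the two pairs of corners.

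I do not expect a substantive obstacle. The points that need care are that "linear combination" must mean a combination with coefficients in $S$, that $f'$ may vanish (this is harmless), and that $\deg f\geq 3$ is what makes the 2-minor strictly lower in degree. The hypothesis $\varphi(x_a)\neq 0$ together with $T$ being a domain is used exactly once, in the cancellation step.
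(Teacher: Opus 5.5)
Your proof is correct and is essentially the standard argument from Cisto and Navarra, which this paper cites rather than reproves: subtract $g\,(x_px_q-x_rx_s)$ from $f$, then cancel the common factor $x_r$ using that $J=\ker\varphi$ is prime and $\varphi(x_r)\neq 0$. The only cosmetic point is that if $f$ is not homogeneous you should write $\deg g\le \deg f-2$ and $\deg f'\le \deg f-1$; this changes nothing in the argument.
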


We note here that we have stated the above lemma in its more general form as it appears in \textbf{\cite{Cisto_Navarra:2023}}. If we are to prove the Zig-Zag Walk Conjecture for dilated closed paths, we need to know what substructures of a dilated closed path might serve as sufficient conditions for its polyomino ideal to be prime. We proceed to show that there is a natural way to generalize the substructures that serve as these primality conditions for closed paths to this new setting of dilated closed paths. We now define these substructures for closed paths as first given by Cisto and Navarra in \textbf{\cite{Cisto_Navarra:2023}}.

\begin{definition}(\textbf{\cite{Cisto_Navarra:2023}}, Definition 3.5).
    Let $\mathcal{P}$ be a polyomino. A path of five cells $A_1,A_2,A_3,A_4,A_5$ of $\mathcal{P}$ is called an \textit{L-configuration} if the two blocks of cells $A_1,A_2,A_3$ and $A_3,A_4,A_5$ go in orthogonal directions, that is one must be a horizontal block while the other is a vertical block. 
\end{definition}

\begin{definition}(\textbf{\cite{Cisto_Navarra:2023}}, Definition 3.9).
    Let $\mathcal{P}$ be a polyomino and let $\mathcal{B} = \{\mathcal{B}_i\}_{i=1,..,s}$ be a set of maximal horizontal (or vertical) blocks of $\mathcal{P}$ such that each $\mathcal{B}_i$ has length at least two and for all $i\in\{1,...,s-1\}$, we have that $V(\mathcal{B}_i)\cap V(\mathcal{B}_{i+1}) = \{a_i,b_i\}$, with $a_i \neq b_i$. We call $\mathcal{B}$ a \textit{ladder of s steps} if for all $i\in\{1,...,s-2\}$, there does not exist an edge interval which contains both the edges $[a_i,b_i]$ and $[a_{i+1},b_{i+1}]$.
\end{definition}

\begin{figure} [h]
    \centering
    \begin{tikzpicture}
        \draw[step=1cm,white,very thin] (0,0) grid (7,5);
        \polyomino[
        empty cell =x,
        grid,
        p={a}{style={lightgray,draw=black}},
        p={b}{style={pink,draw=black}},
        row sep =;
        ]{
        x b b b b x x;
        a a x x b b x;
        a x x x x b b;
        a a a a x x a;
        x x x a a a a
        }
        \begin{scope}[color=black]
            \node[anchor=center] () at (0.5,3.5) {$A_1$};
            \node[anchor=center] () at (0.5,2.5) {$A_2$};
            \node[anchor=center] () at (0.5,1.5) {$A_3$};
            \node[anchor=center] () at (1.5,1.5) {$A_4$};
            \node[anchor=center] () at (2.5,1.5) {$A_5$};

            \fill(5,5) circle[radius=0pt] node[above right]{$\mathcal{B}_1$};
            \fill(6,4) circle[radius=0pt] node[above right]{$\mathcal{B}_2$};
            \fill(7,3) circle[radius=0pt] node[above right]{$\mathcal{B}_3$};

            \fill(4,4) circle[radius=2pt] node[below left]{$a_1$};
            \fill(5,4) circle[radius=2pt] node[below right]{$b_1$};
            \fill(5,3) circle[radius=2pt] node[below left]{$a_2$};
            \fill(6,3) circle[radius=2pt] node[below right]{$b_2$};

        \end{scope}
        
    \end{tikzpicture}
    \caption{A closed path with an L-configuration and a ladder of 3 steps}
    \label{fig:L-configuration example}
\end{figure}

In Theorem 4.2 and Theorem 5.2 of \textbf{\cite{Cisto_Navarra:2023}}, Cisto and Navarra showed that if a closed path contains an L-configuration or a ladder of at least 3 steps, then its polyomino ideal is prime. We now give the natural generalization of an L-configuration and a ladder of $s$ steps in the context of dilated closed paths.

\begin{definition}
    Let $\mathcal{Q} = \mathcal{P} \otimes \mathcal{R}_{m,n}$ be a dilated closed path and let $A_1,A_2,A_3,A_4,A_5$ be a sequence of cells which define an L-configuration in $\mathcal{P}$. We say that the sequence of cell intervals $A_1\otimes\mathcal{R}_{m,n}$, $A_2\otimes\mathcal{R}_{m,n}$, $A_3\otimes\mathcal{R}_{m,n}$, $A_4\otimes\mathcal{R}_{m,n}$, $A_5\otimes\mathcal{R}_{m,n}$ in $\mathcal{Q}$ is an $(m,n)$-\textit{dilated L-configuration}.
\end{definition}

\begin{definition}
    Let $\mathcal{P} = \mathcal{Q} \otimes \mathcal{R}_{m,n}$ be a closed path and let $\mathcal{B} = \{\mathcal{B}_i\}_{i=1,..,s}$ be a set of maximal horizontal (or vertical) blocks of $\mathcal{P}$ that form a ladder of $s$ steps. We say that the set of cell intervals $\mathcal{B}\otimes\mathcal{R}_{m,n} = \{\mathcal{B}_i \otimes \mathcal{R}_{m,n}\}_{i=1,...s}$ in $\mathcal{Q}$ is an $(m,n)$-\textit{dilated ladder of s steps}.
\end{definition}

\begin{figure} [H]
    \centering
    \begin{tikzpicture}
        \draw[step=1cm,white,very thin] (0,0) grid (10,4);
        \polyomino[
        empty cell =x,
        grid,
        p={a}{style={lightgray,draw=black}},
        p={b}{style={pink,draw=black}},
        p={c}{style={cyan,draw=black}},
        row sep =;
        ]{
        x x b b b b b b b b x x x x;
        c c a a x x x x b b b b x x;
        c c x x x x x x x x b b b b;
        c c c c c c a a x x x x a a;
        x x x x x x a a a a a a a a
        }
        
         \begin{scope}[color=black]
            \node[anchor=center] () at (1,3.5) {$A_1\otimes\mathcal{R}_{2,1}$};
            \node[anchor=center] () at (1,2.5) {$A_2\otimes\mathcal{R}_{2,1}$};
            \node[anchor=center] () at (1,1.5) {$A_3\otimes\mathcal{R}_{2,1}$};
            \node[anchor=center] () at (3,1.5) {$A_4\otimes\mathcal{R}_{2,1}$};
            \node[anchor=center] () at (5,1.5) {$A_5\otimes\mathcal{R}_{2,1}$};
        \end{scope}

        \draw[draw=blue, ultra thick] (0,1) rectangle (2,2);
        \draw[draw=blue, ultra thick] (0,2) rectangle (2,3);
        \draw[draw=blue, ultra thick] (0,3) rectangle (2,4);
        \draw[draw=blue, ultra thick] (2,1) rectangle (4,2);
        \draw[draw=blue, ultra thick] (4,1) rectangle (6,2);

        \fill(9,5) circle[radius=0pt] node[above right]{$\mathcal{B}_1\otimes \mathcal{R}_{2,1}$};
        \fill(11,4) circle[radius=0pt] node[above right]{$\mathcal{B}_2\otimes \mathcal{R}_{2,1}$};
        \fill(13,3) circle[radius=0pt] node[above right]{$\mathcal{B}_3\otimes \mathcal{R}_{2,1}$};

        \draw[draw=red, ultra thick] (2,4) rectangle (10,5);
        \draw[draw=red, ultra thick] (8,3) rectangle (12,4);
        \draw[draw=red, ultra thick] (10,2) rectangle (14,3);

        \fill(8,4) circle[radius=2pt] node[below left]{\small $\Delta_{2,1}(a_1)$};
        \fill(10,4) circle[radius=2pt] node[below]{\small $\Delta_{2,1}(b_1)$};
        \fill(10,3) circle[radius=2pt] node[below left]{\small $\Delta_{2,1}(a_2)$};
        \fill(12,3) circle[radius=2pt] node[below]{\small $\Delta_{2,1}(b_2)$};
        
    \end{tikzpicture}
    \caption{A dilated closed path, the result of tensoring the polyomino from Figure \ref{fig:L-configuration example} with $\mathcal{R}_{2,1}$, that has a (2,1)-dilated L-configuration and a (2,1)-dilated ladder of 3 steps}
    \label{fig:dilated L-configuration example}
\end{figure}

We claim that $(m,n)$-dilated L-configurations and $(m,n)$-dilated ladders of at least 3 steps are the natural choices for the primality conditions in the class of dilated closed paths. Our aim now is to prove that the containment of either an $(m,n)$-dilated L-configuration or an $(m,n)$-dilated ladder of at least 3 steps is a sufficient condition for a dilated closed path to be prime. After establishing these results we will be able to prove that the Zig-Zag Walk Conjecture holds for the class of dilated closed paths. 

For any polyomino $\mathcal{P}$, we have by its construction that the toric ideal $J_{\mathcal{P}}$ contains the polyomino ideal $I_{\mathcal{P}}$. In fact, as we will now state below, we can say something stronger about this containment by a result from \textbf{\cite{Mascia_Rinaldo_Romeo:2020}}.

\begin{lemma}(\textbf{\cite{Mascia_Rinaldo_Romeo:2020}}, Lemma 3.1). \label{Lemma: ideal containment}
    Let $\mathcal{P}$ be a polyomino and let $(J_{\mathcal{P}})_2$ represent the generators of degree 2 in $J_{\mathcal{P}}$. Then $I_{\mathcal{P}} = (J_{\mathcal{P}})_2$.
\end{lemma}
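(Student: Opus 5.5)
The inclusion $I_{\mathcal{P}} \subseteq (J_{\mathcal{P}})_2$ is immediate. Every inner 2-minor $x_ax_b - x_cx_d$ lies in $J_{\mathcal{P}}$: the corners $a$ and $c$ lie on a common maximal vertical edge interval, as do $d$ and $b$; similarly $a,d$ and $c,b$ share maximal horizontal edge intervals. Moreover, no hole can meet the inner interval $[a,b]$, so for each $k$ the number of corners in $W_k$ on each side agrees. The generator is also homogeneous of degree 2. The real content is the reverse inclusion.

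Since $J_{\mathcal{P}}$ is a toric ideal, it is graded and each homogeneous component is spanned by pure binomials $f = x_ax_b - x_cx_d$ with $\varphi(x_ax_b) = \varphi(x_cx_d)$. So it suffices to show that each such binomial with $\{a,b\} \neq \{c,d\}$ is, up to sign, an inner 2-minor. First I compare the $h$- and $v$-variables. The multiset of maximal horizontal edge intervals through $a,b$ equals that through $c,d$, and likewise for vertical ones. Since $\{a,b\}\ne\{c,d\}$, the four vertices are paired as follows: $c$ shares a horizontal interval with one of $a,b$ and a vertical interval with the other, and symmetrically for $d$. Hence, after relabelling, $a$ and $b$ are the diagonal corners of a proper interval $[a,b]$ whose anti-diagonal corners are $c,d$. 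The degenerate possibilities (some of the four points collinear, or $a=b$) either force $\{a,b\}=\{c,d\}$ or reduce to relabelling the roles, and I will dispose of them first. As a by-product, the four sides of $[a,b]$ are contained in edge intervals of $\mathcal{P}$.

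The key step, and the one I expect to be the main obstacle, is proving that $[a,b]$ is an inner interval. Suppose some cell $F\subseteq[a,b]$ is not in $\mathcal{P}$. I claim that every walk of non-$\mathcal{P}$ cells starting at $F$ stays inside $[a,b]$. To leave, the walk must cross a boundary edge of the rectangle. Each such edge lies in $E(\mathcal{P})$, so it belongs to some cell of $\mathcal{P}$, and that cell is one of the two cells sharing the edge. Hence the two sides cannot both be outside $\mathcal{P}$. Consequently the connected component of non-$\mathcal{P}$ cells containing $F$ is finite and maximal, so it is a hole $\mathcal{H}_k$ contained in $[a,b]$. Its lower left corner $e_k$ then satisfies $a \le e_k \le b-(1,1)$.

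Now I use the $w$-variables. We have $a\in W_k$, while $b$, $c=(a_1,b_2)$ and $d=(b_1,a_2)$ are not $\le e_k$, because each has a coordinate strictly larger than the corresponding coordinate of $e_k$. Thus $w_k$ divides $\varphi(x_ax_b)$ exactly once and does not divide $\varphi(x_cx_d)$, contradicting $f\in J_{\mathcal{P}}$. Hence every cell of $[a,b]$ lies in $\mathcal{P}$, $[a,b]$ is an inner interval, $f$ is an inner 2-minor, and $(J_{\mathcal{P}})_2 \subseteq I_{\mathcal{P}}$.
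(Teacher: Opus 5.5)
Your proof is correct and follows essentially the same route as the Mascia--Rinaldo--Romeo argument that the paper cites. The paper does not reprove this lemma, but it reruns the same reasoning in part (1) of the proofs of Theorems \ref{Thm: 3.2.1} and \ref{Thm: 3.2.2}: force $h_a\neq h_b$ and $v_a\neq v_b$, so that $a,b$ and $c,d$ are opposite corner pairs of a proper interval whose sides lie in edge intervals, and then show that a missing cell would yield a hole inside $[a,b]$ and hence a mismatch in a $w$-variable. Your version is more careful than the paper's sketch about why a missing cell lies in a hole contained in $[a,b]$, and the details you leave implicit are routine: the degenerate cases fail because a vertex is determined by its maximal horizontal and vertical edge intervals, and the cell with lower left corner $e_k$ belongs to $\mathcal{H}_k$.
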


We will now specify the structure of the toric ideal to the case of dilated closed paths with $(m,n)$-dilated L-configurations. Let $\mathcal{Q} = \mathcal{P} \otimes \mathcal{R}_{m,n}$  be a $(m,n)$-dilated closed path for some $m,n\in\mathbb{Z}^+$ and let 
\[
\mathcal{L}_{\mathcal{Q}}: A_1 \otimes \mathcal{R}_{m,n}, A_2 \otimes \mathcal{R}_{m,n}, A_3 \otimes \mathcal{R}_{m,n}, A_4 \otimes \mathcal{R}_{m,n}, A_5 \otimes \mathcal{R}_{m,n}
\]
be an $(m,n)$-dilated L-configuration in $\mathcal{Q}$. Let $\mathcal{L}_{\mathcal{P}}: A_1, A_2, A_3, A_4, A_5$ be the underlying L-configuration in $\mathcal{P}$ that $\mathcal{L}_{\mathcal{Q}}$ arises from. For the sake of clarity, we set $A_3 = A$. Let $c$ be the lower left corner of $A$. We can assume that $c \leq u$ for any $u\in \bigcup\limits_{i=1}^{5} V(A_i)$, that is to say $A$ is the lowest and leftest cell of $\mathcal{L}_{\mathcal{P}}$, otherwise we can perform the necessary reflections or rotations on $\mathcal{P}$ such that this is the case. In this way, if we let $a = \Delta_{m,n}(c)$ be the lower left corner of the cell interval $A \otimes \mathcal{R}_{m,n}$ in $\mathcal{Q}$, we will have that $a \leq v$ for any $v\in \bigcup\limits_{i=1}^{5} V(A_i \otimes \mathcal{R}_{m,n})$, that is, $A \otimes \mathcal{R}_{m,n}$ is the lowest and leftest cell interval of $\mathcal{L}_{\mathcal{Q}}$.
    
We now consider a specific toric ideal representation of $Q$ as per the definition of such a toric ideal for any polyomino as given in Section \ref{Section:2}. Let $\mathcal{H}$ denote the unique hole of $\mathcal{Q}$ and let $e$ be the lower left corner of $\mathcal{H}$ as defined in Section \ref{Section:2}. Note that given the chosen orientation of $A \otimes \mathcal{R}_{m,n}$ as the lowest and leftest cell interval of $\mathcal{L}_{\mathcal{Q}}$, then for any $b \in V(A \otimes \mathcal{R}_{m,n})$, we have that $b \leq e$. Fixing $K$ to be a field, we define the following map:
\begin{eqnarray*}
\alpha: V(\mathcal{Q}) & \rightarrow & K[\{h_i,v_j,w\}\mid i\in I, j\in J] \\
v & \mapsto & h_iv_jw^k
\end{eqnarray*}
with $v\in V_i\cap H_j$, and with $k = 1$ if $v \in V(A \otimes \mathcal{R}_{m,n})$ and $k = 0$ otherwise. Letting $S = K[x_v \mid v\in V(\mathcal{Q})]$ and letting $T_{\mathcal{Q}} = K[\alpha(v) \mid v \in V(\mathcal{Q})]$ be the toric ring on $\mathcal{Q}$, we define the following surjective ring homomorphism:
\begin{eqnarray*}
\varphi: S & \rightarrow & T_{\mathcal{Q}} \\
            x_v & \mapsto & \alpha(v)
\end{eqnarray*}
We define the toric ideal of $\mathcal{Q}$ to be $J_\mathcal{Q} = \ker \varphi$. This is analogous to its general definition in Section \ref{Section:2}. For the remainder of this section, if $\mathcal{Q}$ is a dilated closed path that contains an $(m,n)$-dilated L-configuration, then when we refer to its toric ideal $J_\mathcal{Q}$, we mean the toric ideal as specified by the above map. The following lemma follows directly from the above lemma.

\begin{lemma} \label{Lemma: L-config ideal containment}
    Let $\mathcal{P}$ be a polyomino and let $\mathcal{Q} = \mathcal{P} \otimes \mathcal{R}_{m,n}$ be a dilated closed path with a $(m,n)$-dilated L-configuration for some $m,n \in \mathbb{Z}^+$. Then $I_{\mathcal{Q}} \subseteq J_{\mathcal{Q}}$.
\end{lemma}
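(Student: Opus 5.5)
To show $I_{\mathcal{Q}} \subseteq J_{\mathcal{Q}}$ it suffices to check that every generator of $I_{\mathcal{Q}}$ lies in $\ker\varphi$. The generators are the inner 2-minors $f = x_ax_b - x_cx_d$, where $[a,b]$ is an inner interval of $\mathcal{Q}$ with anti-diagonal corners $c,d$. So the plan is to fix such an interval and prove $\alpha(a)\alpha(b) = \alpha(c)\alpha(d)$, splitting $\alpha$ into its edge-interval part and its $w$-part.

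\textbf{The edge-interval part.} Since $[a,b]$ is an inner interval, it is a rectangle all of whose cells lie in $\mathcal{Q}$. Hence its four sides are contained in edge intervals of $\mathcal{Q}$. So $a$ and $d$ lie on a common maximal horizontal edge interval, and so do $c$ and $b$. Likewise $a,c$ and $d,b$ share maximal vertical edge intervals. It follows that the $h_iv_j$-factors of $\alpha(a)\alpha(b)$ and of $\alpha(c)\alpha(d)$ are the same product of two $h$'s and two $v$'s. This is exactly the computation behind the general inclusion $I_{\mathcal{P}}\subseteq J_{\mathcal{P}}$ implicit in Lemma \ref{Lemma: ideal containment}.

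\textbf{The $w$-part.} It remains to show that the number of corners of $[a,b]$ in $V(A\otimes\mathcal{R}_{m,n})$ is the same on both sides. That is, we need
\[
|\{a,b\}\cap V(A\otimes\mathcal{R}_{m,n})| = |\{c,d\}\cap V(A\otimes\mathcal{R}_{m,n})|.
\]
This is the main obstacle, since the $w$-exponent no longer comes from the standard set $W_k=\{v\le e\}$ but from the vertices of the dilated cell. I would first try a geometric argument. Because $A$ is the corner cell of the L-configuration in the closed path $\mathcal{P}$, with the orientation fixed above, it has only two neighbours in $\mathcal{P}$, namely $A_2$ and $A_4$. These sit one above and one to the right of $A$. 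The cells to the left of, below, and diagonally below-left of $A$ are not in $\mathcal{P}$, by Lemma \ref{Lemma:Closed path geometry} together with minimality of $c$. By Lemma \ref{Lemma:Dilations preserve disjoint sets}, the same holds for the corresponding dilated cells around $A\otimes\mathcal{R}_{m,n}$ in $\mathcal{Q}$. Consequently, any inner interval of $\mathcal{Q}$ meeting the rectangle $A\otimes\mathcal{R}_{m,n}=[\Delta_{m,n}(c),\Delta_{m,n}(c)+(m,n)]$ in a vertex either lies inside $(A_1\cup A_2\cup A_3)\otimes\mathcal{R}_{m,n}$ or inside $(A_3\cup A_4\cup A_5)\otimes\mathcal{R}_{m,n}$, or touches $A\otimes\mathcal{R}_{m,n}$ only along its top or right boundary.

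I would then run a short case analysis on the position of $[a,b]$. If $[a,b]\subseteq A\otimes\mathcal{R}_{m,n}$, all four corners count and both sides equal $2$. If $[a,b]$ is a vertical strip sharing its lower part with $A\otimes\mathcal{R}_{m,n}$ and extending upward, then $a$ and $d$ are in $V(A\otimes\mathcal{R}_{m,n})$ while $c$ and $b$ are not, which gives $1=1$. The horizontal strip case is symmetric, with $a,c$ in and $b,d$ out. If $[a,b]$ only touches $A\otimes\mathcal{R}_{m,n}$ on its top or right edge, then either no corner lies in it, or exactly one diagonal and one anti-diagonal corner do (for instance $a$ and $c$, when $[a,b]$ lies in $A_4\otimes\mathcal{R}_{m,n}$ flush with the boundary). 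A single lone corner is impossible, because $a$ is the lower left corner of $[a,b]$ and the region below and left of $A\otimes\mathcal{R}_{m,n}$ is empty. In every case the counts agree, so $w$ appears with equal exponent on both sides and $f\in J_{\mathcal{Q}}$.
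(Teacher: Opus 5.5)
Your reduction to the generators and your treatment of the $h_iv_j$-part are correct. Checking the inner 2-minors directly is in fact more to the point than the paper's proof, which only defers to \cite{Mascia_Rinaldo_Romeo:2020}, even though the $w$-weighted set here is the rectangle of lattice points $V(A\otimes\mathcal{R}_{m,n})$ rather than a set of the form $\{v\le e\}$.

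The gap is in the $w$-part. Let $p=(p_1,p_2)$ be the lower left corner of $A\otimes\mathcal{R}_{m,n}$. The only cells you show to be absent from $\mathcal{Q}$ are the dilations of the cells left of, below, and diagonally below-left of $A$. Your reason that a lone corner cannot occur (``$a$ is the lower left corner and the region below and left is empty'') only rules out $b$ being the lone corner. It does not exclude three other configurations:
\begin{itemize}
\item an inner interval with $a=p+(m,n)$ and $b$ further up and to the right, where only $a$ is weighted;
\item an interval in the row of $A_2\otimes\mathcal{R}_{m,n}$ with $a_2=p_2+n$ and $a_1<p_1\le b_1\le p_1+m$, i.e.\ sticking out to the left of $A_2\otimes\mathcal{R}_{m,n}$, where only $d$ is weighted;
\item the symmetric interval in the column of $A_4\otimes\mathcal{R}_{m,n}$ sticking out below it, where only $c$ is weighted.
\end{itemize}
Excluding these requires that the other three diagonal neighbours of $A$ are not in $\mathcal{P}$: the inner corner cell of the L (up-right of $A$), the cell left of $A_2$, and the cell below $A_4$. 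Your claim that every interval meeting $A\otimes\mathcal{R}_{m,n}$ lies in one arm already silently assumes the first of these. As stated, that claim is also inaccurate, since the maximal blocks may continue past $A_1$ or $A_5$, though this is harmless for the count.

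These facts are true and follow from the lemma you already cite. Configuration (I) of Lemma \ref{Lemma:Closed path geometry} at $A=A_3$ excludes the inner corner cell. Configuration (II) at $A_2$ and at $A_4$ excludes the cells beside them. Alternatively, use condition (iv) of Definition \ref{Defn: closed path}.

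A clean way to organize the count is as follows. Set $X=\{p_1,\dots,p_1+m\}$, $Y=\{p_2,\dots,p_2+n\}$ and $W=X\times Y=V(A\otimes\mathcal{R}_{m,n})$, and write $\chi_S$ for the indicator function of a set $S$. Then for $a=(a_1,a_2)$, $b=(b_1,b_2)$, $c=(a_1,b_2)$, $d=(b_1,a_2)$ one has
\[
\chi_W(a)+\chi_W(b)-\chi_W(c)-\chi_W(d)=\bigl(\chi_X(a_1)-\chi_X(b_1)\bigr)\bigl(\chi_Y(a_2)-\chi_Y(b_2)\bigr).
\]
This vanishes unless exactly one of $a_1,b_1$ lies in $X$ and exactly one of $a_2,b_2$ lies in $Y$. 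In each of those four situations $[a,b]$ contains a cell that meets $A\otimes\mathcal{R}_{m,n}$ only at a corner vertex, i.e.\ a cell of $G\otimes\mathcal{R}_{m,n}$ for some diagonal neighbour $G$ of $A$. Since all four diagonal neighbours lie outside $\mathcal{P}$, Lemma \ref{Lemma:Dilations preserve disjoint sets} shows that such an $[a,b]$ is not inner. Hence the $w$-exponents always agree and the containment follows.
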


\begin{proof}
    The result follows from Lemma \ref{Lemma: ideal containment} as per the proof given in \textbf{\cite{Mascia_Rinaldo_Romeo:2020}}.
\end{proof}

Our next aim is to prove the reverse containment of Lemma \ref{Lemma: L-config ideal containment}. Before we do so, we make a remark on the structure of dilated closed paths. 

\begin{remark}
    By Definition \ref{Defn: closed path}, we have that for any closed path $\mathcal{P}$, the path cannot split in two directions simultaneously, that is it can never admit the Y-hexomino as subpolyomino (see Figure \ref{Fig:Y-hexomino}). Let $\mathcal{Y}$ denote the Y-hexomino and let $Q = \mathcal{P} \otimes \mathcal{R}_{m,n}$ be a dilated closed path for some $m,n \in \mathbb{Z}^+$. Then for the structure of $\mathcal{Q}$ as a dilated closed path, $\mathcal{Q}$ cannot admit $\mathcal{Y} \otimes \mathcal{R}_{m,n}$  as a subpolyomino.
\end{remark}

\begin{figure} [H] 
    \centering
    \begin{tikzpicture}
        \draw[step=1cm,white,very thin] (0,0) grid (3,3);
        \polyomino[
        empty cell =x,
        grid,
        p={a}{style={lightgray,draw=black}},
        row sep =;
        ]{
        a x a;
        a a a;
        x a x
        }

        \begin{scope}[color=black]
            \node[anchor=center] () at (1.5,0.5) {$A$};
            \node[anchor=center] () at (1.5,1.5) {$B$};
            \node[anchor=center] () at (0.5,1.5) {$C$};
            \node[anchor=center] () at (0.5,2.5) {$D$};
            \node[anchor=center] () at (2.5,1.5) {$E$};
            \node[anchor=center] () at (2.5,2.5) {$F$};
        \end{scope}
        
    \end{tikzpicture}
    \caption{The Y-hexomino}
    \label{Fig:Y-hexomino}
\end{figure}

In our proofs to follow in this section, when we claim something ``for the structure of $\mathcal{Q}$ as a dilated closed path'', we are referring to the above remark in that no dilated closed path can admit $\mathcal{Y} \otimes \mathcal{R}_{m,n}$.

We note here that the theorem below is a generalization of Theorem 4.2 in \textbf{\cite{Cisto_Navarra:2023}} for closed paths and as such, our proof follows a similar argument to the proof of that theorem.

\begin{theorem} \label{Thm: 3.2.1}
    Let $\mathcal{Q} = \mathcal{P} \otimes \mathcal{R}_{m,n}$ be a dilated closed path with an $(m,n)$-dilated L-configuration. Then $I_{\mathcal{Q}} = J_{\mathcal{Q}}$.
\end{theorem}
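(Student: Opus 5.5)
Since $I_{\mathcal{Q}} \subseteq J_{\mathcal{Q}}$ by Lemma \ref{Lemma: L-config ideal containment}, only the reverse containment $J_{\mathcal{Q}} \subseteq I_{\mathcal{Q}}$ needs proof. We follow the strategy of Theorem 4.2 in \textbf{\cite{Cisto_Navarra:2023}}. The toric ideal $J_{\mathcal{Q}}$ is generated by binomials $f = f^+ - f^-$, and by Lemma \ref{Lemma: ideal containment} its degree-2 part is exactly $I_{\mathcal{Q}}$. So it suffices to show that every binomial of $J_{\mathcal{Q}}$ of degree at least $3$ is redundant. We argue by contradiction: take an irredundant binomial $f \in J_{\mathcal{Q}}$ of degree $\ge 3$, and derive a configuration to which Lemma \ref{Lemma: 2.2} applies. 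The map $\varphi$ into the domain $K[h_i,v_j,w]$ sends every $x_v$ to a nonzero monomial, and $I_{\mathcal{Q}} \subseteq J_{\mathcal{Q}}$, so the hypotheses of that lemma hold.

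\textbf{Walking argument.} Write $f^+ = \prod x_{p_k}$ and $f^- = \prod x_{q_k}$. Since $\varphi(f^+) = \varphi(f^-)$, the multisets of maximal horizontal and vertical edge intervals met by $V_f^+$ and $V_f^-$ agree, and so do the exponents of $w$. Hence the number of vertices of $V(A \otimes \mathcal{R}_{m,n})$ in $V_f^+$ equals that in $V_f^-$. Pick $p_1 \in V_f^+$. On its horizontal edge interval there must be some $q \in V_f^-$; on the vertical interval of $q$ there must be some $p \in V_f^+$; and so on. This builds an alternating closed walk of vertices $p_1, q_1, p_2, q_2, \dots$ that moves along maximal edge intervals of $\mathcal{Q}$. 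Because $\mathcal{Q}$ is a dilated closed path with a unique hole (Proposition \ref{Prop:dilated closed path has unique hole}) and cannot contain $\mathcal{Y} \otimes \mathcal{R}_{m,n}$, its maximal edge intervals are $\Delta_{m,n}$-images of those of $\mathcal{P}$, together with "interior" intervals inside dilated blocks. Any two consecutive turns of the walk then either lie in a common inner interval, or the walk travels around the hole.

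In the first case, three of the vertices (two from one sign, one from the other) are corners of a single inner interval with the correct diagonal/anti-diagonal pattern, and Lemma \ref{Lemma: 2.2} makes $f$ redundant. So irredundancy forces the walk to go around the whole hole.

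\textbf{Role of the L-configuration.} This is where the $w$-grading enters. A walk encircling the hole must pass through the dilated L-configuration. Its turns at the corner cell $A \otimes \mathcal{R}_{m,n}$ place vertices of $V(A \otimes \mathcal{R}_{m,n})$ into $f^+$ and $f^-$ unevenly, because the L-shape forces the walk to cut the corner using vertices on the outer versus inner boundary of $A_3 \otimes \mathcal{R}_{m,n}$. Either this contradicts the balance of the $w$-exponents, or it produces a triple of corners of an inner interval inside the dilated L-region $\bigcup_i A_i\otimes \mathcal{R}_{m,n}$, which again contradicts irredundancy via Lemma \ref{Lemma: 2.2}.

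\textbf{Main obstacle.} The hard part is the case analysis at the dilated L-configuration. After dilation, the L-corner $A\otimes\mathcal{R}_{m,n}$ has many vertices and internal edge intervals (not only $\Delta_{m,n}$-images), so the walk may turn at non-dilated vertices. I would first reduce to turns at vertices of the form $\Delta_{m,n}(w)$. If the walk turns on an internal edge interval of a dilated block, the rectangle $A_i \otimes \mathcal{R}_{m,n}$ together with its neighbours already supplies an inner interval containing the required triple. After that reduction, the remaining configurations mirror the finitely many cases in \textbf{\cite{Cisto_Navarra:2023}}, which are handled by Lemma \ref{Lemma: Dilations preserve inner intervals} and Lemma \ref{Lemma: Dilations preserve non-inners}.
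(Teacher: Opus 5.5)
Your plan has a genuine gap at the step that carries the proof. You claim two things. First, at the dilated corner $A\otimes\mathcal{R}_{m,n}$ the walk either breaks the $w$-balance or already exhibits a Lemma~\ref{Lemma: 2.2} triple among the vertices of $f$. Second, turns at non-dilated vertices can be removed by reducing to vertices $\Delta_{m,n}(w)$. Neither holds.

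Take $\mathcal{P}$ to be the $12$-cell ring around a $2\times 2$ hole, which is a closed path with an L-configuration at its lower left corner. Take $m=n=2$, so $\mathcal{Q}$ is the width-two ring in $[0,8]^2$ and $V(A\otimes\mathcal{R}_{2,2})=\{0,1,2\}^2$. Then
\[ f=x_{(0,0)}x_{(8,8)}x_{(7,1)}x_{(1,7)}-x_{(8,0)}x_{(0,8)}x_{(1,1)}x_{(7,7)}\in J_{\mathcal{Q}} \]
consists of two loops around the hole. One loop turns at $(0,0)\in V_f^+$ and the other at $(1,1)\in V_f^-$, so the $w$-exponents balance. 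Checking the six same-sign pairs on each side shows that no inner interval has two vertices of one sign of $f$ as a diagonal (or anti-diagonal) pair with a vertex of the other sign at a remaining corner. So no Lemma~\ref{Lemma: 2.2} triple exists for $f$ itself.

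In particular, the interior vertex $(1,1)$ has walk-neighbours $(7,1)$ and $(1,7)$, which span a rectangle containing the hole. It therefore supplies no inner interval, and your reduction to dilated vertices fails. Moreover, since $V_f^\pm$ may contain non-dilated vertices, $f$ does not come from a binomial on $\mathcal{P}$. Hence Lemmas~\ref{Lemma: Dilations preserve inner intervals} and \ref{Lemma: Dilations preserve non-inners} give you no way to import Cisto--Navarra's case analysis.

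The missing idea is the exchange step the paper relies on. The paper first uses $w$-balance to get $u_1\in V_f^+$ and $v_1\in V_f^-$ in $V(A\otimes\mathcal{R}_{m,n})$, then locates $u_2,u_3,v_2,v_3$ on their edge intervals. In the hard subcases (3)(I)(ii)--(iv), it writes $f=\tilde f\pm(\text{monomial})\cdot(\text{inner 2-minor})$ and applies Lemma~\ref{Lemma: 2.2} to $\tilde f$, not to $f$. In the example, replacing $x_{(0,0)}x_{(7,1)}$ by $x_{(0,1)}x_{(7,0)}$ makes $(0,1),(1,7)\in V_{\tilde f}^+$ and $(1,1)\in V_{\tilde f}^-$ a valid triple for the inner interval $[(0,1),(1,7)]$. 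This is exactly the paper's case (iv)(a).

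Two further steps are asserted rather than proved.
\begin{itemize}
\item The dichotomy ``inner triple or the walk encircles the hole'' is not justified. The paper avoids it: if $V_f^\pm$ miss $V(A\otimes\mathcal{R}_{m,n})$, then $f$ lives on the simple polyomino $\mathcal{Q}'$ of cells not touching $V(A\otimes\mathcal{R}_{m,n})$, and Theorem~\ref{Thm:simple polyos are prime} gives redundancy.
\item The degree-two statement does not follow from Lemma~\ref{Lemma: ideal containment}. That lemma is about the standard toric ideal of Section~\ref{Section:2}, not this $J_{\mathcal{Q}}$ with $w$ supported on $V(A\otimes\mathcal{R}_{m,n})$. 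The paper proves it separately: a non-inner rectangle bounded by edge intervals must enclose the hole, and then it violates $w$-balance.
\end{itemize}
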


\begin{proof}
    By Lemma \ref{Lemma: L-config ideal containment}, we have that $I_\mathcal{Q} \subseteq J_\mathcal{Q}$. We now prove that $J_\mathcal{Q} \subseteq I_\mathcal{Q}$. We proceed by equivalently proving the following two statements:
    \begin{itemize}
        \item[(1)] every binomial of degree two in $J_\mathcal{Q}$ is in $I_\mathcal{Q}$;
        \item[(2)] every irredundant binomial in $J_\mathcal{Q}$ is of degree 2.
    \end{itemize}

    We let $\mathcal{L}: A_1\otimes\mathcal{R}_{m,n}$, $A_2\otimes\mathcal{R}_{m,n}$, $A_3\otimes\mathcal{R}_{m,n}$, $A_4\otimes\mathcal{R}_{m,n}$, $A_5\otimes\mathcal{R}_{m,n}$ be an $(m.n)$-dilated L-configuration contained in $\mathcal{Q}$. For clarity, we set $A = A_3 \in \mathcal{P}$ and without loss of generality we use opportune rotations and reflections to orient $\mathcal{Q}$ in such a way to distinguish $A \otimes \mathcal{R}_{m,n}$ as the lowest and leftest cell interval in $\mathcal{L}$.

    We first prove (1). Let $f = x_q x_r-x_s x_t$ be an arbitrary nonzero binomial in $J_\mathcal{Q}$. Let $h_q,v_q$ and let $h_r,v_r$ be the variables associated to the maximal horizontal and vertical edge intervals which contain $q$ and $r$, respectively. Since $f \in J_\mathcal{Q}$, then $\varphi(x_q x_r) = w^k h_q v_q h_r v_r = \varphi(x_s x_t)$ with $k \in \{0,1,2\}$. Suppose $h_q = h_r$, then $q$ and $r$ lie on the same maximal horizontal edge interval, say $H_q$. So $h_q^2 \mid \varphi(x_q x_r) = \varphi(x_s x_t)$, and so it must be that $s$ and $t$ also lie on $H_q$. Since we must have that $v_q \mid \varphi(x_s x_t)$ and $v_r \mid \varphi(x_s x_t)$, then one of $s$ or $t$ lies on $V_q$ while the other lies on $V_r$. So either we have $q = s$ and $r = t$ or $q = t$ and $r = s$, in either case $f = x_q x_r-x_s x_t = 0$, but this contradicts that $f$ is a nonzero binomial, so therefore $h_q \neq h_r$. A symmetric argument gives that $v_q \neq v_r$. So, without loss of generality, we have that $[q,r]$ is a proper interval of $\mathbb{N}^2$, for if it is not, we can rotate or reflect $\mathcal{Q}$ such that it is. Then $s$ and $t$ must be the anti-diagonal corners of $[q,r]$. Without loss of generality, assume $s$ is the upper left corner of $[q,r]$ and $t$ is the lower right corner of $[q,r]$, then we have that $[q,s] \subseteq V_q$, $[s,r] \subseteq H_r$, $[t,r] \subseteq V_r$, and $[q,t] \subseteq H_q$. Suppose $[q,r]$ is not an inner interval of $\mathcal{Q}$, then there exists some cell $C$ contained in $[q,r]$ such that $C \notin \mathcal{Q}$. As $[q,s]$, $[s,r]$, $[t,r]$, and $[q,t]$ are edge intervals in $\mathcal{Q}$, then it must be that $C \in \mathcal{H}$, where $\mathcal{H}$ is the unique hole of $\mathcal{Q}$. Therefore, $[q,r]$ contains the hole $\mathcal{H}$. As $[q,r]$ contains $\mathcal{H}$, then at least one of the corners $q$, $r$, $s$, or $t$ is in $V(A \otimes \mathcal{R}_{m,n})$. Without loss of generality, assume $q \in V(A \otimes \mathcal{R}_{m,n})$. Note that $s,t \notin V(A \otimes \mathcal{R}_{m,n})$, because if so, $[q,r]$ could not contain $\mathcal{H}$. However, if $q \in V(A \otimes \mathcal{R}_{m,n})$, then $w \mid \varphi(x_q x_r) = \varphi(x_s x_t) = \varphi(x_s) \varphi (x_t)$, so either $w \mid \varphi(x_s)$ or $w \mid \varphi(x_t)$. If $w \mid \varphi(x_s)$, then $s \in V(A \otimes \mathcal{R}_{m,n})$, which is a contradiction. If $w \mid \varphi(x_t)$, then $t \in V(A \otimes \mathcal{R}_{m,n})$, which is a contradiction. Thus, it must be that $[q,r]$ is an inner interval of $\mathcal{Q}$, and so therefore $f \in I_\mathcal{Q}$. As $f$ was arbitrary, then we have that every binomial of degree two in $J_\mathcal{Q}$ is in $I_\mathcal{Q}$.

    We now prove (2). Towards a contradiction, suppose $f \in J_\mathcal{Q}$ such that $f$ is irredundant and deg $f \geq 3$. Let $f = f^+ - f^-$. Suppose that 
    \[
    V_f^+ \cap V(A \otimes \mathcal{R}_{m,n}) = V_f^- \cap V(A \otimes \mathcal{R}_{m,n}) = \emptyset.
    \]
    We let $\mathcal{Q}' \subset \mathcal{Q}$ be the subpolyomino of $\mathcal{Q}$ defined by 
    \[
    \mathcal{Q}' = \{C \in \mathcal{Q} \text{ }\vert \text{ } V(C)\cap V(A \otimes \mathcal{R}_{m,n}) = \emptyset \}.
    \]
    Note that $V_f^+, V_f^- \subseteq V(\mathcal{Q}')$. We show that $\mathcal{Q}'$ is a simple polyomino. Let $e \in V(\mathcal{Q})$ be the upper right corner of $A \otimes \mathcal{R}_{m,n}$, that is, $v \leq e$, for all $v \in V(A \otimes \mathcal{R}_{m,n})$. By the structure of $\mathcal{Q}$, we have that there exists some cell $E \in \mathcal{H}$ such that $e$ is its lower left corner. As well, there exists some cell $F \in A_4 \otimes \mathcal{R}_{m,n}$ such that $e$ is its upper left corner and $E \cap F = [e, e+(1,0)]$. Since $e \in V(F)$, then $F\notin \mathcal{Q}'$. Let $C,D \notin \mathcal{Q}'$ be arbitrary, if both $C,D \in (\ext \mathcal{Q}) \cup (A\otimes \mathcal{R}_{m,n})$ or both $C,D \in \mathcal{H}$ then there is an obvious walk of cells not in $\mathcal{Q}'$ that connects them. If rather, one of $C$ or $D$ is in $\mathcal{H}$ while the other is in $(\ext \mathcal{Q}) \cup (A\otimes \mathcal{R}_{m,n})$, then there exists a walk of cells not in $\mathcal{Q}'$ given by $\mathcal{W}: C,...,E,F,...,D$ that connects them. Therefore, any two cells not in $\mathcal{Q}'$ are connected and so $\mathcal{Q}'$ is a simple polyomino. Let $\varphi'$ be the restriction of the map $\varphi$ onto $K[v_a \mid a\in V(\mathcal{Q}) \setminus V(A \otimes \mathcal{R}_{m,n})]$ and let $J_{\mathcal{Q}'} = \ker \varphi'$. As $\mathcal{Q}'$ is simple, then by Theorem 2.2 in \textbf{\cite{Qureshi_Shibuta_Shikama:2017}}, we have that $I_{\mathcal{Q}'}$, the polyomino ideal attached to $\mathcal{Q}'$, is prime and therefore $I_{\mathcal{Q}'} = J_{\mathcal{Q}'}$. So then $f \in I_{\mathcal{Q}'}$, but then we can write $f$ as a linear combination of the binomial generators of $I_{\mathcal{Q}'}$ which are all of degree 2 by definition. So $f$ is redundant, but this is a contradiction. Therefore, it must be that either $V_f^+ \cap V(A \otimes \mathcal{R}_{m,n}) \neq \emptyset$ or $V_f^- \cap V(A \otimes \mathcal{R}_{m,n}) \neq \emptyset$. 
    
    Suppose $V_f^+ \cap V(A \otimes \mathcal{R}_{m,n}) \neq \emptyset$. So there exists $u_1 \in V(A \otimes \mathcal{R}_{m,n})$ such that $x_{u_1} \mid f^+$. Since $u_1 \in V(A \otimes \mathcal{R}_{m,n})$, then $w \mid \varphi(u_1)$, so then $w \mid \varphi(f^+) = \varphi(f^-)$. So then there exists $v_1 \in  V(A \otimes \mathcal{R}_{m,n})$ such that $x_{v_1} \mid f^-$. If we were rather to suppose that $V_f^- \cap V(A \otimes \mathcal{R}_{m,n}) \neq \emptyset$, we would symmetrically obtain that there must exist some $u_1, v_1 \in V(A \otimes \mathcal{R}_{m,n})$ such that $x_{u_1} \mid f^+$ and $x_{v_1} \mid f^-$. If $u_1 = v_1$, then $f = x_{u_1}(\tilde f^+ - \tilde f^-)$, with $\tilde f^+ = \frac {f^+}{x_{u_1}}$ and $\tilde f^- = \frac {f^-}{x_{u_1}}$. Note that $\tilde f^+ - \tilde f^- \in J_\mathcal{Q}$ and deg$(\tilde f^+ - \tilde f^-) <$ deg $f$. So $f$ is redundant, but this is a contradiction. Therefore, $u_1 \neq v_1$. Let $V_{u_1}$ and $H_{u_1}$ be the maximal vertical and horizontal edge intervals which contain $u_1$. Then $v_{u_1} \mid \varphi(f^+) = \varphi(f^-)$, so there exists some vertex $v_2$ that lies on $V_{u_1}$ such that $x_{v_2} \mid f^-$. We also have that $h_{u_1} \mid \varphi(f^+) = \varphi(f^-)$, so there exists some vertex $v_3$ that lies on $H_{u_1}$ such that $x_{v_3} \mid f^-$. Now, let $V_{v_1}$ and $H_{v_1}$ be the maximal vertical and horizontal edge intervals which contain $v_1$. Then $v_{v_1} \mid \varphi(f^-) = \varphi(f^+)$, so there exists some vertex $u_2$ that lies on $V_{v_1}$ such that $x_{u_2} \mid f^+$. We also have that $h_{v_1} \mid \varphi(f^-) = \varphi(f^+)$, so there exists some vertex $u_3$ that lies on $H_{v_1}$ such that $x_{u_3} \mid f^+$. Note that by the same argument as above, if $f$ is irredundant, then we must have that $u_2 \neq v_1$, $u_3 \neq v_1$, $v_2 \neq u_1$, and $v_3 \neq u_1$. The following cases could occur: 
    
    \begin{description}
    
    \item[(1)] $u_1$ and $v_1$ lie on the same vertical edge interval. For the structure of $\mathcal{Q}$ as a dilated closed path, at least one of the vertex pairs $u_1, u_3$ or $v_1, v_3$ defines an inner interval in $\mathcal{Q}$. If $u_1$ and $u_3$ define an inner interval in $\mathcal{Q}$, say $\mathcal{I}$, then $v_1$ is one of the remaining two corners of $\mathcal{I}$ and thus applying Lemma \ref{Lemma: 2.2} to $u_1$, $u_3$ and $v_1$ gives that $f$ is redundant, which is a contradiction. If rather, $v_1$ and $v_3$ define an inner interval in $\mathcal{Q}$, say $\mathcal{J}$, then $u_1$ is one of the remaining two corners of $\mathcal{I}$, and thus applying Lemma \ref{Lemma: 2.2} to $u_1$, $v_1$ and $v_3$ gives that $f$ is redundant, which is a contradiction.

    \item[(2)] $u_1$ and $v_1$ lie on the same horizontal edge interval. For the structure of $\mathcal{Q}$ as a dilated closed path, at least one of the vertex pairs $u_1, u_2$ or $v_1, v_2$ defines an inner interval in $\mathcal{Q}$. If $u_1$ and $u_2$ define an inner interval in $\mathcal{Q}$, say $\mathcal{I}$, then $v_1$ is one of the remaining two corners of $\mathcal{I}$ and thus applying Lemma \ref{Lemma: 2.2} to $u_1$, $u_2$ and $v_1$ gives that $f$ is redundant, which is a contradiction. If rather, $v_1$ and $v_2$ define an inner interval in $\mathcal{Q}$, say $\mathcal{J}$, then $u_1$ is one of the remaining two corners of $\mathcal{J}$, and thus applying Lemma \ref{Lemma: 2.2} to $u_1$, $v_1$ and $v_2$ gives that $f$ is redundant, which is a contradiction.

    \item[(3)] $u_1$ and $v_1$ lie on distinct vertical and horizontal edge intervals. In this case, we have that $u_1$ and $v_1$ define some inner interval $\mathcal{K}$ contained in $ A\otimes \mathcal{R}_{m,n}$. We break this case into the following two subcases:

    \begin{description}

    \item[(I)] $u_1$ and $v_1$ are diagonal corners of $\mathcal{K}$. Without loss of generality, let $u_1 < v_1$. We show that $v_3$ cannot lie on $V_{v_1}$. Suppose it were the case that $[v_3, v_1] \subseteq V_{v_1}$. Then, for the structure of $\mathcal{Q}$, at least one of the vertex pairs $u_1, u_2$ or $v_3, v_2$ defines an inner interval of $\mathcal{Q}$. If $u_1$ and $u_2$ define an inner interval, say $\mathcal{I}$, then as $u_1$ and $v_3$ both lie on $H_{u_1}$ and as $v_3$ and $u_2$ both lie on $V_{v_1}$, we have that $v_3$ is either an anti-diagonal or diagonal corner of $\mathcal{I}$. Thus, applying Lemma \ref{Lemma: 2.2} to $u_1$, $u_2$ and $v_3$ gives that $f$ is redundant, which is a contradiction. If rather, $v_3$ and $v_2$ define an inner interval, say $\mathcal{J}$, then as $u_1$ and $v_2$ both lie on $V_{u_1}$ and as $u_1$ and $v_3$ both lie on $H_{u_1}$, we have that $u_1$ is either a diagonal or anti-diagonal corner of $\mathcal{J}$. Thus, applying Lemma \ref{Lemma: 2.2} to $v_2$, $v_3$, and $u_1$ gives that $f$ is redundant, which is a contradiction. By similar arguments, $u_2$ cannot lie on $H_{u_1}$, $u_3$ cannot lie on $V_{u_1}$, and $v_2$ cannot lie on $H_{v_1}$. Investigating the possible positions of $u_2$ and $u_3$ relative to $v_1$ allows us to break this case into the following four subcases:

    \begin{description}

    \item[(i)] $u_2 < v_1$ and $u_3 < v_1$. In this case, for the structure of $\mathcal{Q}$, we have that $u_2,u_3 \in V(A \otimes \mathcal{R}_{m,n})$. Therefore $u_2$, $u_3$, and $v_1$ define some inner interval $\mathcal{I}$ contained in $A \otimes \mathcal{R}_{m,n}$, where $u_3$ and $u_2$ are the anti-diagonal corners of $\mathcal{I}$ and $v_1$ is a diagonal corner of $\mathcal{I}$ (see Figure \ref{Fig:Thm321_Case3Ii}). Thus, applying Lemma \ref{Lemma: 2.2} to $u_2$, $u_3$, and $v_1$ gives that $f$ is redundant, which is a contradiction.

    \begin{figure} [H]
    \centering
    \begin{tikzpicture} [scale = 0.8]
        \draw[step=1cm,white,very thin] (0,0) grid (9,7);
        \draw [white, thin, fill = lightgray] (0,5) rectangle (6,7);
        \draw [white, thin, fill = lightgray] (6,0) rectangle (9,5);
        \draw [black, thin, fill = lightgray] (0,0) rectangle (6,5);
        \polyomino[
        empty cell =x,
        grid,
        p={a}{style={lightgray,draw=black}},
        p={b}{style={gray,draw=black}},
        row sep =;
        ]{
        x x x x x x x x x;
        a x x x x a x x x;
        a x x x x a a x x;
        x x x x x x x x x;
        x x x x x x x x x;
        x x x x x x x x x;
        a x x x x a a x x
        }

        \draw [black, thin] (0,5) -- (0,7);
        \draw [black, thin] (6,5) -- (6,7);
        \draw [black, thin] (6,5) -- (9,5);
        \draw [black, thin] (6,0) -- (9,0);

        \draw[dots] (0.5,1.1) -- (0.5, 3.9);
        \draw[dots] (5.5,1.1) -- (5.5, 3.9);
        \draw[dots] (1.1, 0.5) -- (4.9, 0.5);
        \draw[dots] (1.1, 4.5) -- (4.9, 4.5);

        \draw[dots] (6.5,1.1) -- (6.5, 3.9);
        \draw[dots] (1.1, 5.5) -- (4.9, 5.5);
        \draw[dots] (0.5,6.1) -- (0.5, 6.9);
        \draw[dots] (5.5,6.1) -- (5.5, 6.9);
        \draw[dots] (7.1,0.5) -- (8.9, 0.5);
        \draw[dots] (7.1,4.5) -- (8.9, 4.5);

        \draw [black, thin, fill = gray] (1.75,1) rectangle (4,3.5);
        \draw [black, thin, fill = cyan] (2,2) rectangle (4,3.5);
        
        \begin{scope}[color=black]
            \node[anchor=center] () at (7.5,6) {$\mathcal{H}$};
            \node[anchor=center] () at (2.75,1.5) {$\mathcal{K}$};
            \node[anchor=center] () at (3,2.75) {$\mathcal{I}$};
        \end{scope}

        \fill(1.75,1) circle[radius=2pt] node[below left]{$u_1$};
        \fill(4,3.5) circle[radius=2pt] node[above right]{$v_1$};
        \fill(4,2) circle[radius=2pt] node[below right]{$u_2$};
        \fill(2,3.5) circle[radius=2pt] node[above left]{$u_3$};

        \fill(0,0) circle[radius=0pt] node[below left]{$A \otimes \mathcal{R}_{m,n}$};
        
    \end{tikzpicture}
    \caption{One possible configuration of inner intervals in this case}
    \label{Fig:Thm321_Case3Ii}
    \end{figure}

    \item[(ii)] $u_2 > v_1$ and $u_3 < v_1$. In this case, if $u_3$ and $u_2$ define an inner interval in $\mathcal{Q}$, say $\mathcal{I}$, then they will be the diagonal corners of this interval and $v_1$ will be an anti-diagonal corner of $\mathcal{I}$. Thus, applying Lemma \ref{Lemma: 2.2} to $u_2$, $u_3$, and $v_1$ gives that $f$ is redundant, which is a contradiction. Therefore, we suppose that $u_3$ and $u_2$ do not define an inner interval in $\mathcal{Q}$. For the structure of $\mathcal{Q}$, this means that $v_1$ and $v_2$ must define some inner interval, say $\mathcal{J}$, in $\mathcal{Q}$. Suppose that $v_2$ and $v_1$ are the diagonal corners of $\mathcal{J}$. Let $g,h \in V(\mathcal{Q})$ be the anti-diagonal corners of $\mathcal{J}$. As $\mathcal{J}$ is an inner interval of $\mathcal{Q}$, then we have that $x_{v_2}x_{v_1} - x_{g} x_{h} \in I_\mathcal{Q} \subseteq J_\mathcal{Q}$. Note that we can write $f$ in the following way: 
    \[ \quad \quad \quad \quad \quad \quad \quad
         f = f^+ - f^- = f^+ - \frac{f^-}{x_{v_1}x_{v_2}}(x_{v_2}x_{v_1} - x_g x_h) - \frac{f^-}{x_{v_1}x_{v_2}}x_g x_h
    \]
    Let $\tilde f = f^+ - \frac{f^-}{x_{v_1}x_{v_2}}x_g x_h$. Since $x_{v_2}x_{v_1} - x_g x_h \in J_\mathcal{Q}$, then $\varphi(x_{v_2} x_{v_1}) = \varphi(x_g x_h)$, so therefore $\varphi(\frac{f^-}{x_{v_1}x_{v_2}}x_g x_h) = \varphi(f^-)$ and as such, we have that $\tilde f \in J_\mathcal{Q}$. Note that $u_1$ and $u_3$ together with $g$ define some inner interval, say $\mathcal{J}'$, in $\mathcal{Q}$, where $u_1$ and $u_3$ are the diagonal (resp. anti-diagonal) corners of $\mathcal{J}'$ and $g$ is an anti-diagonal (resp. diagonal) corner of $\mathcal{J}'$ (see Figure \ref{Fig:Thm3.2.14_Case3Iiia}). Thus, applying Lemma \ref{Lemma: 2.2} to $u_1$, $u_3$ and $g$ gives that $\tilde f$ is redundant. Since $f = \tilde f - \frac{f^-}{x_{v_1}x_{v_2}}(x_{v_2}x_{v_1} - x_g x_h)$, then we also have that $f$ is redundant, which is a contradiction. 

    \begin{figure} [H] 
    \centering
    \begin{tikzpicture} [scale = 0.8]
        \draw[step=1cm,white,very thin] (0,0) grid (9,7);
        \draw [white, thin, fill = lightgray] (0,5) rectangle (6,7);
        \draw [white, thin, fill = lightgray] (6,0) rectangle (9,5);
        \draw [black, thin, fill = lightgray] (0,0) rectangle (6,5);
        \polyomino[
        empty cell =x,
        grid,
        p={a}{style={lightgray,draw=black}},
        p={b}{style={gray,draw=black}},
        row sep =;
        ]{
        x x x x x x x x x;
        a x x x x a x x x;
        a x x x x a a x x;
        x x x x x x x x x;
        x x x x x x x x x;
        x x x x x x x x x;
        a x x x x a a x x
        }

        \draw [black, thin] (0,5) -- (0,7);
        \draw [black, thin] (6,5) -- (6,7);
        \draw [black, thin] (6,5) -- (9,5);
        \draw [black, thin] (6,0) -- (9,0);

        \draw[dots] (0.5,1.1) -- (0.5, 3.9);
        \draw[dots] (5.5,1.1) -- (5.5, 3.9);
        \draw[dots] (1.1, 0.5) -- (4.9, 0.5);
        \draw[dots] (1.1, 4.5) -- (4.9, 4.5);

        \draw[dots] (6.5,1.1) -- (6.5, 3.9);
        \draw[dots] (1.1, 5.5) -- (4.9, 5.5);
        \draw[dots] (0.5,6.1) -- (0.5, 6.9);
        \draw[dots] (5.5,6.1) -- (5.5, 6.9);
        \draw[dots] (7.1,0.5) -- (8.9, 0.5);
        \draw[dots] (7.1,4.5) -- (8.9, 4.5);

        \draw [black, thin, fill = cyan] (2.25,2) rectangle (4.5,3.5);
        \draw [black, thin, fill = Apricot] (1.25,1) rectangle (2.25,3.5);
        
        \begin{scope}[color=black]
            \node[anchor=center] () at (7.5,6) {$\mathcal{H}$};
            \node[anchor=center] () at (3.5,2.75) {$\mathcal{J}$};
            \node[anchor=center] () at (1.75,2.25) {$\mathcal{J'}$};
        \end{scope}

        \fill(2.25,1) circle[radius=2pt] node[below right]{$u_1$};
        \fill(1.25,3.5) circle[radius=2pt] node[above left]{$u_3$};
        \fill(4.5,3.5) circle[radius=2pt] node[above right]{$v_1$};
        \fill(2.25,2) circle[radius=2pt] node[below right]{$v_2$};
        \fill(4.5,2) circle[radius=2pt] node[below right]{$h$};
        \fill(2.25,3.5) circle[radius=2pt] node[above left]{$g$};

        \fill(0,0) circle[radius=0pt] node[below left]{$A \otimes \mathcal{R}_{m,n}$};
        
    \end{tikzpicture}
    \caption{One possible configuration of inner intervals when $v_2$ and $v_1$ are diagonal corners of $\mathcal{J}$}
    \label{Fig:Thm3.2.14_Case3Iiia}
    \end{figure}
    
    Now, suppose that $v_2$ and $v_1$ are instead the anti-diagonal corners of $\mathcal{J}$. Let $g',h' \in V(\mathcal{Q})$ be the diagonal corners of $\mathcal{J}$. As $\mathcal{J}$ is an inner interval of $\mathcal{Q}$, then we have that $x_{g'}x_{h'} - x_{v_2} x_{v_1} \in I_\mathcal{Q} \subseteq J_\mathcal{Q}$. Note that we can write $f$ in the following way:
    \[ \quad \quad \quad \quad \quad \quad \quad
    f = f^+ - f^- = f^+ + \frac{f^-}{x_{v_1}x_{v_2}}(x_{g'}x_{h'} - x_{v_2} x_{v_1}) - \frac{f^-}{x_{v_1}x_{v_2}}x_{g'} x_{h'}
    \]
    Let $\hat f = f^+ - \frac{f^-}{x_{v_1}x_{v_2}}x_{g'} x_{h'}$. Since $x_{g'}x_{h'} - x_{v_2} x_{v_1} \in J_\mathcal{Q}$, then $\varphi(x_{v_2} x_{v_1}) = \varphi(x_{g'} x_{h'})$, so therefore $\varphi(\frac{f^-}{x_{v_1}x_{v_2}}x_{g'} x_{h'}) = \varphi(f^-)$ and as such, we have that $\hat f \in J_\mathcal{Q}$. Note that $u_1$ and $u_3$ together with $g'$ define some inner interval, say $\mathcal{J}''$, in $\mathcal{Q}$, where $u_1$ and $u_3$ are the diagonal (resp. anti-diagonal) corners of $\mathcal{J}''$ and $g'$ is an anti-diagonal (resp. diagonal) corner of $\mathcal{J}''$ (see Figure \ref{Fig:Thm3.2.14_Case3Iiib}). Thus, applying Lemma \ref{Lemma: 2.2} to $u_1$, $u_3$ and $g'$ gives that $\hat f$ is redundant. Since $f = \hat f + \frac{f^-}{x_{v_1}x_{v_2}}(x_{g'}x_{h'} - x_{v_2} x_{v_1})$, then we also have that $f$ is redundant, which is a contradiction.
 
 \begin{figure} [H] 
    \centering
    \begin{tikzpicture} [scale = 0.8]
        \draw[step=1cm,white,very thin] (0,0) grid (9,7);
        \draw [white, thin, fill = lightgray] (0,5) rectangle (6,7);
        \draw [white, thin, fill = lightgray] (6,0) rectangle (9,5);
        \draw [black, thin, fill = lightgray] (0,0) rectangle (6,5);
        \polyomino[
        empty cell =x,
        grid,
        p={a}{style={lightgray,draw=black}},
        p={b}{style={gray,draw=black}},
        row sep =;
        ]{
        x x x x x x x x x;
        a x x x x a x x x;
        a x x x x a a x x;
        x x x x x x x x x;
        x x x x x x x x x;
        x x x x x x x x x;
        a x x x x a a x x
        }

        \draw [black, thin] (0,5) -- (0,7);
        \draw [black, thin] (6,5) -- (6,7);
        \draw [black, thin] (6,5) -- (9,5);
        \draw [black, thin] (6,0) -- (9,0);

        \draw[dots] (0.5,1.1) -- (0.5, 3.9);
        \draw[dots] (5.5,1.1) -- (5.5, 3.9);
        \draw[dots] (1.1, 0.5) -- (4.9, 0.5);
        \draw[dots] (1.1, 4.5) -- (4.9, 4.5);

        \draw[dots] (6.5,1.1) -- (6.5, 3.9);
        \draw[dots] (1.1, 5.5) -- (4.9, 5.5);
        \draw[dots] (0.5,6.1) -- (0.5, 6.9);
        \draw[dots] (5.5,6.1) -- (5.5, 6.9);
        \draw[dots] (7.1,0.5) -- (8.9, 0.5);
        \draw[dots] (7.1,4.5) -- (8.9, 4.5);

        \draw [black, thin, fill = gray] (2.25,1) rectangle (4.5,3.5);
        \draw [black, thin, fill = cyan] (2.25,3.5) rectangle (4.5,6.5);
        \draw [black, thin, fill = Apricot] (1.25,1) rectangle (2.25,3.5);
        
        \begin{scope}[color=black]
            \node[anchor=center] () at (7.5,6) {$\mathcal{H}$};
            \node[anchor=center] () at (3.375,2.25) {$\mathcal{K}$};
            \node[anchor=center] () at (3.375,5) {$\mathcal{J}$};
            \node[anchor=center] () at (1.75,2.25) {$\mathcal{J''}$};
        \end{scope}

        \fill(2.25,1) circle[radius=2pt] node[below right]{$u_1$};
        \fill(1.25,3.5) circle[radius=2pt] node[above left]{$u_3$};
        \fill(4.5,3.5) circle[radius=2pt] node[above right]{$v_1$};
        \fill(2.25,6.5) circle[radius=2pt] node[above left]{$v_2$};
        \fill(2.25,3.5) circle[radius=2pt] node[above left]{$g'$};
        \fill(4.5,6.5) circle[radius=2pt] node[above right]{$h'$};

        \fill(0,0) circle[radius=0pt] node[below left]{$A \otimes \mathcal{R}_{m,n}$};
        
    \end{tikzpicture}
    \caption{One possible configuration of inner intervals when $v_2$ and $v_1$ are anti-diagonal corners of $\mathcal{J}$}
    \label{Fig:Thm3.2.14_Case3Iiib}
    \end{figure}
    
    \item[(iii)] $u_2 < v_1$ and $u_3 > v_1$. In this case, if $u_3$ and $u_2$ define an inner interval, say $\mathcal{I}$, in $\mathcal{Q}$, then they will be the diagonal corners of this interval and $v_1$ will be an anti-diagonal corner of $\mathcal{I}$. Thus, as in the above subcase, applying Lemma \ref{Lemma: 2.2} to $u_2$, $u_3$, and $v_1$ gives that $f$ is redundant, which is a contradiction. Therefore, we suppose that $u_3$ and $u_2$ do not define an inner interval in $\mathcal{Q}$. For the structure of $\mathcal{Q}$, this means that $v_1$ and $v_3$ must define some inner interval in $\mathcal{Q}$, say $\mathcal{J}$. Suppose that $v_3$ and $v_1$ are the diagonal corners of $\mathcal{J}$. Let $g,h \in V(\mathcal{Q})$ be the anti-diagonal corners of $\mathcal{J}$. As $\mathcal{J}$ is an inner interval of $\mathcal{Q}$, then we have that $x_{v_3}x_{v_1} - x_{g} x_{h} \in I_\mathcal{Q} \subseteq J_\mathcal{Q}$. Note that we can write $f$ in the following way:
    \[ \quad \quad \quad \quad \quad \quad \quad
    f = f^+ - f^- = f^+ - \frac{f^-}{x_{v_1}x_{v_3}}(x_{v_3}x_{v_1} - x_g x_h) - \frac{f^-}{x_{v_1}x_{v_3}}x_g x_h
    \]
    Let $\tilde f = f^+ - \frac{f^-}{x_{v_1}x_{v_3}}x_g x_h$. Since $x_{v_3}x_{v_1} - x_g x_h \in J_\mathcal{Q}$, then $\varphi(x_{v_3} x_{v_1}) = \varphi(x_g x_h)$, so therefore $\varphi(\frac{f^-}{x_{v_1}x_{v_3}}x_g x_h) = \varphi(f^-)$ and as such, we have that $\tilde f \in J_\mathcal{Q}$. Note that $u_1$ and $u_2$ together with $h$ define some inner interval, say $\mathcal{J}'$, in $\mathcal{Q}$, where $u_1$ and $u_2$ are the diagonal (resp. anti-diagonal) corners of $\mathcal{J}'$ and $h$ is an anti-diagonal (resp. diagonal) corner of $\mathcal{J}'$ (see Figure \ref{Fig:Thm3.2.14_Case3Iiiia}). Thus, applying Lemma \ref{Lemma: 2.2} to $u_1$, $u_2$ and $h$ gives that $\tilde f$ is redundant. Since $f = \tilde f - \frac{f^-}{x_{v_1}x_{v_3}}(x_{v_3}x_{v_1} - x_g x_h)$, then we also have that $f$ is redundant, which is a contradiction. 

    \begin{figure} [H] 
    \centering
    \begin{tikzpicture} [scale = 0.8]
        \draw[step=1cm,white,very thin] (0,0) grid (9,7);
        \draw [white, thin, fill = lightgray] (0,5) rectangle (6,7);
        \draw [white, thin, fill = lightgray] (6,0) rectangle (9,5);
        \draw [black, thin, fill = lightgray] (0,0) rectangle (6,5);
        \polyomino[
        empty cell =x,
        grid,
        p={a}{style={lightgray,draw=black}},
        p={b}{style={gray,draw=black}},
        row sep =;
        ]{
        x x x x x x x x x;
        a x x x x a x x x;
        a x x x x a a x x;
        x x x x x x x x x;
        x x x x x x x x x;
        x x x x x x x x x;
        a x x x x a a x x
        }

        \draw [black, thin] (0,5) -- (0,7);
        \draw [black, thin] (6,5) -- (6,7);
        \draw [black, thin] (6,5) -- (9,5);
        \draw [black, thin] (6,0) -- (9,0);

        \draw[dots] (0.5,1.1) -- (0.5, 3.9);
        \draw[dots] (5.5,1.1) -- (5.5, 3.9);
        \draw[dots] (1.1, 0.5) -- (4.9, 0.5);
        \draw[dots] (1.1, 4.5) -- (4.9, 4.5);

        \draw[dots] (6.5,1.1) -- (6.5, 3.9);
        \draw[dots] (1.1, 5.5) -- (4.9, 5.5);
        \draw[dots] (0.5,6.1) -- (0.5, 6.9);
        \draw[dots] (5.5,6.1) -- (5.5, 6.9);
        \draw[dots] (7.1,0.5) -- (8.9, 0.5);
        \draw[dots] (7.1,4.5) -- (8.9, 4.5);

        \draw [black, thin, fill = Apricot] (1.25,1) rectangle (4.5,2);
        \draw [black, thin, fill = cyan] (2.25,2) rectangle (4.5,3.5);

        \begin{scope}[color=black]
            \node[anchor=center] () at (7.5,6) {$\mathcal{H}$};
            \node[anchor=center] () at (3,1.5) {$\mathcal{J'}$};
            \node[anchor=center] () at (3.5,2.75) {$\mathcal{J}$};
        \end{scope}

        \fill(1.25,2) circle[radius=2pt] node[below left]{$u_1$};
        \fill(4.5,1) circle[radius=2pt] node[below]{$u_2$};
        \fill(4.5,3.5) circle[radius=2pt] node[above right]{$v_1$};
        \fill(2.25,2) circle[radius=2pt] node[above left]{$v_3$};
        \fill(4.5,2) circle[radius=2pt] node[below right]{$h$};
        \fill(2.25,3.5) circle[radius=2pt] node[above left]{$g$};

        \fill(0,0) circle[radius=0pt] node[below left]{$A \otimes \mathcal{R}_{m,n}$};
        
    \end{tikzpicture}
    \caption{One possible configuration of inner intervals when $v_3$ and $v_1$ are diagonal corners of $\mathcal{J}$}
    \label{Fig:Thm3.2.14_Case3Iiiia}
    \end{figure}
   
    Now, suppose that $v_1$ and $v_3$ are instead the anti-diagonal corners of $\mathcal{J}$. Let $g',h' \in V(\mathcal{Q})$ be the diagonal corners of $\mathcal{J}$. As $\mathcal{J}$ is an inner interval of $\mathcal{Q}$, then we have that $x_{g'}x_{h'} - x_{v_1} x_{v_3} \in I_\mathcal{Q} \subseteq J_\mathcal{Q}$. Note that we can write $f$ in the following way:
    \[ \quad \quad \quad \quad \quad \quad \quad
    f = f^+ - f^- = f^+ + \frac{f^-}{x_{v_1}x_{v_3}}(x_{g'}x_{h'} - x_{v_1} x_{v_3}) - \frac{f^-}{x_{v_1}x_{v_3}}x_{g'} x_{h'}
    \]
    Let $\hat f = f^+ - \frac{f^-}{x_{v_1}x_{v_3}}x_{g'} x_{h'}$. Since $x_{g'}x_{h'} - x_{v_1} x_{v_3} \in J_\mathcal{Q}$, then $\varphi(x_{v_1} x_{v_3}) = \varphi(x_{g'} x_{h'})$, so therefore $\varphi(\frac{f^-}{x_{v_1}x_{v_3}}x_{g'} x_{h'}) = \varphi(f^-)$ and as such, we have that $\hat f \in J_\mathcal{Q}$. Note that $u_1$ and $u_2$ together with $g'$ define some inner interval in $\mathcal{Q}$, say $\mathcal{J}''$, where $u_1$ and $u_2$ are the diagonal (resp. anti-diagonal) corners of $\mathcal{J}''$ and $g'$ is an anti-diagonal (resp. diagonal) corner of $\mathcal{J}''$ (see Figure \ref{Fig:Thm3.2.14_Case3Iiiib}). Thus, applying Lemma \ref{Lemma: 2.2} to $u_1$, $u_2$ and $g'$ gives that $\hat f$ is redundant. Since $f = \hat f + \frac{f^-}{x_{v_1}x_{v_3}}(x_{g'}x_{h'} - x_{v_1} x_{v_3})$, then we also have that $f$ is redundant, which is a contradiction.

   \begin{figure} [H]
    \centering
    \begin{tikzpicture} [scale = 0.8]
        \draw[step=1cm,white,very thin] (0,0) grid (9,7);
        \draw [white, thin, fill = lightgray] (0,5) rectangle (6,7);
        \draw [white, thin, fill = lightgray] (6,0) rectangle (9,5);
        \draw [black, thin, fill = lightgray] (0,0) rectangle (6,5);
        \polyomino[
        empty cell =x,
        grid,
        p={a}{style={lightgray,draw=black}},
        p={b}{style={gray,draw=black}},
        row sep =;
        ]{
        x x x x x x x x x;
        a x x x x a x x x;
        a x x x x a a x x;
        x x x x x x x x x;
        x x x x x x x x x;
        x x x x x x x x x;
        a x x x x a a x x
        }

        \draw [black, thin] (0,5) -- (0,7);
        \draw [black, thin] (6,5) -- (6,7);
        \draw [black, thin] (6,5) -- (9,5);
        \draw [black, thin] (6,0) -- (9,0);

        \draw[dots] (0.5,1.1) -- (0.5, 3.9);
        \draw[dots] (5.5,1.1) -- (5.5, 3.9);
        \draw[dots] (1.1, 0.5) -- (4.9, 0.5);
        \draw[dots] (1.1, 4.5) -- (4.9, 4.5);

        \draw[dots] (6.5,1.1) -- (6.5, 3.9);
        \draw[dots] (1.1, 5.5) -- (4.9, 5.5);
        \draw[dots] (0.5,6.1) -- (0.5, 6.9);
        \draw[dots] (5.5,6.1) -- (5.5, 6.9);
        \draw[dots] (7.1,0.5) -- (8.9, 0.5);
        \draw[dots] (7.1,4.5) -- (8.9, 4.5);

        \draw [black, thin, fill = Apricot] (1.25,1) rectangle (4.5,2);
        \draw [black, thin, fill = cyan] (4.5,2) rectangle (7.5,3.5);
        \draw [black, thin, fill = gray] (1.25,2) rectangle (4.5,3.5);

        \begin{scope}[color=black]
            \node[anchor=center] () at (7.5,6) {$\mathcal{H}$};
            \node[anchor=center] () at (2.875,2.75) {$\mathcal{K}$};
            \node[anchor=center] () at (6,2.75) {$\mathcal{J}$};
            \node[anchor=center] () at (2.875,1.5) {$\mathcal{J''}$};
        \end{scope}

        \fill(1.25,2) circle[radius=2pt] node[below left]{$u_1$};
        \fill(4.5,1) circle[radius=2pt] node[below]{$u_2$};
        \fill(4.5,3.5) circle[radius=2pt] node[above right]{$v_1$};
        \fill(7.5,2) circle[radius=2pt] node[below right]{$v_3$};
        \fill(4.5,2) circle[radius=2pt] node[below right]{$g'$};
        \fill(7.5,3.5) circle[radius=2pt] node[above right]{$h'$};

        \fill(0,0) circle[radius=0pt] node[below left]{$A \otimes \mathcal{R}_{m,n}$};
        
    \end{tikzpicture}
    \caption{One possible configuration of inner intervals when $v_1$ and $v_3$ are anti-diagonal corners of $\mathcal{J}$}
    \label{Fig:Thm3.2.14_Case3Iiiib}
    \end{figure}
    
    \item[(iv)] $u_2 > v_1$ and $u_3 > v_1$. For the structure of $\mathcal{Q}$, we have the following two subcases:

    \begin{description}
    
    \item[(a)] $u_1$ and $u_3$ define an inner interval in $\mathcal{Q}$. Note that in this case, the inner interval is precisely $[u_1, u_3]$. Let $g,h \in V(\mathcal{Q})$ be the anti-diagonal corners of $[u_1,u_3]$. As $[u_1,u_3]$ is an inner interval of $\mathcal{Q}$, then we have that $x_{u_1}x_{u_3} - x_{g} x_{h} \in I_\mathcal{Q} \subseteq J_\mathcal{Q}$. Note that we can write $f$ in the following way:
    \[ \quad \quad \quad \quad \quad \quad \quad \quad \quad \quad
    f = f^+ - f^- = \frac{f^+}{x_{u_1} x_{u_3}}(x_{u_1} x_{u_3} - x_g x_h) + \frac{f^+}{x_{u_1} x_{u_3}}x_g x_h - f^-
    \]
    
    Let $\tilde f = \frac{f^+}{x_{u_1} x_{u_3}}x_g x_h - f^-$. Since $x_{u_1}x_{u_3} - x_g x_h \in J_\mathcal{Q}$, then $\varphi(x_{u_1} x_{u_3}) = \varphi(x_g x_h)$, so therefore $\varphi(\frac{f^+}{x_{u_1}x_{u_3}}x_g x_h) = \varphi(f^+)$ and as such, we have that $\tilde f \in J_\mathcal{Q}$. For the structure of $\mathcal{Q}$, at least one of the vertex pairs $g,u_2$ or $v_1, v_2$ defines an inner interval in $\mathcal{Q}$. Suppose $[g,u_2]$ is an inner interval. Then $v_1$ is an anti-diagonal corner of $[g,u_2]$. Thus, applying Lemma \ref{Lemma: 2.2} to $g$, $u_2$ and $v_1$ gives that $\tilde f$ is redundant. Now, suppose $v_1$ and $v_2$ define an inner interval in $\mathcal{Q}$, say $\mathcal{I}$, then $g$ is an anti-diagonal (resp. diagonal) corner of $\mathcal{I}$ given that $v_2$ and $v_1$ are its diagonal (resp. anti-diagonal) corners (see Figure \ref{Fig:Thm3.2.14_Case3Iiva}). Thus, applying Lemma \ref{Lemma: 2.2} to $v_1$, $v_2$ and $g$ gives that $\tilde f$ is redundant. In either case, we have that $\tilde f$ is redundant, and as $f = \frac{f^+}{x_{u_1} x_{u_3}}(x_{u_1} x_{u_3} - x_g x_h) + \tilde f$, then $f$ is also redundant, which is a contradiction.

\begin{figure} [H] 
    \centering
    \begin{tikzpicture} [scale = 0.8]
        \draw[step=1cm,white,very thin] (0,0) grid (9,7);
        \draw [white, thin, fill = lightgray] (0,5) rectangle (6,7);
        \draw [white, thin, fill = lightgray] (6,0) rectangle (9,5);
        \draw [black, thin, fill = lightgray] (0,0) rectangle (6,5);
        \polyomino[
        empty cell =x,
        grid,
        p={a}{style={lightgray,draw=black}},
        p={b}{style={gray,draw=black}},
        row sep =;
        ]{
        x x x x x x x x x;
        a x x x x a x x x;
        a x x x x a a x x;
        x x x x x x x x x;
        x x x x x x x x x;
        x x x x x x x x x;
        a x x x x a a x x
        }

        \draw [black, thin] (0,5) -- (0,7);
        \draw [black, thin] (6,5) -- (6,7);
        \draw [black, thin] (6,5) -- (9,5);
        \draw [black, thin] (6,0) -- (9,0);

        \draw[dots] (0.5,1.1) -- (0.5, 3.9);
        \draw[dots] (5.5,1.1) -- (5.5, 3.9);
        \draw[dots] (1.1, 0.5) -- (4.9, 0.5);
        \draw[dots] (1.1, 4.5) -- (4.9, 4.5);

        \draw[dots] (6.5,1.1) -- (6.5, 3.9);
        \draw[dots] (1.1, 5.5) -- (4.9, 5.5);
        \draw[dots] (0.5,6.1) -- (0.5, 6.9);
        \draw[dots] (5.5,6.1) -- (5.5, 6.9);
        \draw[dots] (7.1,0.5) -- (8.9, 0.5);
        \draw[dots] (7.1,4.5) -- (8.9, 4.5);

        \draw [black, thin, fill = gray] (1.25,2) rectangle (4.5,3.5);
        \draw [black, thin] (4.5,3.5)--(7.5,3.5);
        \draw [black, thin] (4.5,2)--(7.5,2);
        \draw [black, thin, fill = cyan] (1.25,3.5) rectangle (4.5,6.25);
        \draw [black, thin] (4.5,3.5)--(4.5,6.75);

        \begin{scope}[color=black]
            \node[anchor=center] () at (7.5,6) {$\mathcal{H}$};
            \node[anchor=center] () at (2.875,2.75) {$\mathcal{K}$};
            \node[anchor=center] () at (2.875,5) {$\mathcal{I}$};
        \end{scope}

        \fill(1.25,2) circle[radius=2pt] node[below left]{$u_1$};
        \fill(4.5,6.75) circle[radius=2pt] node[below right]{$u_2$};
        \fill(7.5,3.5) circle[radius=2pt] node[above right]{$u_3$};
        \fill(4.5,3.5) circle[radius=2pt] node[above right]{$v_1$};
        \fill(1.25,6.25) circle[radius=2pt] node[above left]{$v_2$};
        \fill(7.5,2) circle[radius=2pt] node[below right]{$h$};
        \fill(1.25,3.5) circle[radius=2pt] node[below left]{$g$};

        \fill(0,0) circle[radius=0pt] node[below left]{$A \otimes \mathcal{R}_{m,n}$};
        
    \end{tikzpicture}
    \caption{One possible configuration of inner intervals for this case}
    \label{Fig:Thm3.2.14_Case3Iiva}
    \end{figure}

    \item[(b)] $v_1$ and $v_3$ define an inner interval in $\mathcal{Q}$. Let $\mathcal{I}$ denote this inner interval. Suppose that $v_1$ and $v_3$ are the diagonal corners of $\mathcal{I}$. Let $g,h \in V(\mathcal{Q})$ be the anti-diagonal corners of $\mathcal{I}$. As $\mathcal{I}$ is an inner interval of $\mathcal{Q}$, then we have that $x_{v_3} x_{v_1} - x_{g} x_{h} \in I_\mathcal{Q} \subseteq J_\mathcal{Q}$. Note that we can write $f$ in the following way:
    \[ \quad \quad \quad \quad \quad \quad \quad \quad \quad \quad
    f = f^+ - f^- = f^+ - \frac{f^-}{x_{v_1}x_{v_3}}(x_{v_3}x_{v_1} - x_g x_h) - \frac{f^-}{x_{v_1}x_{v_3}}x_g x_h
    \]
    Let $\tilde f = f^+ - \frac{f^-}{x_{v_1}x_{v_3}}x_g x_h$. Since $x_{v_3}x_{v_1} - x_g x_h \in J_\mathcal{Q}$, then $\varphi(x_{v_3} x_{v_1}) = \varphi(x_g x_h)$, so therefore $\varphi(\frac{f^-}{x_{v_1}x_{v_3}}x_g x_h) = \varphi(f^-)$ and as such, we have that $\tilde f \in J_\mathcal{Q}$. For the structure of $\mathcal{Q}$, at least one of the vertex pairs $g,u_2$ or $v_2,h$ defines an inner interval in $\mathcal{Q}$. Suppose $[g,u_2]$ is an inner interval. Then $v_1$ is an anti-diagonal corner of $[g,u_2]$. Thus, applying Lemma \ref{Lemma: 2.2} to $g$, $u_2$ and $v_1$ gives that $\tilde f$ is redundant. Now, suppose that rather $v_2$ and $h$ define an inner interval in $\mathcal{Q}$, say $\mathcal{J}$, then $u_1$ is an anti-diagonal (resp. diagonal) corner of $\mathcal{J}$ given that $v_2$ and $h$ are its diagonal (resp. anti-diagonal) corners (see Figure \ref{Fig:Thm3.2.14_Case3Iivb1}). Thus, applying Lemma \ref{Lemma: 2.2} to $v_2$, $h$ and $u_1$ gives that $\tilde f$ is redundant. In either case, we have that $\tilde f$ is redundant, and as $f = \tilde f - \frac{f^-}{x_{v_1}x_{v_3}}(x_{v_3}x_{v_1} - x_g x_h)$, then we also have that $f$ is redundant, which is a contradiction. 

    \begin{figure} [H] 
    \centering
    \begin{tikzpicture} [scale = 0.8]
        \draw[step=1cm,white,very thin] (0,0) grid (9,7);
        \draw [white, thin, fill = lightgray] (0,5) rectangle (6,7);
        \draw [white, thin, fill = lightgray] (6,0) rectangle (9,5);
        \draw [black, thin, fill = lightgray] (0,0) rectangle (6,5);
        \polyomino[
        empty cell =x,
        grid,
        p={a}{style={lightgray,draw=black}},
        p={b}{style={gray,draw=black}},
        row sep =;
        ]{
        x x x x x x x x x;
        a x x x x a x x x;
        a x x x x a a x x;
        x x x x x x x x x;
        x x x x x x x x x;
        x x x x x x x x x;
        a x x x x a a x x
        }

        \draw [black, thin] (0,5) -- (0,7);
        \draw [black, thin] (6,5) -- (6,7);
        \draw [black, thin] (6,5) -- (9,5);
        \draw [black, thin] (6,0) -- (9,0);

        \draw[dots] (0.5,1.1) -- (0.5, 3.9);
        \draw[dots] (5.5,1.1) -- (5.5, 3.9);
        \draw[dots] (1.1, 0.5) -- (4.9, 0.5);
        \draw[dots] (1.1, 4.5) -- (4.9, 4.5);

        \draw[dots] (6.5,1.1) -- (6.5, 3.9);
        \draw[dots] (1.1, 5.5) -- (4.9, 5.5);
        \draw[dots] (0.5,6.1) -- (0.5, 6.9);
        \draw[dots] (5.5,6.1) -- (5.5, 6.9);
        \draw[dots] (7.1,0.5) -- (8.9, 0.5);
        \draw[dots] (7.1,4.5) -- (8.9, 4.5);

        \draw [black, thin, fill = Apricot] (1.25,1.25) rectangle (4.5,2);
        \draw [black, thin] (4.5,3.5)--(7.5,3.5);
        \draw [black, thin, fill = cyan] (2.25,2) rectangle (4.5,3.5);
        \draw [black, thin] (4.5,3.5)--(4.5,6.75);

        \begin{scope}[color=black]
            \node[anchor=center] () at (7.5,6) {$\mathcal{H}$};
            \node[anchor=center] () at (3.375,2.75) {$\mathcal{I}$};
            \node[anchor=center] () at (2.875,1.625) {$\mathcal{J}$};
        \end{scope}

        \fill(1.25,2) circle[radius=2pt] node[above left]{$u_1$};
        \fill(4.5,6.75) circle[radius=2pt] node[below right]{$u_2$};
        \fill(7.5,3.5) circle[radius=2pt] node[above right]{$u_3$};
        \fill(4.5,3.5) circle[radius=2pt] node[above right]{$v_1$};
        \fill(1.25,1.25) circle[radius=2pt] node[below]{$v_2$};
        \fill(2.25,2) circle[radius=2pt] node[above left]{$v_3$};
        \fill(4.5,2) circle[radius=2pt] node[below right]{$h$};
        \fill(2.25,3.5) circle[radius=2pt] node[below left]{$g$};

        \fill(0,0) circle[radius=0pt] node[below left]{$A \otimes \mathcal{R}_{m,n}$};
        
    \end{tikzpicture}
    \caption{One possible configuration of inner intervals for this case}
    \label{Fig:Thm3.2.14_Case3Iivb1}
    \end{figure}
    
    Now, suppose that $v_1$ and $v_3$ are instead the anti-diagonal corners of $\mathcal{I}$. Let $g',h' \in V(\mathcal{Q})$ be the diagonal corners of $\mathcal{I}$. As $\mathcal{I}$ is an inner interval of $\mathcal{Q}$, then we have that $x_{g'}x_{h'} - x_{v_1} x_{v_3} \in I_\mathcal{Q} \subseteq J_\mathcal{Q}$. Note that we can write $f$ in the following way:
    \[ \quad \quad \quad \quad \quad \quad \quad \quad \quad \quad
    f = f^+ - f^- = f^+ + \frac{f^-}{x_{v_1}x_{v_3}}(x_{g'}x_{h'} - x_{v_1} x_{v_3}) - \frac{f^-}{x_{v_1}x_{v_3}}x_{g'} x_{h'}
    \]
    Let $\hat f = f^+ - \frac{f^-}{x_{v_1}x_{v_3}}x_{g'} x_{h'}$. Since $x_{g'}x_{h'} - x_{v_1} x_{v_3} \in J_\mathcal{Q}$, then $\varphi(x_{v_1} x_{v_3}) = \varphi(x_{g'} x_{h'})$, so therefore $\varphi(\frac{f^-}{x_{v_1}x_{v_3}}x_{g'} x_{h'}) = \varphi(f^-)$ and as such, we have that $\hat f \in J_\mathcal{Q}$. For the structure of $\mathcal{Q}$, at least one of the vertex pairs $u_1,u_2$ or $v_2,g'$ defines an inner interval in $\mathcal{Q}$. Suppose $[u_1,u_2]$ is an inner interval. Then $g'$ is an anti-diagonal corner of $[u_1,u_2]$. Thus, applying Lemma \ref{Lemma: 2.2} to $u_1$, $u_2$ and $g'$ gives that $\hat f$ is redundant. Now, suppose that rather $v_2$ and $g'$ define an inner interval in $\mathcal{Q}$, say $\mathcal{J'}$, then $u_1$ is an anti-diagonal (resp. diagonal) corner of $\mathcal{J'}$ given that $v_2$ and $g'$ are its diagonal (resp. anti-diagonal) corners (see Figure \ref{Fig:Thm3.2.14_Case3Iivb2}). Thus, applying Lemma \ref{Lemma: 2.2} to $v_2$, $g'$ and $u_1$ gives that $\hat f$ is redundant. In either case, we have that $\hat f$ is redundant, and as $f = \hat f + \frac{f^-}{x_{v_1}x_{v_3}}(x_{g'}x_{h'} - x_{v_1} x_{v_3})$, then we also have that $f$ is redundant, which is a contradiction.

     \begin{figure} [H] 
    \centering
    \begin{tikzpicture} [scale = 0.8]
        \draw[step=1cm,white,very thin] (0,0) grid (9,7);
        \draw [white, thin, fill = lightgray] (0,5) rectangle (6,7);
        \draw [white, thin, fill = lightgray] (6,0) rectangle (9,5);
        \draw [black, thin, fill = lightgray] (0,0) rectangle (6,5);
        \polyomino[
        empty cell =x,
        grid,
        p={a}{style={lightgray,draw=black}},
        p={b}{style={gray,draw=black}},
        row sep =;
        ]{
        x x x x x x x x x;
        a x x x x a x x x;
        a x x x x a a x x;
        x x x x x x x x x;
        x x x x x x x x x;
        x x x x x x x x x;
        a x x x x a a x x
        }

        \draw [black, thin] (0,5) -- (0,7);
        \draw [black, thin] (6,5) -- (6,7);
        \draw [black, thin] (6,5) -- (9,5);
        \draw [black, thin] (6,0) -- (9,0);

        \draw[dots] (0.5,1.1) -- (0.5, 3.9);
        \draw[dots] (5.5,1.1) -- (5.5, 3.9);
        \draw[dots] (1.1, 0.5) -- (4.9, 0.5);
        \draw[dots] (1.1, 4.5) -- (4.9, 4.5);

        \draw[dots] (6.5,1.1) -- (6.5, 3.9);
        \draw[dots] (1.1, 5.5) -- (4.9, 5.5);
        \draw[dots] (0.5,6.1) -- (0.5, 6.9);
        \draw[dots] (5.5,6.1) -- (5.5, 6.9);
        \draw[dots] (7.1,0.5) -- (8.9, 0.5);
        \draw[dots] (7.1,4.5) -- (8.9, 4.5);

        \draw [black, thin, fill = Apricot] (1.25,1.25) rectangle (4.5,2);
        \draw [black, thin, fill = gray] (1.25,2) rectangle (4.5,3.5);
        \draw [black, thin, fill = cyan] (4.5,2) rectangle (7.5,3.5);
        \draw [black, thin] (7.5,3.5)--(8.25,3.5);
        \draw [black, thin] (4.5,3.5)--(4.5,6.75);

        \begin{scope}[color=black]
            \node[anchor=center] () at (7.5,6) {$\mathcal{H}$};
            \node[anchor=center] () at (6,2.75) {$\mathcal{I}$};
            \node[anchor=center] () at (2.875,1.625) {$\mathcal{J'}$};
            \node[anchor=center] () at (2.875,2.75) {$\mathcal{K}$};
        \end{scope}

        \fill(1.25,2) circle[radius=2pt] node[above left]{$u_1$};
        \fill(4.5,6.75) circle[radius=2pt] node[below right]{$u_2$};
        \fill(8.25,3.5) circle[radius=2pt] node[above right]{$u_3$};
        \fill(4.5,3.5) circle[radius=2pt] node[above right]{$v_1$};
        \fill(1.25,1.25) circle[radius=2pt] node[below]{$v_2$};
        \fill(7.5,2) circle[radius=2pt] node[below right]{$v_3$};
        \fill(4.5,2) circle[radius=2pt] node[below right]{$g'$};
        \fill(7.5,3.5) circle[radius=2pt] node[above]{$h'$};

        \fill(0,0) circle[radius=0pt] node[below left]{$A \otimes \mathcal{R}_{m,n}$};
        
    \end{tikzpicture}
    \caption{Another possible configuration of inner intervals for this case}
    \label{Fig:Thm3.2.14_Case3Iivb2}
    \end{figure}
    
    \end{description}
    
    \end{description}

    \item[(II)] $u_1$ and $v_1$ are anti-diagonal corners of $\mathcal{K}$. By a symmetric argument to the one given in the previous case \textbf{(I)}, the contradiction that $f$ is redundant will always be reached.
    
    \end{description}

    \end{description}

    As all cases lead to a contradiction, then it must be that any irredundant binomial in $J_\mathcal{Q}$ is of degree 2. Thus $J_\mathcal{Q} \subseteq I_\mathcal{Q}$, and so $I_\mathcal{Q} = J_\mathcal{Q}$.
\end{proof}

\begin{corollary}
    Let $\mathcal{Q} = \mathcal{P} \otimes \mathcal{R}_{m,n}$ be a dilated closed path with an $(m,n)$-dilated L-configuration for some $m,n \in \mathbb{Z}^+$. Then $I_{\mathcal{Q}}$ is prime.
\end{corollary}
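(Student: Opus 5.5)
The corollary follows almost immediately from Theorem \ref{Thm: 3.2.1}, so the plan is short. Let $\mathcal{Q} = \mathcal{P} \otimes \mathcal{R}_{m,n}$ be a dilated closed path containing an $(m,n)$-dilated L-configuration. Fix the orientation used before Lemma \ref{Lemma: L-config ideal containment}, so that $A_3 \otimes \mathcal{R}_{m,n}$ is the lowest and leftest cell interval of the configuration. Then take $J_\mathcal{Q} = \ker \varphi$ to be the toric ideal defined through the map $\alpha$ with the single extra variable $w$. Theorem \ref{Thm: 3.2.1} gives directly that $I_\mathcal{Q} = J_\mathcal{Q}$.

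It then remains to check that $J_\mathcal{Q}$ is prime. The map $\varphi : S \to T_\mathcal{Q}$ is a surjective ring homomorphism, and $T_\mathcal{Q}$ is the subring $K[\alpha(v) \mid v \in V(\mathcal{Q})]$ of the polynomial ring $K[\{h_i, v_j, w\}]$. A subring of an integral domain is an integral domain, and $S/\ker\varphi \cong T_\mathcal{Q}$, so $\ker \varphi$ is a prime ideal. This is the same fact recorded in the paper's remark that $I_\mathcal{P} = J_\mathcal{P}$ implies primality, citing Theorem 1.6 of \textbf{\cite{Bigatti_Robbiano:2001}}. Hence $I_\mathcal{Q} = J_\mathcal{Q}$ is prime.

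The only point needing care is that the remark in Subsection \ref{Subsection:2.3} was stated for the general toric ideal with one variable $w_k$ per hole. Here $J_\mathcal{Q}$ is the modified kernel attached to the L-configuration. However, the primality argument uses only the fact that the target ring is a domain, so it applies verbatim. There is no substantive obstacle: all the work has already been done in Theorem \ref{Thm: 3.2.1}.
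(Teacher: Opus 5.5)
Your proposal is correct and follows the same route as the paper: Theorem \ref{Thm: 3.2.1} gives $I_\mathcal{Q} = J_\mathcal{Q}$, and $J_\mathcal{Q}$ is prime because it is the kernel of a surjection onto a subring of a polynomial ring. Your remark that this argument still applies to the modified toric ideal with the single variable $w$ usefully spells out the step the paper dismisses as ``prime by definition.''
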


\begin{proof}
    By Theorem \ref{Thm: 3.2.1}, we have that $I_\mathcal{Q} = J_\mathcal{Q}$. As $J_\mathcal{Q}$ is a toric ideal, then it is prime by definition. Thus $I_\mathcal{Q}$ is prime.
\end{proof}

Our goal now is to prove an analogous theorem to Theorem \ref{Thm: 3.2.1} for dilated closed paths with $(m,n)$-dilated ladders of at least 3 steps. Our steps towards proving the next theorem are much the same as they were for the case of dilated closed paths with $(m,n)$-dilated L-configurations, so therefore in a similar fashion, we first give a useful specification of the toric ideal for dilated closed paths with $(m,n)$-dilated ladders of at least 3 steps. 

Let $\mathcal{Q} = \mathcal{P} \otimes \mathcal{R}_{m,n}$ be a closed path. Let $m,n\in\mathbb{Z}^+$ be arbitrary and let $\mathcal{L}_{\mathcal{Q}} = \mathcal{B}\otimes\mathcal{R}_{m,n} = \{\mathcal{B}_i \otimes \mathcal{R}_{m,n}\}_{i=1,...s}$ be an $(m,n)$-dilated ladder in $\mathcal{Q} = \mathcal{P} \otimes \mathcal{R}_{m,n}$ with $s \geq 3$. Let $\mathcal{L}_{\mathcal{P}} = \{\mathcal{B}_i\}_{i=1,...s}$ be the underlying ladder in $\mathcal{P}$ that $\mathcal{L}_{\mathcal{Q}}$ arises from. Without loss of generality we can consider $\mathcal{B}_1,...,\mathcal{B}_s$ to be horizontal blocks that are going down for if this is not the case we can perform the necessary rotations or reflections on $\mathcal{P}$ such that it is. Let the block $\mathcal{B}_{s-1}$ contain $k$ cells which we will label $A_1,...,A_k$. Let $A$ be the cell in $\mathcal{B}_s$ such that $A_k$ and $A$ share a common edge. We denote by $a_0, b_0$ the diagonal corners of $A$ and by $c_0, d_0$ the anti-diagonal corners of $A$. Note that fixing $\mathcal{L}_\mathcal{P}$ in this way fixes $\mathcal{L}_\mathcal{Q}$ such that the $(m,n)$-dilated block $\mathcal{B}_{s-1}\otimes \mathcal{R}_{m,n}$ in $\mathcal{Q}$ consists of a sequence of $k$ cell intervals $A_1 \otimes \mathcal{R}_{m,n},...,A_k \otimes \mathcal{R}_{m,n}$ such that $A_k \otimes \mathcal{R}_{m,n}$ shares a common horizontal edge interval with $A \otimes \mathcal{R}_{m,n}$. Let $a_1,...,a_{km -1}, b$ be the labels for the $mk$ vertices contained in the maximal horizontal edge interval at the bottom of $\mathcal{B}_{s-1}$, that is the maximal horizontal edge interval which contains the common horizontal edge interval between $A_k \otimes \mathcal{R}_{m,n}$ and $A \otimes \mathcal{R}_{m,n}$. Note that $a_{(k-1)m+1} = \Delta_{m,n}(c_0) $ and $b = \Delta_{m,n}(b_0)$. We let $a = \Delta_{m,n}(a_0)$ and $d = \Delta_{m,n}(d_0)$.
    
We set $L_{\mathcal{B}\otimes \mathcal{R}_{m,n}} = \{a_1,...,a_{(k-1)m}\}\cup V(A \otimes \mathcal{R}_{m,n})$. We now consider a specific toric ideal representation of $Q$ as per the definition of such a toric ideal for any polyomino as given in Section \ref{Section:2}. Let $\mathcal{H}$ denote the unique hole of $\mathcal{Q}$ and let $e$ be the lower left corner of $\mathcal{H}$ as defined in Section \ref{Section:2}. Given the orientation of $\mathcal{L}_\mathcal{Q}$ we have that for any $v \in L_{\mathcal{B}\otimes \mathcal{R}_{m,n}}$, it must be that $v \leq e$. Fixing $K$ to be a field, we define the following map:
\begin{eqnarray*}
\alpha: V(\mathcal{Q}) & \rightarrow & K[\{h_i,v_j,w\}\mid i\in I, j\in J] \\
v & \mapsto & h_iv_jw^k
\end{eqnarray*}
with $v\in V_i\cap H_j$ and with $k = 1$ if $v \in L_{\mathcal{B}\otimes \mathcal{R}_{m,n}}$ and $k = 0$ otherwise. Letting $S = K[x_v \mid v\in V(\mathcal{Q})]$ and letting $T_{\mathcal{Q}} = K[\alpha(v) \mid v \in V(\mathcal{Q})]$ be the toric ring of $\mathcal{Q}$, we define the following surjective ring homomorphism:
\begin{eqnarray*}
\varphi: S & \rightarrow & T_{\mathcal{Q}} \\
            x_v & \mapsto & \alpha(v)
\end{eqnarray*}
We define the toric ideal of $\mathcal{Q}$ to be $J_\mathcal{Q} = \ker \varphi$. For the remainder of this section, if $\mathcal{Q}$ is a dilated closed path that contains an $(m,n)$-dilated ladder of at least 3 steps, then when we refer to its toric ideal $J_\mathcal{Q}$, we mean the toric ideal as specified by the above map.

\begin{lemma}
    Let $\mathcal{Q} = \mathcal{P} \otimes \mathcal{R}_{m,n}$ be a dilated closed path with an $(m,n)$-dilated ladder of at least 3 steps for some $m,n \in \mathbb{Z}^+$. Then $I_{\mathcal{Q}} \subseteq J_{\mathcal{Q}}$.
\end{lemma}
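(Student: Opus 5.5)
We need to show every inner 2-minor $x_px_q-x_rx_s$ of $\mathcal{Q}$ lies in $\ker\varphi$, where $\varphi$ is the ladder-specific map with the extra variable $w$ attached to the vertices of $L_{\mathcal{B}\otimes\mathcal{R}_{m,n}}$. The plan mirrors the paper's argument for Lemma \ref{Lemma: L-config ideal containment}. By Lemma \ref{Lemma: ideal containment}, the argument of \textbf{\cite{Mascia_Rinaldo_Romeo:2020}} applies to any toric map of the form $v\mapsto h_iv_jw^{k}$ whose $w$-exponent is the indicator of a vertex set behaving like a set $W_k=\{a\le e\}$ relative to inner intervals. So the real content is to check that inner intervals see $L_{\mathcal{B}\otimes\mathcal{R}_{m,n}}$ the same way they see such a ``lower-left'' set.

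Fix an inner interval $[p,q]$ with anti-diagonal corners $r,s$. Since $p,q$ lie on the maximal horizontal edge intervals of $p$ and $q$ respectively, and on the maximal vertical edge intervals of $p$ and $q$, and $r,s$ use exactly the same four edge intervals, the $h$ and $v$ parts of $\varphi(x_px_q)$ and $\varphi(x_rx_s)$ agree. This is the standard observation, since a proper inner interval's sides are contained in edge intervals of $\mathcal{Q}$.

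It remains to compare the $w$-exponents. We must show
\[
\bigl|\{p,q\}\cap L_{\mathcal{B}\otimes\mathcal{R}_{m,n}}\bigr| \;=\; \bigl|\{r,s\}\cap L_{\mathcal{B}\otimes\mathcal{R}_{m,n}}\bigr|.
\]
The set $L$ consists of the $(m,n)$-dilated cell $A\otimes\mathcal{R}_{m,n}$ together with the vertices $a_1,\dots,a_{(k-1)m}$ on the bottom edge interval of $\mathcal{B}_{s-1}\otimes\mathcal{R}_{m,n}$ to the left of $A\otimes\mathcal{R}_{m,n}$. Geometrically, $L$ is the set of vertices $v\in V(\mathcal{Q})$ lying weakly below that bottom line and weakly left of the right side of $A\otimes\mathcal{R}_{m,n}$, locally inside the thick strip around the ladder step. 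Because $\mathcal{Q}$ is a dilated closed path with a unique hole (Proposition \ref{Prop:dilated closed path has unique hole}) and cannot contain $\mathcal{Y}\otimes\mathcal{R}_{m,n}$, any inner interval meeting $L$ is contained in one of the following:
\begin{itemize}
\item[(a)] the dilated block $\mathcal{B}_{s-1}\otimes\mathcal{R}_{m,n}$;
\item[(b)] the dilated block containing $A\otimes\mathcal{R}_{m,n}$ in $\mathcal{B}_s$;
\item[(c)] the dilated cell $A\otimes\mathcal{R}_{m,n}$ together with the next dilated cell of the path below or to the right of it.
\end{itemize}
The ladder condition, that the common edges of consecutive steps are not on one edge interval, guarantees that $\mathcal{B}_{s-1}$ and $\mathcal{B}_s$ overlap only along $A_k$ and $A$, so no inner interval crosses the step otherwise.

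The cases then go as follows.
\begin{itemize}
\item[(a)] The interval lies in $\mathcal{B}_{s-1}\otimes\mathcal{R}_{m,n}$. Its bottom two corners either both lie on the bottom line or both do not. Among the corners on the bottom line, membership in $L$ depends only on the $x$-coordinate being at most $\Delta_{m,n}(c_0)$, or lying in $A$'s range. A diagonal/anti-diagonal pair at the bottom consists of one left and one right corner. The counts balance unless the interval straddles the point $a_{(k-1)m}\to a_{(k-1)m+1}$; but $a_{(k-1)m+1}=\Delta_{m,n}(c_0)\in V(A\otimes\mathcal{R}_{m,n})\subseteq L$, and the bottom corners to the right of it up to $b$ are also in $L$. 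Hence every vertex on that bottom line from $a_1$ to $b$ is in $L$, and the two sides balance.
\item[(b)] The interval lies in the dilated block of $\mathcal{B}_s$. Only the vertices of $A\otimes\mathcal{R}_{m,n}$ are in $L$, so $L$ restricted to this block is ``left of a vertical line''. This gives a balanced count for the same reason.
\item[(c)] The interval is contained in $A\otimes\mathcal{R}_{m,n}$ together with the next dilated cell. Here $L$ restricted to it is the lower-left rectangle, and the standard $W_k$ argument applies.
\end{itemize}

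The main obstacle is the geometric claim that every inner interval containing a vertex of $L$ falls into one of (a)–(c), and the careful bookkeeping at $b$ and at the corner where $\mathcal{B}_{s-1}$ meets the dilated $\mathcal{B}_s$. I would handle it by first proving the underlying statement in $\mathcal{P}$, using Lemma \ref{Lemma:Closed path geometry}, and then transporting it to $\mathcal{Q}$ via Lemmas \ref{Lemma: Dilations preserve inner intervals} and \ref{Lemma: Dilations preserve non-inners}: any inner interval of $\mathcal{Q}$ is contained in the dilation of its smallest enclosing dilated interval, and that enclosing interval is inner in $\mathcal{P}$.
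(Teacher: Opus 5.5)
The framework is right: the $h$- and $v$-parts of an inner 2-minor always match, so everything reduces to $|\{p,q\}\cap L|=|\{r,s\}\cap L|$, writing $L=L_{\mathcal{B}\otimes\mathcal{R}_{m,n}}$. Since $\mathcal{P}$ is thin, every inner interval of $\mathcal{Q}$ lies in $\mathcal{M}\otimes\mathcal{R}_{m,n}$ for some maximal block $\mathcal{M}$ of $\mathcal{P}$, and your cases (a) and (b) are correct.

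The gap is your claim that every inner interval with a corner in $L$ falls under (a)--(c). Since $A$ lies below $A_k$ and is the leftmost cell of $\mathcal{B}_s$, no cell of $\mathcal{P}$ lies below $A$, so your (c) is misoriented. The vertical blocks that actually meet $L$ near $A_1$ and $A$ are $\{A_k,A\}$ and the block formed by $A_1$ and the last cell of $\mathcal{B}_{s-2}$. Both are harmless, because on them $L$ depends only on the $y$-coordinate.

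What your list misses is the cell $A'$ of $\mathcal{B}_s$ to the right of $A$ and, when $|\mathcal{B}_s|=2$, the cell $A''$ below $A'$. In that case the path must leave $A'$ downwards, because the cell above $A'$ shares an edge with $A_k$. Then $A''$ is diagonal to $A$ with common vertex $d_0$. Consider the unit cell of $A''\otimes\mathcal{R}_{m,n}\subseteq\mathcal{Q}$ whose upper left corner is $d=\Delta_{m,n}(d_0)$. Its corner $d$ lies in $V(A\otimes\mathcal{R}_{m,n})\subseteq L$, while its other corners $d-(0,1)$, $d+(1,0)$, $d+(1,-1)$ are not in $L$. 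Hence $\varphi(x_{d-(0,1)}x_{d+(1,0)})$ and $\varphi(x_d x_{d+(1,-1)})$ differ by exactly one factor $w$, and this inner 2-minor is not in $J_{\mathcal{Q}}$.

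So, with $L$ as defined, the statement is false whenever the last step $\mathcal{B}_s$ is a domino, and no refinement of your case analysis can close the gap. This already happens in the paper's own example, Figure~\ref{fig:L-configuration example} as drawn, where $\mathcal{B}_3$ is a domino followed by a downward column. There the cell $[(6,1),(7,2)]$ has only its corner $(6,2)=d_0$ in $L$, and in the $(2,1)$-dilation the unit cell $[(12,1),(13,2)]$ behaves the same way. If every step of the ladder is a domino, reversing the ladder does not help either.

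The paper's one-line appeal to Mascia--Rinaldo--Romeo hides the same problem. Their argument works for a full quadrant $\{v\le e\}$ whose unit cell $[e,e+(1,1)]$ lies outside the polyomino. Here $L$ is a local truncation of the quadrant $\{v\le b\}$, and it is exactly the truncation that breaks the balance on the blocks through $A'$ and $A''$.

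Under the extra normalization $|\mathcal{B}_s|\ge 3$, $A'$ has no vertical neighbour. Then every inner interval with a corner in $L$ lies in the dilation of $\mathcal{B}_{s-1}$, $\mathcal{B}_s$, $\{A_k,A\}$ or the vertical block through $A_1$. On each of these $L$ depends on a single coordinate, so your argument goes through. However, the setup does not provide that normalization (or a different choice of $L$), and Theorem~\ref{Thm: 3.2.2} would have to be rechecked accordingly.
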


\begin{proof}
    The result follows from Lemma \ref{Lemma: ideal containment} as per the proof given in \textbf{\cite{Mascia_Rinaldo_Romeo:2020}}.
\end{proof}

 We note here that the theorem below is a generalization of Theorem 5.2 in \textbf{\cite{Cisto_Navarra:2023}} for closed paths and as such, our proof follows a similar argument to the proof of that theorem.

\begin{theorem} \label{Thm: 3.2.2}
    Let $\mathcal{Q} = \mathcal{P} \otimes \mathcal{R}_{m,n}$ be a dilated closed path with an $(m,n)$-dilated ladder of at least 3 steps for some $m,n \in \mathbb{Z}^+$. Then $I_{\mathcal{Q}} = J_{\mathcal{Q}}$.
\end{theorem}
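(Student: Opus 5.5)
We follow the same strategy as in the proof of Theorem \ref{Thm: 3.2.1}, now using the ladder-adapted toric ideal $J_\mathcal{Q}$ with the weight variable $w$ attached to the set $L_{\mathcal{B}\otimes \mathcal{R}_{m,n}} = \{a_1,\dots,a_{(k-1)m}\}\cup V(A\otimes\mathcal{R}_{m,n})$. The preceding lemma gives $I_\mathcal{Q}\subseteq J_\mathcal{Q}$. For the reverse containment we prove two statements: (1) every degree-two binomial of $J_\mathcal{Q}$ lies in $I_\mathcal{Q}$, and (2) every irredundant binomial of $J_\mathcal{Q}$ has degree two. Together with Lemma \ref{Lemma: ideal containment}, these give $J_\mathcal{Q}\subseteq I_\mathcal{Q}$.

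\textbf{Step (1).} Take $f=x_qx_r-x_sx_t\neq 0$ in $J_\mathcal{Q}$. Exactly as before, comparing the $h$- and $v$-variables forces $[q,r]$ to be a proper interval with anti-diagonal corners $s,t$. Its boundary lies on edge intervals of $\mathcal{Q}$. If $[q,r]$ is not inner, it must contain the unique hole $\mathcal{H}$ (Proposition \ref{Prop:dilated closed path has unique hole}). The key geometric claim is then the following: because the ladder goes down with steps $\Delta_{m,n}$-dilated from $\mathcal{P}$, and because $L_{\mathcal{B}\otimes\mathcal{R}_{m,n}}$ consists of the bottom boundary of $\mathcal{B}_{s-1}\otimes\mathcal{R}_{m,n}$ to the left of the step together with the dilated cell $A\otimes\mathcal{R}_{m,n}$, any rectangle whose boundary lies in $\mathcal{Q}$ and which encloses $\mathcal{H}$ must have exactly one diagonal corner in $L_{\mathcal{B}\otimes \mathcal{R}_{m,n}}$ and no anti-diagonal corner there (or the symmetric situation). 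In that case the $w$-degrees of $\varphi(x_qx_r)$ and $\varphi(x_sx_t)$ differ, which is a contradiction. Here the three-step hypothesis is used: the edge $[a_{s-1},b_{s-1}]$ and the edge $[a_{s-2},b_{s-2}]$ are not on a common edge interval. This prevents an enclosing rectangle from having its lower-left corner outside $L_{\mathcal{B}\otimes\mathcal{R}_{m,n}}$ while still passing along the ladder.

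\textbf{Step (2).} Suppose $f$ is irredundant with $\deg f\ge 3$. If $V_f^\pm$ avoid $L_{\mathcal{B}\otimes\mathcal{R}_{m,n}}$, we pass to a subpolyomino $\mathcal{Q}'$ obtained by deleting the cells touching $L_{\mathcal{B}\otimes\mathcal{R}_{m,n}}$. We then check, as in Theorem \ref{Thm: 3.2.1}, that $\mathcal{Q}'$ is simple: the hole is opened to the exterior through the cells of $A\otimes\mathcal{R}_{m,n}$ and the cells under $\mathcal{B}_{s-1}\otimes\mathcal{R}_{m,n}$. Theorem \ref{Thm:simple polyos are prime} then makes $f$ redundant. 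Otherwise, $w$-balancing yields $u_1\in V_f^+$ and $v_1\in V_f^-$, both in $L_{\mathcal{B}\otimes\mathcal{R}_{m,n}}$, with $u_1\ne v_1$. Taking neighbours along their maximal edge intervals gives $u_2,u_3,v_2,v_3$ as before.

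We then run the case analysis:
\begin{enumerate}
\item both points are on the horizontal interval through $a_1,\dots,b$;
\item both points are in $V(A\otimes\mathcal{R}_{m,n})$ (this repeats case (3) of Theorem \ref{Thm: 3.2.1} verbatim);
\item one point is on the bottom interval of $\mathcal{B}_{s-1}\otimes\mathcal{R}_{m,n}$ and the other is in $A\otimes\mathcal{R}_{m,n}$.
\end{enumerate}
In each case we locate three vertices forming a diagonal/anti-diagonal triple of an inner interval, possibly after first swapping a degree-two inner minor into $f^\pm$ as in subcases (ii)--(iv). Lemma \ref{Lemma: 2.2} then applies.

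The main obstacle is the mixed case 3, together with case 1 when $u_1$ and $v_1$ lie far apart on the long bottom interval. In these cases the required inner intervals must be found inside the dilated blocks $\mathcal{B}_{s-1}\otimes\mathcal{R}_{m,n}$ and $\mathcal{B}_{s}\otimes\mathcal{R}_{m,n}$, using the absence of a dilated Y-hexomino. My first attempt would be to show that $u_2$ or $v_2$ (the vertical neighbours) must lie in the rectangle $\mathcal{B}_{s-1}\otimes\mathcal{R}_{m,n}$ or $A\otimes\mathcal{R}_{m,n}$, which yields the needed inner interval directly.
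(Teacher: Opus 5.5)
Your architecture matches the paper's. You use the same $w$-weighted toric ideal on $L_{\mathcal{B}\otimes\mathcal{R}_{m,n}}$, the same reduction to (1) and (2), the same $w$-degree argument for (1), the same simplicity argument for $\mathcal{Q}'$, and the same auxiliary vertices $u_1,v_1,u_2,u_3,v_2,v_3$. The gap is that you stop exactly where the content of (2) begins. The case analysis is not carried out, and the tactic you propose for the mixed case does not close it.

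Take $u_1\in\{a_1,\dots,a_{(k-1)m}\}$ and $v_1\in V(A\otimes\mathcal{R}_{m,n})$ strictly below the edge interval shared with $A_k\otimes\mathcal{R}_{m,n}$. Knowing that $u_2\in V_{v_1}$ lies in $A\otimes\mathcal{R}_{m,n}$ or in $\mathcal{B}_{s-1}\otimes\mathcal{R}_{m,n}$ does not by itself give a triple for Lemma~\ref{Lemma: 2.2}. The paper splits on whether $u_2\in V(A\otimes\mathcal{R}_{m,n})$.
\begin{itemize}
\item If so, $u_2,u_3\in V_f^+$ are opposite corners of an inner interval with $v_1\in V_f^-$ as a corner of the other type, and the lemma applies at once.
\item If not, $u_2$ sits in $A_k\otimes\mathcal{R}_{m,n}$ above the common edge interval, and $[u_1,u_2]$ is inner. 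But both of its diagonal corners are in $V_f^+$ and nothing is known about its anti-diagonal corners $g,h$, so the lemma cannot be applied to $f$.
\end{itemize}
The missing idea is to pass to $\tilde f=\frac{f^+}{x_{u_1}x_{u_2}}x_gx_h-f^-\in J_{\mathcal{Q}}$. The new vertex $h$ lies on the common edge interval directly above $v_1$. So $h,u_3\in V_{\tilde f}^+$ and $v_1\in V_{\tilde f}^-$ are corners of an inner interval of the required type, and $\tilde f$ is redundant. Hence so is $f=\tilde f+\frac{f^+}{x_{u_1}x_{u_2}}(x_{u_1}x_{u_2}-x_gx_h)$. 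The case with the roles of $u_1$ and $v_1$ exchanged is symmetric.

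For two vertices on the long bottom interval, your vertical-neighbour idea is essentially the paper's argument. Together with $u_1$ and $v_1$, one of $u_2\in V_{v_1}$ or $v_2\in V_{u_1}$ gives three corners of an inner interval, and Lemma~\ref{Lemma: 2.2} applies directly.

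Watch one configuration when you write this out: $u_1$ among the $a_i$ under $A_1\otimes\mathcal{R}_{m,n}$, and $v_1$ on the top edge of $A\otimes\mathcal{R}_{m,n}$. These share a horizontal edge interval, yet neither vertical neighbour need work:
\begin{itemize}
\item $u_2$ may lie below $v_1$ inside $A\otimes\mathcal{R}_{m,n}$, and then the interval would contain exterior cells under $\mathcal{B}_{s-1}\otimes\mathcal{R}_{m,n}$;
\item $v_2$ may lie above $\mathcal{B}_{s-1}\otimes\mathcal{R}_{m,n}$ in the dilated last cell of $\mathcal{B}_{s-2}$, and then the interval would contain hole cells.
\end{itemize}
There you should replace $(u_1,v_1)$ by $(u_2,v_1)$, which are two $w$-vertices of $V(A\otimes\mathcal{R}_{m,n})$ on a common vertical edge interval, and argue as in that case. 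The paper's dichotomy ``both among the $a_i$ or both in $V(A\otimes\mathcal{R}_{m,n})$'' does not cover this configuration explicitly either.
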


\begin{proof}
    By the previous proposition, we have that $I_\mathcal{Q} \subseteq J_\mathcal{Q}$. We now prove that $J_\mathcal{Q} \subseteq I_\mathcal{Q}$. As in the proof of Theorem \ref{Thm: 3.2.1}, we proceed by equivalently proving the following two statements:
    \begin{itemize}
        \item[(1)] every binomial of degree two in $J_\mathcal{Q}$ is in $I_\mathcal{Q}$;
        \item[(2)] every irredundant binomial in $J_\mathcal{Q}$ is of degree 2.
    \end{itemize}

    We let $\mathcal{B} \otimes \mathcal{R}_{m,n} = \{ \mathcal{B}_i \otimes \mathcal{R}_{m,n}\}_{i = 1,...,s}$ with $s \geq 3$ be an $(m,n)$-dilated ladder configuration contained in $\mathcal{Q}$. Without loss of generality we assume that each $\mathcal{B}_i \otimes \mathcal{R}_{m,n}$ is a horizontal dilated block and that the dilated ladder is going down. Again, we use $A_1 \otimes \mathcal{R}_{m,n},...,A_k \otimes \mathcal{R}_{m,n}$ to denote the cell intervals, from left to right, of the dilated block $\mathcal{B}_{s-1} \otimes \mathcal{R}_{m,n}$, and $A \otimes \mathcal{R}_{m,n}$ to denote the cell interval of $\mathcal{B}_{s} \otimes \mathcal{R}_{m,n}$ that shares a common horizontal edge interval with $A_k \otimes \mathcal{R}_{m,n}$. As well, we set $L_{\mathcal{B} \otimes \mathcal{R}_{m,n}} = \{a_1,...,a_{(k-1)m}\}\cup V(A \otimes \mathcal{R}_{m,n})$ where $a_{\ell m +1},...,a_{(\ell+1) m}$ are the vertices of $A_{\ell +1} \otimes \mathcal{R}_{m,n}$, for $0 \leq \ell \leq k-2$, that are on the bottom maximal horizontal edge interval of $\mathcal{B}_{s-1}$.

    We first prove (1). We can use a similar argument here as was used to prove (1) for Theorem \ref{Thm: 3.2.1}. Let $f = x_q x_r-x_s x_t$ be an arbitrary nonzero binomial in $J_Q$. Let $h_q,v_q$ and let $h_r,v_r$ be the variables associated to the maximal horizontal and vertical edge intervals which contain $q$ and $r$, respectively. Since $f \in J_\mathcal{Q}$, then $\varphi(x_q x_r) = w^k h_q v_q h_r v_r = \varphi(x_s x_t)$ with $k \in \{0,1,2\}$. Suppose $h_q = h_r$, then $q$ and $r$ lie on the same maximal horizontal edge interval, say $H_q$. So then $h_q^2 \mid \varphi(x_q x_r) = \varphi(x_s x_t)$, and so it must be that $s$ and $t$ also lie on $H_q$. Since we must have that $v_q \mid \varphi(x_s x_t)$ and $v_r \mid \varphi(x_s x_t)$, then one of $s$ or $t$ lies on $V_q$ while the other lies on $V_r$. So either we have $q = s$ and $r = t$ or $q = t$ and $r = s$, in either case $f = x_q x_r-x_s x_t = 0$, but this contradicts that $f$ is a nonzero binomial, so therefore $h_q \neq h_r$. A symmetric argument gives $v_q \neq v_r$. So, without loss of generality, we have that $[q,r]$ is a proper interval of $\mathbb{N}^2$, for if it is not, we can rotate or reflect $\mathcal{Q}$ such that it is. Then $s$ and $t$ must be the anti-diagonal corners of $[q,r]$. Without loss of generality, assume $s$ is the upper left corner of and $t$ is the lower right corner of $[q,r]$, then we have that $[q,s] \subseteq V_q$, $[s,r] \subseteq H_r$, $[t,r] \subseteq V_r$, and $[q,t] \subseteq H_q$. Suppose $[q,r]$ is not an inner interval of $\mathcal{Q}$, then there exists some cell $C$ contained in $[q,r]$ such that $C \notin \mathcal{Q}$. As $[q,s]$, $[s,r]$, $[t,r]$, and $[q,t]$ are edge intervals in $\mathcal{Q}$, then it must be that $C \in \mathcal{H}$, where $\mathcal{H}$ is the unique hole of $\mathcal{Q}$. Therefore, $[q,r]$ contains the hole $\mathcal{H}$. As $[q,r]$ contains $\mathcal{H}$, then at least one of the corners $q$, $r$, $s$, or $t$ is in $L_{\mathcal{B} \otimes \mathcal{R}_{m,n}}$. Without loss of generality, assume $q \in L_{\mathcal{B} \otimes \mathcal{R}_{m,n}}$. Note that $s,t \notin L_{\mathcal{B} \otimes \mathcal{R}_{m,n}} $, because if so, $[q,r]$ could not contain $\mathcal{H}$. However, if $q \in  L_{\mathcal{B} \otimes \mathcal{R}_{m,n}}$, then $w \mid \varphi(x_q x_r) = \varphi(x_s x_t) = \varphi(x_s) \varphi (x_t)$, so either $w \mid \varphi(x_s)$ or $w \mid \varphi(x_t)$. If $w \mid \varphi(x_s)$, then $s \in  L_{\mathcal{B} \otimes \mathcal{R}_{m,n}}$, which is a contradiction. If rather $w \mid \varphi(x_t)$, then $t \in  L_{\mathcal{B} \otimes \mathcal{R}_{m,n}}$, which is a contradiction. Thus, it must be that $[q,r]$ is an inner interval of $\mathcal{Q}$, and so therefore $f \in I_\mathcal{Q}$. As $f$ was arbitrary, then we have that every binomial of degree two in $J_\mathcal{Q}$ is in $I_\mathcal{Q}$.

    We now prove (2). Towards a contradiction, suppose $f \in J_\mathcal{Q}$ such that $f$ is irredundant and deg $f \geq 3$. Let $f = f^+ - f^-$. Suppose that 
    \[
    V_f^+ \cap L_{\mathcal{B} \otimes \mathcal{R}_{m,n}} = V_f^- \cap L_{\mathcal{B} \otimes \mathcal{R}_{m,n}} = \emptyset.
    \]
    We let $\mathcal{Q}' \subset \mathcal{Q}$ be the subpolyomino of $\mathcal{Q}$ defined by 
    \[
    \mathcal{Q}' = \{C \in \mathcal{Q} \text{ }\vert \text{ } V(C)\cap L_{\mathcal{B} \otimes \mathcal{R}_{m,n}} = \emptyset \}.
    \]
    Note that $V_f^+, V_f^- \subseteq V(\mathcal{Q}')$. We show that $\mathcal{Q}'$ is a simple polyomino. Let $b \in V(\mathcal{Q})$ be the upper right corner of $A \otimes \mathcal{R}_{m,n}$. By the structure of $\mathcal{Q}$, we have that there exists some cell $E \in \mathcal{H}$ such that $b$ is its lower left corner. As well, there exists some cell $F \in A_k \otimes \mathcal{R}_{m,n}$ such that $b$ is its lower right corner and $E \cap F = [b, b+(0,1)]$. Since $b \in V(F)$, then $F\notin \mathcal{Q}'$. Let $C,D \notin \mathcal{Q}'$ be arbitrary. If both $C,D \in (\ext \mathcal{Q}) \cup (\mathcal{Q} \setminus \mathcal{Q'})$ or both $C,D \in \mathcal{H}$ then there is an obvious walk of cells not in $\mathcal{Q}'$ that connects them. If rather, one of $C$ or $D$ is in $\mathcal{H}$ while the other is in $(\ext \mathcal{Q}) \cup (\mathcal{Q} \setminus \mathcal{Q'})$, then there exists a walk of cells not in $\mathcal{Q}'$ given by $\mathcal{W}: C,...,E,F,...,D$ that connects them. Therefore, any two cells not in $\mathcal{Q}'$ are connected and so $\mathcal{Q}'$ is a simple polyomino. Let $\varphi'$ be the restriction of the map $\varphi$ onto $K[v_a \mid a\in V(\mathcal{Q}) \setminus L_{\mathcal{B} \otimes \mathcal{R}_{m,n}}]$ and let $J_{\mathcal{Q}'} = \ker \varphi'$. As $\mathcal{Q}'$ is simple, then by Theorem 2.2 in \textbf{\cite{Qureshi_Shibuta_Shikama:2017}}, we have that $I_{\mathcal{Q}'}$, the polyomino ideal attached to $\mathcal{Q}'$, is prime and therefore $I_{\mathcal{Q}'} = J_{\mathcal{Q}'}$. So then $f \in I_{\mathcal{Q}'}$, but then we can write $f$ as a linear combination of the binomial generators of $I_{\mathcal{Q}'}$, which are all of degree 2 by definition. So $f$ is redundant, but this is a contradiction. Therefore it must be that either $V_f^+ \cap L_{\mathcal{B} \otimes \mathcal{R}_{m,n}} \neq \emptyset$ or $V_f^- \cap L_{\mathcal{B} \otimes \mathcal{R}_{m,n}} \neq \emptyset$. 
    
    Suppose $V_f^+ \cap L_{\mathcal{B} \otimes \mathcal{R}_{m,n}} \neq \emptyset$. So there exists $u_1 \in L_{\mathcal{B} \otimes \mathcal{R}_{m,n}}$ such that $x_{u_1} \mid f^+$. Since $u_1 \in L_{\mathcal{B} \otimes \mathcal{R}_{m,n}}$, then $w \mid \varphi(u_1)$, so then $w \mid \varphi(f^+) = \varphi(f^-)$. Therefore, there exists some $v_1 \in  L_{\mathcal{B} \otimes \mathcal{R}_{m,n}}$ such that $x_{v_1} \mid f^-$. If we were rather to suppose that $V_f^- \cap V(A \otimes \mathcal{R}_{m,n}) \neq \emptyset$, we would symmetrically obtain that there must exist some $u_1, v_1 \in L_{\mathcal{B} \otimes \mathcal{R}_{m,n}}$ such that $x_{u_1} \mid f^+$ and $x_{v_1} \mid f^-$. If $u_1 = v_1$, then $f = x_{u_1}(\tilde f^+ - \tilde f^-)$, with $\tilde f^+ = \frac {f^+}{x_{u_1}}$ and $\tilde f^- = \frac {f^-}{x_{u_1}}$. Note that $\tilde f^+ - \tilde f^- \in J_\mathcal{Q}$ and deg$(\tilde f^+ - \tilde f^-) <$ deg $f$. So $f$ is redundant, but this is a contradiction. Therefore, $u_1 \neq v_1$. Let $V_{u_1}$ and $H_{u_1}$ be the maximal vertical and horizontal edge intervals which contain $u_1$. Then $v_{u_1} \mid \varphi(f^+) = \varphi(f^-)$, so there exists some vertex $v_2$ that lies on $V_{u_1}$ such that $x_{v_2} \mid f^-$. We also have that $h_{u_1} \mid \varphi(f^+) = \varphi(f^-)$, so there exists some vertex $v_3$ that lies on $H_{u_1}$ such that $x_{v_3} \mid f^-$. Now, let $V_{v_1}$ and $H_{v_1}$ be the maximal vertical and horizontal edge intervals which contain $v_1$. Then $v_{v_1} \mid \varphi(f^-) = \varphi(f^+)$, so there exists some vertex $u_2$ that lies on $V_{v_1}$ such that $x_{u_2} \mid f^+$. We also have that $h_{v_1} \mid \varphi(f^-) = \varphi(f^+)$, so there exists some vertex $u_3$ that lies on $H_{v_1}$ such that $x_{u_3} \mid f^+$. Note that by the same argument as above, if $f$ is irredundant, then we must have that $u_2 \neq v_1$, $u_3 \neq v_1$, $v_2 \neq u_1$, and $v_3 \neq u_1$. The following cases could occur:

    \begin{description}
        \item[(1)] $u_1$ and $v_1$ lie on the same vertical edge interval. Then it must be the case that $u_1,v_1 \in V(A \otimes \mathcal{R}_{m,n})$. For the structure of $\mathcal{Q}$, either $u_1$ and $u_3$ define an inner interval or $v_1$ and $v_3$ do. If $u_1$ and $u_3$ define an inner interval, say $\mathcal{I}$, then applying Lemma \ref{Lemma: 2.2} to $u_1$, $u_3$, and $v_1$ gives that $f$ is redundant, which is a contradiction (see Figure \ref{Fig:Thm3.2.17_Case1}). If, rather, $v_1$ and $v_3$ define an inner interval, say $\mathcal{J}$, then applying Lemma \ref{Lemma: 2.2} to $v_1$, $v_3$, and $u_1$ gives that $f$ is redundant, which is a contradiction.

    \begin{figure} [H] 
    \centering
    \begin{tikzpicture} [scale = 0.8]
        \draw[step=1cm,white,very thin] (0,0) grid (15,12);

        \draw [white, thin, fill = lightgray] (0,8) rectangle (1,12);
        \draw [white, thin, fill = lightgray] (6,4) rectangle (6.5,8);
        \draw [white, thin, fill = lightgray] (7.5,4) rectangle (8,8);
        \draw [white, thin, fill = lightgray] (13,0) rectangle (15,4);
        \draw [black, thin, fill = lightgray] (1,8) rectangle (6,12);
        \draw [black, thin, fill = lightgray] (1,4) rectangle (6,8);
        \draw [black, thin, fill = lightgray] (8,4) rectangle (13,8);
        \draw [black, thin, fill = lightgray] (8,0) rectangle (13,4);
        \polyomino[
        empty cell =x,
        grid,
        p={a}{style={lightgray,draw=black}},
        p={b}{style={gray,draw=black}},
        row sep =;
        ]{
        x a x x x a x x x x x x x x x;
        x x x x x x x x x x x x x x x;
        x x x x x x x x x x x x x x x;
        x a x x x a x x x x x x x x x;
        x a x x x a x x a x x x a x x;
        x x x x x x x x x x x x x x x;
        x x x x x x x x x x x x x x x;
        x a x x x a x x a x x x a x x;
        x x x x x x x x a x x x a a x;
        x x x x x x x x x x x x x x x;
        x x x x x x x x x x x x x x x;
        x x x x x x x x a x x x a a x
        }

        \draw [black, thin] (0,12)--(1,12);
        \draw [black, thin] (0,8)--(1,8);
        \draw [black, thin] (6,8)--(6.5,8);
        \draw [black, thin] (7.5,8)--(8,8);
        \draw [black, thin] (6,4)--(6.5,4);
        \draw [black, thin] (7.5,4)--(8,4);
        \draw [black, thin] (14,4)--(15,4);
        \draw [black, thin] (14,0)--(15,0);

        \draw[dots] (2.1, 11.5) -- (4.9, 11.5);
        \draw[dots] (2.1, 8.5) -- (4.9, 8.5);
        \draw[dots] (2.1,7.5) -- (4.9, 7.5);
        \draw[dots] (2.1, 4.5) -- (4.9, 4.5);
        \draw[dots] (9.1, 7.5) -- (11.9, 7.5);
        \draw[dots] (9.1,4.5) -- (11.9, 4.5);
        \draw[dots] (9.1,3.5) -- (11.9, 3.5);
        \draw[dots] (9.1,0.5) -- (11.9, 0.5);
        \draw[dots] (14.1,3.5) -- (14.9, 3.5);
        \draw[dots] (14.1,0.5) -- (14.9, 0.5);

        \draw[dots] (1.5,9.1) -- (1.5, 10.9);
        \draw[dots] (5.5,9.1) -- (5.5, 10.9);
        \draw[dots] (1.5,5.1) -- (1.5, 6.9);
        \draw[dots] (5.5,5.1) -- (5.5, 6.9);
        \draw[dots] (8.5,5.1) -- (8.5, 6.9);
        \draw[dots] (12.5,5.1) -- (12.5, 6.9);
        \draw[dots] (8.5,1.1) -- (8.5, 2.9);
        \draw[dots] (12.5,1.1) -- (12.5, 2.9);
        \draw[dots] (13.5,1.1) -- (13.5, 2.9);

        \draw [black, thin, fill = gray] (9.5,1.5) rectangle (14.25,4);

          \begin{scope}[color=black]
            \node[anchor=center] () at (0.2,10) {\huge $\cdot \cdot \cdot$};
            \node[anchor=center] () at (7,6) {\huge $\cdot \cdot \cdot$};
            
            \node[anchor=center] () at (10,10) {\Large $\mathcal{H}$};
            \node[above right] () at (6,12) {\Large $\mathcal{B}_{s-2}$};
            \node[above right] () at (13,8) {\Large $\mathcal{B}_{s-1}$};
            \node[above] () at (14,4) {\Large $\mathcal{B}_s$};

            \node[anchor=center] () at (11.875,2.75) { $\mathcal{I}$};
        \end{scope}

        \fill(9.5,4) circle[radius=2pt] node[above]{$u_1$};
        \fill(9.5,1.5) circle[radius=2pt] node[below left]{$v_1$};
        \fill(14.25,1.5) circle[radius=2pt] node[below right]{$u_3$};

        \fill(8,0) circle[radius=0pt] node[below]{$A \otimes \mathcal{R}_{m,n}$};
        \fill(8,8) circle[radius=0pt] node[above]{$A_k \otimes \mathcal{R}_{m,n}$};
        \fill(1,4) circle[radius=0pt] node[below left]{$A_1 \otimes \mathcal{R}_{m,n}$};
        
    \end{tikzpicture}
    \caption{One possible configuration of an inner interval on the last three steps of the dilated ladder $\mathcal{B} \otimes \mathcal{R}_{m,n}$ that gives a contradiction for this case}
    \label{Fig:Thm3.2.17_Case1}
    \end{figure}

        \item[(2)] $u_1$ and $v_1$ lie on the same horizontal edge interval. In this case, it must be that either $u_1, v_1 \in V(A \otimes \mathcal{R}_{m,n})$ or $u_1,v_1 \in \{a_1,...,a_{(k-1)m}\}$. If $u_1, v_1 \in V(A \otimes \mathcal{R}_{m,n})$, then for the structure of $\mathcal{Q}$, either $u_1$ and $u_2$ define an inner interval or $v_1$ and $v_2$ do. If $u_1$ and $u_2$ define an inner interval, then applying Lemma \ref{Lemma: 2.2} to $u_1$, $u_2$, and $v_1$ gives that $f$ is redundant, which is a contradiction. If, rather, $v_1$ and $v_2$ define an inner interval, then applying Lemma \ref{Lemma: 2.2} to $v_1$, $v_2$, and $u_1$ gives that $f$ is redundant, which is a contradiction. Suppose now that both $u_1,v_1 \in \{a_1,...,a_{(k-1)m}\}$, then for the structure of $\mathcal{Q}$, either $u_1$ and $u_2$ define an inner interval or $v_1$ and $v_2$ do. If $u_1$ and $u_2$ define an inner interval, say $\mathcal{I}$, then applying Lemma \ref{Lemma: 2.2} to $u_1$, $u_2$, and $v_1$ gives that $f$ is redundant, which is a contradiction. If, rather, $v_1$ and $v_2$ define an inner interval, say $\mathcal{J}$, then applying Lemma \ref{Lemma: 2.2} to $v_1$, $v_2$, and $u_1$ gives that $f$ is redundant, which is a contradiction (see Figure \ref{Fig:Thm3.2.17_Case2}).
        
    \begin{figure} [H]
    \centering
    \begin{tikzpicture} [scale = 0.8]
        \draw[step=1cm,white,very thin] (0,0) grid (15,12);

        \draw [white, thin, fill = lightgray] (0,8) rectangle (1,12);
        \draw [white, thin, fill = lightgray] (6,4) rectangle (6.5,8);
        \draw [white, thin, fill = lightgray] (7.5,4) rectangle (8,8);
        \draw [white, thin, fill = lightgray] (13,0) rectangle (15,4);
        \draw [black, thin, fill = lightgray] (1,8) rectangle (6,12);
        \draw [black, thin, fill = lightgray] (1,4) rectangle (6,8);
        \draw [black, thin, fill = lightgray] (8,4) rectangle (13,8);
        \draw [black, thin, fill = lightgray] (8,0) rectangle (13,4);
        \polyomino[
        empty cell =x,
        grid,
        p={a}{style={lightgray,draw=black}},
        p={b}{style={gray,draw=black}},
        row sep =;
        ]{
        x a x x x a x x x x x x x x x;
        x x x x x x x x x x x x x x x;
        x x x x x x x x x x x x x x x;
        x a x x x a x x x x x x x x x;
        x a x x x a x x a x x x a x x;
        x x x x x x x x x x x x x x x;
        x x x x x x x x x x x x x x x;
        x a x x x a x x a x x x a x x;
        x x x x x x x x a x x x a a x;
        x x x x x x x x x x x x x x x;
        x x x x x x x x x x x x x x x;
        x x x x x x x x a x x x a a x
        }

        \draw [black, thin] (0,12)--(1,12);
        \draw [black, thin] (0,8)--(1,8);
        \draw [black, thin] (6,8)--(6.5,8);
        \draw [black, thin] (7.5,8)--(8,8);
        \draw [black, thin] (6,4)--(6.5,4);
        \draw [black, thin] (7.5,4)--(8,4);
        \draw [black, thin] (14,4)--(15,4);
        \draw [black, thin] (14,0)--(15,0);

        \draw[dots] (2.1, 11.5) -- (4.9, 11.5);
        \draw[dots] (2.1, 8.5) -- (4.9, 8.5);
        \draw[dots] (2.1,7.5) -- (4.9, 7.5);
        \draw[dots] (2.1, 4.5) -- (4.9, 4.5);
        \draw[dots] (9.1, 7.5) -- (11.9, 7.5);
        \draw[dots] (9.1,4.5) -- (11.9, 4.5);
        \draw[dots] (9.1,3.5) -- (11.9, 3.5);
        \draw[dots] (9.1,0.5) -- (11.9, 0.5);
        \draw[dots] (14.1,3.5) -- (14.9, 3.5);
        \draw[dots] (14.1,0.5) -- (14.9, 0.5);

        \draw[dots] (1.5,9.1) -- (1.5, 10.9);
        \draw[dots] (5.5,9.1) -- (5.5, 10.9);
        \draw[dots] (1.5,5.1) -- (1.5, 6.9);
        \draw[dots] (5.5,5.1) -- (5.5, 6.9);
        \draw[dots] (8.5,5.1) -- (8.5, 6.9);
        \draw[dots] (12.5,5.1) -- (12.5, 6.9);
        \draw[dots] (8.5,1.1) -- (8.5, 2.9);
        \draw[dots] (12.5,1.1) -- (12.5, 2.9);
        \draw[dots] (13.5,1.1) -- (13.5, 2.9);

          \begin{scope}[color=black]
            \node[anchor=center] () at (0.2,10) {\huge $\cdot \cdot \cdot$};
            \node[anchor=center] () at (7,7) {\huge $\cdot \cdot \cdot$};
        \end{scope}

        \draw [black, thin, fill = gray] (3,4) rectangle (10,6.5);

        \begin{scope}[color=black]
            \node[anchor=center] () at (10,10) {\Large $\mathcal{H}$};
            \node[above right] () at (6,12) {\Large $\mathcal{B}_{s-2}$};
            \node[above right] () at (13,8) {\Large $\mathcal{B}_{s-1}$};
            \node[above] () at (14,4) {\Large $\mathcal{B}_s$};

            \node[anchor=center] () at (6.5,5.25) { $\mathcal{J}$};
        \end{scope}

        \fill(3,4) circle[radius=2pt] node[below left]{$u_1$};
        \fill(10,4) circle[radius=2pt] node[below right]{$v_1$};
        \fill(3,6.5) circle[radius=2pt] node[above left]{$v_2$};

        \fill(8,0) circle[radius=0pt] node[below]{$A \otimes \mathcal{R}_{m,n}$};
        \fill(8,8) circle[radius=0pt] node[above]{$A_k \otimes \mathcal{R}_{m,n}$};
        \fill(1,4) circle[radius=0pt] node[below left]{$A_1 \otimes \mathcal{R}_{m,n}$};

        \end{tikzpicture}
    \caption{One possible configuration of an inner interval on the last three steps of the dilated ladder $\mathcal{B} \otimes \mathcal{R}_{m,n}$ that gives a contradiction for this case}
    \label{Fig:Thm3.2.17_Case2}
    \end{figure}

        \item[(3)] $u_1$ and $v_1$ lie on distinct vertical and horizontal edge intervals. This case breaks into the following three subcases:

        \begin{description}
            \item[(I)] $u_1, v_1 \in V(A \otimes \mathcal{R}_{m,n})$. We can use similar arguments in case (3) of the proof of Theorem \ref{Thm: 3.2.1} to show that $f$ must be redundant, which is a contradiction.

            \item[(II)] $u_1 \in \{a_1,...,a_{(k-1)m}\}$ and $v_1 \in V(A\otimes \mathcal{R}_{m,n}) \setminus V(A_k \otimes \mathcal{R}_{m,n})$. Investigating the possible positions of $u_2$ gives the two following subcases:

            \begin{description}
                \item[(i)] $u_2 \in V(A \otimes \mathcal{R}_{m,n})$. Then for the structure of $\mathcal{Q}$, $u_2$ and $u_3$ define an inner interval, say $\mathcal{I}$ (see Figure \ref{Fig:Thm3.2.17_Case3IIi}). Applying Lemma \ref{Lemma: 2.2} to $u_2$,$u_3$, and $v_1$ gives that $f$ is redundant which is a contradiction.

    \begin{figure} [H]
    \centering
    \begin{tikzpicture} [scale = 0.8]
        \draw[step=1cm,white,very thin] (0,0) grid (15,12);

        \draw [white, thin, fill = lightgray] (0,8) rectangle (1,12);
        \draw [white, thin, fill = lightgray] (6,4) rectangle (6.5,8);
        \draw [white, thin, fill = lightgray] (7.5,4) rectangle (8,8);
        \draw [white, thin, fill = lightgray] (13,0) rectangle (15,4);
        \draw [black, thin, fill = lightgray] (1,8) rectangle (6,12);
        \draw [black, thin, fill = lightgray] (1,4) rectangle (6,8);
        \draw [black, thin, fill = lightgray] (8,4) rectangle (13,8);
        \draw [black, thin, fill = lightgray] (8,0) rectangle (13,4);
        \polyomino[
        empty cell =x,
        grid,
        p={a}{style={lightgray,draw=black}},
        p={b}{style={gray,draw=black}},
        row sep =;
        ]{
        x a x x x a x x x x x x x x x;
        x x x x x x x x x x x x x x x;
        x x x x x x x x x x x x x x x;
        x a x x x a x x x x x x x x x;
        x a x x x a x x a x x x a x x;
        x x x x x x x x x x x x x x x;
        x x x x x x x x x x x x x x x;
        x a x x x a x x a x x x a x x;
        x x x x x x x x a x x x a a x;
        x x x x x x x x x x x x x x x;
        x x x x x x x x x x x x x x x;
        x x x x x x x x a x x x a a x
        }

        \draw [black, thin] (0,12)--(1,12);
        \draw [black, thin] (0,8)--(1,8);
        \draw [black, thin] (6,8)--(6.5,8);
        \draw [black, thin] (7.5,8)--(8,8);
        \draw [black, thin] (6,4)--(6.5,4);
        \draw [black, thin] (7.5,4)--(8,4);
        \draw [black, thin] (14,4)--(15,4);
        \draw [black, thin] (14,0)--(15,0);

        \draw[dots] (2.1, 11.5) -- (4.9, 11.5);
        \draw[dots] (2.1, 8.5) -- (4.9, 8.5);
        \draw[dots] (2.1,7.5) -- (4.9, 7.5);
        \draw[dots] (2.1, 4.5) -- (4.9, 4.5);
        \draw[dots] (9.1, 7.5) -- (11.9, 7.5);
        \draw[dots] (9.1,4.5) -- (11.9, 4.5);
        \draw[dots] (9.1,3.5) -- (11.9, 3.5);
        \draw[dots] (9.1,0.5) -- (11.9, 0.5);
        \draw[dots] (14.1,3.5) -- (14.9, 3.5);
        \draw[dots] (14.1,0.5) -- (14.9, 0.5);

        \draw[dots] (1.5,9.1) -- (1.5, 10.9);
        \draw[dots] (5.5,9.1) -- (5.5, 10.9);
        \draw[dots] (1.5,5.1) -- (1.5, 6.9);
        \draw[dots] (5.5,5.1) -- (5.5, 6.9);
        \draw[dots] (8.5,5.1) -- (8.5, 6.9);
        \draw[dots] (12.5,5.1) -- (12.5, 6.9);
        \draw[dots] (8.5,1.1) -- (8.5, 2.9);
        \draw[dots] (12.5,1.1) -- (12.5, 2.9);
        \draw[dots] (13.5,1.1) -- (13.5, 2.9);

        \draw [black, thin, fill = gray] (9.5,1.5) rectangle (14.25,2.75);

          \begin{scope}[color=black]
            \node[anchor=center] () at (0.2,10) {\huge $\cdot \cdot \cdot$};
            \node[anchor=center] () at (7,6) {\huge $\cdot \cdot \cdot$};
            
            \node[anchor=center] () at (10,10) {\Large $\mathcal{H}$};
            \node[above right] () at (6,12) {\Large $\mathcal{B}_{s-2}$};
            \node[above right] () at (13,8) {\Large $\mathcal{B}_{s-1}$};
            \node[above] () at (14,4) {\Large $\mathcal{B}_s$};

            \node[anchor=center] () at (11.875,2.125) { $\mathcal{I}$};
        \end{scope}

        \fill(9.5,2.75) circle[radius=2pt] node[above]{$u_2$};
        \fill(9.5,1.5) circle[radius=2pt] node[below left]{$v_1$};
        \fill(14.25,1.5) circle[radius=2pt] node[below right]{$u_3$};

        \fill(8,0) circle[radius=0pt] node[below]{$A \otimes \mathcal{R}_{m,n}$};
        \fill(8,8) circle[radius=0pt] node[above]{$A_k \otimes \mathcal{R}_{m,n}$};
        \fill(1,4) circle[radius=0pt] node[below left]{$A_1 \otimes \mathcal{R}_{m,n}$};
        
    \end{tikzpicture}
    \caption{One possible configuration of an inner interval on the last three steps of the dilated ladder $\mathcal{B} \otimes \mathcal{R}_{m,n}$ that gives a contradiction for this case}
    \label{Fig:Thm3.2.17_Case3IIi}
    \end{figure}

                \item[(ii)] $u_2 \notin V(A \otimes \mathcal{R}_{m,n})$. Then for the structure of $\mathcal{Q}$, $u_1$ and $u_2$ must define an inner interval, say $\mathcal{I}$ where $u_1$ and $u_2$ are its diagonal corners, so $\mathcal{I} = [u_1, u_2]$. Let $g,h \in V(\mathcal{Q})$ be the anti-diagonal corners of $\mathcal{I}$. As $\mathcal{I}$ is an inner interval of $\mathcal{Q}$, then we have that $x_{u_1}x_{u_2} - x_{g} x_{h} \in I_\mathcal{Q} \subseteq J_\mathcal{Q}$. Note that we can write $f$ in the following way:
                \[ \quad \quad \quad \quad \quad \quad \quad
                f = f^+ - f^- = \frac{f^+}{x_{u_1} x_{u_2}}(x_{u_1} x_{u_2} - x_g x_h) + \frac{f^+}{x_{u_1} x_{u_2}}x_g x_h - f^-
                \]
    
                 Let $\tilde f = \frac{f^+}{x_{u_1} x_{u_2}}x_g x_h - f^-$. Since $x_{u_1}x_{u_2} - x_g x_h \in J_\mathcal{Q}$, then $\varphi(x_{u_1} x_{u_2}) = \varphi(x_g x_h)$, so therefore $\varphi(\frac{f^+}{x_{u_1}x_{u_2}}x_g x_h) = \varphi(f^+)$ and as such, we have that $\tilde f \in J_\mathcal{Q}$. Note that for the structure of $\mathcal{Q}$, we have that $h, u_3$, and $v_1$ will define an inner interval, say $\mathcal{J}$, where $h$ and $u_3$ are the anti-diagonal corners of $\mathcal{J}$ and $v_1$ is the lower left corner of $\mathcal{J}$ (see Figure \ref{Fig:Thm3.2.17_Case3IIii}). Applying Lemma \ref{Lemma: 2.2} to $h, u_3$, and $v_1$, gives that $\tilde f$ is redundant. As we have that $f = \frac{f^+}{x_{u_1} x_{u_2}}(x_{u_1} x_{u_2} - x_g x_h) + \tilde f$, then this gives that $f$ is redundant, which is a contradiction.

    \begin{figure} [H]
    \centering
    \begin{tikzpicture} [scale = 0.8]
        \draw[step=1cm,white,very thin] (0,0) grid (15,12);

        \draw [white, thin, fill = lightgray] (0,8) rectangle (1,12);
        \draw [white, thin, fill = lightgray] (6,4) rectangle (6.5,8);
        \draw [white, thin, fill = lightgray] (7.5,4) rectangle (8,8);
        \draw [white, thin, fill = lightgray] (13,0) rectangle (15,4);
        \draw [black, thin, fill = lightgray] (1,8) rectangle (6,12);
        \draw [black, thin, fill = lightgray] (1,4) rectangle (6,8);
        \draw [black, thin, fill = lightgray] (8,4) rectangle (13,8);
        \draw [black, thin, fill = lightgray] (8,0) rectangle (13,4);
        \polyomino[
        empty cell =x,
        grid,
        p={a}{style={lightgray,draw=black}},
        p={b}{style={gray,draw=black}},
        row sep =;
        ]{
        x a x x x a x x x x x x x x x;
        x x x x x x x x x x x x x x x;
        x x x x x x x x x x x x x x x;
        x a x x x a x x x x x x x x x;
        x a x x x a x x a x x x a x x;
        x x x x x x x x x x x x x x x;
        x x x x x x x x x x x x x x x;
        x a x x x a x x a x x x a x x;
        x x x x x x x x a x x x a a x;
        x x x x x x x x x x x x x x x;
        x x x x x x x x x x x x x x x;
        x x x x x x x x a x x x a a x
        }

        \draw [black, thin] (0,12)--(1,12);
        \draw [black, thin] (0,8)--(1,8);
        \draw [black, thin] (6,8)--(6.5,8);
        \draw [black, thin] (7.5,8)--(8,8);
        \draw [black, thin] (6,4)--(6.5,4);
        \draw [black, thin] (7.5,4)--(8,4);
        \draw [black, thin] (14,4)--(15,4);
        \draw [black, thin] (14,0)--(15,0);

        \draw[dots] (2.1, 11.5) -- (4.9, 11.5);
        \draw[dots] (2.1, 8.5) -- (4.9, 8.5);
        \draw[dots] (2.1,7.5) -- (4.9, 7.5);
        \draw[dots] (2.1, 4.5) -- (4.9, 4.5);
        \draw[dots] (9.1, 7.5) -- (11.9, 7.5);
        \draw[dots] (9.1,4.5) -- (11.9, 4.5);
        \draw[dots] (9.1,3.5) -- (11.9, 3.5);
        \draw[dots] (9.1,0.5) -- (11.9, 0.5);
        \draw[dots] (14.1,3.5) -- (14.9, 3.5);
        \draw[dots] (14.1,0.5) -- (14.9, 0.5);

        \draw[dots] (1.5,9.1) -- (1.5, 10.9);
        \draw[dots] (5.5,9.1) -- (5.5, 10.9);
        \draw[dots] (1.5,5.1) -- (1.5, 6.9);
        \draw[dots] (5.5,5.1) -- (5.5, 6.9);
        \draw[dots] (8.5,5.1) -- (8.5, 6.9);
        \draw[dots] (12.5,5.1) -- (12.5, 6.9);
        \draw[dots] (8.5,1.1) -- (8.5, 2.9);
        \draw[dots] (12.5,1.1) -- (12.5, 2.9);
        \draw[dots] (13.5,1.1) -- (13.5, 2.9);

        \draw [black, thin, fill = gray] (3,4) rectangle (10,6.5);
        \draw [black, thin, fill = cyan] (10,1.5) rectangle (14.25,4);

          \begin{scope}[color=black]
            \node[anchor=center] () at (0.2,10) {\huge $\cdot \cdot \cdot$};
            \node[anchor=center] () at (7,7) {\huge $\cdot \cdot \cdot$};
            
            \node[anchor=center] () at (10,10) {\Large $\mathcal{H}$};
            \node[above right] () at (6,12) {\Large $\mathcal{B}_{s-2}$};
            \node[above right] () at (13,8) {\Large $\mathcal{B}_{s-1}$};
            \node[above] () at (14,4) {\Large $\mathcal{B}_s$};

            \node[anchor=center] () at (6.5,5.125) { $\mathcal{I}$};
            \node[anchor=center] () at (12.125,2.75) { $\mathcal{J}$};
        \end{scope}
        
        \fill(3,4) circle[radius=2pt] node[below left]{$u_1$};
        \fill(10,6.5) circle[radius=2pt] node[above right]{$u_2$};
        \fill(10,1.5) circle[radius=2pt] node[below left]{$v_1$};
        \fill(14.25,1.5) circle[radius=2pt] node[below right]{$u_3$};
        \fill(3,6.5) circle[radius=2pt] node[above left]{$g$};
        \fill(10,4) circle[radius=2pt] node[below right]{$h$};

        \fill(8,0) circle[radius=0pt] node[below]{$A \otimes \mathcal{R}_{m,n}$};
        \fill(8,8) circle[radius=0pt] node[above]{$A_k \otimes \mathcal{R}_{m,n}$};
        \fill(1,4) circle[radius=0pt] node[below left]{$A_1 \otimes \mathcal{R}_{m,n}$};
        
    \end{tikzpicture}
    \caption{One possible configuration of inner intervals on the last three steps of the dilated ladder $\mathcal{B} \otimes \mathcal{R}_{m,n}$ that gives a contradiction for this case}
    \label{Fig:Thm3.2.17_Case3IIii}
    \end{figure}

            \item[(III)] $u_1 \in V(A\otimes \mathcal{R}_{m,n}) \setminus V(A_k \otimes \mathcal{R}_{m,n})$ and $v_1 \in\{a_1,...,a_{(k-1)m}\}$. A symmetric argument to subcase (II) shows that $f$ is redundant, which is a contradiction.
        \end{description}
    \end{description}
    \end{description}

    As all cases lead to a contradiction, then it must be that any irredundant binomial in $J_\mathcal{Q}$ is of degree 2. Thus $J_\mathcal{Q} \subseteq I_\mathcal{Q}$, and so $I_\mathcal{Q} = J_\mathcal{Q}$.
\end{proof}

\begin{corollary}
    Let $\mathcal{Q} = \mathcal{P} \otimes \mathcal{R}_{m,n}$ be a dilated closed path with an $(m,n)$-dilated ladder of at least 3 steps for some $m,n\in \mathbb{Z}^+$. Then $I_{\mathcal{Q}}$ is prime.
\end{corollary}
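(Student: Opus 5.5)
This follows the proof of the corresponding corollary for dilated L-configurations. By Theorem \ref{Thm: 3.2.2}, applied to the dilated closed path $\mathcal{Q} = \mathcal{P} \otimes \mathcal{R}_{m,n}$ containing an $(m,n)$-dilated ladder of at least $3$ steps, we have $I_\mathcal{Q} = J_\mathcal{Q}$. Here $J_\mathcal{Q}$ is the ideal fixed just before that theorem, namely the kernel of the map $\varphi : S \to T_\mathcal{Q}$ sending $x_v \mapsto h_i v_j w^k$, with $k=1$ exactly when $v \in L_{\mathcal{B}\otimes\mathcal{R}_{m,n}}$.

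So it remains to check that this $J_\mathcal{Q}$ is prime. The ring $T_\mathcal{Q}$ is a subring of the polynomial ring $K[\{h_i,v_j,w\}]$, so it is an integral domain. Since $\varphi$ is surjective onto $T_\mathcal{Q}$, we get $S/J_\mathcal{Q} \cong T_\mathcal{Q}$. A quotient that is a domain means $J_\mathcal{Q}$ is prime; equivalently, $J_\mathcal{Q}$ is a toric ideal, and toric ideals are prime (Theorem 1.6 in \textbf{\cite{Bigatti_Robbiano:2001}}).

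Therefore $I_\mathcal{Q} = J_\mathcal{Q}$ is prime. The only thing to watch is that the modified toric map, with its single extra variable $w$ attached to $L_{\mathcal{B}\otimes\mathcal{R}_{m,n}}$, still lands in a polynomial ring. It does, so there is no real obstacle: all the substantive work is in Theorem \ref{Thm: 3.2.2}.
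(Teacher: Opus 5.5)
Your proof is correct and matches the paper's. Both invoke Theorem~\ref{Thm: 3.2.2} to get $I_\mathcal{Q} = J_\mathcal{Q}$ and then conclude because $J_\mathcal{Q}$ is a toric ideal, hence prime; your observation that $S/J_\mathcal{Q} \cong T_\mathcal{Q}$ is a subring of a polynomial ring, and therefore a domain, simply makes explicit what the paper calls ``prime by definition.''
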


\begin{proof}
    By Theorem \ref{Thm: 3.2.2}, we have that $I_\mathcal{Q} = J_\mathcal{Q}$. As $J_\mathcal{Q}$ is a toric ideal, then it is prime by definition. Thus $I_\mathcal{Q}$ is prime.
\end{proof}

The final piece of the puzzle we must place in order to prove our main result is of the form of a proposition that appears in \textbf{\cite{Cisto_Navarra:2023}}. For a proof of the following proposition, we refer the reader to Cisto and Navarra's proof of Proposition 6.1 in \textbf{\cite{Cisto_Navarra:2023}}.

\begin{proposition} (\textbf{\cite{Cisto_Navarra:2023}}, Proposition 6.1). \label{Proposition 6.1}
    Let $\mathcal{P}$ be a closed path and suppose that $\mathcal{P}$ has no zig-zag walks. Then $\mathcal{P}$ has an L-configuration or a ladder with at least 3 steps.
\end{proposition}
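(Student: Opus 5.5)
This is Proposition 6.1 of \textbf{\cite{Cisto_Navarra:2023}}, a statement purely about closed paths; I would reproduce its argument by contraposition. Assume the closed path $\mathcal{P}$ has neither an L-configuration nor a ladder of at least $3$ steps, and build a zig-zag walk explicitly.

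\emph{Local shape.} By Lemma \ref{Lemma:Closed path geometry}, every cell $A_i$ is either a ``turn'' cell (configuration (I)) or a ``straight'' cell (configuration (II)). Partition the cyclic sequence $A_1,\dots,A_n$ into its maximal blocks $\mathcal{B}_1,\dots,\mathcal{B}_t$, where consecutive blocks share a turn cell and alternate between horizontal and vertical. By Lemma \ref{Lemma:closed path has a 3 block}, some block has length at least $3$.

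\emph{Consequences of the two hypotheses.} No L-configuration means that whenever two consecutive maximal blocks meet at a turn cell, at least one of them has length exactly $2$, since otherwise the three cells on each side of the corner form an L-configuration. So every long block (length $\ge 3$) is flanked by blocks of length $2$. These short blocks, which are horizontal if the long block is vertical and vice versa, together with the next blocks, form staircase-like pieces. Equivalently, group consecutive parallel blocks joined by length-$2$ orthogonal connectors into ladders. Since no ladder has $3$ or more steps, every maximal ladder has exactly $2$ steps, i.e. every staircase stops after one step and the path must change direction.

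\emph{Construction.} For each ladder of $2$ steps $\mathcal{B}_i,\mathcal{B}_{i+1}$, take as inner interval $I$ the rectangle $\mathcal{B}_i\cup$(its shared part), or more precisely the maximal inner interval formed by the two overlapping cells where the blocks meet. Its corners lie on the edge intervals shared with the neighbouring ladders. Traversing the closed path, these intervals link corner to corner, with $v_{i+1}$ the common vertex on the shared edge interval, so conditions (i) and (ii) of Definition \ref{Defn:ZZW} hold, and the free corners $z_i$ point alternately into the hole and to the exterior.

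\emph{Main obstacle.} The hard step is condition (iii): two distinct $z_i,z_j$ never lie in a common inner interval. I would use the thinness of $\mathcal{P}$ (no $\mathcal{R}_{2,2}$) together with condition (iv) of Definition \ref{Defn: closed path}. Any inner interval is contained in a single block or in the union of two consecutive blocks through a turn cell. Each $z_i$ lies at the far end of its ladder's rectangle, so an interval containing two of them would need to span a full ladder step plus another. That forces either a $2\times2$ square or a third parallel step, i.e. a ladder of $3$ steps, contradicting the hypothesis. The bookkeeping over the configurations, including possible reflections and which corner is diagonal, is where I expect most of the case work to lie.
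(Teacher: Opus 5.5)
Your strategy (contraposition, then exhibit a zig-zag walk) is the natural one, and your first deduction is correct: with no L-configuration, every maximal block of length at least $3$ has blocks of length $2$ on both sides. The gap is the construction, which is the whole content of the proposition.

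The interval you actually pin down, ``the maximal inner interval formed by the two overlapping cells where the blocks meet'', is the length-$2$ connector block. These connectors violate condition (i) of Definition \ref{Defn:ZZW}:
\begin{itemize}
\item where two long blocks are joined through two consecutive short blocks $S,S'$, the intervals $S$ and $S'$ share a whole cell;
\item where two parallel long blocks are joined by a single short block, the next connector lies beyond a block of length at least $3$, so consecutive intervals do not meet at all.
\end{itemize}
The side claims also fail. In an octagonal ring (four long sides joined at the corners by single steps), every $z_i$ lies on the exterior side rather than alternating. Your ``third parallel step'' sketch of (iii) is not an argument; with the right intervals, (iii) follows from thinness alone.

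What you need is the consequence of having no $3$-step ladder that you stated only vaguely (``the path must change direction''), together with the right intervals. Suppose a long block $L$ had its two neighbouring length-$2$ blocks leaving on opposite sides of $L$. Then $L$, together with the two blocks parallel to it beyond those connectors, would be a ladder of $3$ steps. Likewise, three consecutive length-$2$ blocks produce one. Hence:
\begin{itemize}
\item every long block is U-shaped;
\item consecutive long blocks are separated by exactly one or two length-$2$ blocks;
\item a turn count shows that there is an even number, at least four, of long blocks.
\end{itemize}

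The zig-zag walk consists of the long blocks in cyclic order. Two long blocks separated by two short blocks are orthogonal and meet in exactly one vertex. Two separated by one short block are parallel and share an edge, so you delete the first cell of the later one (in the direction of traversal) to leave a single common vertex.

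Because each block is U-shaped, $v_i$ and $v_{i+1}$ are the two corners on its connector side. They are therefore adjacent corners on one edge interval, as the definition requires; this is exactly where the ladder hypothesis enters. The point $z_i$ is the outer corner of the last cell $X$ of $I_i$, which is a vertex of no other cell of $\mathcal{P}$. By thinness, an inner interval containing $z_i$ lies either in the maximal block containing $I_i$ or in the length-$2$ block through $X$. Neither contains any other $z_j$, which gives (iii).
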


We are now ready to state our main result.

\begin{theorem} \label{Thm: 3.2.3}
    Let $\mathcal{P}$ be a closed path and let $\mathcal{Q} = \mathcal{P} \otimes \mathcal{R}_{m,n}$ be a dilated closed path for some $m,n\in \mathbb{Z}^+$. If $\mathcal{Q}$ contains no zig-zag walks, then $I_{\mathcal{Q}}$ is prime.
\end{theorem}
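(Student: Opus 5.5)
The plan is to chain together results already established in Sections 3 and 4. Assume $\mathcal{Q} = \mathcal{P} \otimes \mathcal{R}_{m,n}$ contains no zig-zag walks.

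\textbf{Step 1.} By Proposition \ref{Prop: Construct a zig-zag}, applied to the contrapositive, the underlying closed path $\mathcal{P}$ also contains no zig-zag walks. That proposition already does the geometric work: a zig-zag walk $I_1,\dots,I_\ell$ in $\mathcal{P}$ dilates to the zig-zag walk $I_1\otimes\mathcal{R}_{m,n},\dots,I_\ell\otimes\mathcal{R}_{m,n}$ in $\mathcal{Q}$.

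\textbf{Step 2.} Since $\mathcal{P}$ is a closed path without zig-zag walks, Proposition \ref{Proposition 6.1} (Cisto--Navarra) says that $\mathcal{P}$ contains an L-configuration $A_1,\dots,A_5$ or a ladder $\{\mathcal{B}_i\}_{i=1,\dots,s}$ with $s\geq 3$. By definition, the corresponding objects in $\mathcal{Q}$ are then present:
\begin{itemize}
\item the sequence $A_1\otimes\mathcal{R}_{m,n},\dots,A_5\otimes\mathcal{R}_{m,n}$ is an $(m,n)$-dilated L-configuration, and each $A_i\otimes\mathcal{R}_{m,n}$ lies in $\mathcal{Q}$ by Corollary \ref{Cor: Dilations preserve subpolyominoes};
\item the family $\{\mathcal{B}_i\otimes\mathcal{R}_{m,n}\}$ is an $(m,n)$-dilated ladder with at least $3$ steps.
\end{itemize}

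\textbf{Step 3.} Split into the two cases from Step 2.
\begin{itemize}
\item In the L-configuration case, Theorem \ref{Thm: 3.2.1} gives $I_\mathcal{Q}=J_\mathcal{Q}$, where $J_\mathcal{Q}$ is the toric ideal adapted to that dilated L-configuration.
\item In the ladder case, Theorem \ref{Thm: 3.2.2} gives $I_\mathcal{Q}=J_\mathcal{Q}$ for the toric ideal adapted to the dilated ladder.
\end{itemize}
In either case $J_\mathcal{Q}$ is the kernel of a map into a polynomial ring, hence toric and prime. Therefore $I_\mathcal{Q}$ is prime, which is exactly the content of the two corollaries following those theorems.

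The main care point is Step 2. One has to be sure the definitions of dilated L-configuration and dilated ladder require nothing beyond the existence of the underlying configuration in $\mathcal{P}$. For instance, maximality of the dilated blocks is not a separate condition, since it is inherited from maximality in $\mathcal{P}$: a horizontal block of $\mathcal{P}$ is maximal if and only if its dilation is maximal, because cells adjacent to dilated blocks come from adjacent cells of $\mathcal{P}$ by Corollary \ref{Corollary: Dilation is union of dilated cells} and Lemma \ref{Lemma:Dilations preserve disjoint sets}. Since both definitions are phrased purely in terms of the underlying structure in $\mathcal{P}$, no further verification should be needed, and the proof is a short assembly of these results.
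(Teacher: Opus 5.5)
Your proposal is correct and follows the paper's proof exactly: Proposition~\ref{Prop: Construct a zig-zag} passes the absence of zig-zag walks from $\mathcal{Q}$ to $\mathcal{P}$, Proposition~\ref{Proposition 6.1} then gives an L-configuration or a ladder of at least 3 steps in $\mathcal{P}$, and Theorem~\ref{Thm: 3.2.1} or Theorem~\ref{Thm: 3.2.2} gives $I_\mathcal{Q}=J_\mathcal{Q}$, which is prime. Your side remark on maximality of dilated blocks is not needed, since the dilated configurations are defined entirely through the underlying ones in $\mathcal{P}$. Likewise, no contrapositive is required, because Proposition~\ref{Prop: Construct a zig-zag} is already stated in the direction you use.
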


\begin{proof}
    As $\mathcal{Q} = \mathcal{P} \otimes \mathcal{R}_{m,n}$ and $\mathcal{Q}$ contains no zig-zag walks, then by Proposition \ref{Prop: Construct a zig-zag}, we have that $\mathcal{P}$ contains no zig-zag walks. By Proposition \ref{Proposition 6.1}, this implies that $\mathcal{P}$ has an L-configuration or a ladder of at least 3 steps. If $\mathcal{P}$ has an L-configuration, then $\mathcal{Q}$ contains an $(m,n)$-dilated L-configuration by definition. Therefore, by Theorem \ref{Thm: 3.2.1}, we have that $I_\mathcal{Q}$ is prime. If rather $\mathcal{P}$ has a ladder of at least 3 steps, then $\mathcal{Q}$ contains an $(m,n)$-dilated ladder of at least 3 steps by definition. Therefore, by Theorem \ref{Thm: 3.2.2}, we have that $I_\mathcal{Q}$ is prime. In either case, it must be that $I_\mathcal{Q}$ is prime.
\end{proof}

We can combine the above result with Corollary 3.6 in \textbf{\cite{Mascia_Rinaldo_Romeo:2020}} to give the following theorem.

\begin{theorem} \label{Thm: ZZW for dilated closed paths}
    Let $\mathcal{P}$ be a closed path and let $\mathcal{Q} = \mathcal{P} \otimes \mathcal{R}_{m,n}$ be a dilated closed path for some $m,n \in \mathbb{Z}^+$. Then $I_{\mathcal{Q}}$ is prime if and only if $\mathcal{Q}$ contains no zig-zag walks.
\end{theorem}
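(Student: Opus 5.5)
The statement is a biconditional, and I would prove its two directions separately, relying almost entirely on results already established.

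For the forward direction, suppose $I_{\mathcal{Q}}$ is prime. By Proposition \ref{Proposition: Dilations of polyos are polyos}, $\mathcal{Q}$ is a genuine polyomino, so Corollary 3.6 of \textbf{\cite{Mascia_Rinaldo_Romeo:2020}} applies to it. That corollary says the existence of a zig-zag walk forces $I_{\mathcal{Q}}$ to be non-prime. Taking the contrapositive, primality of $I_{\mathcal{Q}}$ implies that $\mathcal{Q}$ contains no zig-zag walks. This is precisely the known direction of Conjecture \ref{Conj:ZZW}, and it holds for every polyomino.

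For the reverse direction, suppose $\mathcal{Q}$ contains no zig-zag walks. I would invoke Theorem \ref{Thm: 3.2.3} directly. Its proof runs through three earlier results in order:
\begin{itemize}
\item[(1)] Proposition \ref{Prop: Construct a zig-zag} transfers the absence of zig-zag walks from $\mathcal{Q}$ down to the underlying closed path $\mathcal{P}$.
\item[(2)] Proposition \ref{Proposition 6.1} then gives $\mathcal{P}$ either an L-configuration or a ladder of at least $3$ steps. Dilating this structure produces an $(m,n)$-dilated L-configuration or an $(m,n)$-dilated ladder in $\mathcal{Q}$.
\item[(3)] Theorem \ref{Thm: 3.2.1} in the first case, or Theorem \ref{Thm: 3.2.2} in the second, gives $I_{\mathcal{Q}} = J_{\mathcal{Q}}$. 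Since $J_{\mathcal{Q}}$ is toric, it is prime, so $I_{\mathcal{Q}}$ is prime.
\end{itemize}

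At this stage the argument is just an assembly of prior results, so no new step is hard. The real weight sits inside the cited results, and the place I would double-check is the reverse direction. Two points need care there. First, Proposition \ref{Prop: Construct a zig-zag} must correctly lift a zig-zag walk of $\mathcal{P}$ to one of $\mathcal{Q}$; in particular, condition (iii) must survive dilation, which is where Lemma \ref{Lemma: Dilations preserve non-inners} is used. Second, the case analyses in Theorems \ref{Thm: 3.2.1} and \ref{Thm: 3.2.2} must cover every possible position of vertices inside a dilated cell interval. If either point needed repair, I would fix it in the cited result rather than here.
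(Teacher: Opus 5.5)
Your proposal is correct and is essentially the paper's own two-line argument. Necessity is Corollary 3.6 of \cite{Mascia_Rinaldo_Romeo:2020} applied to $\mathcal{Q}$, and sufficiency is Theorem \ref{Thm: 3.2.3}, whose proof is exactly the chain you unpack (Proposition \ref{Prop: Construct a zig-zag}, then Proposition \ref{Proposition 6.1}, then Theorem \ref{Thm: 3.2.1} or Theorem \ref{Thm: 3.2.2}).
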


\begin{proof}
    The necessary condition is given by Corollary 3.6 in \textbf{\cite{Mascia_Rinaldo_Romeo:2020}}. The sufficient condition is given by Theorem \ref{Thm: 3.2.3}.
\end{proof}

The above theorem gives that the Zig-Zag Walk Conjecture holds for the class of dilated closed path polyominoes. Therefore, we have fully characterized primality for dilated closed paths. We can in fact assert an even stronger connection between the primality of a closed path and its dilation as given by the following proposition.

\begin{proposition}
    Let $\mathcal{P}$ be a closed path and let $\mathcal{Q} = \mathcal{P} \otimes \mathcal{R}_{m,n}$ for some $m,n \in \mathbb{Z}^+$. If $\mathcal{P}$ contains no zig-zag walks, then $\mathcal{Q}$ contains no zig-zag walks.
\end{proposition}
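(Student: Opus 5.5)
The cleanest route avoids any direct geometric analysis of zig-zag walks in $\mathcal{Q}$ and instead passes through primality, using results already established above. Assume $\mathcal{P}$ is a closed path with no zig-zag walks. The chain of implications runs from $\mathcal{P}$ up to $\mathcal{Q}$ and then comes back down.

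First, Proposition \ref{Proposition 6.1} applies to $\mathcal{P}$. It shows that $\mathcal{P}$ contains either an L-configuration or a ladder of at least 3 steps.

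Second, the corresponding dilated substructure in $\mathcal{Q}$ is obtained purely by definition.
\begin{itemize}
\item If $A_1,\dots,A_5$ is an L-configuration of $\mathcal{P}$, then $A_1\otimes\mathcal{R}_{m,n},\dots,A_5\otimes\mathcal{R}_{m,n}$ is an $(m,n)$-dilated L-configuration of $\mathcal{Q}$.
\item If $\{\mathcal{B}_i\}_{i=1,\dots,s}$ with $s\ge 3$ is a ladder of $\mathcal{P}$, then $\{\mathcal{B}_i\otimes\mathcal{R}_{m,n}\}$ is an $(m,n)$-dilated ladder of at least 3 steps.
\end{itemize}
By Corollary \ref{Cor: Dilations preserve subpolyominoes}, these dilated cells and blocks genuinely lie in $\mathcal{Q}$. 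Theorem \ref{Thm: 3.2.1} in the first case, or Theorem \ref{Thm: 3.2.2} in the second, then gives $I_{\mathcal{Q}}=J_{\mathcal{Q}}$. Since $J_{\mathcal{Q}}$ is toric, $I_{\mathcal{Q}}$ is prime.

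Third, Corollary 3.6 of \textbf{\cite{Mascia_Rinaldo_Romeo:2020}}, which is the forward direction of Conjecture \ref{Conj:ZZW}, says that any polyomino containing a zig-zag walk has a non-prime polyomino ideal. Since $I_{\mathcal{Q}}$ is prime, $\mathcal{Q}$ contains no zig-zag walks.

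Equivalently, the proposition follows by chaining Theorem \ref{Thm: ZZWC for closed paths} and Theorem \ref{Thm: ZZW for dilated closed paths}. That route would additionally need that $\mathcal{P}$ prime implies $\mathcal{Q}$ prime. This is exactly what the first and second steps establish through the L-configuration/ladder dichotomy, so I would present the proof through that dichotomy rather than introduce a separate transfer argument.

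The main obstacle is really only bookkeeping: checking that the hypotheses of Theorems \ref{Thm: 3.2.1} and \ref{Thm: 3.2.2} are met. Those theorems use a specially oriented toric ideal $J_{\mathcal{Q}}$, which requires rotating or reflecting $\mathcal{P}$ so that the L-configuration or ladder sits in the normalized position. Rotations and reflections commute with dilation up to swapping $(m,n)$ with $(n,m)$, and the class of dilated closed paths is closed under that swap. So the normalization can be done on $\mathcal{P}$ first, and the primality conclusion does not depend on the orientation.

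A direct approach, taking a zig-zag walk in $\mathcal{Q}$ and contracting it to one in $\mathcal{P}$, would be the natural alternative. It is much harder, because the vertices $v_i,z_i$ of a walk in $\mathcal{Q}$ need not be images under $\Delta_{m,n}$. That is why I would route the argument through primality as above.
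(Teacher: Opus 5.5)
Your proposal is correct and follows the paper's proof step for step. Proposition~\ref{Proposition 6.1} gives an L-configuration or a ladder of at least 3 steps in $\mathcal{P}$; these become the $(m,n)$-dilated structures in $\mathcal{Q}$ by definition; Theorem~\ref{Thm: 3.2.1} or Theorem~\ref{Thm: 3.2.2} makes $I_{\mathcal{Q}}$ prime; and Corollary 3.6 of \cite{Mascia_Rinaldo_Romeo:2020} then excludes zig-zag walks in $\mathcal{Q}$. Your extra remark on orientation, where rotations swap $(m,n)$ and $(n,m)$, is harmless bookkeeping that the paper absorbs into the ``without loss of generality'' normalization inside those two theorems.
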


\begin{proof}
    If $\mathcal{P}$ is a closed path which contains no zig-zag walks, then by Proposition \ref{Proposition 6.1}, $\mathcal{P}$ contains an L-configuration or a ladder of at least 3 steps. Then $\mathcal{Q}$ must contain either a $(m,n)$-dilated L-configuration or a $(m,n)$-dilated ladder of at least 3 steps by their definitions. So by either Theorem \ref{Thm: 3.2.1} or Theorem \ref{Thm: 3.2.2}, $I_\mathcal{Q}$ is prime. Thus by Corollary 3.6 in \textbf{\cite{Mascia_Rinaldo_Romeo:2020}}, we have that $\mathcal{Q}$ contains no zig-zag walks.
\end{proof}

We make note that the above proposition is the converse of Proposition \ref{Prop: Construct a zig-zag} specified to the class of closed paths. Importantly, we obtain from this proposition the following corollary.

\begin{corollary} \label{Cor: prime implies dilated prime closed path}
      Let $\mathcal{P}$ be a closed path and let $\mathcal{Q} = \mathcal{P} \otimes \mathcal{R}_{m,n}$ be a dilated closed path for some $m,n \in \mathbb{Z}^+$. If $I_\mathcal{P}$ is prime, then $I_\mathcal{Q}$ is prime.
\end{corollary}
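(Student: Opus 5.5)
The plan is to chain the results already established, passing through the characterization of primality for closed paths. Assume $I_\mathcal{P}$ is prime.

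First, we show that $\mathcal{P}$ contains no zig-zag walks. Since $\mathcal{P}$ is a closed path, Theorem \ref{Thm: ZZWC for closed paths} gives that $I_\mathcal{P}$ is prime if and only if $\mathcal{P}$ contains no zig-zag walks. (The necessary direction alone also follows from Corollary 3.6 in \textbf{\cite{Mascia_Rinaldo_Romeo:2020}}.) So primality of $I_\mathcal{P}$ rules out zig-zag walks in $\mathcal{P}$.

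Second, we transfer this to $\mathcal{Q}$. The proposition immediately preceding this corollary says that if a closed path $\mathcal{P}$ has no zig-zag walks, then neither does $\mathcal{Q} = \mathcal{P} \otimes \mathcal{R}_{m,n}$. Hence $\mathcal{Q}$ contains no zig-zag walks.

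Third, since $\mathcal{Q}$ is a dilated closed path, Theorem \ref{Thm: ZZW for dilated closed paths} (or directly Theorem \ref{Thm: 3.2.3}) shows that $I_\mathcal{Q}$ is prime.

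A shortcut is also available. From the absence of zig-zag walks in $\mathcal{P}$, Proposition \ref{Proposition 6.1} yields an L-configuration or a ladder with at least 3 steps in $\mathcal{P}$. This gives a dilated L-configuration or a dilated ladder in $\mathcal{Q}$, and then Theorem \ref{Thm: 3.2.1} or Theorem \ref{Thm: 3.2.2} gives $I_\mathcal{Q} = J_\mathcal{Q}$, which is prime.

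There is no substantive obstacle here. The only point needing care is invoking the correct direction of the closed-path characterization (prime $\Rightarrow$ no zig-zag walks), which is the direction that holds for all polyominoes.
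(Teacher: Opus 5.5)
Your proposal is correct, and the ``shortcut'' in your fourth paragraph is exactly the paper's proof. There, primality of $I_\mathcal{P}$ rules out zig-zag walks via Corollary 3.6 of Mascia--Rinaldo--Romeo, Proposition~\ref{Proposition 6.1} then gives an L-configuration or a ladder of at least 3 steps, and Theorem~\ref{Thm: 3.2.1} or~\ref{Thm: 3.2.2} finishes. Your main chain is also valid, since none of the cited results depends on this corollary, but it is a detour: the proof of the preceding proposition already shows $I_\mathcal{Q}$ is prime before deducing that $\mathcal{Q}$ has no zig-zag walks, and Theorem~\ref{Thm: 3.2.3} then runs back through the same L-configuration/ladder argument.
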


\begin{proof}
    If $\mathcal{P}$ is prime, then by Corollary 3.6 in \textbf{\cite{Mascia_Rinaldo_Romeo:2020}} we have that $\mathcal{P}$ contains no zig-zag walks. So by Proposition \ref{Proposition 6.1}, $\mathcal{P}$ contains an L-configuration or a ladder of at least 3 steps. Then $\mathcal{Q}$ must contain either a $(m,n)$-dilated L-configuration or a $(m,n)$-dilated ladder of at least 3 steps by their definitions. So by either Theorem \ref{Thm: 3.2.1} or Theorem \ref{Thm: 3.2.2}, $I_\mathcal{Q}$ is prime.
\end{proof}

Combining our above results allows us to state the following theorem.

\begin{theorem} \label{Thm: closed path prime iff dilation prime}
    Let $\mathcal{P}$ be a closed path and let $\mathcal{Q} = \mathcal{P} \otimes \mathcal{R}_{m,n}$ be a dilated closed path for some $m,n \in \mathbb{Z}^+$. Then $I_\mathcal{P}$ is prime if and only if $I_\mathcal{Q}$ is prime.
\end{theorem}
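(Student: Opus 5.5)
The equivalence follows by combining results already established, so the plan is to treat the two directions separately and chain earlier statements.

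For the forward direction, assume $I_\mathcal{P}$ is prime. This is exactly Corollary \ref{Cor: prime implies dilated prime closed path}: Corollary 3.6 of \textbf{\cite{Mascia_Rinaldo_Romeo:2020}} shows $\mathcal{P}$ has no zig-zag walks. Then Proposition \ref{Proposition 6.1} supplies an L-configuration or a ladder with at least 3 steps in $\mathcal{P}$. Dilating it gives an $(m,n)$-dilated L-configuration or ladder in $\mathcal{Q}$. Theorem \ref{Thm: 3.2.1} or Theorem \ref{Thm: 3.2.2} then gives $I_\mathcal{Q}=J_\mathcal{Q}$, which is prime. So I would simply cite that corollary.

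For the reverse direction, assume $I_\mathcal{Q}$ is prime. By the necessity half of Theorem \ref{Thm: ZZW for dilated closed paths} (Corollary 3.6 of \textbf{\cite{Mascia_Rinaldo_Romeo:2020}} applied to $\mathcal{Q}$), $\mathcal{Q}$ contains no zig-zag walks. Proposition \ref{Prop: Construct a zig-zag} then gives that $\mathcal{P}$ contains no zig-zag walks. Since $\mathcal{P}$ is a closed path, Theorem \ref{Thm: ZZWC for closed paths} of Cisto and Navarra yields that $I_\mathcal{P}$ is prime.

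There is no real obstacle, since both directions are short chains of cited results. The one step to check is that Proposition \ref{Prop: Construct a zig-zag} applies without extra hypotheses. That proposition needs only that $\mathcal{Q}=\mathcal{P}\otimes\mathcal{R}_{m,n}$ for a polyomino $\mathcal{P}$, which holds here. Both uses of the Mascia–Rinaldo–Romeo corollary are legitimate because $\mathcal{P}$ and $\mathcal{Q}$ are polyominoes, the latter by Proposition \ref{Proposition: Dilations of polyos are polyos}.
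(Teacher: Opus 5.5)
Your proof is correct and is essentially the paper's own argument. The forward direction is exactly Corollary~\ref{Cor: prime implies dilated prime closed path}, and the reverse direction chains Corollary 3.6 of Mascia--Rinaldo--Romeo (applied to $\mathcal{Q}$), Proposition~\ref{Prop: Construct a zig-zag}, and Theorem~\ref{Thm: ZZWC for closed paths} in the same order as the paper, with your added check via Proposition~\ref{Proposition: Dilations of polyos are polyos} that $\mathcal{Q}$ is a polyomino being a point the paper leaves implicit.
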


\begin{proof}
    Assume $I_\mathcal{P}$ is prime. Then $I_\mathcal{Q}$ is prime by Corollary \ref{Cor: prime implies dilated prime closed path}. Now assume $I_\mathcal{Q}$ is prime. Then by Corollary 3.6 in \textbf{\cite{Mascia_Rinaldo_Romeo:2020}}, $\mathcal{Q}$ contains no zig-zag walks. So by Proposition \ref{Prop: Construct a zig-zag}, the polyomino $\mathcal{P}$ contains no zig-zag walks. Since $\mathcal{P}$ is a closed path, then the Zig-Zag Walk Conjecture holds for the class and by Theorem \ref{Thm: ZZWC for closed paths} (Theorem 6.2 in \textbf{\cite{Cisto_Navarra:2023}}), we have that $I_\mathcal{P}$ is prime. Thus, the result follows.
\end{proof}

So in fact, we have an equivalence between the primality of a closed path and the primality of its dilation. One interesting question to explore next would be to see if such an equivalence exists between other classes of polyominoes and their dilations.

\section*{Acknowledgements}
I want to thank Dr. Colin Ingalls for his insight and helpful suggestions on this work.

\bibliographystyle{plain}
\nocite{*}
\bibliography{bibliography}

\end{document}

%% file: preamble.tex
\documentclass[12pt, a4paper]{amsart}
 \pdfoutput=1
\usepackage{fullpage}
\usepackage{amscd}
\usepackage{amsmath}
\usepackage{amsthm}
\usepackage{amssymb}
\usepackage[hidelinks]{hyperref}

\usepackage[dvipsnames]{xcolor}
\usepackage{braket}
\usepackage{tikz}
\usepackage{polyomino}
\usepackage{dirtytalk}
\usepackage{float}

\makeatletter
    \tikzset{
    dot diameter/.store in=\dot@diameter,
    dot diameter=2pt,
    dot spacing/.store in=\dot@spacing,
    dot spacing=6pt,
    dots/.style={
        line width=\dot@diameter,
        line cap=round,
        dash pattern=on 0pt off \dot@spacing
    }
    }
\makeatother

\theoremstyle{definition}
\newtheorem{theorem}{Theorem}[section]
\newtheorem{definition}[theorem]{Definition}
\newtheorem{proposition}[theorem]{Proposition}
\newtheorem{lemma}[theorem]{Lemma}
\newtheorem{conjecture}[theorem]{Conjecture}
\newtheorem{corollary}[theorem]{Corollary}
\newtheorem{remark}[theorem]{Remark}

\DeclareMathOperator{\ext}{ext}
\DeclareMathOperator{\intpolyo}{int}

\title{Primality of dilated closed path polyominoes}
\author{Niall Larney}
\date{September 2026}